\newtheorem{thm}{Theorem}[section]
\newtheorem{lemma}[thm]{Lemma}
\newtheorem{cor}[thm]{Corollary}
\newtheorem{prop}[thm]{Proposition}
\theoremstyle{definition}
\newtheorem{example}[thm]{Example}
\theoremstyle{remark}
\newtheorem{remark}[thm]{Remark}
\newtheorem{claim}[thm]{Claim}
\newtheorem{convention}[thm]{Convention}
\numberwithin{equation}{section}
\newcommand{\acts}{\curvearrowright}
\newcommand{\G}{\mathcal{G}}
\newcommand{\h}{\mathcal{H}}
\newcommand{\K}{\mathcal{K}}
\newcommand{\R}{\mathcal{R}}
\newcommand{\s}{\mathcal{S}}
\newcommand{\Z}{\mathbf{Z}}
\DeclareMathAlphabet{\mathdutchcal}{U}{dutchcal}{m}{n}
\SetMathAlphabet{\mathdutchcal}{bold}{U}{dutchcal}{b}{n}
\DeclareMathAlphabet{\mathdutchbcal}{U}{dutchcal}{b}{n}
\DeclareMathAlphabet{\mathpzc}{OT1}{pzc}{m}{it}
\DeclareMathAlphabet{\mathboondoxfrak}{U}{BOONDOX-frak}{m}{n}
\DeclareMathOperator{\Aut}{Aut}
\def\l@section{\@tocline{1}{5pt}{0pc}{}{}}
\renewcommand{\tocsection}[3]{%
	\indentlabel{\@ifnotempty{#2}{\makebox[20pt][l]{%
				\ignorespaces#1 #2.\hfill}}}\sc #3\dotfill}
\newdimen{\tocsubsecmarg}
\def\l@subsection{\@tocline{2}{2pt}{0pc}{\tocsubsecmarg}{}}
\renewcommand{\tocsubsection}[3]{%
	\indentlabel{\@ifnotempty{#2}{\makebox[30pt][l]{%
				\ignorespaces#1 #2.\hfill}}}#3\dotfill}
\let\oldtocsubsection=\tocsubsection
\renewcommand{\tocsubsection}[2]{\hspace{2em} \oldtocsubsection{#1}{#2}}
\begin{document}
\title[Measurable splittings and wreath products]{Measurable splittings and the measured group theoretic structure of wreath products}

\author[Robin Tucker-Drob]{Robin Tucker-Drob}
\address[Robin Tucker-Drob]{Department of Mathematics, University of Florida, Gainesville, FL, USA}
\email{r.tuckerdrob@ufl.edu}
\author[Konrad Wr\'{o}bel]{Konrad Wr\'{o}bel}
\address[Konrad Wr\'{o}bel]{Department of Mathematics, Jagiellonian University, Krak\'ow, Poland}
\email{konrad1.wrobel@uj.edu.pl}

\begin{abstract}
    Let $\Gamma$ be a countable group that admits an essential measurable splitting (for instance, any group measure equivalent to a free product of nontrivial groups). 
    We show: (1) for any two nontrivial countable groups $B$ and $C$ that are measure equivalent, the wreath product groups $B\wr\Gamma$ and $C\wr\Gamma$ are measure equivalent (in fact, orbit equivalent) -- this is interesting even in the case when the groups $B$ and $C$ are finite; and (2) the groups $B\wr \Gamma$ and $(B\times\mathbf{Z})\wr\Gamma$ are measure equivalent (in fact, orbit equivalent) for every nontrivial countable group $B$.
    On the other hand, we show that certain wreath product actions are not even stably orbit equivalent if $\Gamma$ is instead assumed to be a sofic icc group that is Bernoulli superrigid, and $B$ and $C$ have different cardinalities. 
\end{abstract}

\maketitle

\tableofcontents

\section{Introduction}
Two groups $\Gamma$ and $\Lambda$ are said to be \textbf{measure equivalent} if they admit a \textbf{measure equivalence coupling}, i.e.\  commuting measure preserving actions on a standard nonzero $\sigma-$finite measure space $(\Omega,\mu)$ such that both the actions $\Gamma\acts (\Omega,\mu)$ and $\Lambda\acts(\Omega,\mu)$ admit a finite measure fundamental domain. 
Such a coupling comes with a \textbf{coupling index}, defined to be the ratio between the measures of the fundamental domains of the two actions.  
If $\Gamma$ and $\Lambda$ admit an ergodic measure equivalence coupling with coupling index 1, then we say that $\Gamma$ and $\Lambda$ are \textbf{orbit equivalent}. It is easy to observe that finite groups form a measure equivalence class and the index of any measure equivalence coupling is the ratio of their cardinalities. 
The landmark article \cite{OW87} of Ornstein and Weiss implies that all infinite amenable groups are orbit equivalent. 
Since amenability is an invariant of measure equivalence, the class of infinite amenable groups forms its own measure equivalence class. 

Over the past 25 years, there has been significant progress in developing rigidity results related to measure equivalence and orbit equivalence. 
Two of the most celebrated rigidity results are Popa's cocycle superrigidity theorems, which imply that any orbit equivalence (defined in $\S$\ref{section:OEME}) to a Bernoulli shift action of a wide variety of groups must entail an isomorphism both of the acting groups as well as of the actions \cite{Popa2007,Po08}, and the product and radical rigidity theorems of Monod and Shalom, which recover group theoretic structure from an orbit equivalence of actions under certain negative curvature assumptions on the groups \cite{MonodShalom2006}.
At the other end of the spectrum, results showing algebraically dissimilar groups are orbit equivalent appear to be less common, with a prominent exception being the work \cite{OW87} of Ornstein and Weiss. 

\subsection{Anti-rigidity for wreath products} Our first main theorem is an anti-rigidity result in the nonamenable setting for wreath product groups. 
Given groups $B$ and $\Gamma$, we let $\Gamma$ act on $\bigoplus_\Gamma B$ via the left shift $(\gamma . b)_\delta = b_{\gamma ^{-1}\delta}$ for $b\in \bigoplus_{\Gamma} B$, $\gamma ,\delta\in\Gamma$, and we  denote by $B \wr \Gamma$ the restricted regular \textbf{wreath product} of $B$ with $\Gamma$, viewed as an internal semidirect product of $\bigoplus _\Gamma B$ and $\Gamma$:
\[
B\wr\Gamma = \Big\langle \textstyle{\bigoplus _\Gamma B , \ \Gamma \, {\Big{|}} \,  \gamma b \gamma ^{-1} = \gamma . b \ \ (\gamma \in \Gamma , \ b\in \bigoplus _{\Gamma}B ) }\Big\rangle .
\]
We call the subgroups $\bigoplus_\Gamma B$ and $\Gamma$ the \textbf{wreath kernel} and \textbf{wreath complement} respectively, of $B\wr\Gamma$.

Starting with a free ergodic probability measure preserving (p.m.p.) action of $B$ on a standard probability space  $(Z,\eta )$, one obtains the associated \textbf{wreath product action} of $B\wr\Gamma$ on the probability space $(Z^\Gamma ,\eta ^\Gamma )$, defined by $(b\gamma . z)_{\delta}= b_{\gamma^{-1}\delta} . z_{\gamma^{-1}\delta}$ for $z\in Z^\Gamma$, $\gamma ,\delta \in \Gamma$, and $b\in \bigoplus_\Gamma B$. 
As a consequence of the aforementioned work of Ornstein and Weiss, if $A$ is an amenable group then, up to orbit equivalence of actions, the wreath product action of $A\wr\Gamma$ only depends on the cardinality of $A$. 
While the cardinality of $A$ does matter in general (see Theorem \ref{thm:rigid}), by imposing certain restrictions on $\Gamma$ we obtain the following.

\begin{thm}[see Corollary \ref{cor:freeFactorAmenableLampGroups}]\label{thm:cofeq}
Let $\Gamma$ be a countable group that contains an infinite amenable group as a free factor. 
Then the groups $A_0\wr\Gamma$ and $A_1\wr\Gamma$ are orbit equivalent for all nontrivial (possibly finite) amenable groups $A_0$ and $A_1$. 
In fact, the wreath product actions of $A_0\wr\Gamma$ and $A_1\wr\Gamma$ are orbit equivalent.
\end{thm}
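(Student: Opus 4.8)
The plan is to work entirely at the level of orbit equivalence relations: fixing free ergodic p.m.p.\ actions $A_0\acts(Z_0,\eta_0)$ and $A_1\acts(Z_1,\eta_1)$, I want a measure-preserving isomorphism between the wreath product relations $\R_0:=\R_{A_0\wr\Gamma\acts Z_0^\Gamma}$ and $\R_1:=\R_{A_1\wr\Gamma\acts Z_1^\Gamma}$; since both wreath product actions are free, ergodic, and p.m.p., such an isomorphism is exactly an orbit equivalence of the actions (of index $1$). Write $\Gamma=\Sigma*\Lambda$ with $\Sigma$ an infinite amenable free factor, and set $N=\bigoplus_\Gamma A$ for the wreath kernel. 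The structural input I would use is the algebraic identity $A\wr\Gamma=(N\rtimes\Sigma)*_N(N\rtimes\Lambda)$ (both factors contain the normal subgroup $N$, while $\Sigma$ and $\Lambda$ generate $\Sigma*\Lambda$), together with the fact that for a free p.m.p.\ action of an amalgamated free product the orbit relation is the corresponding amalgamated free product of subrelations, namely $\R_i=\R_{N\rtimes\Sigma}*_{\R_N}\R_{N\rtimes\Lambda}$, the amalgamation being over the lamp relation $\R_N=\R_{N\acts Z_i^\Gamma}$.

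The decisive observation is that amenability of $\Sigma$ forces $N\rtimes\Sigma$ to be amenable: it is an extension of the amenable group $N$ (a directed union of the amenable groups $A^F$, $F\subseteq\Gamma$ finite) by the amenable group $\Sigma$. Consequently the ``$\Sigma$-side'' relation $\R_{N\rtimes\Sigma}$ is hyperfinite by Connes--Feldman--Weiss; it is ergodic (the $\Sigma$-shift alone is already ergodic on $Z_i^\Gamma$) and of type $\mathrm{II}_1$. By Ornstein--Weiss it is therefore isomorphic to the hyperfinite $\mathrm{II}_1$ relation independently of which group $A_i$ produced it, even when $A_0$ and $A_1$ are finite of different cardinalities. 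This is precisely the mechanism that should yield an index-$1$ orbit equivalence despite a cardinality mismatch: the discrepancy is absorbed into the infinite amenable $\Sigma$-direction rather than handled coordinatewise -- a coordinatewise single-lamp orbit equivalence being impossible when $\abs{A_0}\neq\abs{A_1}$.

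To assemble the full isomorphism I would invoke the gluing/functoriality of amalgamated free products of relations: an isomorphism $\R_0\cong\R_1$ can be built from isomorphisms of the three constituent relations $\R_N$, $\R_{N\rtimes\Sigma}$, and $\R_{N\rtimes\Lambda}$ that are mutually compatible over the amalgamated lamp relation $\R_N$. The $\Sigma$-side isomorphism is furnished by Ornstein--Weiss, and the entire content is to choose it so that its restriction to $\R_N$ \emph{also} extends across the $\Lambda$-side.

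The hard part will be exactly this compatibility. The $\Lambda$-shift does not normalize $\R_{N\rtimes\Sigma}$ (indeed $\lambda\Sigma\lambda^{-1}\cap\Sigma=1$ for $\lambda\neq e$), so the $\Lambda$-side factor $N\rtimes\Lambda$ is genuinely non-amenable in general and offers none of the Ornstein--Weiss flexibility; one cannot reshuffle lamps freely there. Thus the real task is to perform the hyperfinite isomorphism on the amenable $\Sigma$-side \emph{relative} to the amalgamated boundary $\R_N$, in a manner that stays equivariant for, and hence extends over, the rigid $\Lambda$-direction. This relative, $\Lambda$-compatible form of Ornstein--Weiss -- simultaneously reshuffling an infinite amenable relation so as to absorb the lamp cardinality while respecting a gluing along which a possibly non-amenable group acts -- is exactly what an essential measurable splitting of $\Gamma$ is designed to provide, so I would reduce the theorem to that machinery (Corollary \ref{cor:freeFactorAmenableLampGroups}) and verify that an infinite amenable free factor supplies a splitting that is essential. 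Finally, since the resulting isomorphism is measure-preserving between ergodic type $\mathrm{II}_1$ relations, it implements an orbit equivalence of the two wreath product actions of index $1$, as required.
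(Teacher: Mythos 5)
Your high-level diagnosis is accurate --- the cardinality mismatch must be absorbed into the infinite amenable free-factor direction, and the whole difficulty is to perform the Ornstein--Weiss reshuffling on that side \emph{relative} to the lamp relation so that it remains compatible with the rigid complementary direction --- but the proposal stops exactly where the proof has to begin, and the way you propose to close the gap is circular. You write that the needed ``relative, $\Lambda$-compatible form of Ornstein--Weiss'' is ``exactly what an essential measurable splitting of $\Gamma$ is designed to provide'' and that you would ``reduce the theorem to that machinery (Corollary \ref{cor:freeFactorAmenableLampGroups})''; but Corollary \ref{cor:freeFactorAmenableLampGroups} \emph{is} the statement being proved, and essential measurable splittings are the hypothesis of the later generalization (Theorem \ref{thm:MELampGroups}), not a device that hands you the compatible isomorphism. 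Nothing in the proposal actually produces an isomorphism on the amenable side whose restriction to the lamp relation extends across the other free factor.

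The two missing ingredients are concrete. First, plain Dye/Ornstein--Weiss gives an abstract isomorphism of the hyperfinite relations on the $\Sigma$-side with no control whatsoever over the lamp subrelation; what is actually needed is an orbit equivalence $\theta$ between the wreath product actions $A_0\wr\Sigma\curvearrowright Z_0^{\Sigma}$ and $A_1\wr\Sigma\curvearrowright Z_1^{\Sigma}$ satisfying $\theta\big(\gamma(\bigoplus_\Sigma A_0).y\big)=\gamma(\bigoplus_\Sigma A_1).\theta(y)$, i.e.\ one intertwining the index cocycles of the lamp subrelations. This is Theorem \ref{FSZ} (Feldman--Sutherland--Zimmer, resting on Golodets--Sinel'shchikov's cocycle strengthening of Dye's theorem), and it is precisely what makes the map \emph{bi-cofinitely equivariant} (Theorem \ref{cofAmenable}); the unadorned classification of hyperfinite relations does not yield it. Second, even granted such a $\theta$, your gluing over the amalgam $(N\rtimes\Sigma)\ast_N(N\rtimes\Lambda)$ presupposes a compatible isomorphism of the non-amenable $\Lambda$-side pieces $\mathcal{R}_{N_i\rtimes\Lambda}$, which you neither construct nor could construct coordinatewise when $|A_0|\neq|A_1|$. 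The paper sidesteps this entirely: it lifts $\theta$ coset-by-coset over $\Gamma$ using a transversal $S$ for the amenable factor with $|\gamma S\triangle S|<\infty$ (Theorem \ref{cofFinCosetReprError} and Corollary \ref{cofFreeProducts}), and the resulting global map is exactly equivariant for the complementary free factor and cofinitely equivariant for everything else. So the skeleton you describe is the right one, but the two steps that constitute the actual proof are absent.
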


Theorem \ref{thm:cofeq} may be compared to results of Genevois and Tessera \cite{GeTe21Asymp} and of Eskin, Fisher, and Whyte \cite{EskinFisherWhyteI, EskinFisherWhyteII} addressing the question of when two wreath product groups are quasi-isometric. 

Letting $\mathbf{F}_2$ denote the free group on 2 generators, $\mathbf{Z}$ denote the group of integers, and $\mathbf{C}_n$ denote the cyclic group of order $n$, we obtain the following as an immediate consequence of Theorem \ref{thm:cofeq}.

\begin{cor}
    The groups $\mathbf{Z}\wr\mathbf{F}_2$ and $\mathbf{C}_n\wr \mathbf{F}_2$ for $n\geq 2$ are all pairwise orbit equivalent.
\end{cor}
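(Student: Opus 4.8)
The plan is to apply Theorem~\ref{thm:cofeq} directly, taking $\Gamma = \mathbf{F}_2$. The single hypothesis to verify is that $\mathbf{F}_2$ contains an infinite amenable group as a free factor. This is immediate: $\mathbf{F}_2 = \mathbf{Z} * \mathbf{Z}$ is the free product of two copies of $\mathbf{Z}$, so $\mathbf{Z}$ appears as a free factor, and $\mathbf{Z}$ is infinite and amenable. Hence $\Gamma = \mathbf{F}_2$ satisfies the hypothesis of Theorem~\ref{thm:cofeq}.

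Next I would record that the groups under consideration are all nontrivial amenable groups: $\mathbf{Z}$ is infinite amenable, and each cyclic group $\mathbf{C}_n$ with $n \geq 2$ is finite, hence amenable, and nontrivial. Thus for any choice of $A_0, A_1$ from the collection $\{\mathbf{Z}\} \cup \{\mathbf{C}_n : n \geq 2\}$, Theorem~\ref{thm:cofeq} applies and yields that $A_0 \wr \mathbf{F}_2$ and $A_1 \wr \mathbf{F}_2$ are orbit equivalent (indeed that the corresponding wreath product actions are orbit equivalent).

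Finally, since orbit equivalence of groups is symmetric and transitive, applying the conclusion to every pair in the collection shows that $\mathbf{Z} \wr \mathbf{F}_2$ and all the $\mathbf{C}_n \wr \mathbf{F}_2$ for $n \geq 2$ are pairwise orbit equivalent, as claimed. There is no genuine obstacle here: the entire content is packaged into Theorem~\ref{thm:cofeq}, and the only verification is the elementary observation that $\mathbf{F}_2$ splits as $\mathbf{Z} * \mathbf{Z}$ with an infinite amenable free factor.
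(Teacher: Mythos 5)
Your proposal is correct and matches the paper exactly: the corollary is stated there as an immediate consequence of Theorem \ref{thm:cofeq}, using precisely the observation that $\mathbf{F}_2 = \mathbf{Z} \ast \mathbf{Z}$ has an infinite amenable free factor and that $\mathbf{Z}$ and each $\mathbf{C}_n$ ($n\geq 2$) are nontrivial amenable groups.
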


This was previously unknown, although as a consequence of a theorem of Bowen the group von Neumann algebras $L(\mathbf{C}_n\wr \mathbf{F}_2)$ and $L(\mathbf{C}_m\wr \mathbf{F}_2)$ are isomorphic for $n,m\geq 2$ \cite{BowenBernshiftflex} (see also \cite{MRV2013}). 

Toward proving Theorem \ref{thm:cofeq}, we isolate the notion of a bi-cofinitely equivariant map between shift spaces. 
Bi-cofinitely equivariant maps have previously appeared implicitly in the study of finitary isomorphisms between shifts \cite{Kr83}. 
For simplicity, assume the alphabets $S_0$ and $S_1$ are finite. For $i\in\{0,1\}$ and $x,y\in S_i^\Gamma$ we use $x\sim y$ to denote that $x$ and $y$ differ in only finitely many coordinates. 
A bijective map $\phi:S_0^\Gamma\rightarrow S_1^\Gamma$ between shift spaces is said to be bi-cofinitely equivariant if for every $x\sim y\in S_0^\Gamma$, $z\sim w\in S_1^\Gamma$, and for every $\gamma\in \Gamma$, equivariance holds modulo finite errors, i.e., $\gamma . \phi(x)\sim \phi(\gamma . y)$ and $\gamma.\phi^{-1}(z)\sim\phi^{-1}(\gamma.w)$. 

Our proof of Theorem \ref{thm:cofeq} parallels Bowen's construction from \cite{BowenBernshiftflex} insofar as the orbit equivalence that we define between the wreath product actions of $B\wr \Gamma$ and $C \wr \Gamma$ is obtained as a ``coset-by-coset lift'' of an orbit equivalence between wreath product actions of $B\wr \Lambda$ and $C\wr\Lambda$ for an appropriate subgroup $\Lambda$ of $\Gamma$. One way in which our setting differs from Bowen's is that it is critical for us that the input orbit equivalence between the $B\wr \Lambda$ and $C\wr\Lambda$ actions is bi-cofinitely equivariant. To this end, we make essential use of Golodets and Sinel'shchikov's cocycle generalization \cite{Golodets1994} of Dye's Theorem \cite{Dye1959} to obtain bi-cofinitely equivariant orbit equivalence maps between shift spaces in the amenable setting.

By working in the setting of groupoids, we can strengthen the first statement in Theorem \ref{thm:cofeq} in two directions: we need only assume that $\Gamma$ admits an essential measurable splitting, and we can replace the assumption on amenability of the wreath kernels with an assumption that we believe to be near optimal.

\begin{thm}[see Theorem \ref{thm:MELampGroups}]\label{thm:antirigid}
Let $\Gamma$ be a countable group that admits an essential measurable splitting.
Let $B$ and $C$ be nontrivial countable groups.
If $B\times \mathbf{Z}$ and $C\times \mathbf{Z}$ are measure equivalent, then $B\wr\Gamma$ and $C\wr\Gamma$ are orbit equivalent. 
In particular, for all such $\Gamma$ and nontrivial $B$ and $C$ we have: 
\begin{enumerate}
    \item if $B$ is measure equivalent to $C$, then $B\wr\Gamma$ is orbit equivalent to $C\wr\Gamma$;
    \item $B\wr\Gamma$ is orbit equivalent to $(B\times\mathbf{Z})\wr\Gamma$.
\end{enumerate}
\end{thm}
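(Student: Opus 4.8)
The plan is to derive the two itemized consequences from the displayed implication and then to prove that implication by a measured-groupoid version of the coset-by-coset lift described in the introduction. For item (1), if $B$ is measure equivalent to $C$ then the product of an ME coupling $B$--$C$ with the tautological self-coupling of $\mathbf{Z}$ is an ME coupling of $B\times\mathbf{Z}$ with $C\times\mathbf{Z}$ (a product of finite fundamental domains is a finite fundamental domain), so the hypothesis of the main implication is met. For item (2), I apply the main implication with $C$ replaced by $B\times\mathbf{Z}$; the hypothesis to verify is then that $B\times\mathbf{Z}$ is measure equivalent to $(B\times\mathbf{Z})\times\mathbf{Z}=B\times\mathbf{Z}^2$. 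Since $\mathbf{Z}$ and $\mathbf{Z}^2$ are infinite amenable they are orbit equivalent by \cite{OW87}, and taking the product of such a coupling with the self-coupling of $B$ yields the required coupling. Thus both consequences reduce to the main implication.

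For the implication itself, fix free ergodic p.m.p.\ actions of $B$ and of $C$, form the associated wreath product actions of $B\wr\Gamma$ on $Z_0^\Gamma$ and of $C\wr\Gamma$ on $Z_1^\Gamma$, and write $\mathcal{R}_B$ and $\mathcal{R}_C$ for their orbit equivalence relations; since the actions are essentially free and ergodic it suffices to produce an isomorphism $\mathcal{R}_B\cong\mathcal{R}_C$ preserving the probability measures (an ergodic, index-one coupling), which gives orbit equivalence of the groups. The naive strategy---match the alphabets coordinate by coordinate using the given coupling of the base groups---fails at once: unless $B$ and $C$ are themselves orbit equivalent, the coordinatewise coupling carries index $(\mathrm{ind})^{\abs{\Gamma}}$, which is meaningless for infinite $\Gamma$. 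The essential measurable splitting of $\Gamma$ is what repairs this, presenting the $\Gamma$-groupoid as a nondegenerate measured free product (amalgamated, or HNN) of subgroupoids and thereby furnishing the free-product structure, with infinitely many cosets, over which the lift is performed; this is the measured-groupoid analogue of the infinite amenable free factor appearing in Theorem~\ref{thm:cofeq}.

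Concretely, I would proceed in three steps. First, I would use the ME coupling of $B\times\mathbf{Z}$ with $C\times\mathbf{Z}$ to build a \emph{bi-cofinitely equivariant} orbit equivalence between the shift spaces $Z_0^\Gamma$ and $Z_1^\Gamma$ carrying the wreath kernels $\bigoplus_\Gamma B$ and $\bigoplus_\Gamma C$; here the extra $\mathbf{Z}$ factor supplies the hyperfinite direction that lets me invoke Golodets and Sinel'shchikov's cocycle version of Dye's theorem \cite{Golodets1994}, and it is precisely this substitution of hyperfiniteness-from-$\mathbf{Z}$ for amenability-of-the-base that removes the amenability hypothesis on the wreath kernels present in Theorem~\ref{thm:cofeq}. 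Second, I would use the essential measurable splitting to decompose $\mathcal{R}_B$ and $\mathcal{R}_C$ as measured free products of subgroupoids, indexed by the cosets of the splitting. Third, I would lift the map from the first step coset by coset and glue the pieces across the amalgamation. The main obstacle is this gluing: bi-cofinite equivariance is exactly the compatibility condition ensuring that the locally defined orbit equivalences agree on overlaps and assemble into a single measure-preserving isomorphism of the full wreath-product groupoids, and checking that they do---while keeping the global coupling index equal to $1$, so that the result is an honest orbit equivalence rather than a mere measure equivalence---is the technical heart of the argument, with the $\mathbf{Z}$ factor doing double duty as the source of both the needed hyperfiniteness and the index normalization.
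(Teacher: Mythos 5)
Your derivations of items (1) and (2) from the displayed implication are correct, and you have correctly identified several ingredients of the paper's argument: the failure of the naive coordinatewise coupling, the role of the essential splitting in furnishing a free-product structure for a coset-by-coset lift, and bi-cofinite equivariance as the gluing condition. But the engine you propose for Step 1 does not work, and the overall architecture is inverted. You claim that the $\mathbf{Z}$ factor in $B\times\mathbf{Z}$ ``supplies the hyperfinite direction'' allowing you to invoke Golodets--Sinel'shchikov to produce a bi-cofinitely equivariant orbit equivalence between the full shift spaces $Z_0^\Gamma$ and $Z_1^\Gamma$. The cocycle version of Dye's theorem (\cite[Lemma 1.12]{Golodets1994}, used via Theorem \ref{FSZ}) requires the ambient orbit equivalence relation to be hyperfinite; if $B$ and $C$ are nonamenable, the relations generated by $\bigoplus_\Gamma(B\times\mathbf{Z})$ and $\bigoplus_\Gamma(C\times\mathbf{Z})$ on the shift spaces are nowhere amenable, and a $\mathbf{Z}$ direct factor in the base does not change that. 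Moreover, if Step 1 really produced a bi-cofinitely equivariant orbit equivalence over all of $\Gamma$, you would already be done (Proposition \ref{prop:cofEqToOEGroupoid} and its group analogue), so Steps 2 and 3 would be vacuous: the coset lift is the device for \emph{promoting} such a map from a subgroupoid to the whole groupoid, not a postprocessing step.

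The paper's proof separates two mechanisms that your proposal merges. First, the $\mathbf{Z}$ factor plays only an index-normalization role: $B\times\mathbf{Z}$ and $C\times\mathbf{Z}$, being measure equivalent with a common $\mathbf{Z}$ direct factor, are in fact \emph{orbit} equivalent, and Proposition \ref{OElampsWreath} (the trivial coordinatewise lift, needing no splitting of $\Gamma$) then gives that $(B\times\mathbf{Z})\wr\Gamma$ and $(C\times\mathbf{Z})\wr\Gamma$ are orbit equivalent. Second, all the real work is concentrated in showing that $B\wr\Gamma$ and $(B\times A)\wr\Gamma$ are orbit equivalent for amenable $A$ (Theorem \ref{thm:MELampGroups}(1)). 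There, the amenability feeding Golodets--Sinel'shchikov comes not from the base but from $\Gamma$: Lemma \ref{lem:freeDecomposableImpliesTreeableFreeFactor} extracts an aperiodic \emph{amenable} free factor $\R$ of the action groupoid of $\Gamma$ from the essential splitting, and the bi-cofinite map is built over $\R$ (Theorem \ref{cofAmenableR}) only between shift spaces with \emph{amenable} alphabet relations --- namely $\mathcal{C}_n$ versus $\mathcal{C}_n\oplus\mathcal{A}$, where $\mathcal{C}_n$ is a finite full relation split off from the orbit relation of $B$ via the decomposition $\mathcal{B}\cong\mathcal{B}_U\oplus\mathcal{C}_n$. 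The nonamenable part $\mathcal{B}_U$ of the alphabet is never touched; it is carried along by the identity. Your proposal is missing both the reduction to this amenable-factor-absorption statement and the alphabet-splitting trick, and without them the appeal to \cite{Golodets1994} has no hyperfinite relation to apply to.
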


Every group that is measure equivalent to a free product of two nontrivial groups admits an essential measurable splitting (see Proposition \ref{prop:MEtoFreeProductsAdmitEssentialSplittings}), including for instance fundamental groups of genus $g$ surfaces for $g\geq 1$ and infinite amenable groups. For a nonamenable group $\Gamma$, it is known that if $\Gamma$ admits an essential measurable splitting, then the first $\ell^2$-Betti number of $\Gamma$ is positive \cite{AlvarezGaboriau2012}. 
An important open problem (first raised by Alvarez and Gaboriau in \cite{AlvarezGaboriau2012}) is whether the converse is true. 

The proof of Theorem \ref{thm:antirigid} relies crucially on the flexibility inherent in the groupoid framework. 
This flexibility is perhaps most evident in the critical observation, established in Lemma \ref{lem:freeDecomposableImpliesTreeableFreeFactor}, that every measure preserving equivalence relation admitting an essential splitting must contain an aperiodic amenable free factor; the analogous statement in the group setting is simply false.

The flexibility afforded by the groupoid setting is also important for constructing measure equivalence couplings when the wreath complements differ.

\begin{thm}[see Theorem \ref{thm:METopGroups}]\label{thm:fixedWreathKernel}
    Let $\Gamma _1$, $\Gamma _2$, and $B$ be countable groups, and assume that $\Gamma_1$ and $\Gamma_2$ are measure equivalent. 
    Then the groups $(\bigoplus _{\mathbf{N}}B)\wr \Gamma_1$ and $(\bigoplus _{\mathbf{N}}B)\wr\Gamma_2$ are measure equivalent.

It follows that if $A$ is an infinite amenable group, then the groups $A\wr \Gamma_1$ and $A\wr\Gamma_2$ are measure equivalent.
\end{thm}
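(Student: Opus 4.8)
The plan is to establish the measure equivalence by exhibiting a \emph{stable orbit equivalence} between free ergodic p.m.p.\ actions of the two wreath products, and then invoke the standard dictionary (Furman) that a stable orbit equivalence between free actions of two groups witnesses a measure equivalence of the groups. Write $K=\bigoplus_{\mathbf N}B$; the single property of $K$ that drives everything is its self-absorption: $K\cong K^n\cong\bigoplus_{\mathbf N}K$. Concretely, I would fix a free ergodic p.m.p.\ action $B\acts(Z_0,\zeta_0)$ and let $K\acts(Z,\zeta):=(Z_0^{\mathbf N},\zeta_0^{\otimes\mathbf N})$ be the product action; then the reindexing $\mathbf N\cong\mathbf N\times S$ furnishes, for every countable set $S$, canonical measure-preserving isomorphisms $Z\cong Z^{S}$ of $K$-spaces, each carrying the orbit relation $\mathcal R_K^Z$ onto the product relation. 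This is the self-absorption I will exploit.

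First I would realize the hypothesis that $\Gamma_1$ and $\Gamma_2$ are measure equivalent as a stable orbit equivalence: fix free ergodic p.m.p.\ actions $\Gamma_i\acts(Y_i,\nu_i)$, positive-measure sets $U_i\subseteq Y_i$, and a measure-scaling isomorphism $\theta\colon U_1\to U_2$ carrying $\mathcal R_{\Gamma_1}|_{U_1}$ onto $\mathcal R_{\Gamma_2}|_{U_2}$. Next I would build free p.m.p.\ actions of the wreath products by placing the lamp configuration on the orbits of the base: on $\mathcal W_i:=Y_i\times Z^{\Gamma_i}$ let $\Gamma_i$ act diagonally (on $Y_i$ by the given action, on $Z^{\Gamma_i}$ by the shift) and let $\bigoplus_{\Gamma_i}K$ act coordinatewise on $Z^{\Gamma_i}$; this is a free p.m.p.\ action of $K\wr\Gamma_i$. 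Viewing the fiber as a configuration $\psi\colon\Gamma_i y\to Z$ on the orbit of $y$, the orbit relation $\tilde{\mathcal R}_i$ is the wreath relation over $\mathcal R_{\Gamma_i}$: the pairs $(y,\psi)$ and $(y',\psi')$ are equivalent iff $y'\in\Gamma_i y$ and, writing $\gamma$ for a group element with $y'=\gamma y$ and using it to identify the two orbits, one has $\psi(p)\,\mathcal R_K^Z\,\psi'(p)$ for all $p$ with equality for all but finitely many $p$. The \emph{fiber condition} here is intrinsic to the orbit and to $\mathcal R_K^Z$; only the allowed matching of coordinates is supplied by the group element.

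The heart of the argument, and the step I expect to be the main obstacle, is to promote $\theta$ to a stable orbit equivalence $\tilde\theta$ between $\tilde{\mathcal R}_1$ and $\tilde{\mathcal R}_2$. The difficulty is that $\theta$ only matches the traces $\mathcal O_1\cap U_1$ and $\mathcal O_2\cap U_2$ of corresponding orbits, whereas the configurations live over the full orbits $\mathcal O_1\cong\Gamma_1$ and $\mathcal O_2\cong\Gamma_2$, whose ``griddings'' differ and are related only through the measure equivalence cocycle, so there is no canonical bijection $\mathcal O_1\to\mathcal O_2$ along which to transport configurations. This is exactly where the self-absorption of $K$ is indispensable, and is the reason the theorem is stated for $\bigoplus_{\mathbf N}B$ rather than an arbitrary lamp group. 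Concretely, I would use the $U_1$-induced partition of each orbit to re-bundle a full configuration $\psi\in Z^{\mathcal O_1}$ as a configuration over $\mathcal O_1\cap U_1$ valued in $Z^{(\mathrm{cell})}$, apply the canonical isomorphisms $Z^{(\mathrm{cell})}\cong Z$ above to land in $Z^{\mathcal O_1\cap U_1}$, transport across $\theta$ to $Z^{\mathcal O_2\cap U_2}$, and decode symmetrically into $Z^{\mathcal O_2}$. Since every isomorphism used is built coordinatewise from isomorphisms of $K$-spaces, the resulting measure-preserving map $Z^{\mathcal O_1}\to Z^{\mathcal O_2}$ intertwines the fiber conditions (coordinatewise same $K$-orbit, cofinite equality); combined with $\theta$ on the base it yields $\tilde\theta$. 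The remaining work---checking that the choices are measurable along the orbit, that aperiodicity and ergodicity make each $\mathcal O_i\cap U_i$ countably infinite so the self-absorption applies almost everywhere, and that $\tilde\theta$ scales the measure by the coupling index---is routine but is where the care lies. This produces the stable orbit equivalence, hence the measure equivalence of $K\wr\Gamma_1$ and $K\wr\Gamma_2$.

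Finally, for the amenable consequence I would argue as follows. If $A$ is infinite amenable then so is $\bigoplus_{\mathbf N}A$, and by Ornstein--Weiss both admit free ergodic p.m.p.\ actions on a common probability space $(Z,\zeta)$ generating one and the same hyperfinite orbit relation $\mathcal R$. Since the wreath orbit relation on $Z^\Gamma$ is determined solely by the base relation on $Z$---namely $f\sim f'$ iff some shift $\gamma$ makes $f(\gamma^{-1}\delta)$ and $f'(\delta)$ $\mathcal R$-equivalent for all $\delta$ and equal for cofinitely many $\delta$---the actions $A\wr\Gamma_i\acts Z^{\Gamma_i}$ and $(\bigoplus_{\mathbf N}A)\wr\Gamma_i\acts Z^{\Gamma_i}$ generate the identical orbit relation and are therefore orbit equivalent. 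Thus $A\wr\Gamma_i$ is orbit equivalent to $(\bigoplus_{\mathbf N}A)\wr\Gamma_i$ for $i=1,2$, and combining this with the main assertion applied to $B:=A$, which gives that $(\bigoplus_{\mathbf N}A)\wr\Gamma_1$ and $(\bigoplus_{\mathbf N}A)\wr\Gamma_2$ are measure equivalent, yields that $A\wr\Gamma_1$ and $A\wr\Gamma_2$ are measure equivalent.
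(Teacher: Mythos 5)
Your proposal is correct and follows essentially the same route as the paper: realize the measure equivalence of $\Gamma_1$ and $\Gamma_2$ as a stable orbit equivalence, and use the self-absorption $\bigoplus_{\mathbf N}B\cong\bigoplus_{\mathbf N}\bigoplus_{\mathbf N}B$ to compress lamp configurations over a full orbit onto its trace in the positive-measure set, which is exactly the content of the paper's Lemma \ref{lem:wreathProductRestriction} showing $(\K\wr\G)_{p^{-1}(D)}\cong\K\wr(\G_D)$. The only difference is cosmetic: the paper packages your ``re-bundling along cells'' as an explicit $\G$-equivariant groupoid isomorphism $\boldsymbol{\bigoplus}_{\G}\K\to\boldsymbol{\bigoplus}_{\G D}\K$ built from partial bisections, which is precisely the measurable-choice bookkeeping you flag as the remaining routine work.
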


\subsection{Rigidity} There is a rich collection of rigidity results related to wreath products \cite{ChIo10,CIOS21,ChifanPopaSize2012,DrVa21,Io07,Io11,IPV13,IsMa19,Po06,Po06a,Popa2007,Po08,Sako09}.
Recently, Drimbe and Vaes \cite{DrVa21} proved orbit equivalence superrigidity for the wreath product actions of permutational wreath products $B\wr_I \Gamma$ with certain conditions on the action $\Gamma\acts I$.
We take a moment to single out the rigidity results found in Chifan, Popa, and Sizemore \cite{ChifanPopaSize2012} which generalize results of Sako \cite{Sako09}. 
Under general assumptions on the groups $B$, $C$, $\Gamma$, and $\Lambda$ (e.g., $\Gamma$ and $\Lambda$ either both having property (T) or both being a nonamenable product of infinite groups, and $B$ and $C$ both being either amenable or being icc with property (T)), they show that if $B\wr\Gamma$ and $C\wr\Lambda$ are measure equivalent then $\Gamma$ and $\Lambda$ are measure equivalent, and in the case where all the groups have property (T) then $\bigoplus_{\Gamma}B$ and $\bigoplus_{\Lambda}C$ must be orbit equivalent.
In \S\ref{section:rigidity} we study orbit equivalence rigidity for a more limited class of actions (e.g., wreath product actions), but only impose conditions on the group $\Gamma$.

We note that, as a consequence of results of Popa-Vaes \cite{PV14}, Popa's cocycle superrigidity theorem \cite{Popa2007}, and the theory of sofic entropy \cite{Bowen2010}, the group von Neumann algebras $L(\mathbf{C}_2\wr\Gamma)$ and $L(\mathbf{C}_3\wr\Gamma)$ are not isomorphic whenever $\Gamma$ is a sofic icc hyperbolic group with property (T). 
While it remains an open question whether there exists a group $\Gamma$ such that $\mathbf{C}_2\wr \Gamma$ is not measure equivalent to $\mathbf{C}_3\wr \Gamma$, 
in direct contrast to Theorem \ref{thm:cofeq} (and the philosophy behind Theorems \ref{thm:antirigid} and \ref{thm:fixedWreathKernel}),
 we obtain the following.

\begin{thm}[see Corollary \ref{cor:Rigid}]\label{thm:rigid}
Let $\Gamma$ be a countably infinite group having no nontrivial finite normal subgroups, and whose Bernoulli shift action is $\mathcal{G}_{\mathrm{ctble}}$-cocycle superrigid. 
Let $\Lambda$ be an arbitrary infinite countable group. 

Let $B$ and $C$ be two countable groups acting in a free p.m.p.\ manner on $(X,\mu)$ and $(Y,\nu)$ respectively, and let
$\alpha: B\wr \Gamma\acts (X^\Gamma, \mu^\Gamma)$ and $\beta:C\wr\Lambda \acts (Y^\Lambda, \nu^\Lambda )$ be the associated wreath product actions. Assume $\alpha$ and $\beta$ are stably orbit equivalent. 

Then $\Gamma$ and $\Lambda$ are isomorphic, and $\bigoplus_\Gamma B$ and $\bigoplus_\Lambda C$ are orbit equivalent. Moreover, if $\Gamma$ is sofic and the actions of both $B$ and $C$ are ergodic, then $B$ and $C$ have the same cardinality.
\end{thm}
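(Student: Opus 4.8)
The plan is to recover the wreath decomposition of $C\wr\Lambda$ from the orbit equivalence relation by exploiting that the wreath complement $\Gamma\leq B\wr\Gamma$ acts on $X^\Gamma$ exactly as the Bernoulli shift with base $(X,\mu)$, which is $\G_{\mathrm{ctble}}$-cocycle superrigid. First I would convert the stable orbit equivalence into a cocycle: after a standard amplification and restriction, the SOE is implemented by a p.m.p.\ isomorphism $\theta$ carrying a corner of the orbit equivalence relation $\R_\alpha$ of $\alpha$ onto a corner of $\R_\beta$, and this produces a measurable rearrangement cocycle $\omega\colon (B\wr\Gamma)\times X^\Gamma\to C\wr\Lambda$ determined by $\theta(g\cdot x)=\omega(g,x)\cdot\theta(x)$. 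Restricting $\omega$ to $\Gamma$ then gives a cocycle for the Bernoulli shift $\Gamma\acts X^\Gamma$.

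Next I would untwist this cocycle. By $\G_{\mathrm{ctble}}$-cocycle superrigidity of the $\Gamma$-Bernoulli shift, $\omega|_\Gamma$ is cohomologous to a homomorphism $\rho\colon\Gamma\to C\wr\Lambda$; absorbing the transfer function into $\theta$, I may assume $\omega(\gamma,x)=\rho(\gamma)$ is independent of $x$, and freeness of the actions forces $\rho$ to be injective. The heart of the first conclusion is then to analyze $\rho$ together with the normal subrelation $\s$ generated by the wreath kernel. Composing $\rho$ with the quotient map $q\colon C\wr\Lambda\twoheadrightarrow\Lambda$, and using that $\Gamma$ has no nontrivial finite normal subgroups to rule out a finite kernel, I would show $q\circ\rho$ is an isomorphism $\Gamma\cong\Lambda$, and that $\rho$ matches the wreath kernels in the sense that $\theta$ carries the subrelation $\s$ generated by $\bigoplus_\Gamma B$ onto the one generated by $\bigoplus_\Lambda C$; this yields the orbit equivalence of $\bigoplus_\Gamma B$ and $\bigoplus_\Lambda C$. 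This packaging of cocycle superrigidity into structural conclusions is exactly where the results of Popa--Vaes enter.

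For the final statement I would upgrade the orbit equivalence to a measure conjugacy and invoke sofic entropy. Under the identification $\Gamma\cong\Lambda$, the untwisting above induces a measure conjugacy between the orbit-space Bernoulli factors $\Gamma\acts(X/B)^\Gamma$ and $\Lambda\acts(Y/C)^\Lambda$ that lifts to an isomorphism of the full Bernoulli shifts $\Gamma\acts X^\Gamma$ and $\Lambda\acts Y^\Lambda$ respecting the fibers coming from the wreath kernels. Since $\Gamma$ is sofic, Bowen's sofic entropy is a measure conjugacy invariant equal to the base Shannon entropy for Bernoulli shifts, and the fiber (relative) entropy of $X^\Gamma$ over its orbit-space factor is exactly $\log\abs{B}$ when $\abs{B}<\infty$ and is $+\infty$ otherwise, where ergodicity of $B\acts(X,\mu)$ is used to pin down this per-coordinate contribution and to guarantee the orbit-space factor is a standard factor. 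Equating this invariant on the two sides gives $\log\abs{B}=\log\abs{C}$, that is, $\abs{B}=\abs{C}$.

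The main obstacle is the structural step: passing from the untwisted homomorphism $\rho\colon\Gamma\to C\wr\Lambda$ to the isomorphism $\Gamma\cong\Lambda$ and the matching of wreath kernels. Controlling the image of $\rho$ requires understanding homomorphisms of a rigid group into a wreath product $C\wr\Lambda$ with $C$ and $\Lambda$ arbitrary, and the asymmetry that only the $\Gamma$-side is cocycle superrigid (while $\Lambda$ is completely general) means the analysis must be one-directional; in particular the normal subrelation $\s$ must be characterized intrinsically so that $\theta$ is forced to respect it. A secondary difficulty is verifying that cocycle superrigidity genuinely upgrades the stable orbit equivalence to a measure conjugacy of the relevant Bernoulli factors, so that sofic entropy becomes applicable at all, since entropy is not itself an orbit equivalence invariant.
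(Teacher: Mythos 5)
Your high-level strategy is the same as the paper's: untwist the SOE cocycle restricted to $\Gamma$ via $\mathcal{G}_{\mathrm{ctble}}$-cocycle superrigidity to get an injective homomorphism $\rho\colon\Gamma\to C\wr\Lambda$, show that $\rho$ essentially lands in $\Lambda$, match the wreath kernels, and finish with sofic entropy. But the structural step you identify as "the heart" contains a real error and two missing ingredients. First, you claim that the hypothesis that $\Gamma$ has no nontrivial finite normal subgroups rules out a kernel for $q\circ\rho$. It does not: $\ker(q\circ\rho)$ is a normal subgroup of $\Gamma$ mapping into $\bigoplus_\Lambda C$, and there is no a priori reason it is finite. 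The paper kills it in two separate stages, neither of which uses that hypothesis. Stage one shows $q\circ\rho(\Gamma)$ is infinite: $\mathbf{Z}$-cocycle superrigidity forces the Bernoulli shift (and all its finite-index restrictions) to be strongly ergodic (Lemma \ref{lem:cocSuperrigidImpliesStronglyErgodic}), while no subgroup of $\bigoplus_\Lambda C$ acts strongly ergodically on a positive-measure piece of $Y^\Lambda$, so $\ker(q\circ\rho)$ has infinite index. Stage two shows the kernel is trivial using \emph{total ergodicity} of the Bernoulli shift: a nontrivial element of $\rho(\ker(q\circ\rho))$ has finite support, and conjugating by infinitely many elements with disjointly translated supports produces an infinite subgroup of $\ker(q\circ\rho)$ whose image fails to act ergodically (it preserves the restriction map to a finite window), contradicting ergodicity of every infinite subgroup. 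The no-finite-normal-subgroups hypothesis is instead used to make $\rho$ itself injective (Lemma \ref{lemma:rigidity_prep}) and to get freeness of totally ergodic actions.

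Second, even once $q\circ\rho$ is injective, $\rho(\Gamma)$ is only a complement to the wreath kernel inside $C\wr\Lambda$, not a subgroup of $\Lambda$; the paper invokes Houghton's theorem (Lemma \ref{prop:One_ended_Implies_Conj}) to conjugate it into $\Lambda$, and to get the conjugator inside the \emph{restricted} sum $\bigoplus_\Lambda C$ (which is what preserves $\nu^\Lambda$ and the Bernoulli structure) it needs that $\Gamma$ has at most one end — itself deduced from cocycle superrigidity via a cocycle built from an action on a tree (Corollary \ref{cor:LcocSuperrigidImpliesOneEnd}). You do not mention either input. Third, the matching of wreath kernels, which you correctly flag as the main obstacle, is exactly where the paper does nontrivial work: Lemma \ref{lem:EquivOEWreathProdActions} shows by a quantitative weak-convergence argument (using Neumann's Lemma and the fact that indicator functions of the correlation sets $D_{\gamma_n}$ converge weakly to a constant $<1$) that any $\Gamma$-equivariant map carrying orbits into orbits must carry $\bigoplus_\Gamma B$-orbits into $\bigoplus_\Lambda C$-orbits; this same lemma also yields surjectivity of $\sigma$ and hence $\Gamma\cong\Lambda$. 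Your proposal offers no mechanism here. The entropy endgame is sound and close to the paper's, though simpler than you make it: once $\theta$ is a $\Gamma$-equivariant measure isomorphism matching the kernels, the full Bernoulli shifts $\Gamma\acts X^\Gamma$ and $\Gamma\acts Y^\Gamma$ are conjugate, and freeness plus ergodicity of the base actions give $h(X,\mu)=\log|B|$ and $h(Y,\nu)=\log|C|$ directly, with no need for relative entropy over the orbit-space factor.
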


As an application of Theorem \ref{thm:rigid} we analyze the structure of the automorphism groups of certain wreath product equivalence relations. 

\medskip
\noindent {\bf Acknowledgements:} We thank Adrian Ioana for discussions which originally motivated looking into Theorem \ref{thm:cofeq}, and for his comments on an earlier draft of this article. 
RTD was partially supported by NSF grant DMS 2246684. KW was partially supported by the Dioscuri program initiated by the Max Planck Society, jointly managed by the National Science Centre (Poland), and mutually funded by the Polish Ministry of Science and Higher Education and the German Federal Ministry of Education and Research.

\section{Preliminaries}

\subsection{Orbit equivalence and measure equivalence}\label{section:OEME} Let $\Gamma\acts (X,\mu)$ and $\Lambda \acts(Y,\nu)$ be p.m.p.\ ergodic actions of discrete countable groups. The actions are {\bf stably orbit equivalent} if there exist positive measure subsets $X_0\subseteq X$ and $Y_0\subseteq Y$ as well as a measure class preserving isomorphism $\phi: (X_0,\mu_{X_0})\rightarrow (Y_0,\nu_{Y_0})$ that satisfies $\phi(\Gamma. x\cap X_0)=\Lambda .\phi(x)\cap Y_0$ for almost every $x\in X_0$ where $\mu_{X_0}$ and $\nu_{Y_0}$ are the normalized restrictions. The map $\phi$ is a {\bf stable orbit equivalence (from $X$ to $Y$)} of {\bf index} $\frac{\nu(Y_0)}{\mu(X_0)}$. 
The actions are called {\bf orbit equivalent} if the sets $X_0$ and $Y_0$ can be taken to be conull.

Two groups $\Gamma$ and $\Lambda$ admit an ergodic measure equivalence coupling of index $\alpha$ if and only if they admit free ergodic p.m.p.\ actions $\Gamma\acts (X,\mu)$ and $\Lambda\acts (Y,\nu)$ that are stably orbit equivalent of index $\alpha$; $\Gamma$ and $\Lambda$ admit an ergodic measure equivalence coupling of coupling index $1$ if and only if they admit orbit equivalent free ergodic p.m.p.\ actions \cite{Furman1999OERigidity}.

\subsection{Wreath product actions}

Let $\Gamma$ be a countable group. Given a p.m.p.\ action $B\acts (Z,\eta)$ of a countable group $B$ on a standard probability space $(Z,\eta)$, define the {associated \bf wreath product action} of $B \wr \Gamma$ to be the p.m.p.\ action of $B\wr \Gamma$ on $(Z^\Gamma,\eta^\Gamma)$ determined by $(b. z)_{\delta} \coloneqq b_{\delta}.z_{\delta}$ and $(\gamma .z )
_{\delta}\coloneqq z_{\gamma ^{-1}\delta}$ for $z\in Z^{\Gamma}$, $b\in \bigoplus _{\Gamma}B$, and $\gamma ,\delta \in \Gamma$.

\begin{prop}\label{OElampsWreath}
Let $\Gamma$, $B$, and $C$ be countable discrete groups and assume the p.m.p.\ actions $B\acts(Y,\nu)$ and $C\acts(Z,\eta)$ are orbit equivalent. Then the associated wreath product actions $B\wr\Gamma\acts(Y^\Gamma,\nu^\Gamma)$ and $C\wr\Gamma\acts(Z^\Gamma,\eta^\Gamma)$ are orbit equivalent. 
\end{prop}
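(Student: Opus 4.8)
The plan is to build the orbit equivalence of the wreath product actions directly from the given base orbit equivalence by applying it coordinatewise. Let $\phi\colon (Y,\nu)\to(Z,\eta)$ be a measure preserving isomorphism (defined off a null set) witnessing the orbit equivalence $B\acts(Y,\nu)$ with $C\acts(Z,\eta)$, so that $\phi(B. y)=C.\phi(y)$ for a.e.\ $y\in Y$. I would define $\Phi\coloneqq\phi^\Gamma\colon Y^\Gamma\to Z^\Gamma$ by $\Phi(z)_\delta=\phi(z_\delta)$. Since $\phi$ preserves the base probability measures, $\Phi$ is a measure preserving bijection from $(Y^\Gamma,\nu^\Gamma)$ onto $(Z^\Gamma,\eta^\Gamma)$ (again off a null set), so it will suffice to check that $\Phi$ carries $B\wr\Gamma$-orbits onto $C\wr\Gamma$-orbits almost everywhere.

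For the forward inclusion I would take $z\in Y^\Gamma$ and an element $w=b\gamma. z$ in its $B\wr\Gamma$-orbit, with $b\in\bigoplus_\Gamma B$ and $\gamma\in\Gamma$, so that $w_\delta=b_\delta. z_{\gamma^{-1}\delta}$ by the definition of the wreath product action. Applying $\phi$ in each coordinate gives $\Phi(w)_\delta=\phi(b_\delta. z_{\gamma^{-1}\delta})$. The orbit equivalence property of $\phi$ then lets me rewrite $\phi(b_\delta. z_{\gamma^{-1}\delta})=c_\delta.\phi(z_{\gamma^{-1}\delta})=c_\delta.\Phi(z)_{\gamma^{-1}\delta}$ for some $c_\delta\in C$, and crucially I may take $c_\delta=e_C$ whenever $b_\delta=e_B$. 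Hence $c=(c_\delta)_\delta$ is finitely supported, $c\in\bigoplus_\Gamma C$, and $\Phi(w)=c\gamma.\Phi(z)$ lies in the $C\wr\Gamma$-orbit of $\Phi(z)$. Running the identical argument with $\phi^{-1}$ (itself an orbit equivalence of $C\acts Z$ with $B\acts Y$) yields the reverse inclusion, so $\Phi((B\wr\Gamma). z)=(C\wr\Gamma).\Phi(z)$ and $\Phi$ is the desired orbit equivalence.

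The one point requiring care, and the main obstacle, is ensuring that these local replacements land in the restricted (finitely supported) wreath kernel and that everything holds on a genuinely conull set. The finite support of $c$ is exactly what distinguishes $\bigoplus_\Gamma C$ from $\prod_\Gamma C$, and it is guaranteed precisely by the observation that an identity coordinate of $b$ may be matched with the identity of $C$; this is where the restricted (rather than full) wreath product enters. To make the coordinatewise use of $\phi(B. y)=C.\phi(y)$ legitimate in all coordinates simultaneously, I would let $N\subseteq Y$ be the null set on which this identity fails and note that, since $\Gamma$ is countable, the set $\{z\in Y^\Gamma: z_\delta\in N \text{ for some }\delta\}$ is a countable union of $\nu^\Gamma$-null sets, hence null; on its conull complement the argument above applies verbatim. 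No measurable selection of the elements $c_\delta$ is needed, since orbit equivalence only asks for the set-theoretic equality of orbits under the already measurable map $\Phi$.
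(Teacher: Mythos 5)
Your proposal is correct and is exactly the argument the paper has in mind: the paper's proof consists of the single observation that $\phi^\Gamma$ witnesses the orbit equivalence, and your write-up simply supplies the verification (finitely supported $c$ via matching identity coordinates, and the conull set on which the coordinatewise identity holds). Nothing further is needed.
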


\begin{proof}
Let $\phi:(Y,\nu)\rightarrow(Z,\eta)$ be a measure space isomorphism that witnesses the orbit equivalence between $B\acts(Y,\nu)$ and $C\acts(Z,\eta)$. The map $\phi^\Gamma:(Y^\Gamma,\nu^\Gamma)\rightarrow (Z^\Gamma,\eta^\Gamma)$ witnesses the orbit equivalence between the wreath product actions. 
\end{proof}

\begin{remark}
In \cite{DelabieKoivistoLeMaitreTessera2022}, Delabie--Koivisto--Le Maitre--Tessera prove a version of the above proposition where the group $\Gamma$ is replaced by a principal groupoid.
We explore this further in $\S\ref{section:wreathME}$. 
\end{remark}

\begin{thm}[Dye, Ornstein-Weiss]
Let $A_0\acts(Y,\nu)$ and $A_1\acts(Z,\eta)$ be free ergodic p.m.p.\ actions of countable amenable groups $A_0$ and $A_1$. 
Then these actions are orbit equivalent if and only if $A_0$ and $A_1$ have the same cardinality.
\end{thm}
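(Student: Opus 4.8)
The plan is to split the equivalence into the easy ``only if'' direction and the harder ``if'' direction, and within the latter to separate the finite and infinite cases, since the phrase ``same cardinality'' carries quite different content in each. For the ``only if'' direction I would use only the definition of orbit equivalence: an orbit equivalence is a measure-class-preserving Borel isomorphism $\phi$ between conull subsets that carries $A_0$-orbits bijectively onto $A_1$-orbits. Because both actions are free, almost every $A_0$-orbit has cardinality $\abs{A_0}$ and almost every $A_1$-orbit has cardinality $\abs{A_1}$; matching orbits under $\phi$ forces these to agree, so $\abs{A_0}=\abs{A_1}$.

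For the ``if'' direction in the finite case, the first observation I would record is that a free ergodic p.m.p.\ action of a finite group $A$ of order $n$ can only live on a purely atomic space consisting of exactly $n$ atoms of equal mass, i.e.\ a single orbit. Indeed, after choosing a measurable fundamental domain $D$ for the orbit relation (available since the relation is smooth), every measurable function on $D$ extends, by constancy along orbits, to an $A$-invariant measurable function on the whole space, so ergodicity forces $D$ to be a single atom. Consequently, when $\abs{A_0}=\abs{A_1}=n<\infty$ both actions are just the transitive action on $n$ equal-mass points, and any bijection of these point sets is an orbit equivalence; and when one group is finite while the other is infinite, the orbit cardinalities (equivalently, the atomicity of the space) disagree, so no orbit equivalence can exist, in agreement with the ``only if'' direction.

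The infinite case is where the real work lies, and I expect it to be the main obstacle. Here both groups are countably infinite, so $\abs{A_0}=\abs{A_1}=\aleph_0$ holds automatically and I must prove that \emph{any} two free ergodic p.m.p.\ actions of infinite amenable groups are orbit equivalent. First I would note that freeness and measure-preservation of an infinite group rule out atoms, so each orbit relation is ergodic, measure preserving, and of type $\mathrm{II}_1$. The strategy is then the classical two-step argument: invoke the Ornstein--Weiss theorem to conclude that the orbit equivalence relation of a free p.m.p.\ action of an infinite amenable group is hyperfinite, and then invoke Dye's uniqueness theorem, that the ergodic hyperfinite type $\mathrm{II}_1$ equivalence relation is unique up to orbit equivalence. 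Together these identify the two orbit relations, so the actions are orbit equivalent. The genuinely hard analytic input is the hyperfiniteness step, which produces, via a F\o lner/Rokhlin exhaustion, an increasing sequence of finite subequivalence relations exhausting almost all of the orbit relation; this is the content of Ornstein and Weiss's extension of Dye's theorem, and I would cite it rather than reprove it.
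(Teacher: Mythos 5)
Your proposal is correct and coincides with the route the paper itself intends: the theorem is stated there without proof, attributed to Dye and Ornstein--Weiss, and your argument is exactly the standard derivation from those two results (Ornstein--Weiss hyperfiniteness of orbit relations of free p.m.p.\ actions of amenable groups, plus Dye's uniqueness of the ergodic hyperfinite type $\mathrm{II}_1$ relation), together with the routine finite and ``only if'' cases. Your handling of the finite case via the fundamental-domain/atom argument and of the cardinality obstruction via freeness is sound.
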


Hence, when $A$ is an amenable group, this means that, up to orbit equivalence, there is a unique wreath product action $A\wr\Gamma\acts(Z^\Gamma,\eta^\Gamma)$ in which the direct sum $\bigoplus_\Gamma A$ acts freely and ergodically. 
We call this action the {\bf canonical wreath product action} of $A\wr\Gamma$.

\subsection{Bi-cofinitely equivariant maps}

For $x,y\in Y^\Gamma$ and a fixed free action $B\acts Y$ of a group $B$, we write $x\sim_B y$ to denote that $x$ and $y$ differ in only finitely many coordinates and for each coordinate $\gamma\in \Gamma$ in which they differ, there exists $\lambda\in B$ such that $\lambda .x_\gamma=y_\gamma$. 

Let $B\acts (Y,\nu )$ and $C\acts (Z,\eta )$ be p.m.p.\ actions of countable groups $B$ and $C$ on standard probability spaces $(Y,\nu )$ and $(Z,\eta )$, and let $\Gamma$ be a countable group. 
A Borel map $\phi:Y^\Gamma\rightarrow Z^\Gamma$ is said to be \textbf{cofinitely equivariant (with respect to the given actions of $B$ and $C$)} if for almost every $y\in Y^\Gamma$, for every $x\sim_{B} y$ and every $\gamma\in \Gamma$ we have $\phi(\gamma .x)\sim_{C} \gamma .\phi(y)$. 
A measure space isomorphism $\phi : (Y^{\Gamma}, \nu ^{\Gamma})\rightarrow (Z^{\Gamma},\eta ^{\Gamma})$ is called \textbf{bi-cofinitely equivariant} if both $\phi$ and $\phi ^{-1}$ are cofinitely equivariant.

We say that the wreath product actions $B\wr\Gamma\curvearrowright (Y^\Gamma ,\nu ^\Gamma )$ and $C\wr\Gamma \curvearrowright (Z^\Gamma ,\eta ^\Gamma )$ are \textbf{cofinitely isomorphic} if there exists a bi-cofinitely equivariant measure space isomorphism between the underlying spaces.

It is clear that if $\phi :Y^{\Gamma}\to Z^{\Gamma}$ is a bi-cofinitely equivariant measure space isomorphism then $\phi$ is simultaneously an orbit equivalence between the wreath product actions of $B\wr \Gamma$ and $C\wr \Gamma$  as well as an orbit equivalence between the actions of the direct sum subgroups $\bigoplus_\Gamma B$ and $\bigoplus_\Gamma C$.
The converse is not quite true, but we have the following under the assumption that the actions of $B$ on $(Y,\nu )$ and of $C$ on $(Z,\eta )$ are ergodic.

\begin{prop}\label{cofSimOE}
Let $\Gamma$ be a countable group and let $B$ and $C$ be countable groups with ergodic p.m.p.\ actions $B\acts (Y,\nu)$ and $C\acts (Z,\eta )$. 

Suppose that $\phi: (Y^\Gamma,\nu ^\Gamma )\to (Z^\Gamma ,\eta ^\Gamma )$ is simultaneously an orbit equivalence between the wreath product actions of $B\wr \Gamma$ and $C\wr \Gamma$, as well as an orbit equivalence between the actions of the direct sum subgroups $\bigoplus_\Gamma B$ and $\bigoplus_\Gamma C$.
Then there exists an automorphism $\rho$ of $\Gamma$ such that 
\[
\phi (\gamma .y)\in \rho (\gamma )(\bigoplus _{\Gamma}C).\phi (y)
\]
for all $\gamma \in \Gamma$ and a.e.\ $y\in Y^{\Gamma}$, and the map 
$\theta : Y^{\Gamma} \to Z^{\Gamma}$, defined by 
\[
\theta (y)_{\delta} \coloneqq  \phi (y)_{\rho (\delta )}
\]
for $\delta \in \Gamma$ and $y\in Y^{\Gamma}$, is a bi-cofinitely equivariant measure space isomorphism.  
\end{prop}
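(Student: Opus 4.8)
The plan is to extract from $\phi$ an ``index cocycle'' recording, for each $\gamma\in\Gamma$, which $\bigoplus_\Gamma C$-orbit inside the $C\wr\Gamma$-orbit of $\phi(y)$ the point $\phi(\gamma.y)$ lands in, and then to show that this cocycle is in fact a constant automorphism of $\Gamma$. Throughout I would use the identification of the wreath kernel orbit relation with $\sim$: since the actions of $B$ and $C$ are free, for a.e.\ pair of points $x\sim_B y$ holds exactly when $x,y$ lie in the same $\bigoplus_\Gamma B$-orbit (and $\sim_C$ corresponds to $\bigoplus_\Gamma C$). I would also record two structural features of the shift that I lean on repeatedly: the $\Gamma$-shift is essentially free modulo $\sim$, i.e.\ for a.e.\ $y$ one has $\gamma.y\sim_B y\Rightarrow\gamma=e$ (so $\gamma\mapsto[\gamma.y]_{\sim_B}$ is injective), and the wreath kernel $\bigoplus_\Gamma B\acts(Y^\Gamma,\nu^\Gamma)$ is ergodic, being a coordinatewise product of the ergodic action $B\acts(Y,\nu)$.

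First I would define the index map. Since $\phi$ is an orbit equivalence of the wreath product actions, for a.e.\ $y$ and every $\gamma$ there is $g=c\delta\in C\wr\Gamma$ (with $c\in\bigoplus_\Gamma C$, $\delta\in\Gamma$) with $\phi(\gamma.y)=g.\phi(y)$, whence $\phi(\gamma.y)\sim_C\delta.\phi(y)$; set $\sigma(\gamma,y):=\delta$. Essential freeness modulo $\sim_C$ makes $\sigma(\gamma,y)$ well defined. Using that $\phi$ is simultaneously an orbit equivalence of the two kernel actions (so it carries $\sim_B$-classes bijectively onto $\sim_C$-classes within corresponding full orbits), together with essential freeness modulo $\sim$ on both sides, one checks that $\gamma\mapsto\sigma(\gamma,y)$ is a bijection of $\Gamma$ for a.e.\ $y$. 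A direct computation in $C\wr\Gamma$, rewriting $\delta c=(\delta.c)\delta$ and again invoking essential freeness modulo $\sim_C$ to match $\Gamma$-components, yields the cocycle identity $\sigma(\gamma_1\gamma_2,y)=\sigma(\gamma_1,\gamma_2.y)\,\sigma(\gamma_2,y)$.

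The crux---and the step I expect to be the main obstacle---is to upgrade this bijection-valued cocycle to a single automorphism $\rho$ independent of $y$, and this is precisely where both the fact that $\phi$ respects the kernel relation and the ergodicity of the kernel become essential. Fixing $\gamma$ and replacing $y$ by $b.y$ for $b\in\bigoplus_\Gamma B$, I would use $\gamma.(b.y)=(\gamma.b).(\gamma.y)\sim_B\gamma.y$ and $b.y\sim_B y$; applying $\phi$ (which preserves $\sim$) and shifting gives $\sigma(\gamma,y).\phi(y)\sim_C\sigma(\gamma,b.y).\phi(y)$, so essential freeness modulo $\sim_C$ forces $\sigma(\gamma,b.y)=\sigma(\gamma,y)$. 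Thus $y\mapsto\sigma(\gamma,y)$ is $\bigoplus_\Gamma B$-invariant, hence a.e.\ constant by ergodicity of the kernel; call the value $\rho(\gamma)$. Intersecting countably many conull sets, for a.e.\ $y$ one has $\sigma(\gamma,y)=\rho(\gamma)$ for all $\gamma$ simultaneously, the cocycle identity becomes $\rho(\gamma_1\gamma_2)=\rho(\gamma_1)\rho(\gamma_2)$, and bijectivity gives $\rho\in\Aut(\Gamma)$. By construction $\phi(\gamma.y)\in\rho(\gamma)(\bigoplus_\Gamma C).\phi(y)$, which is the first assertion.

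Finally I would verify that $\theta=R_\rho\circ\phi$, where $R_\rho(w)_\delta:=w_{\rho(\delta)}$, is a bi-cofinitely equivariant isomorphism. It is a measure space isomorphism because $R_\rho$ is a coordinate permutation (measure preserving, as $\rho$ is a bijection of $\Gamma$) and $\phi$ is an isomorphism. The key computation is the intertwining identity $R_\rho(\gamma.w)=\rho^{-1}(\gamma).R_\rho(w)$, equivalently $R_\rho(\rho(\gamma).w)=\gamma.R_\rho(w)$, together with the observation that $R_\rho$ preserves $\sim_C$. Then for $x\sim_B y$ I would chain $\phi(\gamma.x)\sim_C\rho(\gamma).\phi(x)\sim_C\rho(\gamma).\phi(y)$ (using the first assertion and $x\sim_B y\Rightarrow\phi(x)\sim_C\phi(y)$) and apply $R_\rho$ to reach $\theta(\gamma.x)\sim_C\gamma.\theta(y)$, the cofinite equivariance of $\theta$. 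For $\theta^{-1}=\phi^{-1}\circ R_{\rho^{-1}}$ I would first derive the companion relation $\phi^{-1}(\delta.w)\sim_B\rho^{-1}(\delta).\phi^{-1}(w)$ by applying $\phi^{-1}$ to the first assertion, and then run the same chaining argument with the roles of $B,C$ and $\rho,\rho^{-1}$ exchanged, obtaining $\theta^{-1}(\gamma.w)\sim_B\gamma.\theta^{-1}(z)$ for $w\sim_C z$. Together these give bi-cofinite equivariance.
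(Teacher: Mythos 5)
Your proposal is correct and follows essentially the same route as the paper: both extract the $\Gamma$-component of the orbit-equivalence cocycle, use the hypothesis that $\phi$ also matches the wreath-kernel orbits to see this component is trivial on $\bigoplus_\Gamma B$ and hence (by normality of the kernel and ergodicity of $\bigoplus_\Gamma B\curvearrowright Y^\Gamma$) constant in $y$, and then read off a homomorphism $\rho$ and verify $\theta$ directly. The only cosmetic difference is that you obtain bijectivity of $\rho$ from the induced bijection on $\sim$-classes within an orbit, whereas the paper runs the same construction for $\phi^{-1}$ and checks the two homomorphisms are mutually inverse.
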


\begin{proof}
Note that ergodicity of the action of $B$ on $(Y,\nu )$ implies ergodicity of the action of $\bigoplus _{\Gamma} B$ on $(Y^{\Gamma},\nu ^{\Gamma})$.
By assumption, there exists a measurable map $\alpha : B\wr \Gamma\times Y^\Gamma \to \Gamma$ such that 
\[
\phi (g.y)\in \alpha (g,y)(\bigoplus _{\Gamma} C).\phi (y) \ \ \text{ and } \ \ \alpha (b,y)=e
\]
for all $g\in B\wr\Gamma$, $b\in \bigoplus _{\Gamma}B$, and a.e.\ $y\in Y^{\Gamma}$.
The map $\alpha$ is a cocycle, i.e., it satisfies the cocycle identity $\alpha (gh,y)=\alpha (g,h.y)\alpha (h,y)$ for $g,h\in B\wr\Gamma$ and a.e.\ $y\in Y$.
It follows that 
\[
\alpha (\gamma b \gamma ^{-1},\gamma .y ) = \alpha (\gamma , b.y )\alpha (b,y) \alpha (\gamma ,y)^{-1}
\]
for all $b \in \bigoplus _{\Gamma} B$, $\gamma \in \Gamma$, and a.e.\ $y\in Y^\Gamma$.
Since both $\alpha (b ,y )$ and $\alpha (\gamma b \gamma ^{-1} , \gamma .y )$ are trivial, we conclude that for each fixed $\gamma \in \Gamma$ the map $y\mapsto \alpha (\gamma ,y)$ is $\bigoplus _{\Gamma}B$-invariant, and hence by ergodicity there is some $\rho (\gamma )\in \Gamma$ such that $\alpha (\gamma ,y)=\rho (\gamma )$ for a.e.\ $y\in Y^{\Gamma}$. 
It follows from the cocycle identity that $\rho$ is a homomorphism. 
Likewise, there is a homomorphism $\sigma :\Gamma \to \Gamma$ such that $\phi ^{-1}(\gamma .z)\in \sigma (\gamma )(\bigoplus _{\Gamma}B).\phi ^{-1}(z)$ for a.e.\ $z\in C^{\Gamma}$.
It follows that $\sigma = \rho ^{-1}$ and hence $\rho$ is an automorphism of $\Gamma$.
It is now clear that the map $\theta$ is a bi-cofinitely equivariant measure space isomorphism.
\end{proof}

\begin{example}
    In Proposition \ref{cofSimOE},  ergodicity of the actions $B\acts (Y,\nu)$ and $C\acts(Z,\eta)$ is necessary. 
    Indeed, take $\Gamma=\mathbf{Z}$ and standard probability spaces $(Y,\nu)$ and $(Z,\eta)$ with different Shannon entropies. Let both $B$ and $C$ act trivially on $(Y,\nu)$ and $(Z,\eta)$ respectfully.  
    The wreath product actions $B\wr\mathbf{Z}\acts (Y^\mathbf{Z},\nu^\mathbf{Z})$ and $C\wr\mathbf{Z}\acts(Z^\mathbf{Z},\eta^\mathbf{Z})$ admit an orbit equivalence between them. 
    Additionally, every measure space isomorphism is an orbit equivalence between the $\bigoplus_\mathbf{Z} B$ and $\bigoplus_\mathbf{Z} C$ relations.  
    In this context, for every $x,y\in Y^\mathbf{Z}$ we have $x\sim_{B} y$ if and only if $x=y$ and similarly for the $\sim_{C}$ relation. Therefore, any cofinitely equivariant map would be an equivariant measure space isomorphism between Bernoulli shifts of different entropies, which is impossible.
\end{example}

It should be noted that an equivariant map is not necessarily cofinitely equivariant in our sense as shown in the following example.

\begin{example}\label{ex:eqnoncof}
Consider a nonuniform Bernoulli $\mathbf{Z}$-shift of entropy $\ln 2$. By Ornstein's Isomorphism Theorem \cite{Ornstein1970}, there is an equivariant measure space isomorphism between this Bernoulli shift and the uniform 2-shift since they have the same entropy. 
Taking transitive group actions on the two base spaces, the wreath product action associated to the uniform 2-shift is p.m.p.\  and the wreath product action associated to the nonuniform shift does not admit an equivalent invariant measure. Hence, the actions are not orbit equivalent and the original shifts do not admit a bi-cofinitely equivariant measure class preserving measure space isomorphism between them.
\end{example} 

Proposition \ref{cofSimOE} and the following rigidity result of Monod and Shalom imply that, for countable groups $A_0$, $A_1$, and $\Gamma$, with $A_0$ and $A_1$ amenable, and $\Gamma$ nonamenable and measure equivalent to a free product of nontrivial groups, if the canonical wreath product actions of the groups $A_0\wr \Gamma$ and $A_1\wr \Gamma$ are orbit equivalent, 
then they are already cofinitely isomorphic. 

\begin{thm}[Monod-Shalom \cite{MonodShalom2006}]
Let $\Gamma$ and $\Lambda$ be discrete countable groups with normal amenable subgroups $M\triangleleft \Gamma$, $N\triangleleft \Lambda$ such that the quotients $\Bar{\Gamma}=\Gamma/M$ and $\Bar{\Lambda}=\Lambda/N$ are torsion free and in the class $\mathcal{C}$ of Monod and Shalom. 
Let $(X,\mu)$ and $(Y,\nu)$ be probability $\Gamma-$ and $\Lambda-$spaces respectively on which $M$ and $N$ act ergodically. If the actions of $\Gamma$ and $\Lambda$ are orbit equivalent, then there is a group isomorphism $f:\bar{\Gamma}\rightarrow\bar{\Lambda}$ and a measure space isomorphism $\phi: (X,\mu)\rightarrow (Y,\nu)$ satisfying $\phi (\gamma M.x) = f(\gamma M).\phi (x)$ for all $\gamma \in \Gamma$ and almost every $x\in X$. In particular, $\phi$ is simultaneously an orbit equivalence between the $\Gamma-$ and $\Lambda-$ actions and an orbit equivalence between the $M-$ and $N-$actions.
\end{thm}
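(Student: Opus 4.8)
The plan is to reduce the statement to a cocycle untwisting problem and then feed it into the bounded cohomology machinery that defines the class $\mathcal{C}$. First I would encode the orbit equivalence as a measurable cocycle: fixing a measure space isomorphism $\phi\colon(X,\mu)\to(Y,\nu)$ realizing the orbit equivalence, there is an essentially unique Borel cocycle $\alpha\colon\Gamma\times X\to\Lambda$ with $\phi(\gamma.x)\in\alpha(\gamma,x).\phi(x)$ satisfying $\alpha(\gamma\delta,x)=\alpha(\gamma,\delta.x)\alpha(\delta,x)$. Post-composing with the quotient homomorphism $q\colon\Lambda\to\bar\Lambda$ produces a cocycle $\bar\alpha\colon\Gamma\times X\to\bar\Lambda$ taking values in a torsion-free group in $\mathcal{C}$. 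The goal is to show $\bar\alpha$ is cohomologous to the cocycle associated to a group isomorphism $f\colon\bar\Gamma\to\bar\Lambda$, after which the intertwining map in the conclusion can be read off.

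The conceptual reason the hypotheses are placed on the quotients is that bounded cohomology is insensitive to amenable normal subgroups: for $M\triangleleft\Gamma$ amenable one has $H^\bullet_b(\Gamma;V)\cong H^\bullet_b(\bar\Gamma;V^{M})$ for any coefficient module $V$ (Gromov--Monod). Consequently the entire bounded-cohomological apparatus one needs to invoke lives on $\bar\Gamma$ and $\bar\Lambda$, precisely where membership in $\mathcal{C}$ (that is, $H^2_b(\bar\Lambda;\ell^2\bar\Lambda)\neq0$ and likewise for $\bar\Gamma$) is assumed. This is what lets the argument operate at the level of the quotients while only the quotients are constrained, and it is why $M$ and $N$ will end up landing in the kernel.

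The heart of the proof is the cocycle superrigidity powered by the nonzero class $\omega\in H^2_b(\bar\Lambda;\ell^2\bar\Lambda)$. I would pull $\omega$ back along $\bar\alpha$ to a class in $H^2_b(\bar\Gamma;L^2(X;\ell^2\bar\Lambda))$, the coefficients being the $\bar\Gamma$-module twisted by $\bar\alpha$. Using the Burger--Monod functorial description of bounded cohomology via a doubly ergodic amenable boundary action $\bar\Gamma\acts B$, one represents such classes by $\bar\Gamma$-equivariant essentially-bounded boundary cocycles and compares the pulled-back class against a genuine generator; the rigidity of these $\ell^2$-valued boundary classes yields a measurable $\bar\Gamma$-equivariant boundary map into a boundary of $\bar\Lambda$, which is rigid enough to force $\bar\alpha$ to be cohomologous to a homomorphism $\bar\Gamma\to\bar\Lambda$. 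The degenerate alternative, in which the cocycle reduces to an amenable subgroup, is excluded because it would make the relation amenable, contradicting the non-amenability of groups in $\mathcal{C}$ (amenable groups have vanishing bounded cohomology with dual coefficients, so they cannot lie in $\mathcal{C}$).

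Finally I would promote this to the stated conjugacy and upgrade $f$ to an isomorphism. Running the same argument for $\phi^{-1}$ produces a homomorphism $g\colon\bar\Lambda\to\bar\Gamma$ with $g\circ f$ cohomologous to the identity cocycle, hence inner, hence equal to the identity by torsion-freeness, so $f$ is an isomorphism. Unwinding ``$\bar\alpha$ is cohomologous to $f$'' means that, after correcting $\phi$ by a measurable element of the full group of the $\bar\Lambda$-action, the corrected map satisfies $\phi(\gamma M.x)=f(\gamma M).\phi(x)$; ergodicity of $M$ and $N$ is exactly what guarantees this correction is an honest measure space isomorphism realizing the quotient-level equivariance on the nose, and it is then simultaneously an orbit equivalence of the $\Gamma$- and $\Lambda$-actions and of the $M$- and $N$-actions. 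The main obstacle is the third step: constructing and controlling the pullback in $H^2_b$ with $\ell^2$-coefficients and extracting a rigid equivariant boundary map is where the full strength of the theory of $\mathcal{C}$—the Burger--Monod functorial machinery together with double ergodicity of boundary actions—is indispensable, and it is what makes the theorem genuinely deep rather than formal.
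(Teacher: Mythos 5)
Your proposal sets out to reprove the Monod--Shalom theorem from first principles via bounded cohomology, whereas the paper's proof is deliberately a \emph{derivation from} \cite{MonodShalom2006}: it passes from the orbit equivalence to a measure equivalence coupling $\Omega$ (their Theorem 2.12), invokes their Theorem 2.23 as a black box to obtain a $\Gamma\times\Lambda$-equivariant map $\Psi:\Omega\to\bar\Omega$ onto a coupling of $\bar\Gamma$ with $\bar\Lambda$ on which both quotients act freely and transitively (this is where the isomorphism $f$ comes from), and then does concrete measure-theoretic surgery: it proves that $M\times N$ acts ergodically on the fiber $D=\Psi^{-1}(x_0)$ and uses this to build a simultaneous fundamental domain $\tilde Z\subseteq D$ for the $\Gamma$- and $\Lambda$-actions, which is exactly what makes the resulting $\phi$ an orbit equivalence of the $M$- and $N$-actions as well. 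None of this bookkeeping appears in your outline, and it is precisely the part that delivers the ``simultaneous'' clause of the conclusion.

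The more serious issue is the core untwisting step, which you assert rather than prove. The claim that, because $\bar\Lambda$ is torsion-free and in $\mathcal{C}$, the pulled-back class in $H^2_b(\bar\Gamma;L^2(X;\ell^2\bar\Lambda))$ produces a boundary map ``rigid enough to force $\bar\alpha$ to be cohomologous to a homomorphism'' is not a theorem available for a \emph{single} group in $\mathcal{C}$: Monod and Shalom's cocycle/orbit-equivalence superrigidity results of that shape are proved for products of groups in $\mathcal{C}_{\mathrm{reg}}$, where the interplay between the factors is essential, and single groups in $\mathcal{C}$ (e.g.\ free groups) are very far from OE-superrigid. The input actually needed here is their structure theorem for couplings of groups with amenable normal subgroups, whose proof uses the ergodicity of $M$ and $N$ in an essential way to force the induced coupling of $\bar\Gamma$ with $\bar\Lambda$ to be discrete. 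In your outline that ergodicity appears only at the very end, as a device to make the final correction measurable; but without it the untwisting you claim is false (and the theorem's conclusion fails), so the hypothesis must enter the superrigidity step itself, not just the cleanup. Two smaller points: your argument that $g\circ f$ is ``inner, hence equal to the identity by torsion-freeness'' is wrong as stated (inner automorphisms of torsion-free groups need not be trivial), although injectivity of both $f$ and $g$ plus symmetry still yields that $f$ is an isomorphism; and even granting the untwisting, you do not explain why the corrected $\phi$ matches up the $M$- and $N$-orbits, which is the content of the fundamental-domain construction inside $D$ in the paper's argument.
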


This statement does not explicitly appear in \cite{MonodShalom2006}, although  \cite[Theorem 1.12]{MonodShalom2006} covers the case where $\Gamma = \Lambda$, so we take the liberty of including a derivation from \cite[Theorem 2.23]{MonodShalom2006} here.

\begin{proof}
By \cite[Theorem 2.12]{MonodShalom2006}, there is a measure equivalence coupling $\Omega$ of $\Gamma$ with $\Lambda$, as well as a measure $1$ simultaneous fundamental domain $Z\subseteq \Omega$ for the $\Gamma-$ and $\Lambda-$actions on $\Omega$, 
and measure space isomorphisms $\phi_X: X\rightarrow Z$ and $\phi_Y: Y\rightarrow Z$ that are equivariant for the associated actions of $\Gamma$ and $\Lambda$ respectively on $Z$.
By \cite[Theorem 2.23]{MonodShalom2006}, 
there exists a measurable $\Gamma\times\Lambda $-equivariant map $\Psi: \Omega\to \bar{\Omega}$, where $\bar{\Omega}$ is a coupling of $\bar{\Gamma}$ and $\bar{\Lambda}$ such that both of the actions $\bar{\Gamma}\curvearrowright \bar{\Omega}$ and $\bar{\Lambda}\curvearrowright \bar{\Omega}$ are free and transitive. 
Fix a base point $x_0\in \bar{\Omega}$ and let $f: \bar{\Gamma} \to \bar{\Lambda}$ be the map sending $\bar{\gamma} \in \bar{\Gamma}$ to the unique $\bar{\lambda} \in \bar{\Lambda}$ for which $\bar{\gamma} .x_0 = \bar{\lambda} ^{-1}.x_0$, so that $f$ is a group isomorphism.
The set $D\coloneqq \Psi^{-1}(x_0)$ is $M\times N$-invariant and contains measure 1 fundamental domains $Z_\Gamma$ and $Z_\Lambda$ for the $\Gamma$ and $\Lambda$ action respectively on $\Omega$.

We claim that the action of $M\times N$ on $D$ is ergodic. Since $M$ acts ergodically on $X$, the $M\times \Lambda$ action on $\Omega$ is ergodic. Therefore, given a positive measure $M\times N$-invariant subset $D_0$ of $D$, the union of all translates of $D_0$ by a complete collection of left-coset representatives of $N$ in $\Lambda$ is a positive measure $M\times\Lambda$-invariant subset of $\Omega$, hence this union is conull in $\Omega$. Since the only such translate that meets $D$ is $D_0$ itself, we conclude that $D_0$ is conull in $D$, thereby proving the claim.

Therefore, since the sets $Z_\Lambda$ and $Z_\Gamma$ are both measure 1 subsets of $D$, after discarding a null set there exist measurable maps $Z_{\Gamma}\to M$, $z\mapsto m_z$, and $Z_{\Gamma}\to N$, $z\mapsto n_z$, such that the transformation $Tz\coloneqq m_z.n_z.z$ is a bijection from $Z_{\Gamma}$ to $Z_{\Lambda}$.
Define $T_M: Z_\Gamma\to D$ by $z\mapsto m_z.z$ and $T_N: Z_\Lambda\to D$ by $z\mapsto n_{T^{-1}z}^{-1}.z$. 
Define $\tilde Z\coloneqq T_M(Z_\Gamma)$. Then also $\tilde{Z}=T_N(Z_\Lambda)$, so that $\tilde{Z}$ is a simultaneous fundamental domain for the $\Gamma$ and $\Lambda$ actions on $\Omega$, and $\tilde{Z}$ is moreover contained in $D$.

Let $T_\Gamma: Z\to Z_\Gamma$ and $T_\Lambda: Z\rightarrow Z_\Lambda$ be the unique maps with $T_\Gamma(z)\in \Gamma .z$ and $T_\Lambda(z)\in \Lambda .z$ for all $z\in Z$.
The actions $\Gamma\acts\tilde Z$ and $\Lambda\acts\tilde Z$, coming from the identification of $\tilde{Z}$ with $\Omega /\Gamma$ and $\Omega /\Lambda$ respectively, are now conjugate to the initial actions $\Gamma\acts(X,\mu)$ and $\Lambda \acts (Y,\nu)$ via the maps $T_M\circ T_\Gamma\circ\phi_X$ and $T_N\circ T_\Lambda\circ\phi_Y$, respectively. Since $\tilde Z\subseteq \Psi^{-1}(x_0)$, the orbits of the actions $M\acts \tilde Z$ and $N\acts\tilde Z$ coincide. 
The map $\phi\coloneqq(T_N\circ T_\Lambda\circ\phi_Y)^{-1}\circ (T_M\circ T_\Gamma\circ\phi_X)$ then satisfies the conclusion of the theorem. 
\end{proof}

\section{Cofinite isomorphism}

\subsection{The amenable setting}
In this subsection we show that, for nontrivial amenable groups $A_0$ and $A_1$, the canonical wreath product actions of $A_0\wr\Z$ and $A_1\wr\Z$ are cofinitely isomorphic. 
To this end, we first state a result of Feldman-Sutherland-Zimmer \cite{Feldman1988} from the perspective of group actions which makes use of the work of Golodets-Sinel'shchikov \cite{Golodets1994} (see also Aaronson-Hamachi-Schmidt \cite{AaronsonHamachiSchmidt1995}).

{\bf Choice functions and the index cocycle.} Let $\mathcal{R}$ be an ergodic p.m.p.\ countable Borel equivalence relation on a standard probability space $(X,\mu)$ and let $\mathcal{S}$ be a Borel subequivalence relation of $\mathcal{R}$. 
By ergodicity of $\mathcal{R}$, the map sending $x\in X$ to the number of $\mathcal{S}$-classes contained in the $\mathcal{R}$-class of $x$ is constant on a conull set.
This constant is called the {\bf index} of $\mathcal{S}$ in $\mathcal{R}$. 
Let $N\in \mathbf{N} \cup \{ \aleph _0 \}$ denote this constant. 
A collection of {\bf choice functions} for $\mathcal{S}$ in $\mathcal{R}$ is an indexed collection $(\varphi_i )_{i\in I}$ of Borel functions from $X$ to $X$ whose index set $I$ has cardinality $N$, and satisfies that for each $x\in X$ the collection $(\varphi_i(x))_{i\in I}$ meets each $\mathcal{S}$-class contained in $[x]_{\mathcal{R}}$ exactly once.  
Given such a collection of choice functions, define the associated {\bf index cocycle} $\sigma:\mathcal{R}\rightarrow \mathrm{Sym}(I)$ by $\sigma(y,x)(i)=j$ if and only if $[\varphi_i(x)]_{\mathcal{S}}=[\varphi_j(y)]_{\mathcal{S}}$.

\begin{thm}[Feldman-Sutherland-Zimmer \cite{Feldman1988}, building on Golodets-Sinel'shchikov \cite{Golodets1994}]\label{FSZ}
Let $\Gamma _1$ and $\Gamma _2$ be amenable groups with amenable normal subgroups $M_1\triangleleft \Gamma _1$ and $M_2\triangleleft \Gamma _2$, and assume that $\psi:\Gamma_1/M_1\rightarrow \Gamma_2/M_2$ is an isomorphism between the associated quotient groups. 
Let $\Gamma_1\acts (X_1,\mu_1)$ and $\Gamma_2\acts(X_2,\mu_2)$ be free p.m.p.\ ergodic actions, and assume that both of the restricted actions $M_i\acts(X_i,\mu_i)$, $i=1,2$, are ergodic. 
Then there exists an orbit equivalence $\theta: X_1\rightarrow X_2$ between the $\Gamma_i$-actions such that $\theta(\gamma M_1 .x)=\psi(\gamma M_1).\theta( x)$ for all $\gamma\in\Gamma_1$ and almost every $x\in X_1$.
\end{thm}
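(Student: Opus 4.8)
The plan is to recast the statement purely in terms of orbit equivalence relations and their quotient cocycles, and then to invoke the cocycle form of Dye's theorem (Golodets--Sinel'shchikov) that underlies the result.

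First I would set up the relational picture. For $i=1,2$ let $\mathcal{R}_i$ be the orbit equivalence relation of $\Gamma_i\acts(X_i,\mu_i)$ and $\mathcal{S}_i\subseteq\mathcal{R}_i$ that of $M_i\acts(X_i,\mu_i)$. Since the action is free and $M_i\triangleleft\Gamma_i$, the formula $c_i(g.x,x):=gM_i\in Q_i:=\Gamma_i/M_i$ is well defined (normality makes the value depend only on the coset, freeness makes $g$ unique) and defines a cocycle $c_i:\mathcal{R}_i\to Q_i$ with $\ker c_i=\mathcal{S}_i$. Choosing coset representatives $\gamma_q$ and choice functions $\varphi_q(x)=\gamma_q.x$ indexed by $q\in I:=Q_i$, the associated index cocycle $\sigma_i:\mathcal{R}_i\to\mathrm{Sym}(Q_i)$ is right translation by $c_i$, so $\sigma_i$ and $c_i$ carry the same information. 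In this language the desired conclusion is exactly the existence of an orbit equivalence $\theta:X_1\to X_2$ between $\mathcal{R}_1$ and $\mathcal{R}_2$ with
\[
c_2(\theta x,\theta y)=\psi\bigl(c_1(x,y)\bigr)\quad\text{for a.e. }(x,y)\in\mathcal{R}_1,
\]
since this says precisely that $\theta$ carries the $\mathcal{S}_1$-class $\gamma M_1.x$ onto the $\mathcal{S}_2$-class $\psi(\gamma M_1).\theta(x)$.

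Next I would record the structural inputs that let Golodets--Sinel'shchikov apply. Because $\Gamma_i$ is amenable, $\mathcal{R}_i$ is ergodic hyperfinite (Connes--Feldman--Weiss), and $Q_i$, being a quotient of an amenable group, is amenable. The crucial translation of the hypothesis is that ergodicity of $M_i\acts X_i$ is equivalent to ergodicity of $\mathcal{S}_i=\ker c_i$, which in turn is equivalent to ergodicity of the skew product relation $\mathcal{R}_i\ltimes_{c_i}Q_i$ on $X_i\times Q_i$; I would check this by matching $\Gamma_i$-invariant functions on $X_i\times Q_i$ with $M_i$-invariant functions on $X_i$ via $f\mapsto f(\cdot,e)$. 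Thus each $c_i$ has transitive (one-point) Mackey action. Transporting $c_1$ through $\psi$, the two cocycles $\psi\circ c_1:\mathcal{R}_1\to Q_2$ and $c_2:\mathcal{R}_2\to Q_2$ now take values in the same amenable group and have isomorphic (trivial) Mackey actions.

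Finally I would invoke the Golodets--Sinel'shchikov uniqueness theorem: two cocycles of ergodic hyperfinite p.m.p.\ relations into a common amenable group with isomorphic Mackey actions are conjugate by an orbit equivalence. This produces $\theta:X_1\to X_2$ with $c_2(\theta x,\theta y)=\psi(c_1(x,y))$, and unwinding the definition of the $c_i$ gives $\theta(\gamma M_1.x)=\psi(\gamma M_1).\theta(x)$ as required. I expect the main obstacle to be this black box together with one genuinely delicate point in the reduction: if the cocycle theorem is only available up to cohomology---yielding $c_2(\theta_0 x,\theta_0 y)=b(\theta_0 x)\,\psi(c_1(x,y))\,b(\theta_0 y)^{-1}$ for some transfer map $b:X_2\to Q_2$---then one must absorb the coboundary $\partial b$ into the orbit equivalence by composing $\theta_0$ with a full-group element $\beta\in[\mathcal{R}_2]$ realizing $c_2(\beta z,z)=b(z)^{-1}$. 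The existence of such a measurable, measure-preserving $\beta$ is exactly where ergodicity (fullness) of $M_2$ inside $\mathcal{R}_2$ is essential, and verifying it measurably and bijectively is the step I would treat most carefully.
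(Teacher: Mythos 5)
Your proposal is correct and follows the same overall strategy as the paper: translate everything into the index/quotient cocycles $\sigma_i$ (equivalently your $c_i$), verify that ergodicity of $M_i\acts X_i$ gives these cocycles dense range (trivial Mackey range) in the amenable quotient, invoke the Golodets--Sinel'shchikov theorem on hyperfinite relations to get an orbit equivalence transporting $\psi\circ c_1$ to a cocycle cohomologous to $c_2$, and then kill the coboundary using ergodicity of $\mathcal{S}_2$. The one place you genuinely diverge is the last step. The paper first invokes Feldman--Sutherland--Zimmer's Theorem~1.6b to compose with a full-group element so that the orbit equivalence already carries $\mathcal{S}_1$ onto $\mathcal{S}_2$; both cocycles are then trivial on $\mathcal{S}_2$, so the transfer function is $\mathcal{S}_2$-invariant, hence \emph{constant} by ergodicity, and a constant is absorbed by a single group translation. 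You instead absorb an arbitrary measurable transfer function $b$ directly, by producing $\beta\in[\mathcal{R}_2]$ with $c_2(\beta z,z)=b(z)^{-1}$. This does work, and the step you flag as delicate goes through: partition $X_2$ into the level sets $A_q=\{z: b(z)^{-1}=q\}$, choose any partition $X_2=\bigsqcup_q B_q$ with $\mu_2(B_q)=\mu_2(A_q)$, and use ergodicity of $\mathcal{S}_2$ (so that its full pseudogroup maps any set onto any other set of equal measure) to send $A_q$ onto $s(q)^{-1}.B_q$ inside $[[\mathcal{S}_2]]$ before translating by a coset representative $s(q)$; the union of these pieces is the required measure-preserving bijection. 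Your route trades the citation of FSZ Theorem~1.6b for this explicit matching construction, which is marginally more self-contained; the paper's route has the advantage that the resulting $\theta$ visibly carries $\mathcal{S}_1$ onto $\mathcal{S}_2$ from the outset. One small imprecision worth noting: you only need (and only clearly have, in general) the implication that ergodicity of $\ker c_i$ forces ergodicity of the skew product, not the stated equivalence, though for these particular cocycles the converse also holds by restricting the skew product to $X_i\times\{e\}$.
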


Although this statement does not appear directly in the work of Feldman-Sutherland-Zimmer, the following proof is very similar to the proof of \cite[Theorem 3.2]{Feldman1988}.

\begin{proof}
For $i\in \{1,2\}$, let $\R_i$ be the orbit equivalence relation of the action $\Gamma_i\acts(X_i,\mu_i)$ and let $\s_i$ be the orbit equivalence relation of the ergodic action $M_i\acts(X_i,\mu_i)$. 
By choosing a section $s_i:\Gamma _i /M_i \rightarrow \Gamma _i$ of the projection map from $\Gamma _i$ to $\Gamma _i/M_i$, 
we obtain a collection of choice functions $(\varphi _{\delta M_i})_{\delta M_i\in \Gamma _i/M_i}$ for $\mathcal{S}_i$ in $\mathcal{R}_i$ indexed by $\Gamma _i/M_i$, defined by $\varphi _{\delta M_i}(x)\coloneqq s _i (\delta M_i).x$. 
The associated index cocycle $\sigma_i: \R_i\rightarrow \mathrm{Sym}(\Gamma _i/M_i )$ is then given by $\sigma_i(\gamma .x, x)=\rho_i (\gamma M_i )$ for $\gamma \in \Gamma_i$ and $x\in X$, where $\rho _i :\Gamma _i /M_i \rightarrow \mathrm{Sym}(\Gamma _i/M_i )$ denotes the right regular representation of $\Gamma _i/M_i$ given by $\rho _i(\gamma M_i)(\delta M_i)\coloneqq \delta \gamma ^{-1}M_i$ for $\gamma M_i, \delta M_i \in \Gamma _i/M_i$.

Let $\hat\psi : \rho _1 (\Gamma _1/M_1 )\rightarrow \rho _2 (\Gamma _2/M_2 )$ be the composition $\hat\psi \coloneqq \rho _2\circ\psi \circ \rho _1  ^{-1}$, and let $\hat\sigma _1 : \R_1\rightarrow \rho _2(\Gamma_2/M_2)$ be given by $\hat\sigma _1 \coloneqq\hat\psi\circ\sigma_1$. 
Since $\mathcal{S}_0$ and $\mathcal{S}_1$ are ergodic, the cocycles $\sigma_2$ and $\hat \sigma _1$ each take values and have dense range (in the sense of \cite[p. 458]{Golodets1994}) in the group $\rho _2(\Gamma _2/M_2)$, and both $\R_0$ and $\R_1$ are hyperfinite by Ornstein-Weiss \cite{OW87}.
Hence, by \cite[Lemma 1.12]{Golodets1994} there is an orbit equivalence $\bar\theta: X_1\rightarrow X_2$ from $\R_1$ to $\R_2$ such that $\hat\sigma _1\circ(\bar\theta\times\bar\theta)^{-1}$ and $\sigma_2$ are cohomologous as $\rho _2 (\Gamma _2/M_2 )$-valued cocycles. 
By \cite[Theorem 1.6b]{Feldman1988}, after replacing $\bar\theta$ by its composition with an element of the full group of $\R_2$, we can assume that $(\bar\theta \times \bar\theta )(\s_1 )=\s_2$. 

Since $\hat\sigma _1\circ(\bar\theta\times\bar\theta)^{-1}$ and $\sigma _2$ are cohomologous, after discarding a null set we may find a measurable function $f: X_2\rightarrow \rho _2(\Gamma_2/M_2)$ satisfying 
\begin{equation}\label{eqn:cohomologous}
f(y)\hat\sigma _1(\bar\theta ^{-1}(y),\bar\theta ^{-1} (x))f(x)^{-1}= \sigma_2(y,x)    
\end{equation} 
for all $(y,x)\in\mathcal{R}_2$. 
For $(y,x)\in\s_2$, we have $f(y)=f(x)$ since both cocycles are trivial on $\s_2$. 
By ergodicity of $\s_2$, after discarding another null set there exists some $\lambda \in \Gamma _2$ such that $f(x)=\rho_2(\lambda M_2)$ for all $x\in X_2$. 
The map $x\mapsto \lambda  ^{-1}\bar\theta (x)$ is bijective; denote its inverse by $\theta$, so that $\theta ^{-1}(x)=\lambda ^{-1}\bar\theta (x)$ for all $x\in X_2$. 
Then $\psi\circ\rho_1^{-1}\circ\sigma_1=\rho_2^{-1}\circ \sigma_2\circ (\theta\times\theta)$ by \eqref{eqn:cohomologous} and the definition of $\hat{\psi}$. Therefore
\[
\theta (\gamma M_1 .x)
=(\rho_2^{-1}\circ\sigma _2) (\theta (\gamma M_1.x),\theta (x)).\theta (x) 
= \psi ((\rho _1^{-1}\circ \sigma _1)(\gamma M_1.x ,x)).\theta (x)
=\psi (\gamma M_1 ).\theta (x)
\]
for all $x\in X_1$ and $\gamma \in \Gamma _1$.
\end{proof}

Using the preceding theorem, we obtain our first nontrivial examples of bi-cofinitely equivariant maps. 

\begin{thm} \label{cofAmenable}
Let $\Gamma$ be amenable and let $A_0$ and $A_1$ be two nontrivial amenable groups acting in a free p.m.p.\ ergodic manner on $(Y,\nu)$ and $(Z,\eta)$, respectively. Then, with respect to these actions, there exists a bi-cofinitely equivariant measure space isomorphism between $(Y^\Gamma,\nu^\Gamma)$ and $(Z^\Gamma,\eta^\Gamma)$.
\end{thm}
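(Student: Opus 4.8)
The plan is to apply Theorem~\ref{FSZ} directly to the two wreath product groups and then reinterpret its conclusion as bi-cofinite equivariance. Set $\Gamma_1=A_0\wr\Gamma$ and $\Gamma_2=A_1\wr\Gamma$, with wreath kernels $M_1=\bigoplus_\Gamma A_0$ and $M_2=\bigoplus_\Gamma A_1$; as amenable-by-amenable extensions, $\Gamma_1$ and $\Gamma_2$ are amenable, with amenable normal subgroups $M_i$, and the wreath complement identifies each quotient $\Gamma_i/M_i$ canonically with $\Gamma$, so that the identity on $\Gamma$ furnishes an isomorphism $\psi\colon\Gamma_1/M_1\to\Gamma_2/M_2$. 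Take $X_1=Y^\Gamma$ and $X_2=Z^\Gamma$ with their wreath product actions, and let $\s_1,\s_2$ denote the orbit relations of $M_1\acts X_1$ and $M_2\acts X_2$. The point of this setup is that $x\sim_{A_0}y$ means exactly $(x,y)\in\s_1$, since the finitely supported coordinatewise $A_0$-moves generating $\bigoplus_\Gamma A_0$ are precisely those permitted by $\sim_{A_0}$; likewise $\sim_{A_1}$ is $\s_2$. Moreover $\s_2$ is invariant under the $\Gamma$-shift, since $M_2\triangleleft\Gamma_2$.

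Before invoking Theorem~\ref{FSZ} I verify its hypotheses, taking $\Gamma$ infinite (the case relevant to the applications). Ergodicity of $M_i\acts X_i$ follows from ergodicity of $A_0\acts(Y,\nu)$ and $A_1\acts(Z,\eta)$ exactly as noted in the proof of Proposition~\ref{cofSimOE}, and this forces ergodicity of the ambient $\Gamma_i$-action. The only genuinely technical hypothesis is freeness of the wreath product $\Gamma_i$-action. For a nontrivial element $b\gamma$ with $\gamma\neq e$, a fixed point $z$ satisfies $z_\delta=b_\delta.z_{\gamma^{-1}\delta}$ for all $\delta$; since $b$ is finitely supported while $\Gamma$ is infinite, on the cofinitely many orbits of $\langle\gamma\rangle\acts\Gamma$ (by left translation) where $b$ vanishes this forces $z$ to be constant along the orbit, giving infinitely many independent equalities of i.i.d.\ coordinates, each of probability at most $\tfrac12$ (freeness of $A_i$ makes every atom of $\nu$ have mass at most $\tfrac12$); hence the fixed-point set is null. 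The case $\gamma=e$, $b\neq e$ is immediate from freeness of $A_i$.

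Theorem~\ref{FSZ} now yields an orbit equivalence $\theta\colon X_1\to X_2$ with $\theta(\gamma M_1.x)=\psi(\gamma M_1).\theta(x)$. Applying this relation with $\gamma\in M_1$ shows $\theta$ carries $\s_1$ onto $\s_2$; applying it with $\gamma$ in the wreath complement and reading both sides modulo $M_2$ (where $\psi(\gamma M_1)=\gamma M_2$ and the quotient action is the shift) shows $(\theta(\gamma.x),\gamma.\theta(x))\in\s_2$ for every $\gamma\in\Gamma$ and a.e.\ $x$. To see $\theta$ is cofinitely equivariant, fix $x\sim_{A_0}y$, i.e.\ $(x,y)\in\s_1$, and $\gamma\in\Gamma$: then $(\theta(x),\theta(y))\in\s_2$, hence $(\gamma.\theta(x),\gamma.\theta(y))\in\s_2$ by shift-invariance of $\s_2$, and combining with $(\theta(\gamma.x),\gamma.\theta(x))\in\s_2$ by transitivity gives $\theta(\gamma.x)\sim_{A_1}\gamma.\theta(y)$. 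The conclusion of Theorem~\ref{FSZ} is symmetric (with $\psi^{-1}$ again the identity on $\Gamma$), so the identical argument applied to $\theta^{-1}$ shows it too is cofinitely equivariant; thus $\theta$ is the desired bi-cofinitely equivariant measure space isomorphism.

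I expect the main obstacle to lie not in the transitivity bookkeeping of the last paragraph, which is routine once the translation is set up, but in two preliminary points: correctly unwinding the conclusion $\theta(\gamma M_1.x)=\psi(\gamma M_1).\theta(x)$ of Theorem~\ref{FSZ} into the clean membership statement $(\theta(\gamma.x),\gamma.\theta(x))\in\s_2$, and the verification of freeness of the wreath product action, which is precisely where the hypotheses that $A_0,A_1$ are nontrivial and act freely enter.
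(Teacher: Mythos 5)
Your proof is correct and follows essentially the same route as the paper: apply Theorem \ref{FSZ} to the wreath products $A_0\wr\Gamma$ and $A_1\wr\Gamma$ with the wreath kernels $\bigoplus_\Gamma A_0$, $\bigoplus_\Gamma A_1$ as the normal subgroups and the identity of $\Gamma$ as the quotient isomorphism, then read off bi-cofinite equivariance from the coset-equivariance $\theta(\gamma M_1.x)=\psi(\gamma M_1).\theta(x)$ of the resulting orbit equivalence; you simply supply more detail (freeness of the wreath product actions, ergodicity of the kernels, the transitivity bookkeeping) than the paper's three-line argument does. Your restriction to infinite $\Gamma$ is harmless and in fact necessary: for finite $\Gamma$ and finite $A_0$, $A_1$ of different orders the spaces $(Y^\Gamma,\nu^\Gamma)$ and $(Z^\Gamma,\eta^\Gamma)$ need not even be isomorphic as measure spaces, and every application in the paper (Corollary \ref{cor:freeFactorAmenableLampGroups}, Theorem \ref{cofAmenableR}) invokes the result only for infinite amenable $\Gamma$.
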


\begin{proof}
The wreath product groups $A_0\wr\Gamma$ and $A_1\wr \Gamma$ are amenable since $\Gamma$, $A_0$, and $A_1$ are all amenable. 
Consider the canonical wreath product actions of $A_0\wr\Gamma$ and $A_1\wr \Gamma$ on $(Y^\Gamma,\nu^\Gamma)$ and $(Z^\Gamma,\eta^\Gamma)$ respectively. 
The normal subgroups $\tilde{A}_0\coloneqq \bigoplus _{\Gamma}A_0$ and $\tilde{A}_1\coloneqq \bigoplus _{\Gamma}A_1$ each act ergodically.  
Let $\psi :(A_0\wr\Gamma )/\tilde{A}_0\rightarrow (A_1\wr\Gamma )/\tilde{A}_1$ denote the isomorphism given by $\psi(\gamma \tilde{A}_0)=\gamma \tilde{A}_1$ for $\gamma \in \Gamma$.

By Theorem \ref{FSZ} there exists an orbit equivalence $\theta: Y^\Gamma\rightarrow Z^\Gamma$ such that $\theta(\gamma \tilde{A} .y)=\gamma \tilde{A}_1.\theta( y)$ for all $\gamma\in\Gamma$ and almost every $y\in Y^\Gamma$. Therefore, $\theta$ is a bi-cofinitely equivariant measure space isomorphism.
\end{proof}

\subsection{Free products and finite index extensions}

In order to convey the main ideas of the proof of Theorem \ref{thm:cofequivfreefactorgroupoids}, we first present the proof in the setting of group actions before moving to the general groupoid setting.

\begin{thm} \label{cofFinCosetReprError}
Let $B\curvearrowright (Y,\nu )$ and $C\curvearrowright (Z,\eta )$ be p.m.p.\ actions of countable groups $B$ and $C$ on $(Y,\nu)$ and $(Z,\eta)$, respectively. 
Let $\Lambda$ be a subgroup of the countable group $\Gamma$ and suppose that there exist
\begin{itemize}
    \item[(i)] a set $S$ of left coset representatives for $\Lambda$ in $\Gamma$ satisfying $|\gamma S\triangle S|<\infty$ for all $\gamma\in \Gamma$, and
    \item[(ii)] a bi-cofinitely $\Lambda$-equivariant measure space isomorphism  $\phi _{\Lambda} :(Y^\Lambda,\nu ^\Lambda)\rightarrow(Z^\Lambda,\eta ^\Lambda)$.    
\end{itemize}
Then, with respect to these actions of $B$ and $C$, there exists a bi-cofinitely $\Gamma$-equivariant measure space isomorphism $\phi :(Y^\Gamma,\nu ^\Gamma)\rightarrow(Z^\Gamma,\eta ^\Gamma)$.
\end{thm}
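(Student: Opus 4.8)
The plan is to construct $\phi$ as a ``coset-by-coset lift'' of $\phi_\Lambda$. Fixing the coset representatives $S$, every $\gamma\in\Gamma$ is written uniquely as $\gamma=s\lambda$ with $s\in S$ and $\lambda\in\Lambda$, so the partition $\Gamma=\bigsqcup_{s\in S}s\Lambda$ identifies $(Y^\Gamma,\nu^\Gamma)$ with the product $\bigl(\prod_{s\in S}Y^\Lambda,\ \bigotimes_{s\in S}\nu^\Lambda\bigr)$ via $x\mapsto(x^{(s)})_{s\in S}$, where $x^{(s)}\in Y^\Lambda$ is given by $x^{(s)}_\lambda\coloneqq x_{s\lambda}$. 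I would then define $\phi$ blockwise by $\phi(x)^{(s)}\coloneqq\phi_\Lambda(x^{(s)})$, that is $\phi(x)_{s\lambda}\coloneqq\phi_\Lambda(x^{(s)})_\lambda$. Since $\phi$ is literally the product $\prod_{s\in S}\phi_\Lambda$ under this identification, it is automatically a measure space isomorphism with inverse the blockwise lift of $\phi_\Lambda^{-1}$, so the only real content is verifying bi-cofinite $\Gamma$-equivariance.

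For the equivariance, the key computation I would record is how the blockwise decomposition transforms under the shift. Writing, for $\gamma\in\Gamma$ and $s\in S$, the unique decomposition $\gamma^{-1}s=s'\mu$ with $s'\in S$ and $\mu\in\Lambda$ (so $s'$ and $\mu$ depend on $\gamma$ and $s$), a direct index chase gives $(\gamma.x)^{(s)}=\mu^{-1}.x^{(s')}$ for every $x\in Y^\Gamma$. Consequently $\phi(\gamma.x)^{(s)}=\phi_\Lambda(\mu^{-1}.x^{(s')})$ while $(\gamma.\phi(y))^{(s)}=\mu^{-1}.\phi_\Lambda(y^{(s')})$. Since $z\sim_C w$ in $Z^\Gamma$ amounts to requiring $z^{(s)}\sim_C w^{(s)}$ for every $s$ together with $z^{(s)}=w^{(s)}$ for all but finitely many $s$, to conclude $\phi(\gamma.x)\sim_C\gamma.\phi(y)$ I would check separately: (a) each block satisfies $\phi_\Lambda(\mu^{-1}.x^{(s')})\sim_C\mu^{-1}.\phi_\Lambda(y^{(s')})$, and (b) all but finitely many blocks coincide.

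Part (a) is exactly the cofinite $\Lambda$-equivariance of $\phi_\Lambda$ applied to the pair $x^{(s')}\sim_B y^{(s')}$ (which holds for every $s'$ because $x\sim_B y$) and the element $\mu^{-1}\in\Lambda$; here I would note that the single conull set for $\phi_\Lambda$ pulls back to a conull set of $y\in Y^\Gamma$, as $S$ is countable and so only countably many block conditions are imposed. Part (b) is where hypothesis (i) enters: applying $|\gamma S\triangle S|<\infty$ to $\gamma^{-1}$ shows that $\gamma^{-1}s\in S$, hence $\mu=e$ and $s'=\gamma^{-1}s$, for all but finitely many $s$; for those $s$ the two blocks reduce to $\phi_\Lambda(x^{(s')})$ and $\phi_\Lambda(y^{(s')})$, which agree whenever $x^{(s')}=y^{(s')}$. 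Since $x\sim_B y$ forces $x^{(s')}=y^{(s')}$ for all but finitely many $s'$ and $s\mapsto\gamma^{-1}s$ is injective, only finitely many blocks can differ. Bi-cofinite equivariance then follows by symmetry, because $\phi^{-1}$ is the blockwise lift of the cofinitely $\Lambda$-equivariant map $\phi_\Lambda^{-1}$ and hypothesis (i) is stated for every $\gamma\in\Gamma$.

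I expect the main obstacle to be the bookkeeping in part (b): one must show that the two independent sources of finite error --- the finitely many blocks on which the coset shuffle $\mu$ is nontrivial (controlled by (i)) and the finitely many blocks on which $x$ and $y$ genuinely differ (controlled by $x\sim_B y$) --- combine correctly. In particular, on blocks with $\mu\neq e$ one only obtains the relation $\sim_C$ rather than equality, so it is essential that there be only finitely many such blocks, after which each contributes only finitely many altered coordinates; verifying that these finite sets cannot conspire to produce infinitely many global discrepancies is the one place where genuine care is needed.
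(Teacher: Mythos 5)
Your construction is exactly the one in the paper: your blockwise map $\phi(x)_{s\lambda}=\phi_\Lambda(x^{(s)})_\lambda$ coincides with the paper's formula $\phi(y)_\gamma=\phi_\Lambda\big((\sigma(\gamma)^{-1}.y)|_\Lambda\big)_{\sigma(\gamma)^{-1}\gamma}$, your dichotomy $\mu=e$ versus $\mu\neq e$ is the paper's case split $s\in S\cap\gamma S$ versus $s\in S\setminus\gamma S$, and the product decomposition argument for measurability is identical. The proof is correct and takes essentially the same route; the only cosmetic difference is that you verify the perturbation in $x\sim_B y$ and the shift by $\gamma$ in a single pass, whereas the paper treats them as two separate steps.
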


\begin{proof}
A good example to keep in mind throughout the proof is the case where the group $\Gamma$ is the free group with free generators $a$ and $b$, the subgroup $\Lambda$ is the cyclic subgroup generated by $a$, the set $S$ consists of the identity element together with all reduced words whose rightmost letter is $b$ or $b^{-1}$, and the map $\phi _{\Lambda}$ is obtained from Theorem \ref{cofAmenable}. 

Let $\sigma :\Gamma \to S$ be the map sending the coset $\alpha \Lambda$ to its representative in $S$, and let $\rho : \Gamma \times \Gamma /\Lambda \rightarrow \Lambda$ denote the associated cocycle into $\Lambda$, defined by $\rho (\gamma , \alpha \Lambda ) = \sigma (\gamma \alpha )^{-1}\gamma \sigma (\alpha )$. Define $\phi :Y^{\Gamma} \rightarrow Z^{\Gamma}$ by 
\[
\phi (y)_\gamma=\phi _{\Lambda} \big((\sigma (\gamma )^{-1}.y)|_{\Lambda}\big)_{\sigma (\gamma )^{-1}\gamma} .
\]
See Figure \ref{fig:cof} for an illustration. 
Note that $\phi$ is bijective with inverse given by $\phi ^{-1}(z)_\gamma = \phi _{\Lambda}^{-1} ((\sigma (\gamma )^{-1}.z)|_{\Lambda})_{\sigma (\gamma )^{-1}\gamma}$. 

\tikzset{every picture/.style={line width=0.75pt}} %set default line width to 0.75pt        
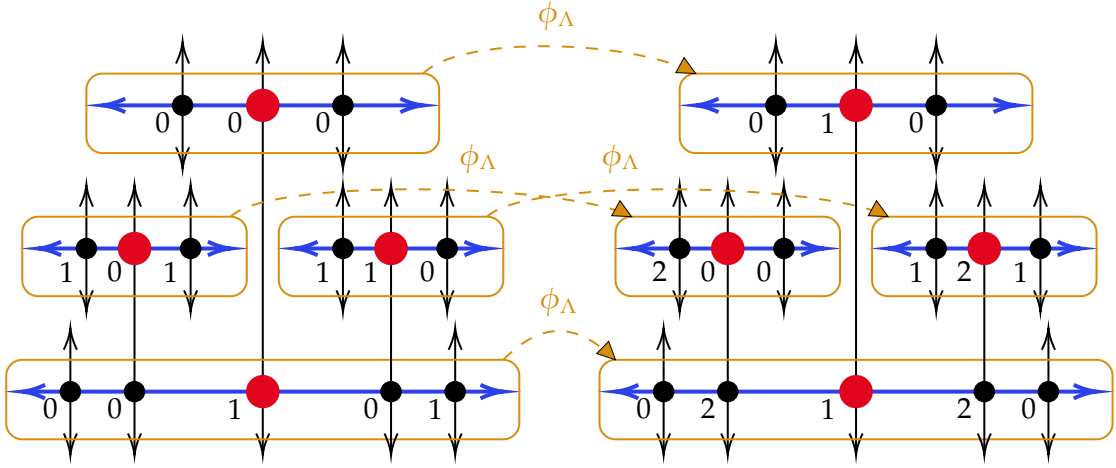
\begin{figure}[ht]
    \centering
    \begin{tikzpicture}[x=0.75pt,y=0.75pt,yscale=-1,xscale=1]

%Straight Lines [id:da6883523025508387] 
\draw [color={rgb, 255:red, 43; green, 64; blue, 233 }  ,draw opacity=1 ][line width=1.5]    (3,240) -- (237,240) ;
\draw [shift={(240,240)}, rotate = 180] [color={rgb, 255:red, 43; green, 64; blue, 233 }  ,draw opacity=1 ][line width=1.5]    (11.37,-3.42) .. controls (7.23,-1.45) and (3.44,-0.31) .. (0,0) .. controls (3.44,0.31) and (7.23,1.45) .. (11.37,3.42)   ;
\draw [shift={(0,240)}, rotate = 0] [color={rgb, 255:red, 43; green, 64; blue, 233 }  ,draw opacity=1 ][line width=1.5]    (11.37,-3.42) .. controls (7.23,-1.45) and (3.44,-0.31) .. (0,0) .. controls (3.44,0.31) and (7.23,1.45) .. (11.37,3.42)   ;
%Straight Lines [id:da19447660582251425] 
\draw [color={rgb, 255:red, 43; green, 64; blue, 233 }  ,draw opacity=1 ][line width=1.5]    (11,168) -- (101,168) ;
\draw [shift={(104,168)}, rotate = 180] [color={rgb, 255:red, 43; green, 64; blue, 233 }  ,draw opacity=1 ][line width=1.5]    (11.37,-3.42) .. controls (7.23,-1.45) and (3.44,-0.31) .. (0,0) .. controls (3.44,0.31) and (7.23,1.45) .. (11.37,3.42)   ;
\draw [shift={(8,168)}, rotate = 0] [color={rgb, 255:red, 43; green, 64; blue, 233 }  ,draw opacity=1 ][line width=1.5]    (11.37,-3.42) .. controls (7.23,-1.45) and (3.44,-0.31) .. (0,0) .. controls (3.44,0.31) and (7.23,1.45) .. (11.37,3.42)   ;
%Straight Lines [id:da12588880209456144] 
\draw [color={rgb, 255:red, 43; green, 64; blue, 233 }  ,draw opacity=1 ][line width=1.5]    (43,96) -- (197,96) ;
\draw [shift={(200,96)}, rotate = 180] [color={rgb, 255:red, 43; green, 64; blue, 233 }  ,draw opacity=1 ][line width=1.5]    (11.37,-3.42) .. controls (7.23,-1.45) and (3.44,-0.31) .. (0,0) .. controls (3.44,0.31) and (7.23,1.45) .. (11.37,3.42)   ;
\draw [shift={(40,96)}, rotate = 0] [color={rgb, 255:red, 43; green, 64; blue, 233 }  ,draw opacity=1 ][line width=1.5]    (11.37,-3.42) .. controls (7.23,-1.45) and (3.44,-0.31) .. (0,0) .. controls (3.44,0.31) and (7.23,1.45) .. (11.37,3.42)   ;
%Straight Lines [id:da5154946844089479] 
\draw    (120,66) -- (120,270) ;
\draw [shift={(120,272)}, rotate = 270] [color={rgb, 255:red, 0; green, 0; blue, 0 }  ][line width=0.75]    (10.93,-3.29) .. controls (6.95,-1.4) and (3.31,-0.3) .. (0,0) .. controls (3.31,0.3) and (6.95,1.4) .. (10.93,3.29)   ;
\draw [shift={(120,64)}, rotate = 90] [color={rgb, 255:red, 0; green, 0; blue, 0 }  ][line width=0.75]    (10.93,-3.29) .. controls (6.95,-1.4) and (3.31,-0.3) .. (0,0) .. controls (3.31,0.3) and (6.95,1.4) .. (10.93,3.29)   ;
%Straight Lines [id:da8482764963292482] 
\draw    (56,138) -- (56,270) ;
\draw [shift={(56,272)}, rotate = 270] [color={rgb, 255:red, 0; green, 0; blue, 0 }  ][line width=0.75]    (10.93,-3.29) .. controls (6.95,-1.4) and (3.31,-0.3) .. (0,0) .. controls (3.31,0.3) and (6.95,1.4) .. (10.93,3.29)   ;
\draw [shift={(56,136)}, rotate = 90] [color={rgb, 255:red, 0; green, 0; blue, 0 }  ][line width=0.75]    (10.93,-3.29) .. controls (6.95,-1.4) and (3.31,-0.3) .. (0,0) .. controls (3.31,0.3) and (6.95,1.4) .. (10.93,3.29)   ;
%Straight Lines [id:da20534347383597673] 
\draw    (24,210) -- (24,270) ;
\draw [shift={(24,272)}, rotate = 270] [color={rgb, 255:red, 0; green, 0; blue, 0 }  ][line width=0.75]    (10.93,-3.29) .. controls (6.95,-1.4) and (3.31,-0.3) .. (0,0) .. controls (3.31,0.3) and (6.95,1.4) .. (10.93,3.29)   ;
\draw [shift={(24,208)}, rotate = 90] [color={rgb, 255:red, 0; green, 0; blue, 0 }  ][line width=0.75]    (10.93,-3.29) .. controls (6.95,-1.4) and (3.31,-0.3) .. (0,0) .. controls (3.31,0.3) and (6.95,1.4) .. (10.93,3.29)   ;
%Straight Lines [id:da5602271341356622] 
\draw    (216,210) -- (216,270) ;
\draw [shift={(216,272)}, rotate = 270] [color={rgb, 255:red, 0; green, 0; blue, 0 }  ][line width=0.75]    (10.93,-3.29) .. controls (6.95,-1.4) and (3.31,-0.3) .. (0,0) .. controls (3.31,0.3) and (6.95,1.4) .. (10.93,3.29)   ;
\draw [shift={(216,208)}, rotate = 90] [color={rgb, 255:red, 0; green, 0; blue, 0 }  ][line width=0.75]    (10.93,-3.29) .. controls (6.95,-1.4) and (3.31,-0.3) .. (0,0) .. controls (3.31,0.3) and (6.95,1.4) .. (10.93,3.29)   ;
%Straight Lines [id:da568067185540736] 
\draw    (32,138) -- (32,198) ;
\draw [shift={(32,200)}, rotate = 270] [color={rgb, 255:red, 0; green, 0; blue, 0 }  ][line width=0.75]    (10.93,-3.29) .. controls (6.95,-1.4) and (3.31,-0.3) .. (0,0) .. controls (3.31,0.3) and (6.95,1.4) .. (10.93,3.29)   ;
\draw [shift={(32,136)}, rotate = 90] [color={rgb, 255:red, 0; green, 0; blue, 0 }  ][line width=0.75]    (10.93,-3.29) .. controls (6.95,-1.4) and (3.31,-0.3) .. (0,0) .. controls (3.31,0.3) and (6.95,1.4) .. (10.93,3.29)   ;
%Straight Lines [id:da9677657925981287] 
\draw    (84,138) -- (84,198) ;
\draw [shift={(84,200)}, rotate = 270] [color={rgb, 255:red, 0; green, 0; blue, 0 }  ][line width=0.75]    (10.93,-3.29) .. controls (6.95,-1.4) and (3.31,-0.3) .. (0,0) .. controls (3.31,0.3) and (6.95,1.4) .. (10.93,3.29)   ;
\draw [shift={(84,136)}, rotate = 90] [color={rgb, 255:red, 0; green, 0; blue, 0 }  ][line width=0.75]    (10.93,-3.29) .. controls (6.95,-1.4) and (3.31,-0.3) .. (0,0) .. controls (3.31,0.3) and (6.95,1.4) .. (10.93,3.29)   ;
%Straight Lines [id:da17713525959885934] 
\draw    (80,66) -- (80,126) ;
\draw [shift={(80,128)}, rotate = 270] [color={rgb, 255:red, 0; green, 0; blue, 0 }  ][line width=0.75]    (10.93,-3.29) .. controls (6.95,-1.4) and (3.31,-0.3) .. (0,0) .. controls (3.31,0.3) and (6.95,1.4) .. (10.93,3.29)   ;
\draw [shift={(80,64)}, rotate = 90] [color={rgb, 255:red, 0; green, 0; blue, 0 }  ][line width=0.75]    (10.93,-3.29) .. controls (6.95,-1.4) and (3.31,-0.3) .. (0,0) .. controls (3.31,0.3) and (6.95,1.4) .. (10.93,3.29)   ;
%Straight Lines [id:da5420408505688459] 
\draw    (160,66) -- (160,126) ;
\draw [shift={(160,128)}, rotate = 270] [color={rgb, 255:red, 0; green, 0; blue, 0 }  ][line width=0.75]    (10.93,-3.29) .. controls (6.95,-1.4) and (3.31,-0.3) .. (0,0) .. controls (3.31,0.3) and (6.95,1.4) .. (10.93,3.29)   ;
\draw [shift={(160,64)}, rotate = 90] [color={rgb, 255:red, 0; green, 0; blue, 0 }  ][line width=0.75]    (10.93,-3.29) .. controls (6.95,-1.4) and (3.31,-0.3) .. (0,0) .. controls (3.31,0.3) and (6.95,1.4) .. (10.93,3.29)   ;
%Straight Lines [id:da7930444538391419] 
\draw [line width=1.5]    (24,240) ;
\draw [shift={(24,240)}, rotate = 0] [color={rgb, 255:red, 0; green, 0; blue, 0 }  ][fill={rgb, 255:red, 0; green, 0; blue, 0 }  ][line width=1.5]      (0, 0) circle [x radius= 4.36, y radius= 4.36]   ;
%Straight Lines [id:da6596883972746148] 
\draw [line width=1.5]    (56,240) ;
\draw [shift={(56,240)}, rotate = 0] [color={rgb, 255:red, 0; green, 0; blue, 0 }  ][fill={rgb, 255:red, 0; green, 0; blue, 0 }  ][line width=1.5]      (0, 0) circle [x radius= 4.36, y radius= 4.36]   ;
%Straight Lines [id:da8243976932446004] 
\draw [line width=1.5]    (32,168) ;
\draw [shift={(32,168)}, rotate = 0] [color={rgb, 255:red, 0; green, 0; blue, 0 }  ][fill={rgb, 255:red, 0; green, 0; blue, 0 }  ][line width=1.5]      (0, 0) circle [x radius= 4.36, y radius= 4.36]   ;
%Straight Lines [id:da652504491312889] 
\draw [line width=1.5]    (84,168) ;
\draw [shift={(84,168)}, rotate = 0] [color={rgb, 255:red, 0; green, 0; blue, 0 }  ][fill={rgb, 255:red, 0; green, 0; blue, 0 }  ][line width=1.5]      (0, 0) circle [x radius= 4.36, y radius= 4.36]   ;
%Straight Lines [id:da9587309484407096] 
\draw [line width=1.5]    (80,96) ;
\draw [shift={(80,96)}, rotate = 0] [color={rgb, 255:red, 0; green, 0; blue, 0 }  ][fill={rgb, 255:red, 0; green, 0; blue, 0 }  ][line width=1.5]      (0, 0) circle [x radius= 4.36, y radius= 4.36]   ;
%Straight Lines [id:da32934469792317556] 
\draw [line width=1.5]    (160,96) ;
\draw [shift={(160,96)}, rotate = 0] [color={rgb, 255:red, 0; green, 0; blue, 0 }  ][fill={rgb, 255:red, 0; green, 0; blue, 0 }  ][line width=1.5]      (0, 0) circle [x radius= 4.36, y radius= 4.36]   ;
%Straight Lines [id:da875085584420747] 
\draw [line width=1.5]    (184,240) ;
\draw [shift={(184,240)}, rotate = 0] [color={rgb, 255:red, 0; green, 0; blue, 0 }  ][fill={rgb, 255:red, 0; green, 0; blue, 0 }  ][line width=1.5]      (0, 0) circle [x radius= 4.36, y radius= 4.36]   ;
%Straight Lines [id:da22446238880170988] 
\draw [line width=1.5]    (216,240) ;
\draw [shift={(216,240)}, rotate = 0] [color={rgb, 255:red, 0; green, 0; blue, 0 }  ][fill={rgb, 255:red, 0; green, 0; blue, 0 }  ][line width=1.5]      (0, 0) circle [x radius= 4.36, y radius= 4.36]   ;
%Straight Lines [id:da3874905323351423] 
\draw [color={rgb, 255:red, 226; green, 5; blue, 31 }  ,draw opacity=1 ][line width=3]    (120,240) ;
\draw [shift={(120,240)}, rotate = 0] [color={rgb, 255:red, 226; green, 5; blue, 31 }  ,draw opacity=1 ][fill={rgb, 255:red, 226; green, 5; blue, 31 }  ,fill opacity=1 ][line width=3]      (0, 0) circle [x radius= 6.37, y radius= 6.37]   ;
%Straight Lines [id:da28021576283120875] 
\draw [color={rgb, 255:red, 226; green, 5; blue, 31 }  ,draw opacity=1 ][line width=3]    (56,168) ;
\draw [shift={(56,168)}, rotate = 0] [color={rgb, 255:red, 226; green, 5; blue, 31 }  ,draw opacity=1 ][fill={rgb, 255:red, 226; green, 5; blue, 31 }  ,fill opacity=1 ][line width=3]      (0, 0) circle [x radius= 6.37, y radius= 6.37]   ;
%Straight Lines [id:da8529961329896467] 
\draw [color={rgb, 255:red, 226; green, 5; blue, 31 }  ,draw opacity=1 ][line width=3]    (120,96) ;
\draw [shift={(120,96)}, rotate = 0] [color={rgb, 255:red, 226; green, 5; blue, 31 }  ,draw opacity=1 ][fill={rgb, 255:red, 226; green, 5; blue, 31 }  ,fill opacity=1 ][line width=3]      (0, 0) circle [x radius= 6.37, y radius= 6.37]   ;
%Rounded Rect [id:dp5891965657365057] 
\draw  [color={rgb, 255:red, 218; green, 140; blue, 10 }  ,draw opacity=1 ] (-8,232) .. controls (-8,227.58) and (-4.42,224) .. (0,224) -- (240,224) .. controls (244.42,224) and (248,227.58) .. (248,232) -- (248,256) .. controls (248,260.42) and (244.42,264) .. (240,264) -- (0,264) .. controls (-4.42,264) and (-8,260.42) .. (-8,256) -- cycle ;
%Rounded Rect [id:dp3870188369252683] 
\draw  [color={rgb, 255:red, 218; green, 140; blue, 10 }  ,draw opacity=1 ] (0,160) .. controls (0,155.58) and (3.58,152) .. (8,152) -- (104,152) .. controls (108.42,152) and (112,155.58) .. (112,160) -- (112,184) .. controls (112,188.42) and (108.42,192) .. (104,192) -- (8,192) .. controls (3.58,192) and (0,188.42) .. (0,184) -- cycle ;
%Rounded Rect [id:dp8178695026590872] 
\draw  [color={rgb, 255:red, 218; green, 140; blue, 10 }  ,draw opacity=1 ] (32,88) .. controls (32,83.58) and (35.58,80) .. (40,80) -- (200,80) .. controls (204.42,80) and (208,83.58) .. (208,88) -- (208,112) .. controls (208,116.42) and (204.42,120) .. (200,120) -- (40,120) .. controls (35.58,120) and (32,116.42) .. (32,112) -- cycle ;
%Straight Lines [id:da22126715665432095] 
\draw [color={rgb, 255:red, 43; green, 64; blue, 233 }  ,draw opacity=1 ][line width=1.5]    (139,168) -- (229,168) ;
\draw [shift={(232,168)}, rotate = 180] [color={rgb, 255:red, 43; green, 64; blue, 233 }  ,draw opacity=1 ][line width=1.5]    (11.37,-3.42) .. controls (7.23,-1.45) and (3.44,-0.31) .. (0,0) .. controls (3.44,0.31) and (7.23,1.45) .. (11.37,3.42)   ;
\draw [shift={(136,168)}, rotate = 0] [color={rgb, 255:red, 43; green, 64; blue, 233 }  ,draw opacity=1 ][line width=1.5]    (11.37,-3.42) .. controls (7.23,-1.45) and (3.44,-0.31) .. (0,0) .. controls (3.44,0.31) and (7.23,1.45) .. (11.37,3.42)   ;
%Straight Lines [id:da8661699365938622] 
\draw    (184,138) -- (184,270) ;
\draw [shift={(184,272)}, rotate = 270] [color={rgb, 255:red, 0; green, 0; blue, 0 }  ][line width=0.75]    (10.93,-3.29) .. controls (6.95,-1.4) and (3.31,-0.3) .. (0,0) .. controls (3.31,0.3) and (6.95,1.4) .. (10.93,3.29)   ;
\draw [shift={(184,136)}, rotate = 90] [color={rgb, 255:red, 0; green, 0; blue, 0 }  ][line width=0.75]    (10.93,-3.29) .. controls (6.95,-1.4) and (3.31,-0.3) .. (0,0) .. controls (3.31,0.3) and (6.95,1.4) .. (10.93,3.29)   ;
%Straight Lines [id:da7372394025486311] 
\draw    (160,138) -- (160,198) ;
\draw [shift={(160,200)}, rotate = 270] [color={rgb, 255:red, 0; green, 0; blue, 0 }  ][line width=0.75]    (10.93,-3.29) .. controls (6.95,-1.4) and (3.31,-0.3) .. (0,0) .. controls (3.31,0.3) and (6.95,1.4) .. (10.93,3.29)   ;
\draw [shift={(160,136)}, rotate = 90] [color={rgb, 255:red, 0; green, 0; blue, 0 }  ][line width=0.75]    (10.93,-3.29) .. controls (6.95,-1.4) and (3.31,-0.3) .. (0,0) .. controls (3.31,0.3) and (6.95,1.4) .. (10.93,3.29)   ;
%Straight Lines [id:da30469479366481056] 
\draw    (212,138) -- (212,198) ;
\draw [shift={(212,200)}, rotate = 270] [color={rgb, 255:red, 0; green, 0; blue, 0 }  ][line width=0.75]    (10.93,-3.29) .. controls (6.95,-1.4) and (3.31,-0.3) .. (0,0) .. controls (3.31,0.3) and (6.95,1.4) .. (10.93,3.29)   ;
\draw [shift={(212,136)}, rotate = 90] [color={rgb, 255:red, 0; green, 0; blue, 0 }  ][line width=0.75]    (10.93,-3.29) .. controls (6.95,-1.4) and (3.31,-0.3) .. (0,0) .. controls (3.31,0.3) and (6.95,1.4) .. (10.93,3.29)   ;
%Straight Lines [id:da6894427848836324] 
\draw [line width=1.5]    (160,168) ;
\draw [shift={(160,168)}, rotate = 0] [color={rgb, 255:red, 0; green, 0; blue, 0 }  ][fill={rgb, 255:red, 0; green, 0; blue, 0 }  ][line width=1.5]      (0, 0) circle [x radius= 4.36, y radius= 4.36]   ;
%Straight Lines [id:da5345652655925273] 
\draw [line width=1.5]    (212,168) ;
\draw [shift={(212,168)}, rotate = 0] [color={rgb, 255:red, 0; green, 0; blue, 0 }  ][fill={rgb, 255:red, 0; green, 0; blue, 0 }  ][line width=1.5]      (0, 0) circle [x radius= 4.36, y radius= 4.36]   ;
%Straight Lines [id:da2721576080840924] 
\draw [color={rgb, 255:red, 226; green, 5; blue, 31 }  ,draw opacity=1 ][line width=3]    (184,168) ;
\draw [shift={(184,168)}, rotate = 0] [color={rgb, 255:red, 226; green, 5; blue, 31 }  ,draw opacity=1 ][fill={rgb, 255:red, 226; green, 5; blue, 31 }  ,fill opacity=1 ][line width=3]      (0, 0) circle [x radius= 6.37, y radius= 6.37]   ;
%Rounded Rect [id:dp04063555927526741] 
\draw  [color={rgb, 255:red, 218; green, 140; blue, 10 }  ,draw opacity=1 ] (128,160) .. controls (128,155.58) and (131.58,152) .. (136,152) -- (232,152) .. controls (236.42,152) and (240,155.58) .. (240,160) -- (240,184) .. controls (240,188.42) and (236.42,192) .. (232,192) -- (136,192) .. controls (131.58,192) and (128,188.42) .. (128,184) -- cycle ;
%Straight Lines [id:da5017628589451366] 
\draw [color={rgb, 255:red, 43; green, 64; blue, 233 }  ,draw opacity=1 ][line width=1.5]    (299,240) -- (533,240) ;
\draw [shift={(536,240)}, rotate = 180] [color={rgb, 255:red, 43; green, 64; blue, 233 }  ,draw opacity=1 ][line width=1.5]    (11.37,-3.42) .. controls (7.23,-1.45) and (3.44,-0.31) .. (0,0) .. controls (3.44,0.31) and (7.23,1.45) .. (11.37,3.42)   ;
\draw [shift={(296,240)}, rotate = 0] [color={rgb, 255:red, 43; green, 64; blue, 233 }  ,draw opacity=1 ][line width=1.5]    (11.37,-3.42) .. controls (7.23,-1.45) and (3.44,-0.31) .. (0,0) .. controls (3.44,0.31) and (7.23,1.45) .. (11.37,3.42)   ;
%Straight Lines [id:da8477465135405737] 
\draw [color={rgb, 255:red, 43; green, 64; blue, 233 }  ,draw opacity=1 ][line width=1.5]    (307,168) -- (397,168) ;
\draw [shift={(400,168)}, rotate = 180] [color={rgb, 255:red, 43; green, 64; blue, 233 }  ,draw opacity=1 ][line width=1.5]    (11.37,-3.42) .. controls (7.23,-1.45) and (3.44,-0.31) .. (0,0) .. controls (3.44,0.31) and (7.23,1.45) .. (11.37,3.42)   ;
\draw [shift={(304,168)}, rotate = 0] [color={rgb, 255:red, 43; green, 64; blue, 233 }  ,draw opacity=1 ][line width=1.5]    (11.37,-3.42) .. controls (7.23,-1.45) and (3.44,-0.31) .. (0,0) .. controls (3.44,0.31) and (7.23,1.45) .. (11.37,3.42)   ;
%Straight Lines [id:da17727177780750392] 
\draw [color={rgb, 255:red, 43; green, 64; blue, 233 }  ,draw opacity=1 ][line width=1.5]    (339,96) -- (493,96) ;
\draw [shift={(496,96)}, rotate = 180] [color={rgb, 255:red, 43; green, 64; blue, 233 }  ,draw opacity=1 ][line width=1.5]    (11.37,-3.42) .. controls (7.23,-1.45) and (3.44,-0.31) .. (0,0) .. controls (3.44,0.31) and (7.23,1.45) .. (11.37,3.42)   ;
\draw [shift={(336,96)}, rotate = 0] [color={rgb, 255:red, 43; green, 64; blue, 233 }  ,draw opacity=1 ][line width=1.5]    (11.37,-3.42) .. controls (7.23,-1.45) and (3.44,-0.31) .. (0,0) .. controls (3.44,0.31) and (7.23,1.45) .. (11.37,3.42)   ;
%Straight Lines [id:da4188910272647468] 
\draw    (416,66) -- (416,270) ;
\draw [shift={(416,272)}, rotate = 270] [color={rgb, 255:red, 0; green, 0; blue, 0 }  ][line width=0.75]    (10.93,-3.29) .. controls (6.95,-1.4) and (3.31,-0.3) .. (0,0) .. controls (3.31,0.3) and (6.95,1.4) .. (10.93,3.29)   ;
\draw [shift={(416,64)}, rotate = 90] [color={rgb, 255:red, 0; green, 0; blue, 0 }  ][line width=0.75]    (10.93,-3.29) .. controls (6.95,-1.4) and (3.31,-0.3) .. (0,0) .. controls (3.31,0.3) and (6.95,1.4) .. (10.93,3.29)   ;
%Straight Lines [id:da354208602076304] 
\draw    (352,138) -- (352,270) ;
\draw [shift={(352,272)}, rotate = 270] [color={rgb, 255:red, 0; green, 0; blue, 0 }  ][line width=0.75]    (10.93,-3.29) .. controls (6.95,-1.4) and (3.31,-0.3) .. (0,0) .. controls (3.31,0.3) and (6.95,1.4) .. (10.93,3.29)   ;
\draw [shift={(352,136)}, rotate = 90] [color={rgb, 255:red, 0; green, 0; blue, 0 }  ][line width=0.75]    (10.93,-3.29) .. controls (6.95,-1.4) and (3.31,-0.3) .. (0,0) .. controls (3.31,0.3) and (6.95,1.4) .. (10.93,3.29)   ;
%Straight Lines [id:da8940964739058646] 
\draw    (320,210) -- (320,270) ;
\draw [shift={(320,272)}, rotate = 270] [color={rgb, 255:red, 0; green, 0; blue, 0 }  ][line width=0.75]    (10.93,-3.29) .. controls (6.95,-1.4) and (3.31,-0.3) .. (0,0) .. controls (3.31,0.3) and (6.95,1.4) .. (10.93,3.29)   ;
\draw [shift={(320,208)}, rotate = 90] [color={rgb, 255:red, 0; green, 0; blue, 0 }  ][line width=0.75]    (10.93,-3.29) .. controls (6.95,-1.4) and (3.31,-0.3) .. (0,0) .. controls (3.31,0.3) and (6.95,1.4) .. (10.93,3.29)   ;
%Straight Lines [id:da10514755763193584] 
\draw    (512,210) -- (512,270) ;
\draw [shift={(512,272)}, rotate = 270] [color={rgb, 255:red, 0; green, 0; blue, 0 }  ][line width=0.75]    (10.93,-3.29) .. controls (6.95,-1.4) and (3.31,-0.3) .. (0,0) .. controls (3.31,0.3) and (6.95,1.4) .. (10.93,3.29)   ;
\draw [shift={(512,208)}, rotate = 90] [color={rgb, 255:red, 0; green, 0; blue, 0 }  ][line width=0.75]    (10.93,-3.29) .. controls (6.95,-1.4) and (3.31,-0.3) .. (0,0) .. controls (3.31,0.3) and (6.95,1.4) .. (10.93,3.29)   ;
%Straight Lines [id:da8269909833263946] 
\draw    (328,138) -- (328,198) ;
\draw [shift={(328,200)}, rotate = 270] [color={rgb, 255:red, 0; green, 0; blue, 0 }  ][line width=0.75]    (10.93,-3.29) .. controls (6.95,-1.4) and (3.31,-0.3) .. (0,0) .. controls (3.31,0.3) and (6.95,1.4) .. (10.93,3.29)   ;
\draw [shift={(328,136)}, rotate = 90] [color={rgb, 255:red, 0; green, 0; blue, 0 }  ][line width=0.75]    (10.93,-3.29) .. controls (6.95,-1.4) and (3.31,-0.3) .. (0,0) .. controls (3.31,0.3) and (6.95,1.4) .. (10.93,3.29)   ;
%Straight Lines [id:da9966894641842421] 
\draw    (380,138) -- (380,198) ;
\draw [shift={(380,200)}, rotate = 270] [color={rgb, 255:red, 0; green, 0; blue, 0 }  ][line width=0.75]    (10.93,-3.29) .. controls (6.95,-1.4) and (3.31,-0.3) .. (0,0) .. controls (3.31,0.3) and (6.95,1.4) .. (10.93,3.29)   ;
\draw [shift={(380,136)}, rotate = 90] [color={rgb, 255:red, 0; green, 0; blue, 0 }  ][line width=0.75]    (10.93,-3.29) .. controls (6.95,-1.4) and (3.31,-0.3) .. (0,0) .. controls (3.31,0.3) and (6.95,1.4) .. (10.93,3.29)   ;
%Straight Lines [id:da6494953086801254] 
\draw    (376,66) -- (376,126) ;
\draw [shift={(376,128)}, rotate = 270] [color={rgb, 255:red, 0; green, 0; blue, 0 }  ][line width=0.75]    (10.93,-3.29) .. controls (6.95,-1.4) and (3.31,-0.3) .. (0,0) .. controls (3.31,0.3) and (6.95,1.4) .. (10.93,3.29)   ;
\draw [shift={(376,64)}, rotate = 90] [color={rgb, 255:red, 0; green, 0; blue, 0 }  ][line width=0.75]    (10.93,-3.29) .. controls (6.95,-1.4) and (3.31,-0.3) .. (0,0) .. controls (3.31,0.3) and (6.95,1.4) .. (10.93,3.29)   ;
%Straight Lines [id:da9951432531884212] 
\draw    (456,66) -- (456,126) ;
\draw [shift={(456,128)}, rotate = 270] [color={rgb, 255:red, 0; green, 0; blue, 0 }  ][line width=0.75]    (10.93,-3.29) .. controls (6.95,-1.4) and (3.31,-0.3) .. (0,0) .. controls (3.31,0.3) and (6.95,1.4) .. (10.93,3.29)   ;
\draw [shift={(456,64)}, rotate = 90] [color={rgb, 255:red, 0; green, 0; blue, 0 }  ][line width=0.75]    (10.93,-3.29) .. controls (6.95,-1.4) and (3.31,-0.3) .. (0,0) .. controls (3.31,0.3) and (6.95,1.4) .. (10.93,3.29)   ;
%Straight Lines [id:da020680709654006035] 
\draw [line width=1.5]    (320,240) ;
\draw [shift={(320,240)}, rotate = 0] [color={rgb, 255:red, 0; green, 0; blue, 0 }  ][fill={rgb, 255:red, 0; green, 0; blue, 0 }  ][line width=1.5]      (0, 0) circle [x radius= 4.36, y radius= 4.36]   ;
%Straight Lines [id:da6634467031639741] 
\draw [line width=1.5]    (352,240) ;
\draw [shift={(352,240)}, rotate = 0] [color={rgb, 255:red, 0; green, 0; blue, 0 }  ][fill={rgb, 255:red, 0; green, 0; blue, 0 }  ][line width=1.5]      (0, 0) circle [x radius= 4.36, y radius= 4.36]   ;
%Straight Lines [id:da7472521159057546] 
\draw [line width=1.5]    (328,168) ;
\draw [shift={(328,168)}, rotate = 0] [color={rgb, 255:red, 0; green, 0; blue, 0 }  ][fill={rgb, 255:red, 0; green, 0; blue, 0 }  ][line width=1.5]      (0, 0) circle [x radius= 4.36, y radius= 4.36]   ;
%Straight Lines [id:da4897645022673571] 
\draw [line width=1.5]    (380,168) ;
\draw [shift={(380,168)}, rotate = 0] [color={rgb, 255:red, 0; green, 0; blue, 0 }  ][fill={rgb, 255:red, 0; green, 0; blue, 0 }  ][line width=1.5]      (0, 0) circle [x radius= 4.36, y radius= 4.36]   ;
%Straight Lines [id:da550233520084741] 
\draw [line width=1.5]    (376,96) ;
\draw [shift={(376,96)}, rotate = 0] [color={rgb, 255:red, 0; green, 0; blue, 0 }  ][fill={rgb, 255:red, 0; green, 0; blue, 0 }  ][line width=1.5]      (0, 0) circle [x radius= 4.36, y radius= 4.36]   ;
%Straight Lines [id:da8575542169673644] 
\draw [line width=1.5]    (456,96) ;
\draw [shift={(456,96)}, rotate = 0] [color={rgb, 255:red, 0; green, 0; blue, 0 }  ][fill={rgb, 255:red, 0; green, 0; blue, 0 }  ][line width=1.5]      (0, 0) circle [x radius= 4.36, y radius= 4.36]   ;
%Straight Lines [id:da37474835325542166] 
\draw [line width=1.5]    (480,240) ;
\draw [shift={(480,240)}, rotate = 0] [color={rgb, 255:red, 0; green, 0; blue, 0 }  ][fill={rgb, 255:red, 0; green, 0; blue, 0 }  ][line width=1.5]      (0, 0) circle [x radius= 4.36, y radius= 4.36]   ;
%Straight Lines [id:da4219571242029878] 
\draw [line width=1.5]    (512,240) ;
\draw [shift={(512,240)}, rotate = 0] [color={rgb, 255:red, 0; green, 0; blue, 0 }  ][fill={rgb, 255:red, 0; green, 0; blue, 0 }  ][line width=1.5]      (0, 0) circle [x radius= 4.36, y radius= 4.36]   ;
%Straight Lines [id:da7271851942681822] 
\draw [color={rgb, 255:red, 226; green, 5; blue, 31 }  ,draw opacity=1 ][line width=3]    (416,240) ;
\draw [shift={(416,240)}, rotate = 0] [color={rgb, 255:red, 226; green, 5; blue, 31 }  ,draw opacity=1 ][fill={rgb, 255:red, 226; green, 5; blue, 31 }  ,fill opacity=1 ][line width=3]      (0, 0) circle [x radius= 6.37, y radius= 6.37]   ;
%Straight Lines [id:da9745041352377912] 
\draw [color={rgb, 255:red, 226; green, 5; blue, 31 }  ,draw opacity=1 ][line width=3]    (352,168) ;
\draw [shift={(352,168)}, rotate = 0] [color={rgb, 255:red, 226; green, 5; blue, 31 }  ,draw opacity=1 ][fill={rgb, 255:red, 226; green, 5; blue, 31 }  ,fill opacity=1 ][line width=3]      (0, 0) circle [x radius= 6.37, y radius= 6.37]   ;
%Straight Lines [id:da7134673548648549] 
\draw [color={rgb, 255:red, 226; green, 5; blue, 31 }  ,draw opacity=1 ][line width=3]    (416,96) ;
\draw [shift={(416,96)}, rotate = 0] [color={rgb, 255:red, 226; green, 5; blue, 31 }  ,draw opacity=1 ][fill={rgb, 255:red, 226; green, 5; blue, 31 }  ,fill opacity=1 ][line width=3]      (0, 0) circle [x radius= 6.37, y radius= 6.37]   ;
%Rounded Rect [id:dp32292093218350915] 
\draw  [color={rgb, 255:red, 218; green, 140; blue, 10 }  ,draw opacity=1 ] (288,232) .. controls (288,227.58) and (291.58,224) .. (296,224) -- (536,224) .. controls (540.42,224) and (544,227.58) .. (544,232) -- (544,256) .. controls (544,260.42) and (540.42,264) .. (536,264) -- (296,264) .. controls (291.58,264) and (288,260.42) .. (288,256) -- cycle ;
%Rounded Rect [id:dp8508051921400417] 
\draw  [color={rgb, 255:red, 218; green, 140; blue, 10 }  ,draw opacity=1 ] (296,160) .. controls (296,155.58) and (299.58,152) .. (304,152) -- (400,152) .. controls (404.42,152) and (408,155.58) .. (408,160) -- (408,184) .. controls (408,188.42) and (404.42,192) .. (400,192) -- (304,192) .. controls (299.58,192) and (296,188.42) .. (296,184) -- cycle ;
%Rounded Rect [id:dp8504907278641793] 
\draw  [color={rgb, 255:red, 218; green, 140; blue, 10 }  ,draw opacity=1 ] (328,88) .. controls (328,83.58) and (331.58,80) .. (336,80) -- (496,80) .. controls (500.42,80) and (504,83.58) .. (504,88) -- (504,112) .. controls (504,116.42) and (500.42,120) .. (496,120) -- (336,120) .. controls (331.58,120) and (328,116.42) .. (328,112) -- cycle ;
%Straight Lines [id:da36037723543667544] 
\draw [color={rgb, 255:red, 43; green, 64; blue, 233 }  ,draw opacity=1 ][line width=1.5]    (435,168) -- (525,168) ;
\draw [shift={(528,168)}, rotate = 180] [color={rgb, 255:red, 43; green, 64; blue, 233 }  ,draw opacity=1 ][line width=1.5]    (11.37,-3.42) .. controls (7.23,-1.45) and (3.44,-0.31) .. (0,0) .. controls (3.44,0.31) and (7.23,1.45) .. (11.37,3.42)   ;
\draw [shift={(432,168)}, rotate = 0] [color={rgb, 255:red, 43; green, 64; blue, 233 }  ,draw opacity=1 ][line width=1.5]    (11.37,-3.42) .. controls (7.23,-1.45) and (3.44,-0.31) .. (0,0) .. controls (3.44,0.31) and (7.23,1.45) .. (11.37,3.42)   ;
%Straight Lines [id:da2915305864763795] 
\draw    (480,138) -- (480,270) ;
\draw [shift={(480,272)}, rotate = 270] [color={rgb, 255:red, 0; green, 0; blue, 0 }  ][line width=0.75]    (10.93,-3.29) .. controls (6.95,-1.4) and (3.31,-0.3) .. (0,0) .. controls (3.31,0.3) and (6.95,1.4) .. (10.93,3.29)   ;
\draw [shift={(480,136)}, rotate = 90] [color={rgb, 255:red, 0; green, 0; blue, 0 }  ][line width=0.75]    (10.93,-3.29) .. controls (6.95,-1.4) and (3.31,-0.3) .. (0,0) .. controls (3.31,0.3) and (6.95,1.4) .. (10.93,3.29)   ;
%Straight Lines [id:da7645931045340568] 
\draw    (456,138) -- (456,198) ;
\draw [shift={(456,200)}, rotate = 270] [color={rgb, 255:red, 0; green, 0; blue, 0 }  ][line width=0.75]    (10.93,-3.29) .. controls (6.95,-1.4) and (3.31,-0.3) .. (0,0) .. controls (3.31,0.3) and (6.95,1.4) .. (10.93,3.29)   ;
\draw [shift={(456,136)}, rotate = 90] [color={rgb, 255:red, 0; green, 0; blue, 0 }  ][line width=0.75]    (10.93,-3.29) .. controls (6.95,-1.4) and (3.31,-0.3) .. (0,0) .. controls (3.31,0.3) and (6.95,1.4) .. (10.93,3.29)   ;
%Straight Lines [id:da19974288076626556] 
\draw    (508,138) -- (508,198) ;
\draw [shift={(508,200)}, rotate = 270] [color={rgb, 255:red, 0; green, 0; blue, 0 }  ][line width=0.75]    (10.93,-3.29) .. controls (6.95,-1.4) and (3.31,-0.3) .. (0,0) .. controls (3.31,0.3) and (6.95,1.4) .. (10.93,3.29)   ;
\draw [shift={(508,136)}, rotate = 90] [color={rgb, 255:red, 0; green, 0; blue, 0 }  ][line width=0.75]    (10.93,-3.29) .. controls (6.95,-1.4) and (3.31,-0.3) .. (0,0) .. controls (3.31,0.3) and (6.95,1.4) .. (10.93,3.29)   ;
%Straight Lines [id:da5276057084835517] 
\draw [line width=1.5]    (456,168) ;
\draw [shift={(456,168)}, rotate = 0] [color={rgb, 255:red, 0; green, 0; blue, 0 }  ][fill={rgb, 255:red, 0; green, 0; blue, 0 }  ][line width=1.5]      (0, 0) circle [x radius= 4.36, y radius= 4.36]   ;
%Straight Lines [id:da41919790179898153] 
\draw [line width=1.5]    (508,168) ;
\draw [shift={(508,168)}, rotate = 0] [color={rgb, 255:red, 0; green, 0; blue, 0 }  ][fill={rgb, 255:red, 0; green, 0; blue, 0 }  ][line width=1.5]      (0, 0) circle [x radius= 4.36, y radius= 4.36]   ;
%Straight Lines [id:da7856089050701138] 
\draw [color={rgb, 255:red, 226; green, 5; blue, 31 }  ,draw opacity=1 ][line width=3]    (480,168) ;
\draw [shift={(480,168)}, rotate = 0] [color={rgb, 255:red, 226; green, 5; blue, 31 }  ,draw opacity=1 ][fill={rgb, 255:red, 226; green, 5; blue, 31 }  ,fill opacity=1 ][line width=3]      (0, 0) circle [x radius= 6.37, y radius= 6.37]   ;
%Rounded Rect [id:dp9707968657809425] 
\draw  [color={rgb, 255:red, 218; green, 140; blue, 10 }  ,draw opacity=1 ] (424,160) .. controls (424,155.58) and (427.58,152) .. (432,152) -- (528,152) .. controls (532.42,152) and (536,155.58) .. (536,160) -- (536,184) .. controls (536,188.42) and (532.42,192) .. (528,192) -- (432,192) .. controls (427.58,192) and (424,188.42) .. (424,184) -- cycle ;
%Curve Lines [id:da7527110834012443] 
\draw [color={rgb, 255:red, 218; green, 140; blue, 10 }  ,draw opacity=1 ] [dash pattern={on 4.5pt off 4.5pt}]  (240,224) .. controls (258.91,204.84) and (275.41,205.11) .. (293.97,222.09) ;
\draw [shift={(296,224)}, rotate = 224.18] [fill={rgb, 255:red, 218; green, 140; blue, 10 }  ,fill opacity=1 ][line width=0.08]  [draw opacity=0] (8.93,-4.29) -- (0,0) -- (8.93,4.29) -- cycle    ;
%Curve Lines [id:da07322882547397813] 
\draw [color={rgb, 255:red, 218; green, 140; blue, 10 }  ,draw opacity=1 ] [dash pattern={on 4.5pt off 4.5pt}]  (104,152) .. controls (123.11,132.65) and (265.71,135.01) .. (301.46,150.76) ;
\draw [shift={(304,152)}, rotate = 208.6] [fill={rgb, 255:red, 218; green, 140; blue, 10 }  ,fill opacity=1 ][line width=0.08]  [draw opacity=0] (8.93,-4.29) -- (0,0) -- (8.93,4.29) -- cycle    ;
%Curve Lines [id:da08819951890649191] 
\draw [color={rgb, 255:red, 218; green, 140; blue, 10 }  ,draw opacity=1 ] [dash pattern={on 4.5pt off 4.5pt}]  (232,152) .. controls (251.11,132.65) and (393.71,135.01) .. (429.46,150.76) ;
\draw [shift={(432,152)}, rotate = 208.6] [fill={rgb, 255:red, 218; green, 140; blue, 10 }  ,fill opacity=1 ][line width=0.08]  [draw opacity=0] (8.93,-4.29) -- (0,0) -- (8.93,4.29) -- cycle    ;
%Curve Lines [id:da30702059155551176] 
\draw [color={rgb, 255:red, 218; green, 140; blue, 10 }  ,draw opacity=1 ] [dash pattern={on 4.5pt off 4.5pt}]  (200,80) .. controls (219.11,60.65) and (300.87,63.01) .. (333.58,78.76) ;
\draw [shift={(336,80)}, rotate = 208.6] [fill={rgb, 255:red, 218; green, 140; blue, 10 }  ,fill opacity=1 ][line width=0.08]  [draw opacity=0] (8.93,-4.29) -- (0,0) -- (8.93,4.29) -- cycle    ;

% Text Node
\draw (9,242) node [anchor=north west][inner sep=0.75pt]   [align=left] {0};
% Text Node
\draw (41,242) node [anchor=north west][inner sep=0.75pt]   [align=left] {0};
% Text Node
\draw (101,242) node [anchor=north west][inner sep=0.75pt]   [align=left] {1};
% Text Node
\draw (201,242) node [anchor=north west][inner sep=0.75pt]   [align=left] {1};
% Text Node
\draw (169,242) node [anchor=north west][inner sep=0.75pt]   [align=left] {0};
% Text Node
\draw (17,173) node [anchor=north west][inner sep=0.75pt]   [align=left] {1};
% Text Node
\draw (41,173) node [anchor=north west][inner sep=0.75pt]   [align=left] {0};
% Text Node
\draw (69,173) node [anchor=north west][inner sep=0.75pt]   [align=left] {1};
% Text Node
\draw (65,98) node [anchor=north west][inner sep=0.75pt]   [align=left] {0};
% Text Node
\draw (101,98) node [anchor=north west][inner sep=0.75pt]   [align=left] {0};
% Text Node
\draw (145,98) node [anchor=north west][inner sep=0.75pt]   [align=left] {0};
% Text Node
\draw (145,173) node [anchor=north west][inner sep=0.75pt]   [align=left] {1};
% Text Node
\draw (169,173) node [anchor=north west][inner sep=0.75pt]   [align=left] {1};
% Text Node
\draw (197,173) node [anchor=north west][inner sep=0.75pt]   [align=left] {0};
% Text Node
\draw (305,242) node [anchor=north west][inner sep=0.75pt]   [align=left] {0};
% Text Node
\draw (337,242) node [anchor=north west][inner sep=0.75pt]   [align=left] {2};
% Text Node
\draw (397,242) node [anchor=north west][inner sep=0.75pt]   [align=left] {1};
% Text Node
\draw (497,242) node [anchor=north west][inner sep=0.75pt]   [align=left] {0};
% Text Node
\draw (465,242) node [anchor=north west][inner sep=0.75pt]   [align=left] {2};
% Text Node
\draw (313,173) node [anchor=north west][inner sep=0.75pt]   [align=left] {2};
% Text Node
\draw (337,173) node [anchor=north west][inner sep=0.75pt]   [align=left] {0};
% Text Node
\draw (365,173) node [anchor=north west][inner sep=0.75pt]   [align=left] {0};
% Text Node
\draw (361,98) node [anchor=north west][inner sep=0.75pt]   [align=left] {0};
% Text Node
\draw (397,98) node [anchor=north west][inner sep=0.75pt]   [align=left] {1};
% Text Node
\draw (441,98) node [anchor=north west][inner sep=0.75pt]   [align=left] {0};
% Text Node
\draw (441,173) node [anchor=north west][inner sep=0.75pt]   [align=left] {1};
% Text Node
\draw (465,173) node [anchor=north west][inner sep=0.75pt]   [align=left] {2};
% Text Node
\draw (493,173) node [anchor=north west][inner sep=0.75pt]   [align=left] {1};
% Text Node
\draw (256,42) node [anchor=north west][inner sep=0.75pt]  [color={rgb, 255:red, 218; green, 140; blue, 10 }  ,opacity=1 ] [align=left] {$\displaystyle \phi _{\Lambda }$};
% Text Node
\draw (217,114) node [anchor=north west][inner sep=0.75pt]  [color={rgb, 255:red, 218; green, 140; blue, 10 }  ,opacity=1 ] [align=left] {$\displaystyle \phi _{\Lambda }$};
% Text Node
\draw (288,114) node [anchor=north west][inner sep=0.75pt]  [color={rgb, 255:red, 218; green, 140; blue, 10 }  ,opacity=1 ] [align=left] {$\displaystyle \phi _{\Lambda }$};
% Text Node
\draw (257,186) node [anchor=north west][inner sep=0.75pt]  [color={rgb, 255:red, 218; green, 140; blue, 10 }  ,opacity=1 ] [align=left] {$\displaystyle \phi _{\Lambda} $};
\end{tikzpicture}
    \caption{The map $\phi: \{0,1\}^{\mathbf{F}_2}\to \{0,1,2\}^{\mathbf{F}_2}$. Each rectangle represents a coset of $\Lambda=\langle a\rangle$ and the red vertices represent the elements of the set $S$.}
    \label{fig:cof}
\end{figure}

We first show that $\phi$ is cofinitely equivariant.
Given $y_0,y_1\in Y^{\Gamma}$ with $y_0\sim_B y_1$, we have the equality $(s^{-1}.y_0)|_{\Lambda}= (s^{-1}.y_1)|_{\Lambda}$ for all but finitely many $s\in S$. For each $s\in S$ for which this equality holds, we have that $\phi (y_0)_\gamma=\phi (y_1)_\gamma$ for all $\gamma \in s\Lambda$.
For each of the finitely many $s\in S$ for which the equality does not hold, we still have the almost equality $(s^{-1}.y_0)|_{\Lambda}\sim_B (s^{-1}.y_1)|_{\Lambda}$. 
Thus, since $\phi _{\Lambda}$ is bi-cofinitely equivariant we see that $\phi (y_0)_\gamma=\phi (y_1)_\gamma $ for all but finitely many $\gamma \in s\Lambda$, and therefore $\phi (y_0)\sim_C \phi (y_1)$.

To show that $\phi$ is cofinitely equivariant, it remains to show that for $\delta \in \Gamma$ and $y\in Y^{\Gamma}$ we have $\phi (\delta .y)\sim_C \delta .\phi (y)$. For each $s\in S$ and $\gamma \in s\Lambda$ we have 
\begin{align}
\nonumber \phi (\delta .y)_\gamma &= \phi _{\Lambda} \big((s^{-1}\delta .y )|_{\Lambda} \big)_{s^{-1}\gamma} \\
\label{eqn:cofeq} (\delta .\phi (y) )_\gamma  &= \phi _{\Lambda}\big((\sigma (\delta ^{-1}\gamma )^{-1}.y)|_{\Lambda}\big)_{\sigma (\delta ^{-1}\gamma )^{-1}\delta ^{-1}\gamma}  \\
\nonumber &= \phi _{\Lambda}\big(\rho (\delta ^{-1},s\Lambda ).((s^{-1}\delta .y)|_{\Lambda}) \big)_{\rho (\delta ^{-1}, s\Lambda )s^{-1}\gamma}.
\end{align}
If $s\in S\cap \delta S$ then $\rho (\delta ^{-1}, s\Lambda ) = e_{\Lambda}$, hence we have equality $\phi (\delta .y)_{\gamma} = (\delta .\phi (y) )_{\gamma}$ for all $\gamma \in s\Lambda$. 
If $s\in S\setminus \delta S$, then since $\phi _{\Lambda}$ is bi-cofinitely equivariant and $\rho (\delta ^{-1},s\Lambda )\in \Lambda$ we have 
\[
\phi _{\Lambda}\big(\rho (\delta ^{-1},s\Lambda ).((s^{-1}\delta .y)|_{\Lambda}) \big)\sim_C \rho (\delta ^{-1},s\Lambda ).\phi _{\Lambda}\big((s^{-1}\delta.y)|_{\Lambda} \big),
\]
and hence \eqref{eqn:cofeq} shows that $(\delta .\phi (y) )_{\gamma} =\phi (\delta .y)_{\gamma}$ for all but finitely many $\gamma \in s\Lambda$. 
Since $\delta S\triangle S$ is finite we conclude that $\delta .\phi (y)\sim_{C} \phi (\delta .y)$ and thus $\phi$ is cofinitely equivariant. 
Similarly, $\phi ^{-1}$ is cofinitely equivariant.

It remains to show that $\phi$ is a measure space isomorphism. For each $s\in S$ the map $Y^{\Gamma}\rightarrow Z^{s\Lambda}$, $y\mapsto \phi (y)|_{s\Lambda}$ only depends on $y|_{s\Lambda}$, and hence descends to a map $\phi _s:Y^{s\Lambda}\rightarrow Z^{s\Lambda}$. 
Therefore, under the natural identifications $Y^\Gamma = \prod _S Y^{s\Lambda}$ and $Z^{\Gamma} = \prod _S Z^{s\Lambda}$, we may identify $\phi$ with $\prod _S \phi _s$, and hence it is enough to show that each $\phi _s$ is a measure space isomorphism. 
Each of the maps $\phi _s$ is a composition $\phi _s = s_Z\circ \phi _{\Lambda}\circ s_Y^{-1}$, where $s_Y : Y^{\Lambda}\rightarrow Y^{s\Lambda}$ is the map $s_Y(y)_{s\lambda}=y_{\lambda}$, and similarly for $s_Z$. Therefore, $\phi _s$ is a measure space isomorphism, since each of the maps $s_Z$, $\phi _\Lambda$, and $s_Y$ are. 
\end{proof}

\begin{cor}\label{cofFreeProducts}
Assume that $\Lambda$ is a free factor of the countable group $\Gamma$ (i.e. $\Gamma = \Lambda \ast H$ for some subgroup $H$ of $\Gamma$). 
Let $B$ and $C$ be countable groups acting in a p.m.p.\ manner on $(Y,\nu)$ and $(Z,\eta)$ respectively.
With respect to these actions, if there exists a bi-cofinitely $\Lambda$-equivariant measure space isomorphism from $(Y^\Lambda,\nu^\Lambda)$ to $(Z^\Lambda,\eta^\Lambda)$, then there exists a bi-cofinitely $\Gamma$-equivariant measure space isomorphism  from $(Y^\Gamma,\nu^\Gamma)$ to $(Z^\Gamma,\eta^\Gamma)$.
\end{cor}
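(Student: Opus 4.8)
The plan is to deduce the corollary directly from Theorem~\ref{cofFinCosetReprError}. Hypothesis (ii) of that theorem is exactly the assumed existence of a bi-cofinitely $\Lambda$-equivariant isomorphism $\phi_\Lambda$, so the entire content of the corollary reduces to producing, for the free-factor inclusion $\Lambda \le \Gamma = \Lambda \ast H$, a set $S$ of left coset representatives for $\Lambda$ in $\Gamma$ satisfying the almost-invariance condition $\abs{\gamma S \triangle S} < \infty$ for every $\gamma \in \Gamma$. Once such an $S$ is in hand, Theorem~\ref{cofFinCosetReprError} immediately delivers the desired bi-cofinitely $\Gamma$-equivariant isomorphism.

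First I would define $S$ using the normal form for the free product: let $S$ consist of the identity together with all $\gamma \in \Gamma$ whose reduced-word expression in $\Lambda \ast H$ ends in a nontrivial syllable from $H$. The normal form theorem shows that $S$ is a complete and irredundant set of representatives for the left cosets $\gamma\Lambda$, since right multiplication by $\Lambda$ alters only the terminal $\Lambda$-syllable of a reduced word, so each coset contains exactly one word that is empty or terminates in $H$. This $S$ generalizes the transversal used in the worked $\mathbf{F}_2$ example following Theorem~\ref{cofFinCosetReprError}.

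The main work is then verifying $\abs{\gamma S \triangle S} < \infty$ for all $\gamma$. The key simplification is that the set $F \coloneqq \{\gamma \in \Gamma : \abs{\gamma S \triangle S} < \infty\}$ is a subgroup of $\Gamma$: closure under inversion holds because left translation by $\gamma$ is a bijection and hence $\abs{\gamma^{-1}S \triangle S} = \abs{S \triangle \gamma S} = \abs{\gamma S \triangle S}$, and closure under products follows from the inclusion $\gamma_1\gamma_2 S \triangle S \subseteq \gamma_1(\gamma_2 S \triangle S) \cup (\gamma_1 S \triangle S)$ together with the cardinality-preservation of left translation. Since $\Lambda \cup H$ generates $\Gamma$, it therefore suffices to check membership in $F$ for $\gamma \in \Lambda$ and for $\gamma \in H$ separately. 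For $\gamma = \lambda \in \Lambda$, left multiplication preserves the terminal $H$-syllable of every word ending in $H$ and permutes the set of such words, so $\lambda S$ and $S$ differ only through the images of the identity element, giving $\abs{\lambda S \triangle S} \le 2$. For $\gamma = h \in H$, the only reduced word whose membership in $S$ is disturbed by left multiplication by $h$ is $h^{-1}$ itself, which is sent to the identity; tracking this single exchange shows that $hS$ and $S$ differ by a finite (in fact empty) set.

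The step I expect to be the main obstacle is the syllable-cancellation bookkeeping in this generator computation: one must argue carefully that left multiplication by a single element can only trigger cancellation at the left end of a reduced word, so that the terminal $H$-syllable determining membership in $S$ survives outside of the finitely many cases identified above. This is precisely where the free-product hypothesis $\Gamma = \Lambda \ast H$ is used essentially—it is the structure that forces $S$ to be almost invariant, and the analogous statement fails for a general finite-index or merely malnormal subgroup $\Lambda$. With $F = \Gamma$ established, condition (i) of Theorem~\ref{cofFinCosetReprError} is satisfied and the corollary follows.
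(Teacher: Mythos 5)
Your proposal is correct and follows essentially the same route as the paper: you choose the identical transversal $S$ (the identity together with all normal forms ending in a nontrivial $H$-syllable), verify almost-invariance on the generators $\Lambda$ and $H$ (obtaining $\lambda S\triangle S=\{\lambda,e\}$ and $hS=S$, exactly as in the paper), and conclude via the fact that $\{\gamma:\abs{\gamma S\triangle S}<\infty\}$ is a subgroup containing $\Lambda\cup H$. The only difference is that you spell out the subgroup closure argument and the syllable-cancellation bookkeeping that the paper leaves implicit.
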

\begin{proof}

Assume $\Gamma= \Lambda \ast H$. We must find a set $S$ as in \textit{(i)} of Theorem \ref{cofFinCosetReprError}.
Let $S$ consist of the identity element of $\Gamma$, along with all non-identity group elements whose normal form, with respect to the free splitting $\Gamma = \Lambda\ast H$, ends with an element of $H \setminus\{e\}$ (i.e., the rightmost term of this normal form is in $H\setminus \{ e \}$). 
This forms a collection of left coset representatives for $\Lambda$ in $\Gamma$ with 
$hS=S$ for all $h\in H$ and $\lambda S\triangle S = \{ \lambda , e \}$ for all $\lambda \in \Lambda\setminus\{ e \}$. 
Since $\Lambda$ and $H$ generate $\Gamma$, it follows that $|\gamma S\triangle S|<\infty$ for all $\gamma \in \Gamma$.
\end{proof}

\begin{cor}\label{cofFinIndex}
Assume that $\Lambda$ is a finite index subgroup of the countable group $\Gamma$. 
Let $B$ and $C$ be countable groups acting in a p.m.p.\ manner on $(Y,\nu)$ and $(Z,\eta)$ respectively.
With respect to these actions, if there exists a bi-cofinitely $\Lambda$-equivariant measure space isomorphism from $(Y^\Lambda,\nu^\Lambda)$ to $(Z^\Lambda,\eta^\Lambda)$, then there exists a bi-cofinitely $\Gamma$-equivariant measure space isomorphism  from $(Y^\Gamma,\nu^\Gamma)$ to $(Z^\Gamma,\eta^\Gamma)$.
\end{cor}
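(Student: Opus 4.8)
The plan is to invoke Theorem \ref{cofFinCosetReprError} directly. Hypothesis (ii) of that theorem is literally the assumption we are handed, namely the existence of a bi-cofinitely $\Lambda$-equivariant measure space isomorphism $\phi_\Lambda:(Y^\Lambda,\nu^\Lambda)\to(Z^\Lambda,\eta^\Lambda)$, so the only thing left to verify is hypothesis (i): the existence of a set $S$ of left coset representatives for $\Lambda$ in $\Gamma$ satisfying $|\gamma S\triangle S|<\infty$ for every $\gamma\in\Gamma$.

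First I would choose $S$ to be any transversal for the left cosets of $\Lambda$ in $\Gamma$, selecting one representative from each coset. Since $\Lambda$ has finite index in $\Gamma$, there are only finitely many such cosets, and hence $S$ is a finite subset of $\Gamma$, say of cardinality $n=[\Gamma:\Lambda]$. For any $\gamma\in\Gamma$ the translate $\gamma S$ is again a finite set of cardinality $n$, so $\gamma S\triangle S\subseteq \gamma S\cup S$ has at most $2n$ elements; in particular $|\gamma S\triangle S|\le 2n<\infty$. Thus condition (i) holds automatically, the finiteness of the index rendering the near-invariance of $S$ trivially satisfied for every group element.

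With both hypotheses of Theorem \ref{cofFinCosetReprError} in place, the theorem produces the desired bi-cofinitely $\Gamma$-equivariant measure space isomorphism from $(Y^\Gamma,\nu^\Gamma)$ to $(Z^\Gamma,\eta^\Gamma)$, completing the proof. I expect no genuine obstacle here: unlike the free-product case of Corollary \ref{cofFreeProducts}, where one must carefully design $S$ from normal forms so that each left translation perturbs it in only finitely many places, in the finite-index situation every transversal is finite and therefore automatically nearly invariant. The corollary is thus an immediate specialization of Theorem \ref{cofFinCosetReprError}, and the entire content of the argument is the one-line observation that translates of a finite set differ from it only finitely.
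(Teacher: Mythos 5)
Your proof is correct and is exactly the intended argument: the paper states this corollary without proof precisely because, as you observe, any transversal for a finite index subgroup is finite, so condition (i) of Theorem \ref{cofFinCosetReprError} holds trivially. Nothing further is needed.
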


The first part of the following corollary will be generalized in Theorem \ref{thm:MELampGroups} below. 

\begin{cor}\label{cor:freeFactorAmenableLampGroups}
Let $\Gamma$ be a countable group that has an infinite amenable free factor, and let $A_0$ and $A_1$ be nontrivial (possibly finite) amenable groups.
Then $A_0\wr\Gamma$ and $A_1\wr\Gamma$ are orbit equivalent.
In fact, the canonical wreath product actions of $A_0\wr\Gamma$ and $A_1\wr\Gamma$ are orbit equivalent.
\end{cor}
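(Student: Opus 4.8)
The plan is to produce a single bi-cofinitely $\Gamma$-equivariant measure space isomorphism between the underlying spaces of the two canonical wreath product actions, and then to read off the orbit equivalence from the general observation (recorded just before Proposition \ref{cofSimOE}) that any such map is automatically an orbit equivalence between the wreath product actions of $A_0\wr\Gamma$ and $A_1\wr\Gamma$. Concretely, write $\Gamma = \Lambda \ast H$ where $\Lambda$ is the infinite amenable free factor, and fix free ergodic p.m.p.\ actions $A_0\acts(Y,\nu)$ and $A_1\acts(Z,\eta)$ (for instance Bernoulli shifts of the nontrivial amenable groups $A_i$). The associated wreath product actions of $A_0\wr\Gamma$ and $A_1\wr\Gamma$ on $(Y^\Gamma,\nu^\Gamma)$ and $(Z^\Gamma,\eta^\Gamma)$ then have wreath kernels $\bigoplus_\Gamma A_0$ and $\bigoplus_\Gamma A_1$ acting freely and ergodically (ergodicity coming from ergodicity on the base, as noted in the proof of Proposition \ref{cofSimOE}); since $A_0$ and $A_1$ are amenable, by the Dye--Ornstein--Weiss theorem these are, up to orbit equivalence, exactly the canonical wreath product actions, so it suffices to work with these particular models.

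The two substantive inputs are already established earlier. First, because $\Lambda$ is amenable, I would apply Theorem \ref{cofAmenable} with $\Lambda$ in the role of the acting group to obtain a bi-cofinitely $\Lambda$-equivariant measure space isomorphism $\phi_\Lambda:(Y^\Lambda,\nu^\Lambda)\to(Z^\Lambda,\eta^\Lambda)$ with respect to the chosen actions of $A_0$ and $A_1$. Here it is the infiniteness of $\Lambda$ that matters: it guarantees that $A_0\wr\Lambda$ and $A_1\wr\Lambda$ are infinite amenable groups, which is what lets the Feldman--Sutherland--Zimmer/Golodets--Sinel'shchikov machinery behind Theorem \ref{cofAmenable} produce an orbit equivalence irrespective of whether $\abs{A_0}=\abs{A_1}$. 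Second, since $\Lambda$ is a free factor of $\Gamma$, I would feed $\phi_\Lambda$ into Corollary \ref{cofFreeProducts} to upgrade it to a bi-cofinitely $\Gamma$-equivariant measure space isomorphism $\phi:(Y^\Gamma,\nu^\Gamma)\to(Z^\Gamma,\eta^\Gamma)$, again with respect to the same actions of $A_0$ and $A_1$.

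To finish, I would invoke the remark preceding Proposition \ref{cofSimOE}: the bi-cofinitely equivariant isomorphism $\phi$ is, in particular, an orbit equivalence between the wreath product actions of $A_0\wr\Gamma$ and $A_1\wr\Gamma$. As $\phi$ is an isomorphism of probability spaces, it is defined on conull sets and has index $1$, so this is an honest orbit equivalence of the (canonical) wreath product actions; consequently $A_0\wr\Gamma$ and $A_1\wr\Gamma$ are themselves orbit equivalent. I do not expect a genuine obstacle here, since all of the analytic difficulty is already packaged into Theorem \ref{cofAmenable} (the amenable base case) and Corollary \ref{cofFreeProducts} (the free-factor induction); the only points requiring care are the bookkeeping verification that the chosen free ergodic actions yield the canonical wreath product actions, and the observation that ``infinite amenable'' is precisely the hypothesis needed to apply the amenable case to $\Lambda$.
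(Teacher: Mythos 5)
Your proposal is correct and follows exactly the paper's route: the paper's proof of this corollary is a one-line deduction from Theorem \ref{cofAmenable} (applied to the infinite amenable free factor $\Lambda$) and Corollary \ref{cofFreeProducts}, combined with the observation preceding Proposition \ref{cofSimOE} that a bi-cofinitely equivariant isomorphism is an orbit equivalence of the wreath product actions. Your additional bookkeeping (that free ergodic base actions yield the canonical wreath product actions, and that infiniteness of $\Lambda$ is what makes the amenable input work) is accurate and merely makes explicit what the paper leaves implicit.
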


\begin{proof}
This follows immediately from Theorem \ref{cofAmenable} and Corollary \ref{cofFreeProducts}.
\end{proof}

\begin{remark}
By definition, a discrete group $\Gamma$ has more than one  end if and only if it contains an infinite, co-infinite subset $S$ satisfying $|\gamma S\triangle S|<\infty$ for all $\gamma \in \Gamma$. 
It is unclear to us how a subgroup $\Lambda$ must sit inside of $\Gamma$ in order to find such an $S$ that is furthermore a set of left coset representives for $\Lambda$ in $\Gamma$. 
For example, such an $S$ can be found whenever $\Lambda$ is a free factor of a finite index subgroup of $\Gamma$, but we do not know if the converse holds.  
\end{remark}

\section{Groupoid Preliminaries}\label{section:groupoids}

A groupoid is a small category in which every morphism is invertible. 
Given a groupoid $\mathcal{G}$, we will abuse notation and also write $\mathcal{G}$ for its set of morphisms.
We write $\mathrm{Ob}(\mathcal{G})$ for the set of objects of $\mathcal{G}$, and for $x\in\mathrm{Ob}(\mathcal{G})$ we write $e_x$ for the \textbf{unit} (i.e., identity morphism) at $x$. 
We call the set $\G ^0$, of all units of $\mathcal{G}$, the \textbf{unit space} of $\mathcal{G}$. 
We write $s:\G\rightarrow \mathrm{Ob}(\mathcal{G})$ and $r:\G \rightarrow \mathrm{Ob}(G)$ for the source and range maps of $\mathcal{G}$, determined by $e_{s(g)}=g^{-1}g$ and $e_{r(g)}=gg^{-1}$ for $g\in\mathcal{G}$.
We will write $s_{\mathcal{G}}$ for $s$ and $r_\mathcal{G}$ for $r$ if there is a need to notationally distinguish the source and range maps of $\mathcal{G}$ with those of another groupoid.
We say that a groupoid $\G$ is {\bf aperiodic} if $r^{-1}(x)$ is infinite for every $x\in \G^0$. 

Given a groupoid $\G$, the image of $\G$ in $\mathrm{Ob}(\mathcal{G})\times \mathrm{Ob}(\mathcal{G})$ under the map $g\mapsto (r(g),s(g))$ is an equivalence relation on $\G ^0$ which we denote by $\mathcal{R}_{\mathcal{G}}$.
A groupoid $\G$ is called {\bf principal} if the map $g\mapsto (r(g),s(g))$ is injective.
An equivalence relation $\mathcal{R}$ on a set $X$ is naturally a principal groupoid, with $\mathrm{Ob}(\mathcal{R})=X$, unit space the diagonal $\R^0=\{(x,x)\in X\times X: x\in X\}$, source and range maps the right and left projections respectively, and inverse and composition operations given by $(y,x)^{-1}\coloneqq (x,y)$ and $(z,y)(y,x)\coloneqq (z,x)$ for all mutually $\mathcal{R}$-related points $x,y,z\in X$. 
Each principal groupoid is naturally isomorphic as a groupoid to an equivalence relation via the map $g\mapsto (r(g),s(g))$. 
Given this correspondence, we will freely and frequently identify principal groupoids with their associated equivalence relation.

\begin{convention}
It will frequently be convenient to identify each object $x$ of a groupoid $\mathcal{G}$ with its corresponding unit $e_x$, in which case we will not distinguish between the sets $\mathrm{Ob}(\mathcal{G})$ and $\mathcal{G}^0$, and we will view $s$ and $r$ as taking values in $\mathcal{G}^0$. 
\end{convention}

A \textbf{discrete Borel groupoid} is a groupoid where both $\G$ and $\G^0$ are standard Borel spaces, the source and range maps $s$ and $r$ are both Borel and countable-to-one, and the composition and inverse operations are Borel. We call a discrete Borel groupoid that is an equivalence relation a \textbf{countable Borel equivalence relation}.

A \textbf{discrete p.m.p.\ groupoid} is a pair $(\G,\mu_{\G^0})$ where $\G$ is a discrete Borel groupoid and $\mu_{\G^0}$ is a Borel probability measure on $\G^0$ satisfying $\int_{\G^0} c_x^r d\mu_{\G^0}=\int_{\G^0} c_x^s d\mu_{\G^0}$ where $c_x^r$ and $c_x^s$ refer to the counting measure on $r^{-1}(x)$ and $s^{-1}(x)$ respectively. Set $\mu^1_\G:=\int_{\G^0} c_x^r d\mu_{\G^0}=\int_{\G^0} c_x^s d\mu_{\G^0}$ to be this sigma-finite measure on $\G$.

Given two subsets $D_0$ and $D_1$ of $\mathcal{G}$ we define 
$CD\coloneqq \{ gh : g\in C,\ h\in D,\  s(g)=r(h)\}$, and for $g\in \mathcal{G}$ we define $gD\coloneqq \{g\}D$ and $Dg\coloneqq D\{g\}$.

Given a positive measure subset $D$ of $\mathcal{G}^0$, we define the \textbf{reduction} (or \textbf{restriction}) of $\mathcal{G}$ to $D$ to be the discrete p.m.p.\ groupoid $\G_D\coloneqq D\mathcal{G}D$, where the measure $\mu _{\mathcal{G}_D^0}$ is the normalized restriction of $\mu _{\mathcal{G}^0}$ to $D$; for notational clarity we will sometimes write $\mathcal{G}|_D$ for $\mathcal{G}_D$. 
A \textbf{(Borel) partial bisection} of $\mathcal{G}$ is a (Borel) subset of $\G$ on which the maps $r$ and $s$ are both injective. 
For $\sigma$ a partial bisection, $x\in r(\sigma)$, and $y\in s(\sigma)$, we abuse notation and write $x\sigma$ and $\sigma y$ for the unique element of $x\sigma$ and $\sigma y$ respectively.

A subset $C$ of $\G^0$ is said to be $\G$-\textbf{invariant} if it is $\mathcal{R}_{\mathcal{G}}$-invariant, or equivalently $r(\G C)=C$.  
A discrete p.m.p.\ groupoid $\mathcal{G}$ is called \textbf{ergodic} if every $\G$-invariant Borel subset $C$ of $\G^0$ is either $\mu _{\mathcal{G}^0}$-null or $\mu _{\mathcal{G}^0}$-conull.

Let $\Gamma \acts (X,\mu)$ be a p.m.p.\ action of a countable group on a standard probability space $(X,\mu )$. 
The associated \textbf{action groupoid} is the discrete p.m.p.\ groupoid $\G = \Gamma\ltimes X$ with morphism set $\Gamma \times X$, unit space $\G^0=\{e_\Gamma\}\times X$ equipped with the measure $\delta _e \times \mu$, source and range maps given by $s(\gamma,x)= (e,x)$, and $r(\gamma,x)=(e,\gamma . x)$, and composition law given by $(\gamma _1,x)(\gamma _0,y)\coloneqq(\gamma _1 \gamma _0 ,y)$ whenever $x=\gamma _0 . y$. 

Let $X$ be a standard Borel space. A {\bf fibered Borel space} over $X$ is a standard Borel space $W$ together with a Borel map $q:W\rightarrow X$. 
We often suppress the fibering map from the notation, and we write $W_x$ for the fiber $q^{-1}(x)$ over $x\in X$. 
Suppose that $\mathcal{G}$ is a discrete Borel groupoid and $W\rightarrow \mathcal{G}^0$ is a fibered Borel space over $\mathcal{G}^0$. A (left) {\bf Borel action} of $\mathcal{G}$ on $W$ is an assignment of a Borel isomorphism $\alpha _g:W_{s(g)}\rightarrow W_{r(g)}$ to each $g\in \mathcal{G}$ satisfying $\alpha _{g^{-1}}=\alpha _g^{-1}$ and $\alpha _{g_1g_0}=\alpha _{g_1}\circ \alpha _{g_0}$ whenever $s(g_1)=r(g_0)$, and such that the map $\{ (g,h)\in\mathcal{G}\times \mathcal{H} : h\in \mathcal{H}_{s(g)}\} \rightarrow \mathcal{H}$, $(g,h)\mapsto \alpha _g(h)$, is Borel.

\subsection{Semidirect products of groupoids}
Let $\G$ be a discrete p.m.p. groupoid and let $q: \mathcal{H}\to \mathcal{G}^0$ be a Borel bundle of discrete p.m.p.\ groupoids over $\mathcal{G}^0$, i.e., $q: \mathcal{H}\rightarrow \mathcal{G}^0$ is a fibered Borel space over $\mathcal{G}^0$ and for each $x\in \mathcal{G}^0$ the fiber $\mathcal{H}_x$ is equipped with the structure of a discrete p.m.p.\ groupoid. 
Note that $\mathcal{H}$ itself is naturally a discrete p.m.p.\ groupoid when $\mathcal{H}^0=\bigsqcup _{x\in \mathcal{G}^0}\mathcal{H}_x^0$ is equipped with the measure $\mathcal{\mu}_{\mathcal{H}^0}\coloneqq \int _{\mathcal{G}^0}\mu _{\mathcal{H}_x^0}\, d\mu _{\mathcal{G}^0}(x)$.
Suppose that $\alpha :\G \acts \h$ is a Borel action of $\G$ on $\mathcal{H}$ by measure preserving groupoid isomorphisms, i.e., for each $g\in \mathcal{G}$ the map $\alpha _g : \mathcal{H}_{s(g)}\to\mathcal{H}_{r(g)}$ is a measure preserving groupoid isomorphism.
The associated \textbf{semidirect product groupoid}, denoted $\G\ltimes\mathcal{H}$, is the 
discrete p.m.p.\ groupoid defined as follows:
\begin{itemize}[leftmargin=*]
\item the morphism set consists of all pairs $(g,h)$ with $g\in\mathcal{G}$ and $h\in\mathcal{H}_{s(g)}$, equipped with the standard Borel structure it inherits as a Borel subset of $\mathcal{G}\times\mathcal{H}$.
\item the set of objects is $\mathrm{Ob}(\mathcal{G}\ltimes\mathcal{H} )\coloneqq \mathcal{H}^0$, and for each $x\in\mathcal{G}^0$ and $y\in \mathcal{H}_x^0$, the unit at $y$ is the morphism $e_y\coloneqq (x,y) \in (\mathcal{G}\ltimes \mathcal{H})^0$;
\item the probability measure $\mu _{(\mathcal{G}\ltimes\mathcal{H} )^0}$ on the unit space is the pushforward of $\mu _{\mathcal{H}^0}$ under $y\mapsto e_y$.
\item the source and range maps are given by $s(g,h) \coloneqq s_{\mathcal{H}}(h)$ and $r(g,h)\coloneqq \alpha _g(r_{\mathcal{H}}(h))$;
\item the composition law in $\G\ltimes\mathcal{H}$ is given by $(g_1,h_1)(g_0,h_0) \coloneqq (g_1g_0, \alpha _{g_0}^{-1}(h_1)h_0)$ whenever $s(g_1, h_1)=r(g_0, h_0)$.
\end{itemize}
The map $\iota : \mathcal{H}\to \mathcal{G}\ltimes\mathcal{H}$, defined fiberwise by $\iota (h)\coloneqq (x,h)$ for $h\in \mathcal{H}_x$, is a measure preserving groupoid embedding.
Then $\iota (\mathcal{H})$ complements the subgroupoid $\mathcal{G}\ltimes \mathcal{H}^0$, 
in the sense that each $\gamma \in \mathcal{G}\ltimes\mathcal{H}$ can be expressed uniquely as a product $\gamma = \delta h$, with $\delta \in \mathcal{G}\ltimes\mathcal{H}^0$ and $h\in \iota (\mathcal{H})$: namely, if $\gamma = (g,h_0)$ then we have $\gamma = \delta h$ where $\delta = (g,r(h_0)) \in \mathcal{G}\ltimes\mathcal{H}^0$ and $h=\iota (h_0)\in\iota (\mathcal{H})$. 
Using the embedding $\iota$, in what follows we will identify $\mathcal{H}$ with its image, $\iota (\mathcal{H})$, in $\mathcal{G}\ltimes \mathcal{H}$.

\begin{remark}
    The action groupoid construction is a special case of the semidirect product of groupoids. 
    Indeed, let $\Gamma\acts(X,\mu)$ be a p.m.p\ action of a discrete countable group $\Gamma$.
    We may view $X$ as a discrete p.m.p.\ groupoid consisting solely of units, $\mathrm{Ob}(X)=X=X^0$, and equipped with the measure $\mu$. 
    The action of $\Gamma$ on $X$ is then by measure preserving groupoid automorphisms, and the associated semidirect product groupoid $\Gamma\ltimes X$ is precisely the action groupoid $\Gamma\ltimes (X,\mu )$.
\end{remark}

\subsection{Direct sums of groupoids}\label{subsection:direct_sum_groupoid}
Given a countable set $I$ and discrete p.m.p.\ groupoids $\mathcal{K}_i$, $i\in I$, their {\bf direct sum}, denoted $\bigoplus _I\mathcal{K}_i$ is the discrete p.m.p.\ groupoid with underlying set
\[
\bigoplus _I \mathcal{K}_i = \{ k\in \prod _I\mathcal{K}_i : k_i \in \mathcal{K}_i^0\text{ for all but finitely many }i\in I \} ,
\]
with unit space $(\bigoplus _I \mathcal{K}_i)^0 = \prod _I \mathcal{K}^0_i$, and with source and range maps determined by $s(k)_i = s_{\mathcal{K}_i}(k_i)$ and $r(k)_i=r_{\mathcal{K}_i}(k_i)$ for $i\in I$, and multiplication performed coordinate-wise: for $(kl)_i = k_il_i$ for $k,l \in \bigoplus _I\mathcal{K}_i$ with $s(k)=r(l)$. The measure on $(\bigoplus _I \mathcal{K}_i)^0$ is product measure. When $\mathcal{K}_i=\mathcal{K}$ for all $i\in I$ then we denote the direct sum by $\bigoplus _I \mathcal{K}$. 

If $\mathcal{R}_i$ is a p.m.p.\ countable Borel equivalence relation on $(X _i ,\mu _i )$ for each $i\in I$, then we will naturally identify the groupoid $\bigoplus _I \mathcal{R}_i$ with the p.m.p.\ countable Borel equivalence relation on $(\prod _I X_i , \prod _I \mu _i )$ consisting of all pairs $(x,y)\in (\prod _I X_i)^2$ satisfying $(x_i,y_i ) \in \mathcal{R}_i$ for all $i\in I$ and $x_i=y_i$ for all but finitely many $i\in I$.

\subsection{Wreath products of groupoids} Wreath products of groupoids are discussed in \cite{houghton1975wreath}; here we describe the construction in the discrete p.m.p.\ setting.
Let $\mathcal{K}$ and $\mathcal{G}$ be discrete p.m.p.\ groupoids and suppose $\mathcal{G}\curvearrowright W$ is a Borel action of $\mathcal{G}$ on a fibered Borel space $W\to \mathcal{G}^0$ with each fiber countable. 
Define $q: \boldsymbol{\bigoplus}_W \K\to \mathcal{G}^0$ to be the Borel bundle of discrete p.m.p.\ groupoids over $\mathcal{G}^0$ whose fiber over $x\in \mathcal{G}^0$ is the direct sum $(\boldsymbol{\bigoplus}_{W}\mathcal{K})_x\coloneqq \bigoplus_{W_x}\K$. 
(We use the boldface symbol $\boldsymbol{\bigoplus}$ to distinguish a bundle of direct sum groupoids from an ordinary direct sum groupoid.) 
Here, $\boldsymbol{\bigoplus}_W \K$ is equipped with the sigma-algebra generated by the map $q$ together with all maps of the form $\boldsymbol{\bigoplus}_W \K\rightarrow \K$, $k\mapsto k_{\sigma (q(k))}$, where $\sigma$ varies over all Borel sections of $W\to \mathcal{G}^0$;
this makes $\boldsymbol{\bigoplus}_W \K$ into a standard Borel space (as can be verified using the Lusin-Novikov Uniformization Theorem).  
Thus, $\boldsymbol{\bigoplus}_W \K$ is a discrete p.m.p.\ groupoid whose unit space $(\boldsymbol{\bigoplus}_W \K )^0 = \bigsqcup _{x\in\mathcal{G}^0}\mathcal{K}^{W_x}$ is equipped with the probability measure $\mu _{(\boldsymbol{\bigoplus}_W \K )^0} = \int _{\mathcal{G}^0}\mu _{\mathcal{K}^0}^{W_x}\, d\mu _{\mathcal{G}^0}(x)$.

The groupoid $\G$ acts by measure preserving groupoid automorphisms on $\boldsymbol{\bigoplus}_W \K$ via the shift action given by $(g.k )_w \coloneqq k_{g^{-1} . w}$ for $g\in \G$, $k\in \bigoplus _{W_{s(g)}}\K$, and $w\in W_{r(g)}$. 
The associated {\bf (restricted) wreath product groupoid}, denoted $\K\wr_W\G$, is the semidirect product $\K\wr_W\G\coloneqq \G\ltimes (\boldsymbol{\bigoplus}_W \K)$.
As indicated after the definition of semidirect product groupoids, we naturally view the direct sum bundle $\boldsymbol{\bigoplus} _W \mathcal{K}$ as a subgroupoid of $\K\wr _W\G$.  
We write $\K\wr\G$ in the case where $W=\mathcal{G}$ fibers over $\mathcal{G}^0$ via the range map $r$, and the action of $\mathcal{G}$ is by left translation.

\begin{remark}\label{rem:action_wreath} If $\G=\Gamma\ltimes(X,\mu)$ and $\mathcal{B}=B\ltimes(Y,\nu)$ are action groupoids associated to p.m.p.\ actions of countable groups $\Gamma$ and $B$ respectively, then the wreath product groupoid $\mathcal{B}\wr \G$ is isomorphic to the action groupoid $(B\wr \Gamma )\ltimes (X\times Y^{\Gamma} , \mu \times \nu ^{\Gamma})$ associated to the product action $B\wr\Gamma \curvearrowright (X\times Y^{\Gamma} , \mu \times \nu ^{\Gamma})$, given on the $Y^{\Gamma}$-coordinate by the wreath product action of $B\wr\Gamma$, and on the $X$-coordinate by the action factoring through the quotient map $B\wr \Gamma \rightarrow \Gamma$ to the initial action of $\Gamma$.
This was first observed and proven in \cite[Proposition 7.1]{DelabieKoivistoLeMaitreTessera2022}.
\end{remark}

\subsection{Free factors of groupoids} Let $\mathcal{G}$ be a groupoid and let $(\mathcal{H}_i)_{i\in I}$ be a family of subgroupoids of $\mathcal{G}$ (we do not require the subgroupoids to contain all of $\mathcal{G}^0$). 
We say that the family $(\mathcal{H}_i)_{i\in I}$ is \textbf{freely independent} if there does not exist a finite sequence $i_0, \dots, i_{n-1}\in I$ of $n\geq 2$ indices with $i_j\neq i_{j+1}$, along with non-unit elements $h_0\in \mathcal{H}_{i_0} -\mathcal{G}^0,  \dots, h_{n-1} \in \mathcal{H}_{i_{n-1}}-\mathcal{G}^0$ such that $h_0\cdots h_{n-1} \in \mathcal{G}^0$.
We say that the subgroupoids $\mathcal{H}_0,\dots , \mathcal{H}_{n-1}$ of $\mathcal{G}$ are \textbf{mutually freely independent} if the family $(\mathcal{H}_i)_{0\leq i<n}$ is freely independent; when $n=2$ we drop the word ``mutual'' and simply say that $\mathcal{H}_0$ and $\mathcal{H}_1$ are freely independent. 
If $\mathcal{H}_0$ and $\mathcal{H}_1$ are subgroupoids of $\mathcal{G}$ that are freely independent and together generate $\mathcal{G}$, then we write $\mathcal{G}=\mathcal{H}_0\ast \mathcal{H}_1$, and we call this data a \textbf{(free) splitting} of $\mathcal{G}$. 
We call a subgroupoid $\mathcal{H}$ of $\mathcal{G}$ a {\bf free factor} of $\mathcal{G}$ if $\mathcal{G}$ splits as 
$\mathcal{G}=\mathcal{H}\ast \mathcal{K}$ for some subgroupoid $\mathcal{K}$ of $\mathcal{G}$. 
The reader can find more about free factors of groupoids and equivalence relations in the works of Alvarez \cite{AlvarezThesis}, Alvarez-Gaboriau \cite{AlvarezGaboriau2012}, Carderi \cite{CarderiThesis}, and Gaboriau \cite{Ga00}.

\begin{prop}\label{prop:freeFactorsHaveNiceCosetRep}
Let $\h$ be a free factor of a discrete Borel groupoid $\G$ with $\mathcal{H}^0=\mathcal{G}^0$. Then there exists a Borel set $\Sigma \subseteq \mathcal{G}$ of left coset representatives for $\mathcal{H}$ in $\mathcal{G}$ satisfying $|g(s(g)\Sigma )\triangle (r(g)\Sigma )|<\infty$ for all $g\in \mathcal{G}$.
\end{prop}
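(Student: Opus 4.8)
The plan is to reproduce, at the level of groupoids and Borel-measurably, the explicit choice of coset representatives from the group-theoretic Corollary~\ref{cofFreeProducts}: fixing a splitting $\mathcal{G} = \mathcal{H}\ast\mathcal{K}$ (with $\mathcal{K}$ a Borel subgroupoid), I would take $\Sigma$ to consist of the units together with those elements whose reduced expression ends in a non-unit letter of the complementary factor $\mathcal{K}$. The substantive new point, beyond the combinatorics, is to carry this out Borel-measurably and to phrase the group estimate $|\gamma S\triangle S|<\infty$ in the range-sensitive groupoid form, noting that $x\Sigma = \Sigma\cap r^{-1}(x)$ for a unit $x$.

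The first step is to set up Borel normal forms. Writing $\mathcal{H}_* = \mathcal{H}\setminus\mathcal{G}^0$ and $\mathcal{K}_* = \mathcal{K}\setminus\mathcal{G}^0$, let $W_n\subseteq\mathcal{G}^n$ be the Borel set of composable, factor-alternating tuples $(a_1,\dots,a_n)$ of non-units, and let $p_n\colon W_n\to\mathcal{G}$ be the (Borel) product map. Free independence of $\mathcal{H}$ and $\mathcal{K}$ is precisely the hypothesis needed to conclude that reduced expressions are unique, so that each $p_n$ is injective and $\mathcal{G} = \bigsqcup_{n\ge 0} p_n(W_n)$ (with $p_0$ the inclusion of $\mathcal{G}^0$). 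By the Lusin--Souslin theorem each $\mathcal{G}_n\coloneqq p_n(W_n)$ is Borel and $p_n^{-1}$ is Borel; hence the reduced length and the factor of the last letter are Borel invariants of $g$, and
\[
\Sigma\coloneqq\mathcal{G}^0\cup\{g\in\mathcal{G}\setminus\mathcal{G}^0 : \text{the last letter of the reduced expression of } g \text{ lies in }\mathcal{K}_*\}
\]
is Borel.

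Next I would check that $\Sigma$ is a transversal and then establish two translation estimates. Every $g$ factors uniquely as $g = \sigma h$ with $\sigma\in\Sigma$ and $h\in\mathcal{H}$ (strip a trailing $\mathcal{H}_*$-letter if present, uniqueness coming from uniqueness of normal forms), so for each unit $x$ the fiber $x\Sigma$ meets each left $\mathcal{H}$-coset inside $r^{-1}(x)$ exactly once. The core is the following two facts, obtained by the same case analysis on normal forms as in Corollary~\ref{cofFreeProducts} but now tracking source and range: for $k\in\mathcal{K}$ one has the exact equality $k(s(k)\Sigma) = r(k)\Sigma$ (left multiplication by $k$ preserves ``ending in $\mathcal{K}_*$'', the only possible cancellation deleting a leading $\mathcal{K}_*$-letter and still leaving the result in $\Sigma$), while for $h\in\mathcal{H}_*$ the only representative driven out of $\Sigma$ is the unit $e_{s(h)}$ (sent to $h\notin\Sigma$) and the only representative of $r(h)\Sigma$ not hit is $e_{r(h)}$, so that
\[
h\,(s(h)\Sigma)\ \triangle\ (r(h)\Sigma) = \{\,h,\ e_{r(h)}\,\}
\]
has exactly two elements. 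These are the groupoid analogues of $hS = S$ and $\lambda S\triangle S = \{\lambda,e\}$.

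Finally I would propagate the estimate to all of $\mathcal{G}$ by telescoping along a reduced expression. Since left translation by a fixed $u$ is injective on $r^{-1}(s(u))$ and commutes with symmetric differences, for composable $u,v$ one obtains
\[
\bigl| uv\,(s(v)\Sigma)\ \triangle\ (r(u)\Sigma)\bigr| \le \bigl| v\,(s(v)\Sigma)\ \triangle\ (r(v)\Sigma)\bigr| + \bigl| u\,(s(u)\Sigma)\ \triangle\ (r(u)\Sigma)\bigr|.
\]
Applying this along $g = a_1\cdots a_n$ bounds $|g(s(g)\Sigma)\triangle(r(g)\Sigma)|$ by $\sum_{i=1}^n |a_i(s(a_i)\Sigma)\triangle(r(a_i)\Sigma)|\le 2n<\infty$, since each summand is $0$ (for $a_i\in\mathcal{K}$) or $2$ (for $a_i\in\mathcal{H}_*$). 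The main obstacle is the Borel normal form step: deducing uniqueness of reduced expressions from free independence and extracting the last-letter data Borel-measurably via Lusin--Souslin. Once that is in place, the remaining verifications are the group argument of Corollary~\ref{cofFreeProducts} carried over with source and range maps attached.
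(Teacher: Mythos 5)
Your proposal is correct and follows essentially the same route as the paper's proof: the same transversal $\Sigma$ (units together with non-units whose normal form ends in the complementary factor $\mathcal{K}$), the same two key facts ($k(s(k)\Sigma)=r(k)\Sigma$ for $k\in\mathcal{K}$ and $h(s(h)\Sigma)\triangle(r(h)\Sigma)=\{h,r(h)\}$ for non-unit $h\in\mathcal{H}$), and the same conclusion via the fact that $\mathcal{H}\cup\mathcal{K}$ generates $\mathcal{G}$. You simply make explicit two points the paper leaves implicit, namely the Borel measurability of $\Sigma$ via Lusin--Souslin and the telescoping estimate along a normal form.
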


\begin{proof}
Let $\mathcal{K}\leq \G$ be such that $\h\ast \mathcal{K} =\G$. Every non-unit element $g\in\G -\G ^0$ has a unique normal form as a product where the terms of the product alternate between non-unit elements in $\h- \G ^0$ and non-unit elements in $\mathcal{K}- \G ^0$. Let $\Sigma\leq \G$ consist of all units together with all non-units whose normal form has rightmost term in $\K$. 
The set $\Sigma$ is invariant under left multiplication by elements of $\mathcal{K}$, and $h(s(h)\Sigma )\triangle (r(h)\Sigma )=\{h,r(h)\}$ for non-unit $h\in\h$. Since $\h\cup \mathcal{K}$ generates $\G$, it follows that $|g(s(g)\Sigma )\triangle (r(g)\Sigma )|<\infty$ for all $g\in \mathcal{G}$.
\end{proof}

\section{Measurable splittings}\label{section:measurable_splittings}
We recall some definitions from (and closely related to) \cite{AlvarezGaboriau2012}.
Let $\mathcal{R}$ be a countable Borel equivalence relation on a standard Borel space $X$.
A splitting $\R =\mathcal{R}_0\ast\mathcal{R}_1$ of $\mathcal{R}$ is called \textbf{inessential} if there exists a Borel partition of $X$ into $\mathcal{R}$-invariant sets $X_0$ and $X_1$ along with, for each $i\in \{ 0 , 1\}$, a Borel complete section $U_i$ of $\mathcal{R}|_{X_i}$ such that $\mathcal{R}|_{U_i}=\mathcal{R}_i|_{U_i}$.
If $\mu$ is a Borel probability measure on $X$, then we say that a splitting of $\mathcal{R}$ is $\mu$-\textbf{inessential} if there is an $\mathcal{R}$-invariant $\mu$-conull set on which it is inessential, and we say that the splitting is $\mu$\textbf{-essential} otherwise.

Suppose that $U\subseteq X$ is a Borel complete section for $\R$.
A \textbf{sliding} of $\mathcal{R}$ to $U$ is a splitting of $\mathcal{R}$ of the form $\mathcal{R}=\mathcal{R}|_U\ast\mathcal{T}$, where $\mathcal{T}$ is a smooth (hence treeable) subrelation of $\mathcal{R}$ that has $U$ as a transversal, and that has the same domain as $\mathcal{R}$ (i.e., all of $X$).

The following lemma is closely related to \cite[Example 1.3]{AlvarezGaboriau2012} and the proof of \cite[Theorem 2.1]{GaboriaurelativeT}.
We will need it for the proof of Theorem \ref{thm:MELampGroups} below.

\begin{lemma}\label{lem:freeDecomposableImpliesTreeableFreeFactor}
    Let $\R$ be an ergodic p.m.p.\ countable Borel equivalence relation on $(X,\mu )$ that is $\mu$-nowhere amenable and admits a $\mu$-essential splitting. 
    Then, after discarding a null set, 
    \begin{enumerate}
    \item $\R$ has a free factor $\mathcal{T}$ with domain all of $X$ that is both treeable and $\mu$-nowhere amenable.

    \item $\R$ has an aperiodic amenable free factor $\s$ with domain all of $X$.
    \end{enumerate}
\end{lemma}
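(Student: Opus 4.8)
The plan is to read both free factors off the measurable Bass--Serre tree attached to the given splitting, using $\mu$-essentialness to force nondegeneracy and $\mu$-nowhere amenability to force nonamenability of the treeable factor; throughout I would work on an $\R$-invariant conull set and discard null sets freely. First I would fix a $\mu$-essential splitting $\R = \R_0 \ast \R_1$ and attach to it the Borel field of Bass--Serre trees $(T_x)_{x\in X}$, where the vertices of $T_x$ are the $\R_0$- and $\R_1$-classes meeting $[x]_\R$ and the edges are the points of $[x]_\R$ (the point $y$ labelling the edge between its $\R_0$-class and its $\R_1$-class). Free independence of $\R_0$ and $\R_1$ makes each $T_x$ acyclic, and the fact that they generate $\R$ makes it connected, so $(T_x)$ is a genuine field of trees on which $\R$ acts by isomorphisms, the action on edges being the tautological action on the class $[x]_\R$. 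The first thing to record is that the inessentiality condition recalled before the lemma is exactly the assertion that, on an invariant set, the trees can be equivariantly collapsed onto a complete section of a single factor; hence $\mu$-essentialness says that after discarding a null set no such collapse exists, and in particular the $T_x$ are infinite.

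For part (2) I would produce a sliding $\R = \R|_U \ast \s$ with $\s$ smooth, following \cite[Example 1.3]{AlvarezGaboriau2012} and the construction in the proof of \cite[Theorem 2.1]{GaboriaurelativeT}. Concretely, I would use a Borel, $\R$-consistent combing of $(T_x)$ to select a complete section $U$ together with a smooth subrelation $\s$ having $U$ as a transversal and full domain, arranged so that $\R|_U$ and $\s$ are freely independent and generate $\R$. Being smooth, $\s$ is automatically treeable and amenable, so the only remaining point is aperiodicity, and this is where essentialness is consumed: on any positive-measure set where the $\s$-classes were finite the combing would exhibit the splitting as inessential there (the finite $\s$-classes supplying sections $U_i$ with $\R|_{U_i} = \R_i|_{U_i}$), contradicting $\mu$-essentialness. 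Discarding this null set leaves $\s$ smooth, amenable, aperiodic, and a free factor with domain all of $X$, proving (2).

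For part (1) I would instead keep the full tree direction. From the decomposition $\R = \R|_U \ast \s$ --- or directly from the field $(T_x)$ --- the restriction $\R|_U$ is again ergodic and $\mu$-nowhere amenable, since amenability transfers between $\R$ and its restriction to a complete section. I claim the trees $T_x$ branch on a conull set: a biregular tree fails to branch only when both vertex degrees equal $2$, which forces the $\R_0$- and $\R_1$-classes to have size $2$ and hence $\R$ to be amenable, a possibility ruled out by $\mu$-nowhere amenability of $\R$. I would therefore enlarge $\s$ to a treeable free factor $\mathcal{T}$ by adjoining a treeing of the branching directions of the Bass--Serre tree; this $\mathcal{T}$ is treeable because free products of treeable relations are treeable \cite{Ga00}, has full domain by construction, and is $\mu$-nowhere amenable because a treeing whose class-trees branch on every positive-measure invariant piece generates a nowhere amenable relation. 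Since $\s$ is a free factor of $\mathcal{T}$ and $\mathcal{T}$ a free factor of $\R$, this is consistent with (2).

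The main obstacle, shared by both parts, is the measurable Bass--Serre step: producing a Borel, $\R$-equivariant combing of $(T_x)$ that yields an honest free-factor splitting (rather than a mere generating pair of subrelations) together with a Borel transversal for the smooth piece, and then matching the essential/inessential dichotomy of the splitting with the aperiodic/periodic dichotomy of that smooth piece. For part (1) the delicate point is to obtain nowhere amenability rather than mere nonamenability, which forces the branching estimate to be run over every positive-measure invariant subset and is precisely where the hypothesis that $\R$ is $\mu$-nowhere amenable --- not merely nonamenable --- is used.
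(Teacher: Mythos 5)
Your plan for part (2) fails at a basic point: a smooth p.m.p.\ countable Borel equivalence relation has finite classes almost everywhere, so the sliding piece $\s$ you describe cannot be ``smooth, amenable, aperiodic.'' Indeed, if $U$ is a positive-measure Borel transversal for $\s$ and $\s$ preserves $\mu$, the mass transport principle gives $\int_U |[u]_{\s}|\, d\mu (u)=\mu (X)=1$, so $|[u]_{\s}|<\infty$ for a.e.\ $u\in U$, and since every class meets $U$, a.e.\ $\s$-class is finite. (This is consistent with the paper's definition of a sliding, whose smooth piece is generated by a finite-to-one retraction.) Your proposed rescue --- that finiteness of the $\s$-classes on a positive measure set would exhibit the splitting as inessential --- cannot be right either, and in any case does not connect to the definition: inessentiality requires complete sections $U_i$ with $\R |_{U_i}=\R _i|_{U_i}$ for the \emph{original} factors $\R _0,\R _1$, and finiteness of the classes of an auxiliary sliding subrelation produces no such sections. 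The aperiodic amenable free factor simply cannot be obtained as a sliding; in the paper it is obtained (after part (1)) as the subrelation generated by an aperiodic hyperfinite subgraph of a treeing of $\mathcal{T}$ --- hyperfinite and aperiodic, but not smooth.

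Part (1) is also not yet a proof: ``adjoining a treeing of the branching directions of the Bass--Serre tree'' is not a construction, and the branching estimate via biregularity does not apply since the trees here are not biregular and the essential content is to produce an explicit freely independent, generating, treeable pair. The paper's route is the reverse of yours and entirely concrete: first reduce (using \cite[Theorem 2.63]{AlvarezThesis} and essentialness of the splitting) to the case where $\R _0$ is aperiodic and $\R _1$ has all classes nontrivial, both with full domain; then choose complete sections $Y_0$ and $Y_1$ meeting each ergodic component of $\R _0$ (resp.\ $\R _1$) in measure at most $\tfrac13$ (resp.\ $\tfrac12$), and slide each $\R _i$ to $Y_i$ via a retraction $f_i$ with $|f_0^{-1}(x)|\geq 3$ and $|f_1^{-1}(x)|\geq 2$. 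The resulting smooth relations $\mathcal{T}_0,\mathcal{T}_1$ have classes of sizes at least $3$ and $2$, so $\mathcal{T}=\mathcal{T}_0\ast\mathcal{T}_1$ is treeable, has full domain, is $\mu$-nowhere amenable (the $(3,2)$ free product phenomenon, as for $\mathbf{C}_3\ast\mathbf{C}_2$), and is a free factor since $\R = (\R _0|_{Y_0}\ast\R _1|_{Y_1})\ast\mathcal{T}$. The hypothesis that $\R$ is $\mu$-nowhere amenable is consumed in the degenerate case where both factors have all classes finite, in which case $\R$ itself is treeable and serves as $\mathcal{T}$. You would need to replace both halves of your argument with something along these lines.
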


\begin{proof}
(2) follows directly from (1): given $\mathcal{T}$ from (1), take $\s$ to be the equivalence relation generated by any aperiodic hyperfinite subgraph of a treeing of $\mathcal{T}$ (which always exists; see, e.g., \cite{Ga00} or \cite{KeMi04}). 

Toward proving (1), let $\mathcal{R}=\mathcal{R}_0\ast\mathcal{R}_1$ be a $\mu$-essential splitting of $\mathcal{R}$.
Note that it is enough to show that for some positive measure subset $X_0$ of $X$, the restriction $\mathcal{R}|_{X_0}$ has a free factor that is treeable and $\mu |_{X_0}$-nowhere amenable.

If all classes of both $\mathcal{R}_0$ and $\mathcal{R}_1$ are finite then $\mathcal{R}$ is itself treeable, so we are done.
We may therefore assume that the set $D_0$, where $\mathcal{R}_0$ is aperiodic, has positive measure.
Then by \cite[Theorem 2.63]{AlvarezThesis}, $\mathcal{R}|_{D_0}$ splits as $\mathcal{R}_0|_{D_0}\ast \mathcal{Q}$ for some subrelation $\mathcal{Q}$ of $\mathcal{R}|_{D_0}$. This splitting is $\mu$-essential: since the original splitting of $\mathcal{R}$ is $\mu$-essential, there cannot be a positive measure set on which $\mathcal{R}$ coincides with $\mathcal{R}_0$, and there cannot be a positive measure subset $U$ of $D_0$ on which $\mathcal{R}$ coincides with $\mathcal{Q}$ since $\mathcal{R}_0|_U$ is still (a.e.) aperiodic on $U$.
Thus, after restricting to $D_0$ if necessary, we may assume without loss of generality that $\mathcal{R}_0$ is aperiodic with domain all of $X$. 

The splitting $\mathcal{R}=\mathcal{R}_0\ast\mathcal{R}_1$ being $\mu$-essential implies that, after discarding an $\mathcal{R}$-invariant null set, the set $D_1$, where $\mathcal{R}_1$ is nontrivial, is a complete section for $\mathcal{R}$ and, again by \cite[Theorem 2.63]{AlvarezThesis} (together with aperiodicity of $\mathcal{R}_0$), after restricting to $D_1$ we may additionally assume without loss of generality that  $\mathcal{R}_1$ has domain all of $X$ and every $\mathcal{R}_1$-class is nontrivial. 

Since $\mathcal{R}_0$ is aperiodic, we may find a Borel complete section $Y_0$ for $\mathcal{R}_0$ that meets each ergodic component of $\mathcal{R}_0$ in a set of measure at most $\tfrac{1}{3}$ with respect to the $\mathcal{R}_0$-ergodic measure on that component; 
likewise, since each $\mathcal{R}_1$-class is nontrivial, we may find a Borel complete section $Y_1$ for $\mathcal{R}_1$ that meets each ergodic component of $\mathcal{R}_1$ in a set of measure at most $\tfrac{1}{2}$ with respect to the $\mathcal{R}_1$-ergodic measure on that component. 
Then, after discarding another $\mathcal{R}$-invariant null set, for each $i\in \{ 0,1\}$ we may find a Borel retraction $f_i:X\to Y_i$ whose graph is contained in $\mathcal{R}_i$, with $|f_0^{-1}(x)|\geq 3$ for each $x\in Y_0$ and $|f_1^{-1}(x)|\geq 2$ for each $x\in Y_1$. 
The equivalence relation $\mathcal{T}_i$ generated by $f_i$ therefore gives a sliding $\mathcal{R}_i=\mathcal{R}_i|_{Y_i} \ast \mathcal{T}_i$ of $\mathcal{R}_i$ to $Y_i$.
Observe that each $\mathcal{T}_0$-class has cardinality at least $3$, and each $\mathcal{T}_1$-class has cardinality at least $2$; moreover the relations $\mathcal{R}_0|_{Y_0}$, $\mathcal{R}_1|_{Y_1}$, $\mathcal{T}_0$, and $\mathcal{T}_1$ are mutually freely independent and generate $\mathcal{R}$. 
It follows that the relation $\mathcal{T}\coloneqq  \mathcal{T}_0\ast\mathcal{T}_1$ is is treeable (as witnessed by the treeing coming from $f_0$ and $f_1$), and it is $\mu$-nowhere amenable (using either \cite{AdamsTreeAmenable} or \cite{Ga00}). 
Since $\mathcal{R}$ splits as $\mathcal{R}=(\mathcal{R}_0|_{Y_0} \ast \mathcal{R}_1|_{Y_1}) \ast \mathcal{T}$, the proof is complete.
\end{proof}

\begin{remark}
    The naive group theoretic analogue of Lemma \ref{lem:freeDecomposableImpliesTreeableFreeFactor} is false. 
    Let $\Gamma = \Gamma_0\ast\Gamma_1$, where $\Gamma _i$ are both simple nonamenable groups. 
    Each nontrivial free factor of $\Gamma$ is nonamenable since it is a retract of $\Gamma$ and therefore contains an isomorphic copy of either $\Gamma_0$ or $\Gamma_1$.
    However, see \cite[Example 1.3]{AlvarezGaboriau2012} for an interesting family of examples where the group theoretic analogue virtually holds.
\end{remark}

\begin{prop}\label{prop:freeProductRelationsAdmitEssentialSplittings}
Let $\mathcal{R}$ be a p.m.p.\ countable Borel equivalence relation on $(X,\mu )$.
Suppose that $\mathcal{R}$ splits as $\mathcal{R}=\mathcal{R}_0\ast\mathcal{R}_1$ where for each $i\in \{ 0 , 1 \}$ the subrelation $\mathcal{R}_i$ has domain all of $X$ and all $\mathcal{R}_i$-classes are nontrivial. 
Then this splitting of $\mathcal{R}$ is $\mu$-essential.
\end{prop}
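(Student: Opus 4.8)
The plan is to argue by contradiction. Assume the splitting is $\mu$-inessential, so that after discarding a null $\mathcal{R}$-invariant set we may assume it is inessential, witnessed by an $\mathcal{R}$-invariant partition $X=X_0\sqcup X_1$ and Borel complete sections $U_i\subseteq X_i$ of $\mathcal{R}|_{X_i}$ with $\mathcal{R}|_{U_i}=\mathcal{R}_i|_{U_i}$. Since $\mu(X_0)+\mu(X_1)=1$, at least one $X_i$ has positive measure; fix such an $i$, say $i=0$. The free splitting restricts to the invariant set $X_0$ (generation and free independence are inherited, and nontriviality of all $\mathcal{R}_i$-classes persists because $X_0$ is $\mathcal{R}_i$-invariant), so I would work entirely inside $(X_0,\mu|_{X_0})$ with $\mathcal{R}=\mathcal{R}_0\ast\mathcal{R}_1$. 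First I record two consequences of the hypotheses. Because $U_0$ is a complete section, its $\mathcal{R}$-saturation is all of $X_0$, so $\mu(U_0)>0$. Because $\mathcal{R}|_{U_0}=\mathcal{R}_0|_{U_0}$, any two points of $U_0$ in the same $\mathcal{R}$-class are $\mathcal{R}_0$-equivalent; hence the $\mathcal{R}_0$-class $C_0(x)$ containing $U_0\cap[x]_{\mathcal{R}}$ is well defined, giving a Borel, $\mathcal{R}$-invariant assignment of a distinguished $\mathcal{R}_0$-class to each $\mathcal{R}$-class. Setting $V_0:=\{x:[x]_{\mathcal{R}_0}=C_0(x)\}$, we have $U_0\subseteq V_0$, so $\mu(V_0)>0$; the goal is to contradict this.

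The key device is the Bass--Serre tree of the free splitting. For a.e.\ $\mathcal{R}$-class $C$, form the bipartite graph $T_C$ whose type-$0$ vertices are the $\mathcal{R}_0$-classes in $C$, whose type-$1$ vertices are the $\mathcal{R}_1$-classes in $C$, and whose edges are the points of $C$, each point $p$ joining $[p]_{\mathcal{R}_0}$ to $[p]_{\mathcal{R}_1}$. Since $\mathcal{R}_0$ and $\mathcal{R}_1$ generate $\mathcal{R}$ the graph $T_C$ is connected, and since they are freely independent it is acyclic (a reduced cycle would give a nontrivial reduced word lying in $\mathcal{R}^0$); thus $T_C$ is a tree, forming a Borel field of trees over $X_0$. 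Because every $\mathcal{R}_i$-class is nontrivial, every vertex of $T_C$ has degree at least $2$. The invariant selection $C_0$ singles out a root $v_0:=v_0(C)$, a type-$0$ vertex, in each tree, and a point $x$ lies in $V_0$ exactly when its edge $e_x$ is incident to $v_0$.

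Finally I would run a mass-transport argument on the edge set, which is measurably identified with $(X_0,\mu|_{X_0})$. Root each $T_C$ at $v_0$ and orient every edge toward the root; each non-root vertex then has a unique outgoing edge and $\deg-1\geq 1$ incoming edges. Let each edge $e_x$ send mass $1$ to its parent edge, the outgoing edge at the endpoint of $e_x$ nearer to $v_0$, and nothing when $e_x$ is incident to $v_0$. Then the mass sent out of $x$ is $\mathbf{1}[x\notin V_0]$, while the mass received by $x$ is the number of children of $e_x$, namely $\deg(t(e_x))-1\geq 1$, where $t(e_x)$ is the endpoint of $e_x$ farther from $v_0$. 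Applying the mass transport principle for the p.m.p.\ relation $\mathcal{R}|_{X_0}$ yields $\mu(X_0\setminus V_0)=\int_{X_0}(\deg(t(e_x))-1)\,d\mu\geq\mu(X_0)$, whence $\mu(V_0)=0$, contradicting $\mu(V_0)>0$. The main obstacle, and the real content, is precisely this last step: one must recognize that the inessentiality datum is equivalent to an $\mathcal{R}$-invariant choice of a vertex in each Bass--Serre tree, and that in the measure-preserving setting the induced ``flow toward the root'' is incompatible with measure preservation once every vertex has degree at least $2$. The remaining points — that the splitting and nontriviality pass to $X_0$, that $T_C$ is a Borel field of trees, and that the parent/child assignment is Borel — are routine.
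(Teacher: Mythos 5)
Your proof is correct and follows essentially the same route as the paper's: both reduce to showing that the $\mathcal{R}_0$-saturation of the complete section (your $V_0$, the paper's $D$) is null, both build the acyclic incidence graph of the splitting rooted at the distinguished $\mathcal{R}_0$-class singled out by the inessentiality data, and both conclude by mass transport using that all classes are nontrivial. The only difference is in the choice of transport: the paper sends mass $1$ from every point directly to the penultimate vertex on its geodesic to the root class, so that points of $D$ receive infinite mass, whereas you send mass one step toward the root, so that every point receives mass at least $1$ while points of $V_0$ send out nothing; both versions contradict measure preservation in the same way.
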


\begin{proof}
It is enough to show that if $D\subseteq X$ is an $\mathcal{R}_0$-invariant set with $\mathcal{R}|_D =\mathcal{R}_0|_D$, then $D$ must be $\mu$-null.
Given such an $D$, let $Y\subseteq X$ be the $\mathcal{R}$-saturation of $D$.
The fact that $\mathcal{R}|_D=\mathcal{R}_0|_D$ means that for each $x\in Y$, the set $D$ meets the $\mathcal{R}$-class $[x]_{\mathcal{R}}$ in exactly one $\mathcal{R}_0$-class which we denote by $C_x$.

Consider the bipartite graph $G$  on the vertex set $V\coloneqq Y\sqcup ((Y/\mathcal{R}_0) \sqcup (Y/\mathcal{R}_1))$, where $Y$ is one piece of the bipartition, and with an edge between $x$ and $[x]_{\mathcal{R}_i}$ for each $x\in Y$ and $i\in \{ 0, 1 \}$. 
The relations $\mathcal{R}_0$ and $\mathcal{R}_1$ being freely independent means that this graph is acyclic.
The fact that $\mathcal{R}_0$ and $\mathcal{R}_1$ generate $\mathcal{R}$ means that the vertices of $V$ contained in a given $\mathcal{R}|_Y$-class constitute a single connected component of $G$.
For each $x\in Y$ there is a unique geodesic path through $G$ from $x$ to the $\mathcal{R}_0$-class $C_x\in Y/\mathcal{R}_0$, and we let $\pi (x)\in D$ be the penultimate vertex along this path.

Define a mass transport where each $x\in Y$ sends mass $1$ to $\pi (x)$, and sends mass $0$ to all other points. 
Then each $x\in Y$ sends out mass exactly $1$, but each $z\in D$ receives infinite mass: since all $\mathcal{R}_0$ and $\mathcal{R}_1$ classes are nontrivial, there are infinitely many geodesic paths in $G$ starting at $z$ and ending in $Y$ that never pass through the vertex $[z]_{\mathcal{R}_0}$, and the end point of each such path sends mass $1$ to $z$.  
If $D$ were not $\mu$-null then this would contradict the mass transport principle.
\end{proof}

We say a group $\Gamma$ admits an \textbf{essential measurable splitting} if there exists a free p.m.p.\ action $\Gamma\acts (X,\mu)$ whose associated orbit equivalence relation admits a $\mu$-essential splitting (by considering ergodic decompositions, it is clear that the existence of such an action implies the existence of one which is additionally ergodic). 
This is equivalent to the group \textbf{not} being measurably freely indecomposable in the sense of Alvarez and Gaboriau \cite{AlvarezGaboriau2012} (cf. \cite[Proposition 4.8]{AlvarezGaboriau2012}). 

\begin{prop}[\cite{AlvarezGaboriau2012}]\label{prop:MEtoFreeProductsAdmitEssentialSplittings}
The class of countable groups that admit an essential measurable splitting is invariant under measure equivalence. 
Every countable group that is measure equivalent to a free product of two nontrivial groups belongs to this class. 
\end{prop}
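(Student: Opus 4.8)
The plan is to isolate a single stability property at the level of equivalence relations and deduce both assertions from it. Say that an ergodic p.m.p.\ countable Borel equivalence relation \textbf{admits an essential splitting} if it has a free splitting that is $\mu$-essential. The key lemma I would establish is that this property is invariant under the moves generating stable isomorphism of ergodic relations, namely: (i) restriction to a Borel complete section $A$ (passing from $\R$ to $\R|_A$) together with its inverse operation; and (ii) class-bijective extension (passing from a relation $\R$ to a relation $\tilde\R$ admitting a class-bijective Borel factor map onto $\R$). Granting this lemma, both statements follow from standard manipulations of measure equivalence couplings.

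For the first assertion, suppose $\Gamma$ admits an essential measurable splitting, witnessed by a free ergodic p.m.p.\ action $\Gamma\acts(X_0,\mu_0)$ whose orbit equivalence relation $\R^{X_0}$ carries a $\mu_0$-essential splitting, and suppose $\Lambda$ is measure equivalent to $\Gamma$ via an ergodic coupling $\Omega$. Enlarge the coupling to $\tilde\Omega\coloneqq\Omega\times X_0$, letting $\Gamma$ act diagonally (on $\Omega$ and on $X_0$) and letting $\Lambda$ act only on the $\Omega$ factor; the two actions still commute and still admit finite-measure fundamental domains, so $\tilde\Omega$ is again a coupling of $\Gamma$ and $\Lambda$. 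The induced $\Gamma$-action on $\tilde\Omega/\Lambda\cong(\Omega/\Lambda)\times X_0$ is diagonal, and after first replacing $X_0$ by its product with a Bernoulli $\Gamma$-shift (which enlarges $\R^{X_0}$ only by a class-bijective extension, hence preserves the essential splitting by the key lemma) we may assume this diagonal action is ergodic. Projection onto the $X_0$ coordinate exhibits the orbit relation of $\Gamma\acts\tilde\Omega/\Lambda$ as a class-bijective extension of $\R^{X_0}$, so by the key lemma it admits an essential splitting. By Furman's description of measure equivalence recalled in $\S$\ref{section:OEME}, this relation is stably isomorphic to the orbit relation of the induced free ergodic action $\Lambda\acts\tilde\Omega/\Gamma$ (passing to an ergodic component if necessary); applying the key lemma once more, the latter relation admits an essential splitting, so $\Lambda$ admits an essential measurable splitting.

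For the second assertion, by the first it suffices to treat the case $\Gamma=\Lambda_0\ast\Lambda_1$ itself, with $\Lambda_0$ and $\Lambda_1$ nontrivial. Take any free ergodic p.m.p.\ action of $\Gamma$ (for instance the Bernoulli shift), and let $\R_i$ be the subrelation generated by the subgroup $\Lambda_i$. Freeness of the action and free independence of $\Lambda_0$ and $\Lambda_1$ in $\Gamma$ yield a free splitting $\R=\R_0\ast\R_1$ with both subrelations having domain all of $X$, and since each $\Lambda_i$ is nontrivial every $\R_i$-class has cardinality $|\Lambda_i|\geq 2$, so all classes are nontrivial. Proposition \ref{prop:freeProductRelationsAdmitEssentialSplittings} then shows the splitting is $\mu$-essential, so $\Gamma$ admits an essential measurable splitting.

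I expect the crux to be the key lemma, and within it the behavior of essential splittings under restriction to a complete section. The structural input here is the decomposition theory for free products of equivalence relations (as in \cite[Theorem 2.63]{AlvarezThesis}, already used in the proof of Lemma \ref{lem:freeDecomposableImpliesTreeableFreeFactor}), which produces an induced free splitting of $\R|_A$ from one of $\R$; the real work is verifying that this operation, and its inverse, preserve the essential/inessential dichotomy and not merely the existence of a splitting. Invariance under class-bijective extension is more transparent, since such a map is a fiberwise bijection on classes and the witnessing data for (in)essentiality---an invariant partition together with complete sections on which $\R$ agrees with $\R_i$---is class-local and transports along the factor map; moreover, in the situation relevant to the second assertion, where the subrelations already have all classes nontrivial, one may bypass this step entirely by reapplying Proposition \ref{prop:freeProductRelationsAdmitEssentialSplittings} upstairs. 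Since the key lemma is ultimately due to Alvarez and Gaboriau, I would cite \cite{AlvarezGaboriau2012} for it and present the coupling manipulations above as the derivation of the stated form.
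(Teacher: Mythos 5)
Your proposal matches the paper's proof: the second assertion is derived there exactly as you do it, from the first assertion together with Proposition \ref{prop:freeProductRelationsAdmitEssentialSplittings} applied to the subrelations generated by the two free factors, while the first assertion is handled by simply citing \cite[Proposition 4.13]{AlvarezGaboriau2012} --- the same result you ultimately defer to for your key lemma. The coupling manipulations you sketch are a reasonable unpacking of that citation, but the paper does not reprove it.
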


\begin{proof}
The first statement is \cite[Proposition 4.13]{AlvarezGaboriau2012}.
The second statement is an immediate consequence of the first and Proposition \ref{prop:freeProductRelationsAdmitEssentialSplittings}.
\end{proof}

Some examples of groups $\Gamma$ that admit essential measurable splittings (such as infinite treeable groups and more generally groups measure equivalent to free products of nontrivial groups) can be found in \cite{AlvarezGaboriau2012,Bridson2007, CGMTD2021,Gaboriau2005}.

It is known that, for a nonamenable group $\Gamma$, if $\beta^{(2)}_1(\Gamma)=0$ then $\Gamma$ does not admit an essential measurable splitting \cite{AlvarezGaboriau2012}. 
The converse (a question first raised in \cite{AlvarezGaboriau2012}) remains an open problem.

\section{Bi-cofinitely equivariant maps in the groupoid setting}\label{section:bicofGroupoids}

In this section, we prove Theorem \ref{thm:cofequivfreefactorgroupoids} which is a groupoid version of Theorem \ref{cofFinCosetReprError}.
The results in this subsection allow us to conclude, for example, that the groups $B\wr \pi_1(\Sigma_g)$ and $(B\times \Z )\wr\pi_1(\Sigma_g)$ are orbit equivalent for every nontrivial group $B$, where $\Sigma_g$ is a closed orientable surface of genus $g\geq 1$. 

Fix a discrete p.m.p.\ groupoid $\mathcal{G}$. 
Given a standard probability space $(Y,\nu )$, we make the set $Y^{\otimes \mathcal{G}}\coloneqq \bigsqcup _{x\in \mathcal{G}^0}Y^{x\mathcal{G}}$ into a standard Borel space
by equipping it with the sigma-algebra generated by the projection map $p : Y^{\otimes\mathcal{G}}\to \mathcal{G}^0$, 
together with all maps of the form $Y^{\otimes\mathcal{G}}\to Y$, $y\mapsto y_{\sigma (p(y))}$, 
where $\sigma$ varies over all Borel sections of the range map $r:\mathcal{G}\to \mathcal{G}^0$.
We equip $Y^{\otimes \mathcal{G}}$ with the probability measure $\nu ^{\otimes \mathcal{G}}\coloneqq \int _{\mathcal{G}^0}\nu ^{x\mathcal{G}}\, d\mu _{\mathcal{G}^0}(x)$.
Thus, given a discrete p.m.p.\ groupoid $\mathcal{K}$, the unit space of the direct sum bundle $\boldsymbol{\bigoplus}_{\mathcal{G}}\mathcal{K}$ is precisely $((\mathcal{K}^0)^{\otimes\mathcal{G}}, \mu _{\mathcal{K}^0}^{\otimes\mathcal{G}})$.

Let $\mathcal{B}$ and $\mathcal{C}$ be principal discrete p.m.p.\ groupoids, and consider the corresponding actions of $\mathcal{G}$ on the direct sum bundles $\boldsymbol{\bigoplus}_{\mathcal{G}}\mathcal{B}$ and $\boldsymbol{\bigoplus}_{\mathcal{G}}\mathcal{C}$.
Suppose that $\phi : ((\mathcal{B}^0)^{\otimes \mathcal{G}}, \mu _{\mathcal{B}^0}^{\otimes\mathcal{G}} )\rightarrow ((\mathcal{C}^0)^{\otimes \mathcal{G}},\mu _{\mathcal{C}^0}^{\otimes \mathcal{G}})$ is a fiberwise isomorphism of measure spaces, i.e., 
there is a $\mu _{\mathcal{B}^0}^{\otimes\mathcal{G}}$-conull set on which $\phi$ is injective and $\phi _*\mu _{\mathcal{B}^0} ^{x\mathcal{G}} = \mu _{\mathcal{C}^0} ^{x\mathcal{G}}$ for almost every $x\in \mathcal{G}^0$.
We say that $\phi$ is {\bf cofinitely $\mathcal{G}$-equivariant (with respect to $\mathcal{B}$ and $\mathcal{C}$)} if for each $g\in\mathcal{G}$, for almost every $b\in \bigoplus _{s(g)\mathcal{G}}\mathcal{B}$ there exists some $c\in\bigoplus _{s(g)\mathcal{G}}\mathcal{C}$ with $s(c)=\phi (s(b))$ and $r(c)=g^{-1}.\phi (g.r(b))$.
Note that such a $c$ is necessarily unique (since $\mathcal{C}$ is principal), so we denote it by $\phi _g(b)\coloneqq c$. 
We say $\phi$ is {\bf bi-cofinitely $\G$-equivariant} if both $\phi$ and $\phi^{-1}$ are cofinitely equivariant.

\begin{prop}\label{prop:cofEqToOEGroupoid}
Let $\mathcal{B}$ and $\mathcal{C}$ be principal discrete p.m.p.\ groupoids. 
Let $\G$ be a discrete p.m.p.\ groupoid, and assume that $\phi: ((\mathcal{B}^0)^{\otimes\G},\mu _{\mathcal{B}^0}^{\otimes\G})\rightarrow ((\mathcal{C}^0)^{\otimes\G},\mu _{\mathcal{C}^0}^{\otimes\G})$ is a bi-cofinitely $\G$-equivariant fiberwise isomorphism with respect to $\mathcal{B}$ and $\mathcal{C}$. 
Then the map $\Phi :\mathcal{B}\wr\mathcal{G}\to\mathcal{C}\wr\mathcal{G}$ defined by $\Phi(g,b)\coloneqq (g,\phi _g(b))$ for $g\in \G$ and $b\in \bigoplus_{s(g)\G}\mathcal{B}$ is an isomorphism of the wreath product groupoids $\mathcal{B}\wr\G$ and $\mathcal{C}\wr \G$. 
\end{prop}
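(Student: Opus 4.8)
The plan is to verify directly that $\Phi$ is a measure-preserving groupoid isomorphism whose induced map on unit spaces is $\phi$, exploiting throughout that $\mathcal{B}$ and $\mathcal{C}$ are \emph{principal}: any element of $\bigoplus_{x\mathcal{G}}\mathcal{B}$ or $\bigoplus_{x\mathcal{G}}\mathcal{C}$ is determined by its source and range. Consequently I never need an explicit description of $\phi_g(b)$, only the two identities $s(\phi_g(b))=\phi(s(b))$ and $r(\phi_g(b))=g^{-1}.\phi(g.r(b))$ that define it. Everything is understood modulo null sets.

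First I would settle well-definedness. For each fixed $g\in\mathcal{G}$, cofinite $\mathcal{G}$-equivariance of $\phi$ produces $\phi_g(b)$ for a.e.\ $b\in\bigoplus_{s(g)\mathcal{G}}\mathcal{B}$, and principality of $\mathcal{C}$ makes $b\mapsto\phi_g(b)$ a single-valued Borel map on a conull subset of each fiber. To promote this per-$g$ statement to joint Borel measurability and $\mu^1_{\mathcal{B}\wr\mathcal{G}}$-a.e.\ definedness of $(g,b)\mapsto\Phi(g,b)$, I would write the discrete groupoid $\mathcal{G}$ as a countable union of Borel partial bisections, disintegrate $\mu^1_{\mathcal{B}\wr\mathcal{G}}$ along the projection $(g,b)\mapsto g$, and combine Fubini with the Lusin--Novikov uniformization theorem. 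This is the most technically fiddly part of the argument, though it is routine.

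The core is then purely algebraic. On units, $\Phi(e_y)=e_{\phi(y)}$, i.e.\ $\phi_{e_x}(y)=\phi(y)$, which is immediate from the defining identities since $s(y)=r(y)=y$ for a unit $y$. Using $s(g,b)=s(b)$ and $r(g,b)=g.r(b)$, the two defining identities give $s(\Phi(g,b))=\phi(s(g,b))$ and $r(\Phi(g,b))=\phi(r(g,b))$, so $\Phi$ intertwines source and range with $\phi$ and therefore carries composable pairs to composable pairs. For multiplicativity, take composable $(g_1,b_1)$ and $(g_0,b_0)$, so that $s(b_1)=g_0.r(b_0)$. Both $\Phi\big((g_1,b_1)(g_0,b_0)\big)=\big(g_1g_0,\ \phi_{g_1g_0}((g_0^{-1}.b_1)b_0)\big)$ and $\Phi(g_1,b_1)\Phi(g_0,b_0)=\big(g_1g_0,\ (g_0^{-1}.\phi_{g_1}(b_1))\phi_{g_0}(b_0)\big)$ share the same $\mathcal{G}$-coordinate, so by principality of $\mathcal{C}$ it suffices to match the sources and ranges of the two $\bigoplus_{s(g_0)\mathcal{G}}\mathcal{C}$-coordinates. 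Using only the defining identities for $\phi_g$ and the facts that source and range of a product are the source of the right factor and the range of the left factor, I would compute that both sources equal $\phi(s(b_0))$ and both ranges equal $g_0^{-1}.r(\phi_{g_1}(b_1))$. Since $\Phi$ preserves units and composition it is a functor, and hence it automatically preserves inverses.

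Finally I would exhibit the inverse and check measure preservation. Applying the construction to the cofinitely equivariant map $\phi^{-1}$ gives $\Psi:\mathcal{C}\wr\mathcal{G}\to\mathcal{B}\wr\mathcal{G}$, $\Psi(g,c)=(g,(\phi^{-1})_g(c))$; a source-and-range computation (again via principality, now of $\mathcal{B}$) shows $(\phi^{-1})_g(\phi_g(b))$ has source $s(b)$ and range $r(b)$, whence $(\phi^{-1})_g\circ\phi_g=\mathrm{id}$ and symmetrically $\phi_g\circ(\phi^{-1})_g=\mathrm{id}$, so $\Psi=\Phi^{-1}$. For the measure, the induced unit-space map is the fiberwise isomorphism $\phi$, which pushes $\mu_{(\mathcal{B}^0)^{\otimes\mathcal{G}}}=\int\mu_{\mathcal{B}^0}^{x\mathcal{G}}\,d\mu_{\mathcal{G}^0}$ forward to $\mu_{(\mathcal{C}^0)^{\otimes\mathcal{G}}}$; since $r\circ\Phi=\phi\circ r$, the bijection $\Phi$ maps each fiber $r^{-1}(y)$ onto $r^{-1}(\phi(y))$, so it carries counting measures to counting measures and hence pushes $\mu^1_{\mathcal{B}\wr\mathcal{G}}=\int c^r_y\,d\mu_{(\mathcal{B}\wr\mathcal{G})^0}(y)$ forward to $\mu^1_{\mathcal{C}\wr\mathcal{G}}$. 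This completes the verification that $\Phi$ is an isomorphism of discrete p.m.p.\ groupoids.
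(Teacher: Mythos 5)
Your argument is correct and follows essentially the same route as the paper's proof: multiplicativity is reduced, via principality of the direct sum bundles, to matching sources and ranges using only the defining identities for $\phi_g$, and the inverse is exhibited as $(g,c)\mapsto(g,(\phi^{-1})_g(c))$. You simply spell out more of the routine details (measurability, unit preservation, measure preservation) that the paper leaves implicit.
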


\begin{proof}
The direct sum bundle groupoids are both principal since $\mathcal{B}$ and $\mathcal{C}$ are.
Thus, for $g,h\in \G$, $a\in \bigoplus_{s(g)\G}\mathcal{B}$, and $b\in \bigoplus_{s(h)\G}\mathcal{B}$ with $s(g,a)=r(h,b)$, checking sources and ranges verifies the equality $\phi _{gh}((h^{-1}.a)b)=(h^{-1}.\phi _g(a))\phi _h(b)$, and hence
\[
    \Phi((g,a)(h,b))=(gh,\phi_{gh}((h^{-1}.a)b))=(gh,(h^{-1}.\phi_g(a))\phi_h(b))=\Phi(g,a)\Phi(h,b),
\]
so that $\Phi$ is homomorphism. 
Defining $\tilde\Phi(g,c)\coloneqq (g,(\phi^{-1})_{g}(c))$ confirms that $\Phi$ is invertible with inverse $\tilde \Phi$.
\end{proof}

We now adapt Theorems \ref{cofAmenable} and \ref{cofFinCosetReprError} to the setting of wreath product groupoids. 

\begin{thm} \label{cofAmenableR}
Let $\R$ be an aperiodic amenable p.m.p.\ equivalence relation on $(X,\mu)$. 
Let $A_0\acts(Y_0,\nu _0)$, $A_1 \acts(Y_1,\nu _1)$ be free ergodic p.m.p.\ actions of (possibly finite) amenable countable groups. 
Then there exists a bi-cofinitely $\R$-equivariant fiberwise isomorphism (with respect to the orbit equivalence relations of $A_0\acts(Y_0,\nu _0)$ and $A_1 \acts(Y_1,\nu _1)$) between $(Y_0^{\otimes \R},\nu _0^{\otimes\R})$ and $(Y_1^{\otimes \R},\nu _1^{\otimes \R})$. 
\end{thm}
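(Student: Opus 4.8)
The plan is to reduce the statement to the group-level result Theorem \ref{cofAmenable} in the case $\Gamma = \mathbf{Z}$, by first realizing $\R$ as the orbit relation of a single transformation and then transporting the resulting bi-cofinitely $\mathbf{Z}$-equivariant map uniformly across the fibers of $Y_0^{\otimes\R}\to\R^0$.

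First I would use the structure theory of amenable relations: since $\R$ is amenable it is hyperfinite (Connes--Feldman--Weiss; cf.\ \cite{OW87}), and being aperiodic it is the orbit equivalence relation of a free p.m.p.\ action $\mathbf{Z}\acts(X,\mu )$, generated by some aperiodic transformation $T$. Writing $\mathcal{B} = A_0\ltimes(Y_0,\nu_0)$ and $\mathcal{C} = A_1\ltimes(Y_1,\nu_1)$ for the associated action groupoids (these are principal because the actions are free), Remark \ref{rem:action_wreath} identifies $\mathcal{B}\wr\R\cong (A_0\wr\mathbf{Z})\ltimes(X\times Y_0^{\mathbf{Z}})$, and likewise for $\mathcal{C}\wr\R$. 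Concretely, the free $\mathbf{Z}$-action trivializes the bundle as $Y_i^{\otimes\R}\cong X\times Y_i^{\mathbf{Z}}$ via $y\mapsto (x,\eta )$, where $x = p(y)$ and $\eta_n\coloneqq y_{(x,T^n x)}$; this map is Borel and carries each fiber measure $\nu_i^{x\R}$ to $\nu_i^{\mathbf{Z}}$. Under this identification, an element of $\R$ over $x$ with range $T^m x$ acts on fibers as a shift, and the wreath kernel $\boldsymbol{\bigoplus}_\R\mathcal{B}$ acts fiberwise as $\bigoplus_{\mathbf{Z}}A_0$ acting coordinatewise on $Y_0^{\mathbf{Z}}$ (so that the fiberwise relation $\sim_{\mathcal{B}}$ is exactly $\sim_{A_0}$).

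Next I would invoke Theorem \ref{cofAmenable} with $\Gamma = \mathbf{Z}$ to obtain a bi-cofinitely $\mathbf{Z}$-equivariant measure space isomorphism $\phi :(Y_0^{\mathbf{Z}},\nu_0^{\mathbf{Z}})\to (Y_1^{\mathbf{Z}},\nu_1^{\mathbf{Z}})$, and then define $\Theta\coloneqq \mathrm{id}_X\times\phi$, i.e.\ $\Theta (x,\eta )\coloneqq (x,\phi (\eta ))$. Because $\Theta$ preserves the basepoint and restricts to the measure space isomorphism $\phi$ on each fiber, it is a fiberwise isomorphism. The remaining step is to verify bi-cofinite $\R$-equivariance directly from the definition: given $g\in\R$ with $s(g) = x$ and $r(g) = T^m x$, and a wreath-kernel element $b$ over $x$ with $s(b) = (x,\eta )$ and $r(b) = (x,\eta ')$ (so $\eta\sim_{A_0}\eta '$), the required element $c\in\bigoplus_{x\R}\mathcal{C}$ exists precisely when $\phi (\eta )\sim_{A_1} g^{-1}.\phi (g.r(b))$. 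Writing the $\R$-translation as a shift on fibers and using that $\phi$ carries $\sim_{A_0}$ to $\sim_{A_1}$ and is cofinitely equivariant for the shift, both $\phi (\eta )$ and $g^{-1}.\phi (g.r(b))$ are $\sim_{A_1}$-equivalent to $\phi (\eta ')$, which gives the claim; applying the same reasoning to $\phi^{-1}$ handles $\Theta^{-1}$.

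The point that requires the most care is that a single $\phi$, chosen once on $Y_0^{\mathbf{Z}}$, simultaneously witnesses $\R$-equivariance over every fiber. This works precisely because the free $\mathbf{Z}$-action trivializes the bundle so that every groupoid translation becomes an honest shift in the $\mathbf{Z}$-coordinate, against which $\phi$ is cofinitely equivariant in both directions. Notably, no ergodicity hypothesis on $\R$ is needed; the only ergodicity used is that of the base actions $A_0\acts (Y_0,\nu_0)$ and $A_1\acts (Y_1,\nu_1)$, which is exactly what Theorem \ref{cofAmenable} consumes.
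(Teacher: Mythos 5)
Your proof is correct and follows essentially the same route as the paper's: the paper generates $\R$ by a free p.m.p.\ action of an infinite amenable group, applies Theorem \ref{FSZ} to the associated wreath product actions (which is exactly the content of Theorem \ref{cofAmenable}), and takes the product of the resulting orbit equivalence with $\mathrm{id}_X$, reading off bi-cofinite $\R$-equivariance through the identification of Remark \ref{rem:action_wreath}. Your specialization to a free $\mathbf{Z}$-action and the explicit fiberwise verification of equivariance are only cosmetic differences.
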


\begin{proof}
Fix an infinite amenable group $\Gamma$ and consider the wreath product actions $A_0\wr \Gamma\curvearrowright (Y_0^\Gamma, \nu_0 ^\Gamma )$ and $A_1\wr\Gamma\acts (Y_1^\Gamma,\nu_1^\Gamma)$. 
Let $\psi :(A_0\wr\Gamma )/\bigoplus_\Gamma A_0\rightarrow (A_1\wr\Gamma )/\bigoplus_\Gamma A_1$ denote the natural isomorphism $\psi(\gamma (\bigoplus_\Gamma A_0))=\gamma (\bigoplus_\Gamma A_1)$. 
By Theorem \ref{FSZ} there exists an orbit equivalence $\theta : (Y_0^\Gamma ,\nu_0 ^\Gamma )\rightarrow (Y_1^\Gamma , \nu_1 ^\Gamma )$ between the wreath product actions which satisfies $\theta(\gamma (\bigoplus_\Gamma A_0) .y)=\gamma (\bigoplus_\Gamma A_1).\theta(y)$ for all $\gamma\in A_0\wr\Gamma$ and almost every $y\in Y_0^\Gamma$.

By Dye's Theorem and Connes-Feldman-Weiss \cite{CFW81}, since the equivalence relation $\R$ is aperiodic and amenable, it can be generated by a free p.m.p.\ action $\Gamma\acts(X,\mu)$.
Consider the diagonal action $A_0\wr\Gamma\acts (X\times Y_0^\Gamma,\mu\times \nu_0^\Gamma)$, where the action on the $Y_0^\Gamma$-coordinate is the wreath product action and the action on the $X$-coordinate is the action of $\Gamma$ on $X$ applied through the quotient map from $A_0\wr\Gamma$ to $\Gamma$. 
Likewise, we consider the diagonal action $A_1\wr\Gamma\acts(X\times Y_1^\Gamma,\mu\times \nu_1^\Gamma)$. 
These two actions are free, and the map $\mathrm{id}_X \times \theta: X\times Y_0^\Gamma \rightarrow X\times Y_1^\Gamma$, where $\mathrm{id}_X$ is the identity map on the $X$, is an orbit equivalence between them. This yields an isomorphism of action groupoids 
\[
\tilde{\theta} : (A_0\wr\Gamma )\ltimes (X\times Y_0^\Gamma,\mu\times \nu_0^\Gamma) \rightarrow (A_1\wr\Gamma )\ltimes (X\times Y_1^\Gamma,\mu\times \nu_1 ^\Gamma) .
\]
Let $\mathcal{A}_0 = A_0\ltimes (Y_0^{\Gamma},\nu_0 ^{\Gamma})$ and let $\mathcal{A}_1=A_1\ltimes (Y_1^{\Gamma},\nu_1 ^{\Gamma} )$. 
By identifying $\mathcal{R}$ with $\Gamma\ltimes (X,\mu )$, we may naturally identify $\mathcal{A}_0\wr\mathcal{R}$ and $\mathcal{A}_1\wr\mathcal{R}$ with $(A_0\wr\Gamma )\ltimes (X\times Y_0^\Gamma,\mu\times \nu_0^\Gamma)$ and $(A_1\wr\Gamma )\ltimes (X\times Y_1^\Gamma,\mu\times \nu_1 ^\Gamma)$ respectively (see Remark \ref{rem:action_wreath}). Under these identifications, $\tilde{\theta}$ corresponds to an isomorphism of $\mathcal{A}_0\wr\mathcal{R}$ with $\mathcal{A}_1\wr\mathcal{R}$ which takes $\{ x \}\times\bigoplus _{x\mathcal{R}}\mathcal{A}_0$ to $\{x\}\times\bigoplus _{x\mathcal{R}}\mathcal{A}_1$ for each $x\in X$, and hence we obtain a bi-cofinitely $\mathcal{R}$-equivariant fiberwise isomorphism.
\end{proof}

\begin{thm}\label{thm:cofequivfreefactorgroupoids}
Let $\mathcal{H}$ be a subgroupoid of a discrete p.m.p.\ groupoid $\mathcal{G}$ with $\mathcal{H}^0=\mathcal{G}^0$, and assume that there exists a Borel set $\Sigma \subseteq \mathcal{G}$ of left coset representatives for $\mathcal{H}$ in $\mathcal{G}$ satisfying $|g(s(g)\Sigma )\triangle (r(g)\Sigma )|<\infty$ for all $g\in \mathcal{G}$.
Let $\mathcal{B}$ and $\mathcal{C}$ be principal discrete p.m.p. groupoids with measures $\nu$ and $\eta$ on their unit spaces, respectively.  

If there exists a bi-cofinitely $\mathcal{H}$-equivariant fiberwise isomorphism of measure spaces $\phi _{\mathcal{H}} : ((\mathcal{B}^0)^{\otimes \mathcal{H}} , \nu ^{\otimes\mathcal{H}} )\rightarrow ((\mathcal{C}^0)^{\otimes \mathcal{H}}, \eta ^{\otimes\mathcal{H}})$ then there exists a bi-cofinitely $\mathcal{G}$-equivariant fiberwise isomorphism of measure spaces $\phi : ((\mathcal{B}^0)^{\otimes \mathcal{G}} , \nu ^{\otimes\mathcal{G}} )\rightarrow ((\mathcal{C}^0)^{\otimes \mathcal{G}}, \eta ^{\otimes\mathcal{G}})$.
\end{thm}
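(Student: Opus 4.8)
The plan is to transport the proof of Theorem \ref{cofFinCosetReprError} to the fibered setting over $\mathcal{G}^0$, using the Borel transversal $\Sigma$ in place of the coset set $S$ and the input map $\phi_{\mathcal{H}}$ in place of $\phi_{\Lambda}$. First I would fix a Borel coset-selection map: for $g\in\mathcal{G}$ let $\sigma(g)\in\Sigma$ be the unique representative of the left coset $g\mathcal{H}$ (so $r(\sigma(g))=r(g)$ and $\sigma(g)^{-1}g\in\mathcal{H}$); this is Borel since $\Sigma$ is a Borel transversal, by Lusin--Novikov. I would then define $\phi$ fiberwise, for $x\in\mathcal{G}^0$, $y\in(\mathcal{B}^0)^{x\mathcal{G}}$, and $g\in x\mathcal{G}$, by
\[
\phi(y)_g \coloneqq \phi_{\mathcal{H}}\bigl((\sigma(g)^{-1}.y)|_{\mathcal{H}}\bigr)_{\sigma(g)^{-1}g},
\]
where $(\sigma(g)^{-1}.y)|_{\mathcal{H}}$ is the restriction to $s(\sigma(g))\mathcal{H}$ of the shifted configuration $\sigma(g)^{-1}.y\in(\mathcal{B}^0)^{s(\sigma(g))\mathcal{G}}$, and $\sigma(g)^{-1}g\in s(\sigma(g))\mathcal{H}$. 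The candidate inverse is given by the same formula with $\phi_{\mathcal{H}}^{-1}$ in place of $\phi_{\mathcal{H}}$. I would also record the associated cocycle $\rho(g,\sigma)\coloneqq \sigma(g\sigma)^{-1}(g\sigma)\in\mathcal{H}$, defined for $\sigma\in s(g)\Sigma$, which measures the failure of $g$ to carry the chosen representative $\sigma$ to another chosen representative.

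Next I would verify that $\phi$ is a fiberwise measure-space isomorphism. Over each $x\in\mathcal{G}^0$ the range fiber splits as the disjoint union of left cosets $x\mathcal{G}=\bigsqcup_{\sigma\in x\Sigma}\sigma\mathcal{H}$, so $(\mathcal{B}^0)^{x\mathcal{G}}=\prod_{\sigma\in x\Sigma}(\mathcal{B}^0)^{\sigma\mathcal{H}}$; since $\phi(y)|_{\sigma\mathcal{H}}$ depends only on $y|_{\sigma\mathcal{H}}$, the fiber map factors as a product of maps indexed by the cosets. On the coset $\sigma\mathcal{H}$ this factor is the composite $\sigma_{\mathcal{C}}\circ\phi_{\mathcal{H}}\circ\sigma_{\mathcal{B}}^{-1}$ of the fiber of $\phi_{\mathcal{H}}$ over $s(\sigma)$ with the obvious shift identifications $\sigma_{\mathcal{B}}\colon(\mathcal{B}^0)^{s(\sigma)\mathcal{H}}\to(\mathcal{B}^0)^{\sigma\mathcal{H}}$ and $\sigma_{\mathcal{C}}$, exactly as in Theorem \ref{cofFinCosetReprError}. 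As shifts preserve product measure and $\phi_{\mathcal{H}}$ is a fiberwise measure isomorphism, each factor is measure preserving; taking the product over cosets and integrating over $\mathcal{G}^0$ gives $\phi_*\nu^{x\mathcal{G}}=\eta^{x\mathcal{G}}$ for a.e.\ $x$, while the explicit inverse formula shows $\phi$ is a.e.\ injective. Measurability in $x$ is handled throughout by $\Sigma$ being Borel together with the fibered Borel structure.

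Then I would establish bi-cofinite $\mathcal{G}$-equivariance. Fix $g\in\mathcal{G}$ and a.e.\ $b\in\bigoplus_{s(g)\mathcal{G}}\mathcal{B}$, and set $u\coloneqq s(b)$, $v\coloneqq r(b)$; I must produce $c\in\bigoplus_{s(g)\mathcal{G}}\mathcal{C}$ with $s(c)=\phi(u)$ and $r(c)=g^{-1}.\phi(g.v)$, i.e.\ (writing $p\sim_{\mathcal{C}}q$ to mean $p,q$ differ in finitely many coordinates with each differing pair lying in $\mathcal{C}$) I need $\phi(u)\sim_{\mathcal{C}}g^{-1}.\phi(g.v)$. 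Since the relevant direct sum groupoid is an equivalence relation, it suffices to check (a) $\phi(u)\sim_{\mathcal{C}}\phi(v)$ and (b) $g.\phi(v)\sim_{\mathcal{C}}\phi(g.v)$. For (a) I argue coset-by-coset as in Theorem \ref{cofFinCosetReprError}: for all but finitely many cosets $\sigma$ the restrictions $u|_{\sigma\mathcal{H}}$ and $v|_{\sigma\mathcal{H}}$ coincide, and on each of the finitely many remaining cosets bi-cofinite equivariance of $\phi_{\mathcal{H}}$ forces agreement off a finite set. For (b) I compute $\phi(g.v)_h$ and $(g.\phi(v))_h$ for $h$ in a coset $\tau\mathcal{H}$ with $\tau\in r(g)\Sigma$ and compare them via $\rho(g^{-1},\tau)$, mirroring equation \eqref{eqn:cofeq}: when $g^{-1}\tau\in\Sigma$ the cocycle is a unit and the two values agree on all of $\tau\mathcal{H}$, whereas for the finitely many cosets with $g^{-1}\tau\notin\Sigma$ (a set contained in $r(g)\Sigma\triangle g(s(g)\Sigma)$) bi-cofinite equivariance of $\phi_{\mathcal{H}}$ gives agreement off a finite subset of $\tau\mathcal{H}$. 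The hypothesis $|g(s(g)\Sigma)\triangle(r(g)\Sigma)|<\infty$ guarantees only finitely many exceptional cosets, so the total discrepancy is finite. Applying the same argument to $\phi^{-1}$ and $\phi_{\mathcal{H}}^{-1}$ completes the proof.

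I expect the main obstacle to be the bookkeeping in step (b): correctly tracking how translation by $g$ permutes the cosets, identifying the finite exceptional set with the symmetric difference $r(g)\Sigma\triangle g(s(g)\Sigma)$, and confirming that $\rho$ is a unit precisely on the non-exceptional cosets, all while staying inside the fibered Borel category. The measure-isomorphism step is conceptually routine once the coset product decomposition is set up measurably, so the genuine content is the finite-error analysis faithfully transported from the group setting of Theorem \ref{cofFinCosetReprError}.
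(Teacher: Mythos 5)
Your proposal is correct and follows essentially the same route as the paper's proof: the same coset-selection map $\sigma$ and cocycle $\rho$, the same fiberwise formula for $\phi$ and its inverse, the same two-step verification of cofinite equivariance (agreement of $\phi(u)$ and $\phi(v)$ coset-by-coset, then comparison of $g.\phi(v)$ with $\phi(g.v)$ via $\rho$ with exceptional cosets controlled by $r(g)\Sigma\triangle g(s(g)\Sigma)$), and the same product-over-cosets decomposition for the measure-isomorphism step. No substantive differences.
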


\begin{proof}
The proof mirrors that of Theorem \ref{cofFinCosetReprError}. 
Throughout the proof we write $\sim _{\mathcal{B}}$ for both the relations $\mathcal{R}_{\boldsymbol{\bigoplus}_\G\mathcal{B}}$ and $\mathcal{R}_{\boldsymbol{\bigoplus}_\mathcal{H}\mathcal{B}}$,
and we write $\sim _{\mathcal{C}}$ for both the relations $\mathcal{R}_{\boldsymbol{\bigoplus}_\G\mathcal{C}}$ and $\mathcal{R}_{\boldsymbol{\bigoplus}_\mathcal{H}\mathcal{C}}$. 
Let $\sigma :\mathcal{G}/\mathcal{H} \rightarrow \Sigma$ be the map sending the left coset $g\mathcal{H}$ to its coset representative in $\Sigma$. We will also write $\sigma (g)$ for $\sigma (g\mathcal{H})$. Let $\rho : \mathcal{G} \otimes _{\mathcal{G}^0}  \mathcal{G}/\mathcal{H} \rightarrow \mathcal{H}$ denote the associated cocycle into $\mathcal{H}$, defined by $\rho (g, \alpha \mathcal{H} ) = \sigma (g \alpha \mathcal{H} )^{-1}g \sigma (\alpha \mathcal{H})$. Define $\phi :(\mathcal{B}^0)^{\otimes \mathcal{G}} \rightarrow (\mathcal{C}^0)^{\otimes \mathcal{G}}$ by 
\[
\phi (y)_g \coloneqq \phi _{\mathcal{H}} \big( (\sigma (g)^{-1}.y)|_{s(\sigma (g))\mathcal{H}}\big)_{\sigma (g)^{-1}g}
\]
for $y\in (\mathcal{B}^0)^{\otimes \G}$ and $g\in \G$. 
Note that $\phi$ is bijective with inverse given by 
\[
\phi ^{-1}(z)_g = \phi _{\h}^{-1} \big( (\sigma (g )^{-1}.z)|_{s(\sigma(g))\h}\big) _{\sigma (g )^{-1}g}
\]
for $z\in (\mathcal{C}^0)^{\otimes \G}$ and $g\in \G$. 

Given $x\in \mathcal{G}^0$ and $y_0,y_1\in (\mathcal{B}^0)^{x\mathcal{G}}$ with $y_0\sim_{\mathcal{B}} y_1$, we have the equality $(\alpha ^{-1}.y_0)|_{s(\alpha )\mathcal{H}}= (\alpha ^{-1}.y_1)|_{s(\alpha )\mathcal{H}}$ for all but finitely many $\alpha \in x\Sigma$. 
For each $\alpha \in x\Sigma$ for which this equality holds we have that $\phi (y_0)_{g}=\phi (y_1)_{g}$ for all $g \in \alpha\mathcal{H}$. 
For each of the finitely many $\alpha\in x\Sigma$ for which we do not have equality we still have almost equality $(\alpha ^{-1}.y_0)|_{s(\alpha )\mathcal{H}}\sim_{\mathcal{B}} (\alpha ^{-1}.y_1)|_{s(\alpha )\mathcal{H}}$, 
so since $\phi _{\mathcal{H}}$ is bi-cofinitely equivariant we see that $\phi (y_0)_g=\phi (y_1)_g$ for all but finitely many $g \in \alpha \mathcal{H}$. 
Therefore, $\phi (y_0)\sim_{\mathcal{C}} \phi (y_1)$.

To show that $\phi$ is cofinitely equivariant, it remains to show that for $k \in \mathcal{G}$ and $y\in (\mathcal{B}^0)^{s(k)\mathcal{G}}$ we have $\phi (k.y)\sim_{\mathcal{C}} k.\phi (y)$. 
Given $k\in\mathcal{G}$, for each $\alpha \in r(k)\Sigma$ and $g \in \alpha \mathcal{H}$ we have 
\begin{align}
\nonumber \phi (k.y)_g&= \phi _{\mathcal{H}} \big( (\alpha ^{-1}k.y) |_{s(\alpha )\mathcal{H}} \big)_{\alpha ^{-1}g} \\
\label{eqn:groupoidcofeq} (k .\phi (y) )_{g} &= \phi _{\mathcal{H}}\big( (\sigma (k ^{-1}g )^{-1}.y)|_{s(\sigma (k^{-1}g))\mathcal{H}}\big) _{\sigma (k ^{-1}g)^{-1}k^{-1}g}  \\
\nonumber &= \phi _{\mathcal{H}}\big(\rho (k^{-1},\alpha \mathcal{H}).((\alpha ^{-1}k.y)|_{s(\alpha )\mathcal{H}})\big) _{\rho (k ^{-1},\alpha \mathcal{H})\alpha ^{-1}g}
\end{align}
If $\alpha\in r(k)\Sigma\cap ks(k)\Sigma$ then $\sigma (k^{-1}g)=k^{-1}\alpha$ and $\rho (k^{-1}, \alpha\mathcal{H} ) = s(\alpha )$, 
hence we have equality $\phi (k.y)_g = (k.\phi (y) )_g$ for all $g \in \alpha\mathcal{H}$. If $\alpha\in r(k)\Sigma\setminus k (s(k)\Sigma )$, then since $\phi _{\mathcal{H}}$ is bi-cofinitely equivariant and $\rho (k ^{-1},\alpha \mathcal{H} )\in \mathcal{H}$ we have 
\[
\phi _{\mathcal{H}}\big( \rho (k ^{-1},\alpha \mathcal{H}).((\alpha ^{-1}k.y)|_{s(\alpha )\mathcal{H}})\big) \sim_{\mathcal{C}} \rho (k^{-1},\alpha\mathcal{H}).\phi _{\mathcal{H}}\big((\alpha ^{-1}ky)|_{s(\alpha )\mathcal{H}} \big) ,
\]
and hence \eqref{eqn:groupoidcofeq} shows that $(k.\phi (y) )_g=\phi (k.y)_g$ for all but finitely many $g \in \alpha\mathcal{H}$. Since $r(k)\Sigma\setminus k (s(k)\Sigma )$ is finite and $r(k)\mathcal{G} = \bigcup _{\alpha\in r(k)\Sigma}\alpha\mathcal{H}$ 
we conclude that $k.\phi (y)\sim_{\mathcal{C}} \phi (k.y)$ and thus $\phi$ is cofinitely equivariant. 
Similarly, we conclude that $\phi^{-1}$ is cofinitely equivariant.

It remains to show that for each $x\in \mathcal{G}^0$, the restriction $\phi ^x :((\mathcal{B}^0)^{x\mathcal{G}},\nu ^{x\mathcal{G}}) \rightarrow ((\mathcal{C}^0)^{x\mathcal{G}}, \eta ^{x\mathcal{G}})$ of $\phi$ to the fiber over $x$ is a measure space isomorphism. 
For each $\alpha \in x\Sigma$ the map $(\mathcal{B}^0)^{x\mathcal{G}}\rightarrow (\mathcal{C}^0)^{\alpha\mathcal{H}}$, $y\mapsto \phi (y)|_{\alpha \mathcal{H}}$ only depends on $y|_{\alpha\mathcal{H}}$, and hence descends to a map $\phi ^x_{\alpha}:(\mathcal{B}^0)^{\alpha\mathcal{H}}\rightarrow (\mathcal{C}^0)^{\alpha\mathcal{H}}$. 
Therefore, under the natural identifications $(\mathcal{B}^0)^{x\mathcal{G}} = \prod _{\alpha \in x\Sigma} (\mathcal{B}^0)^{\alpha \mathcal{H}}$ and $(\mathcal{C}^0)^{x\mathcal{G}} = \prod _{\alpha \in x\Sigma} (\mathcal{C}^0)^{\alpha\mathcal{H}}$, we may identify $\phi ^x$ with $\prod _{\alpha\in x\Sigma} \phi ^x_{\alpha}$, and hence it is enough to show that each $\phi ^x_{\alpha}$ is a measure space isomorphism. 
Each of the maps $\phi ^x_{\alpha}$ is a composition $\phi ^x_{\alpha} = \alpha _{\mathcal{C}}\circ \phi ^x _{\mathcal{H}}\circ \alpha _{\mathcal{B}}^{-1}$, where $\alpha _{\mathcal{B}} : (\mathcal{B}^0)^{s(\alpha )\mathcal{H}}\rightarrow (\mathcal{B}^0)^{\alpha \mathcal{H}}$ is the map $\alpha _{\mathcal{B}}(y)_{\alpha h}=y_h$, the map $\alpha _{\mathcal{C}}$ is defined similarly, and $\phi ^x_{\mathcal{H}}$ denotes the restriction of $\phi _{\mathcal{H}}$ to the fiber over $x$. 
Therefore, $\phi ^x_{\alpha}$ is a measure space isomorphism, since each of the maps $\alpha _{\mathcal{B}}$, $\phi ^x_{\mathcal{H}}$, and $\alpha _{\mathcal{C}}$ are.
\end{proof}

\section{Measure equivalence and wreath products}
\label{section:wreathME}
\subsection{Fixed wreath complement}
It is fairly immediate that \ref{prop:freeFactorsHaveNiceCosetRep} and \ref{prop:cofEqToOEGroupoid}, Lemma \ref{lem:freeDecomposableImpliesTreeableFreeFactor}, and Theorems \ref{cofAmenableR} and \ref{thm:cofequivfreefactorgroupoids} together imply that $A_0\wr \Gamma$ is orbit equivalent to $A_1\wr \Gamma$ whenever $A_0$ and $A_1$ are nontrivial (possibly finite) amenable groups and $\Gamma$ is a group that admits an essential measurable splitting (generalizing Corollary \ref{cor:freeFactorAmenableLampGroups}). 
However, a bit more work allows us to relax the amenability assumption on $A_0$ and $A_1$, replacing it instead with the assumption that the groups $A_0\times \mathbf{Z}$ and $A_1\times \mathbf{Z}$ are measure equivalent.

\begin{thm}\label{thm:MELampGroups}
Let $\Gamma$ be a countable group that admits an essential measurable splitting.
\begin{enumerate}
    \item The groups $B\wr\Gamma$ and $(B\times A) \wr \Gamma$ are orbit equivalent for every nontrivial countable group $B$ and every countable amenable group $A$.
    \item Let $B_0$ and $B_1$ be nontrivial countable groups and let $A_0$ and $A_1$ be countable amenable groups. 
    If $B_0\times A_0$ and $B_1\times A_1$ are measure equivalent, then $B_0\wr\Gamma$ and $B_1\wr\Gamma$ are orbit equivalent.
\end{enumerate}
In particular, if $B_0$ and $B_1$ are nontrivial countable groups which are measure equivalent, then $B_0\wr\Gamma$ and $B_1\wr\Gamma$ are orbit equivalent.
\end{thm}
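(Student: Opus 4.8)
The plan is to move entirely into the groupoid framework and reduce all three assertions to the construction of a single bi-cofinitely equivariant fiberwise isomorphism over an amenable free factor. First I would use the hypothesis to fix a free ergodic p.m.p.\ action $\Gamma\acts(X,\mu)$ whose orbit equivalence relation $\mathcal{R}$ admits a $\mu$-essential splitting, and identify the action groupoid $\Gamma\ltimes(X,\mu)$ with $\mathcal{R}$. Being ergodic and p.m.p., $\mathcal{R}$ is either amenable or $\mu$-nowhere amenable. In the nowhere amenable case Lemma \ref{lem:freeDecomposableImpliesTreeableFreeFactor}(2) yields an aperiodic amenable free factor $\mathcal{S}$ of $\mathcal{R}$ with domain all of $X$; in the amenable case I simply take $\mathcal{S}=\mathcal{R}$ (itself aperiodic amenable, with trivial complement). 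In either case Proposition \ref{prop:freeFactorsHaveNiceCosetRep} supplies a Borel set $\Sigma\subseteq\mathcal{R}$ of left coset representatives for $\mathcal{S}$ with $|g(s(g)\Sigma)\triangle(r(g)\Sigma)|<\infty$ for all $g$, which is precisely the hypothesis of Theorem \ref{thm:cofequivfreefactorgroupoids}.

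The reduction is then uniform across the three claims. Given countable groups $D_0,D_1$ equipped with free ergodic p.m.p.\ actions, I form the principal action groupoids $\mathcal{D}_i=D_i\ltimes(Y_i,\nu_i)$. By Remark \ref{rem:action_wreath} the wreath product groupoid $\mathcal{D}_i\wr\mathcal{R}$ is the action groupoid of the wreath product action of $D_i\wr\Gamma$, so orbit equivalence of $D_0\wr\Gamma$ and $D_1\wr\Gamma$ will follow from an isomorphism $\mathcal{D}_0\wr\mathcal{R}\cong\mathcal{D}_1\wr\mathcal{R}$. By Proposition \ref{prop:cofEqToOEGroupoid} such an isomorphism is produced from a bi-cofinitely $\mathcal{R}$-equivariant fiberwise isomorphism of wreath kernels, and by Theorem \ref{thm:cofequivfreefactorgroupoids} (with $\mathcal{H}=\mathcal{S}$ and the set $\Sigma$ above) the latter is produced from a bi-cofinitely $\mathcal{S}$-equivariant fiberwise isomorphism
\[
\phi_{\mathcal{S}}:\big((\mathcal{D}_0^0)^{\otimes\mathcal{S}},\nu_0^{\otimes\mathcal{S}}\big)\longrightarrow\big((\mathcal{D}_1^0)^{\otimes\mathcal{S}},\nu_1^{\otimes\mathcal{S}}\big).
\]
Everything thus comes down to building $\phi_{\mathcal{S}}$ over the aperiodic amenable relation $\mathcal{S}$, and I would organize the three claims around this. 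For part (1) I take $\mathcal{D}_0=\mathcal{B}$ and $\mathcal{D}_1=\mathcal{B}\times\mathcal{A}$ with $\mathcal{A}=A\ltimes(Z,\zeta)$; since $\boldsymbol{\bigoplus}_{\mathcal{S}}(\mathcal{B}\times\mathcal{A})=\boldsymbol{\bigoplus}_{\mathcal{S}}\mathcal{B}\times\boldsymbol{\bigoplus}_{\mathcal{S}}\mathcal{A}$, the task is to absorb the amenable wreath factor $\boldsymbol{\bigoplus}_{\mathcal{S}}\mathcal{A}$, bi-cofinitely equivariantly, exploiting the infinitude of the $\mathcal{S}$-orbits (a ``Hilbert hotel'' along each amenable orbit).

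For part (2), given that $D_0=B_0\times A_0$ and $D_1=B_1\times A_1$ are measure equivalent, I realize the coupling (via \S\ref{section:OEME}) as free ergodic actions of $D_0$ and $D_1$ whose action groupoids $\mathcal{D}_0,\mathcal{D}_1$ are stably isomorphic, build the corresponding $\phi_{\mathcal{S}}$, and then compose with the orbit equivalences between $B_i\wr\Gamma$ and $(B_i\times A_i)\wr\Gamma$ furnished by part (1) to conclude that $B_0\wr\Gamma$ and $B_1\wr\Gamma$ are orbit equivalent. The concluding ``in particular'' statement is then immediate: if $B_0$ and $B_1$ are measure equivalent then so are $B_0\times\mathbf{Z}$ and $B_1\times\mathbf{Z}$ (take the product of the given coupling with the trivial self-coupling of $\mathbf{Z}$), and part (2) applies with $A_0=A_1=\mathbf{Z}$.

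The step I expect to be the main obstacle is the construction of $\phi_{\mathcal{S}}$ when the kernels $\mathcal{D}_i$ are not amenable. Theorem \ref{cofAmenableR} disposes of the amenable case by realizing $\mathcal{S}$ through a free amenable group action and invoking the Golodets--Sinel'shchikov cocycle form of Dye's theorem (Theorem \ref{FSZ}); the needed strengthening must permit $\mathcal{D}_0,\mathcal{D}_1$ to be merely stably isomorphic while carrying the infinite amenable direct factor $\mathcal{A}_i$. Two things must be achieved at once. First, \emph{index absorption}: a measure equivalence of $D_0$ and $D_1$ only yields a stable isomorphism of index $\alpha$ possibly different from $1$ (already $B_0=\mathbf{F}_2,\ B_1=\mathbf{F}_3$ forces $\alpha=2$), whereas $\phi_{\mathcal{S}}$ is an honest measure-space isomorphism of index $1$; the amenable piece $\boldsymbol{\bigoplus}_{\mathcal{S}}\mathcal{A}_i$ sitting inside $\boldsymbol{\bigoplus}_{\mathcal{S}}\mathcal{D}_i$ is an aperiodic hyperfinite relation whose great flexibility (all infinite amenable relations being mutually orbit equivalent) is what soaks up the factor $\alpha$. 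Second, \emph{cofinite equivariance}: the map must respect the tail relations $\sim_{\mathcal{B}},\sim_{\mathcal{C}}$, which is exactly what the cocycle version of Dye's theorem guarantees in the amenable regime. Reconciling the amenable flexibility needed for (i) with the rigidity demanded by (ii), all carried out over $\mathcal{S}$, is the delicate heart of the argument; crucially it has no group-theoretic analogue, paralleling the failure of the naive group version noted after Lemma \ref{lem:freeDecomposableImpliesTreeableFreeFactor}. Once $\phi_{\mathcal{S}}$ is in hand, Theorem \ref{thm:cofequivfreefactorgroupoids}, Proposition \ref{prop:cofEqToOEGroupoid}, and Remark \ref{rem:action_wreath} deliver the desired orbit equivalences mechanically.
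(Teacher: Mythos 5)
Your reduction scaffolding coincides with the paper's: pass to the action groupoid, extract an aperiodic amenable free factor $\mathcal{S}$ via Lemma \ref{lem:freeDecomposableImpliesTreeableFreeFactor}, get the coset representatives from Proposition \ref{prop:freeFactorsHaveNiceCosetRep}, lift a bi-cofinitely $\mathcal{S}$-equivariant map to a bi-cofinitely $\mathcal{G}$-equivariant one via Theorem \ref{thm:cofequivfreefactorgroupoids}, and convert to an isomorphism of wreath product groupoids via Proposition \ref{prop:cofEqToOEGroupoid} and Remark \ref{rem:action_wreath}. But the step you yourself flag as ``the main obstacle'' --- producing the fiberwise isomorphism $\phi_{\mathcal{S}}$ when the kernels $\mathcal{D}_i$ are non-amenable --- is the actual content of the theorem, and your proposal leaves it unresolved. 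A ``Hilbert hotel along each amenable orbit'' cannot work as stated: Theorem \ref{cofAmenableR} rests on the Feldman--Sutherland--Zimmer/Golodets--Sinel'shchikov machinery, which requires the entire wreath product group to be amenable, and there is no analogue of Theorem \ref{FSZ} with a non-amenable kernel. Likewise, your plan for part (2) to perform ``index absorption'' inside $\phi_{\mathcal{S}}$ over a stable isomorphism of the $\mathcal{D}_i$ is exactly the delicate point you acknowledge but do not carry out.

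The paper circumvents both difficulties. For part (1) it never builds a cofinitely equivariant map with non-amenable kernels over $\mathcal{S}$: it applies Theorem \ref{cofAmenableR} only to the amenable kernels $\mathcal{C}_n$ (the full relation on $n$ points, with $n=2$ if $B$ is infinite and $n=|B|$ otherwise) versus $\mathcal{C}_n\oplus\mathcal{A}$, lifts the resulting map to a bi-cofinitely $\mathcal{G}$-equivariant $\phi$, and only then brings in $B$ by writing the orbit relation $\mathcal{B}$ of a free ergodic $B$-action as $\mathcal{B}\cong\mathcal{B}_U\oplus\mathcal{C}_n$ for a set $U$ of measure $1/n$, and tensoring $\phi$ with the identity in the $U$-coordinate; bi-cofinite equivariance of $\mathrm{id}\times\phi$ is inherited from $\phi$. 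For part (2) the index is absorbed at the group level, not the groupoid level: $B_0\times A_0\times\mathbf{Z}$ and $B_1\times A_1\times\mathbf{Z}$ are measure equivalent groups with a $\mathbf{Z}$ direct factor, hence orbit equivalent; the elementary Proposition \ref{OElampsWreath} lifts that orbit equivalence coordinatewise to the wreath product actions of $(B_i\times A_i\times\mathbf{Z})\wr\Gamma$; and part (1), applied with the amenable group $A_i\times\mathbf{Z}$, connects $B_i\wr\Gamma$ to $(B_i\times A_i\times\mathbf{Z})\wr\Gamma$. Without these two devices --- the $\mathcal{B}_U\oplus\mathcal{C}_n$ splitting and the $\mathbf{Z}$-factor plus Proposition \ref{OElampsWreath} route --- your argument stalls exactly where you predict it will.
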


\begin{proof}
We begin with the proof of (1). Fix a free ergodic p.m.p.\ action $\Gamma\acts(X,\mu)$ with action groupoid $\G$ that admits a $\mu$-essential splitting. By Lemma \ref{lem:freeDecomposableImpliesTreeableFreeFactor}, $\G$ contains an aperiodic amenable equivalence relation $\R$ as a free factor.  

If $B$ is infinite then let $n=2$, and otherwise let $n$ denote the cardinality of $B$.
Let $Z_n$ be a set of cardinality $n$, let $\eta _n$ be normalized counting measure on $Z_n$, and let $\mathcal{C}_n$ be the p.m.p.\ equivalence relation on $Z_n$ where all points of $Z_n$ are equivalent to each other.
Fix a free ergodic p.m.p.\ action $A\acts (Z,\eta)$ and denote the associated orbit equivalence relations by $\mathcal{A}$, respectively. 
By Theorem \ref{cofAmenableR}, there is a bi-cofinitely $\R$-equivariant fiberwise isomorphism, with respect to $\mathcal{C}_n$ and $\mathcal{C}_n\oplus \mathcal{A}$, from $(Z_n^{\otimes \R},\eta_n ^{\otimes \R})$ to $((Z_n\times Z)^{\otimes \R},(\eta _n\times \eta)^{\otimes\R})$.
Applying Theorem \ref{thm:cofequivfreefactorgroupoids} and Proposition \ref{prop:freeFactorsHaveNiceCosetRep} gives a bi-cofinitely $\G$-equivariant fiberwise isomorphism $\phi : (Z_n^{\otimes \G},\eta _n^{\otimes \G})\to ((Z_n\times Z)^{\otimes \G},(\eta_n\times\eta)^{\otimes\G})$,
with respect to $\mathcal{C}_n$ and $\mathcal{C}_n\oplus \mathcal{A}$.

Fix any free ergodic p.m.p.\ action $B\acts(W,\rho)$ along with the associated orbit equivalence relation $\mathcal{B}$. 
Take any subset $U\subseteq W$ of measure $\frac{1}{n}$. 
Then $\mathcal{B}$ is isomorphic to the direct sum $\mathcal{B}_U\oplus \mathcal{C}_n$. 
Define a fiberwise measure space isomorphism $\tilde\phi: ((U\times Z_n)^{\otimes \G},(\rho|_U\times\eta_n)^{\otimes \G})\rightarrow((U\times Z_n\times Z)^{\otimes \G},(\rho|_U \times\eta_n\times\eta)^{\otimes\G})$ as follows. 
For each $x\in \G^0$, the map $\tilde \phi$ takes the fiber $(U\times Z_n)^{x\G}$ to the fiber $(U\times Z_n\times Z)^{x\G}$ via $\tilde{\phi}(u,z)=(u, \phi(z))$ for $u\in U^{x\G}$ and $z\in Z_n^{x\G}$;
here we are using the natural identification of $(U\times Z_n)^{x\G}$ with $U^{x\G}\times Z_n^{x\G}$ and of $(U\times Z_n\times Z)^{x\G}$ with $U^{x\G}\times (Z_n\times Z)^{x\G}$.

The map $\tilde \phi$ is bi-cofinitely $\G$-equivariant with respect to $\mathcal{B}_U\oplus \mathcal{C}_n$ and $\mathcal{B}_U\oplus \mathcal{C}_n\oplus\mathcal{A}$ (which are isomorphic to $\mathcal{B}$ and $\mathcal{B}\oplus \mathcal{A}$, respectively) and hence, by Proposition \ref{prop:cofEqToOEGroupoid}, the groupoids $\mathcal{B}\wr\G$ and $(\mathcal{B}\oplus \mathcal{A})\wr\G$ are isomorphic.

By Remark \ref{rem:action_wreath}, the groupoids $\mathcal{B}\wr\G$ and $(\mathcal{B}\oplus \mathcal{A})\wr\G$ are naturally isomorphic to orbit equivalence relations of free p.m.p.\ ergodic actions of $B\wr\Gamma$ and $(B\times A)\wr\Gamma$, respectively, and therefore $B\wr\Gamma$ and $(B\times A)\wr\Gamma$ are orbit equivalent. This concludes point (1).

(2): Assume that $B_0\times A_0$ and $B_1\times A_1$ are measure equivalent. 
Then $B_0\times A_0\times \mathbf{Z}$ is measure equivalent to $B_1\times A_1\times \mathbf{Z}$, and since these groups have a copy of $\mathbf{Z}$ as a direct factor, it follows that $B_0\times A_0\times \mathbf{Z}$ and $B_1\times A_1\times \mathbf{Z}$ are in fact orbit equivalent. 
By Proposition \ref{OElampsWreath}, $(B_0\times A_0\times \mathbf{Z})\wr\Gamma$ and $(B_1\times A_1\times \mathbf{Z})\wr\Gamma$ are orbit equivalent. 
Part (2) now follows from part (1), since by part (1) for each $i\in \{ 0, 1 \}$ the group $B_i\wr \Gamma$ is orbit equivalent to $(B_i\times A_i\times \mathbf{Z})\wr \Gamma$.
\end{proof}

\subsection{Fixed wreath kernel}

Towards understanding the situation of measure equivalent wreath complement groups, we prove the following lemma that analyzes what occurs when we restrict the wreath complement groupoid of a wreath product to a positive measure set.

\begin{lemma}\label{lem:wreathProductRestriction}
    Let $\G$ be a discrete p.m.p.\ groupoid. Assume $\K$ is a discrete p.m.p.\ groupoid that satisfies $\K\cong \bigoplus _\mathbf{N} \K$. Let $p: (\K\wr\G)^0\to \G^0$ be the natural projection, and let $D$ be a positive measure subset of $\mathcal{G}^0$. Then the groupoids $(\K\wr \G)_{p^{-1}(D)}$ and $\K\wr (\G_D )$ are isomorphic.
\end{lemma}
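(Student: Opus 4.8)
The plan is to make both groupoids completely explicit and to isolate the single point where they differ. Writing $x\mathcal{G}=r^{-1}(x)$, the restriction $(\mathcal{K}\wr\mathcal{G})_{p^{-1}(D)}$ has morphisms $(g,k)$ with $g\in\mathcal{G}_D$ and $k\in\bigoplus_{s(g)\mathcal{G}}\mathcal{K}$, whereas $\mathcal{K}\wr(\mathcal{G}_D)$ has morphisms $(g,k)$ with $g\in\mathcal{G}_D$ and $k\in\bigoplus_{s(g)\mathcal{G}_D}\mathcal{K}$; here $x\mathcal{G}_D=\{g\in\mathcal{G}_D:r(g)=x\}=(x\mathcal{G})\cap s^{-1}(D)$ is the smaller index set. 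Thus the $\mathcal{G}$-component and all of the semidirect-product bookkeeping agree on the two sides, and the only task is to produce, for each $x\in D$, a measure isomorphism $\bigoplus_{x\mathcal{G}}\mathcal{K}\to\bigoplus_{x\mathcal{G}_D}\mathcal{K}$ that is Borel in $x$ and intertwines the two shift actions of $\mathcal{G}_D$. The extra coordinates of $x\mathcal{G}$ (those $g$ with $s(g)\notin D$) will be absorbed into the coordinates indexed by $x\mathcal{G}_D$ using the hypothesis $\mathcal{K}\cong\bigoplus_\mathbf{N}\mathcal{K}$.

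The crux is an equivariant Borel way to fold $x\mathcal{G}$ onto $x\mathcal{G}_D$. First I would observe that the left-translation action of $\mathcal{G}_D$ on $\{g\in\mathcal{G}:r(g)\in D\}$ has orbits exactly the source-fibers: if $r(g_1),r(g_2)\in D$ and $s(g_1)=s(g_2)$, then $g_2 g_1^{-1}\in\mathcal{G}_D$ and $g_2=(g_2 g_1^{-1})g_1$. Using the Lusin--Novikov theorem, I would choose a Borel section $z\mapsto g_z$ assigning to each $z$ in the $\mathcal{R}_{\mathcal{G}}$-saturation of $D$ that lies outside $D$ a morphism $g_z$ with $s(g_z)=z$ and $r(g_z)\in D$, and then define a Borel map $q$ on $\{g:r(g)\in D\}$ by $q(g)=g$ when $s(g)\in D$ and $q(g)=g g_{s(g)}^{-1}$ otherwise. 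A direct check shows $q(g)\in\mathcal{G}_D$ with $r(q(g))=r(g)$, that $q$ restricts to the identity on $\mathcal{G}_D$, and that $q(hg)=h q(g)$ for all $h\in\mathcal{G}_D$; hence for each $x\in D$ the restriction $q:x\mathcal{G}\to x\mathcal{G}_D$ is a surjection with nonempty countable fibers whose fiber-partition is carried to the one over $r(h)$ by left translation by any $h\in\mathcal{G}_D$.

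With $q$ in hand I would build the fiberwise isomorphisms. Fix once and for all a groupoid isomorphism $\Psi:\mathcal{K}\to\bigoplus_\mathbf{N}\mathcal{K}$. Applying $\Psi$ in every coordinate gives $\bigoplus_{x\mathcal{G}}\mathcal{K}\cong\bigoplus_{x\mathcal{G}\times\mathbf{N}}\mathcal{K}$ and likewise $\bigoplus_{x\mathcal{G}_D\times\mathbf{N}}\mathcal{K}\cong\bigoplus_{x\mathcal{G}_D}\mathcal{K}$; since the same $\Psi$ is used in each coordinate, both commute with coordinate permutation and so intertwine the shift. It then suffices to give a $\mathcal{G}_D$-equivariant Borel bijection $x\mathcal{G}\times\mathbf{N}\to x\mathcal{G}_D\times\mathbf{N}$, with $\mathcal{G}_D$ acting trivially on $\mathbf{N}$. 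Writing each fiber as $q^{-1}(\xi)=\xi Q_{s(\xi)}$ for the countable nonempty set $Q_y\coloneqq q^{-1}(e_y)$, and choosing Borel bijections $\beta_y:Q_y\times\mathbf{N}\to\mathbf{N}$ (possible because each $Q_y\times\mathbf{N}$ is countably infinite and $y\mapsto Q_y$ is a Borel field of countable sets), I would send $(g,n)$ with $\xi=q(g)$ to $(\xi,\beta_{s(\xi)}(\xi^{-1}g,n))$; equivariance follows at once from $q(hg)=h q(g)$ and $s(h\xi)=s(\xi)$. Composing the three maps yields, for each $x\in D$, an isomorphism $\Phi_x:\bigoplus_{x\mathcal{G}}\mathcal{K}\to\bigoplus_{x\mathcal{G}_D}\mathcal{K}$ that is Borel in $x$, measure preserving on unit spaces, and intertwines the $\mathcal{G}_D$-shifts.

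Finally I would assemble the $\Phi_x$ into the desired groupoid isomorphism $\Theta:(\mathcal{K}\wr\mathcal{G})_{p^{-1}(D)}\to\mathcal{K}\wr(\mathcal{G}_D)$ by $\Theta(g,k)\coloneqq(g,\Phi_{s(g)}(k))$, exactly as in Proposition~\ref{prop:cofEqToOEGroupoid}: the fact that each $\Phi_x$ is a groupoid homomorphism together with the intertwining relation $\Phi_{r(g)}\circ\alpha_g=\alpha^{D}_g\circ\Phi_{s(g)}$ (where $\alpha_g$, $\alpha^D_g$ denote the shifts in $\mathcal{K}\wr\mathcal{G}$ and $\mathcal{K}\wr\mathcal{G}_D$) makes $\Theta$ respect sources, ranges, and the semidirect-product composition law, while matching the normalized restriction of $\mu_{(\mathcal{K}\wr\mathcal{G})^0}$ to $p^{-1}(D)$ with $\mu_{(\mathcal{K}\wr\mathcal{G}_D)^0}$ shows that $\Theta$ is measure preserving on unit spaces. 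The main obstacle is the second paragraph: producing the $\mathcal{G}_D$-equivariant Borel retraction $q$ in the non-principal setting and arranging the absorption Borel-uniformly across fibers of varying (finite or infinite) cardinality. Once $q$ and the fixed isomorphism $\Psi$ are available, the remaining verifications are the routine semidirect-product computations already rehearsed in the proof of Proposition~\ref{prop:cofEqToOEGroupoid}.
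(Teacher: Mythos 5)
Your proof is correct, but it takes a genuinely different route from the paper's. The paper first reduces to ergodic $\mathcal{G}$ via the ergodic decomposition and then constructs a globally $\mathcal{G}$-equivariant isomorphism $\boldsymbol{\bigoplus}_{\mathcal{G}}\mathcal{K}\to\boldsymbol{\bigoplus}_{\mathcal{G}D}\mathcal{K}$ (which it only afterwards restricts to $D$): when $\mu_{\mathcal{G}^0}(D)=\tfrac{1}{n}$ it partitions $\mathcal{G}^0$ into $n$ pieces of equal measure, uses ergodicity to pick partial bisections $\sigma_j$ with $r(\sigma_j)=D$ carrying those pieces onto $D$, and compresses the $n$ coordinates $h\sigma_0,\dots,h\sigma_{n-1}$ of $x\mathcal{G}$ into the single coordinate $h\in x\mathcal{G}D$ via a fixed isomorphism $\bigoplus_{i<n}\mathcal{K}\cong\mathcal{K}$; a general $D$ is then handled by writing $\mu_{\mathcal{G}^0}(D)=\sum_i\tfrac{1}{n_i}$ and recombining with $\mathcal{K}\cong\bigoplus_I\mathcal{K}$. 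You instead work directly over $D$ and produce only a $\mathcal{G}_D$-equivariant family of fiber isomorphisms, which is all the final assembly requires: your Lusin--Novikov retraction $q(g)=gg_{s(g)}^{-1}$ plays the role of the paper's partial bisections, and absorbing the fibers $Q_y$ (of varying countable cardinality) through $Q_y\times\mathbf{N}\cong\mathbf{N}$ replaces the measure arithmetic. The trade-off is that the paper's argument yields the formally stronger conclusion of full $\mathcal{G}$-equivariance at the cost of the ergodicity reduction, whereas yours avoids ergodicity and the unit-fraction decomposition entirely and treats arbitrary $D$ uniformly; both hinge essentially on $\mathcal{K}\cong\bigoplus_{\mathbf{N}}\mathcal{K}$. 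The ``main obstacle'' you flag at the end is in fact already resolved by your own construction of $q$ and the Borel family $\beta_y$, so no gap remains there.
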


\begin{proof} 
For each $\G$-invariant subset $C$ of $\G^0$, the preimage $p^{-1}(C)$ is a $\K\wr\G$-invariant subset of $(\K\wr\G)^0$. Therefore, by considering the ergodic decomposition of $\G$, we may assume without loss of generality that $\G$ is ergodic.

Consider the fibered Borel spaces $r: \G\to \G^0$ and $r: \G D\to \G^0$, each equipped with the left translation action of $\G$, along with the associated Borel bundles of discrete p.m.p.\ groupoids $\boldsymbol{\bigoplus}_{\G} \K$ and $\boldsymbol{\bigoplus}_{\G D} \K$ each equipped with the associated shift action of $\G$.  

\begin{claim}
    There exists a measure-preserving groupoid isomorphism $\theta: \boldsymbol{\bigoplus}_{\G} \K\to \boldsymbol{\bigoplus}_{\G D} \K$ that is $\G$-equivariant, i.e., satisfies $\theta(\bigoplus_{s(g)\G} \K)=\bigoplus_{s(g)\G D}\K$ and $\theta(g. k)=g. \theta(k)$ for each $g\in \G$ and $k\in \bigoplus_{s(g)\G}\K$.
\end{claim}

\begin{proof}[Proof of Claim] 
We first prove the claim in the case that $\mu_{\G^0}(D)=\frac{1}{n}$ for some integer $n\geq 1$.

Fix a partition $(E_j)_{j=0}^{n-1}$ of $\G^0$ into sets of measure $\frac{1}{n}$ with $E_0=D$. 
Since $\G$ is ergodic and p.m.p., for each $0\leq j <n-1$ we may find a partial bisection $\sigma _j \subseteq \G$ with $r_\G(\sigma _j)=D$ and $s_\G(\sigma _j)=E_j$.
The assumption implies the groupoids $\K$ and $\bigoplus_{i=0}^{n-1}\K$ are isomorphic, say via the isomorphism 
$\psi : \bigoplus_{i=0}^{n-1}\K\rightarrow \K$. Define the map $\theta : \boldsymbol{\bigoplus}_{\G} \K\to \boldsymbol{\bigoplus}_{\G D} \K$ by \[
\theta(k)_h = \psi(k_{h\sigma _0}, \dots, k_{h\sigma _{n-1}})
\]
for $k\in \bigoplus _{x\G} \K$, $h\in x\G D$, and $x\in \mathcal{G}^0$. 
The map $\theta$ is the desired measure preserving $\G$-equivariant groupoid isomorphism in this case.

We now consider the case of a general $D$. Let $\{n_i\}_{i\in I}$ be a countable collection of positive integers such that $\sum_{i\in I} \frac{1}{n_i}=\mu_{\G^0}(D)$ and let $(D_i)_{i\in I}$ be a Borel partition of $D$ with $\mu_{\G^0}(D_i)=\frac{1}{n_i}$. By the previous case, for each $i\in I$ there exists a measure-preserving $\mathcal{G}$-equivariant groupoid isomorphism $\theta _i : \boldsymbol{\bigoplus}_{\G} \K\to\boldsymbol{\bigoplus}_{\G D_i} \K$. Write $\theta _i ^x$ for the restriction of $\theta _i$ to the fiber over $x\in \mathcal{G}^0$. 

We define $\theta :\boldsymbol{\bigoplus}_{\G} \K\to \boldsymbol{\bigoplus}_{\G D} \K$ fiberwise as follows. Fix a groupoid isomorphism $\Psi:\K\to\bigoplus _{I} \K$. For each $x\in \G^0$ let $\Psi^{x\G}: \bigoplus _{x\G}\K \to \bigoplus_{x\G} (\bigoplus _{I}\K)$ be the map that applies $\Psi$ in every coordinate.
Let $\tau^x: \bigoplus_{x\G} (\bigoplus _{I}\K)\rightarrow \bigoplus_{I} (\bigoplus _{x\G}\K)$ be the ``transpose'' map that interchanges the $I$-{} and $x\G$-coordinates.
Define the map $\theta^x: \bigoplus_{x\G}\K\to \bigoplus_{x\G D}\K$ by 
\[
\theta^x(k)_h\coloneqq \theta_{i}^x(\tau^x(\Psi^{x\G}(k))_i)_h
\]
for $k\in \bigoplus_{x\G}\K$ and $h\in x\G D_i$. The map $\theta^x$ is the composition of the following measure preserving groupoid isomorphisms
\begin{equation*}
\label{eqn:central_sum_ses} 
\begin{tikzcd}
\bigoplus_{x\G}\K\arrow{r}{\Psi^{x\G}}& [+3pt]\bigoplus_{x\G} \bigoplus _{I}\K\arrow{r}{\tau^x}
&[-10pt]\bigoplus_{I} \bigoplus _{x\G}\K\arrow{r}{\bigoplus _i\theta_{i}^x} & [+13pt] \bigoplus_i\bigoplus_{x\G D_i}\K \arrow{r}{\cong}& [-8pt]\bigoplus_{x\G D}\K 
\end{tikzcd} 
\end{equation*}
where the last isomorphism is the obvious one. The resulting map $\theta$ is as claimed.
\end{proof}

Let $\theta: \boldsymbol{\bigoplus}_{\G} \K\to \boldsymbol{\bigoplus}_{\G D} \K$ be given by the claim. 
Then $\theta$ restricts to a $\mathcal{G}_D$-equivariant measure-preserving groupoid isomorphism $\theta _D$, from $\boldsymbol{\bigoplus}_{D\G} \K$ to $\boldsymbol{\bigoplus}_{D\G D} \K$.
The map $\theta_D$ in turn induces an isomorphism of the semidirect product groupoids $\mathcal{G}_D\ltimes \boldsymbol{\bigoplus}_{D\G} \K$ and $\mathcal{G}_D\ltimes \boldsymbol{\bigoplus}_{D\G D} \K$, which coincide with $(\mathcal{K}\wr\mathcal{G})_{p^{-1}(D)}$ and $\mathcal{K}\wr (\mathcal{G}_D)$ respectively, so the proof is complete.
\end{proof}

\begin{thm}\label{thm:METopGroups}
Let $\Gamma _0$, $\Gamma _1$, and $B$ be countable groups, and assume that $\Gamma_0$ and $\Gamma_1$ admit a measure equivalence coupling with coupling index $\alpha$. 
Then the groups $(\bigoplus _{\mathbf{N}}B)\wr \Gamma_0$ and $(\bigoplus _{\mathbf{N}}B)\wr\Gamma_1$ admit a measure equivalence coupling with coupling index $\alpha$.

It follows that if $A$ is an infinite amenable group and the groups $\Gamma_0$ and $\Gamma_1$ are measure equivalent, then the groups $ A\wr \Gamma_0$ and $A\wr\Gamma_1$ are measure equivalent.
\end{thm}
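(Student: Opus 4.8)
The plan is to realize each wreath product group as a wreath product \emph{groupoid} $\K\wr\G_i$, where $\G_i$ is the action groupoid of a free ergodic action of $\Gamma_i$ and $\K$ is a single groupoid modeling the wreath kernel, and then to transport a stable orbit equivalence of the $\Gamma_i$ into one of the wreath products using Lemma \ref{lem:wreathProductRestriction}. First, by the correspondence between measure equivalence couplings and stable orbit equivalences recalled in \S\ref{section:OEME} (Furman \cite{Furman1999OERigidity}), the hypothesis yields free ergodic p.m.p.\ actions $\Gamma_0\acts(X_0,\mu_0)$ and $\Gamma_1\acts(X_1,\mu_1)$, with action groupoids $\G_0$ and $\G_1$, that are stably orbit equivalent of index $\alpha$: concretely there are positive measure sets $D_i\subseteq X_i=\G_i^0$ and a measure-preserving groupoid isomorphism $\G_0|_{D_0}\cong\G_1|_{D_1}$ (for the normalized restricted measures, using ergodicity to upgrade measure-class preservation to measure preservation) with $\mu_1(D_1)/\mu_0(D_0)=\alpha$. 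To build the kernel, fix a free ergodic p.m.p.\ action $B\acts(Y,\nu)$ and let $\K$ be the action groupoid of the product action $\bigoplus_{\mathbf{N}}B\acts(Y^{\mathbf{N}},\nu^{\mathbf{N}})$, which is free and ergodic. A bijection $\mathbf{N}\times\mathbf{N}\cong\mathbf{N}$ gives a measure-preserving groupoid isomorphism $\bigoplus_{\mathbf{N}}\K\cong\K$, since $\bigoplus_{\mathbf{N}}\K$ is the action groupoid of $\bigoplus_{\mathbf{N}}(\bigoplus_{\mathbf{N}}B)\acts(Y^{\mathbf{N}})^{\mathbf{N}}$, which under $\mathbf{N}\times\mathbf{N}\cong\mathbf{N}$ is conjugate to $\bigoplus_{\mathbf{N}}B\acts Y^{\mathbf{N}}$. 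By Remark \ref{rem:action_wreath}, $\K\wr\G_i$ is isomorphic to the action groupoid of a free ergodic p.m.p.\ action of $(\bigoplus_{\mathbf{N}}B)\wr\Gamma_i$, so it suffices to produce a stable orbit equivalence of index $\alpha$ between $\K\wr\G_0$ and $\K\wr\G_1$.

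Now apply Lemma \ref{lem:wreathProductRestriction}, whose hypothesis $\K\cong\bigoplus_{\mathbf{N}}\K$ was just verified. Writing $p_i:(\K\wr\G_i)^0\to\G_i^0$ for the projection, the lemma gives $(\K\wr\G_i)_{p_i^{-1}(D_i)}\cong\K\wr(\G_i|_{D_i})$ for $i=0,1$. Since the construction $\K\wr(-)$ is functorial in measure-preserving isomorphisms of the complement groupoid, the isomorphism $\G_0|_{D_0}\cong\G_1|_{D_1}$ induces $\K\wr(\G_0|_{D_0})\cong\K\wr(\G_1|_{D_1})$, whence $(\K\wr\G_0)_{p_0^{-1}(D_0)}\cong(\K\wr\G_1)_{p_1^{-1}(D_1)}$. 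As $p_i$ pushes the unit-space probability measure forward to $\mu_i$, the set $p_i^{-1}(D_i)$ has measure $\mu_i(D_i)$, so this isomorphism is a stable orbit equivalence of index $\mu_1(D_1)/\mu_0(D_0)=\alpha$ between the two wreath product actions. Running the Furman correspondence in reverse produces a measure equivalence coupling of $(\bigoplus_{\mathbf{N}}B)\wr\Gamma_0$ and $(\bigoplus_{\mathbf{N}}B)\wr\Gamma_1$ of coupling index $\alpha$, which is the first assertion.

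For the final statement, let $A$ be an infinite amenable group and suppose $\Gamma_0$ and $\Gamma_1$ are measure equivalent. Applying the first part with $B=A$ shows that $(\bigoplus_{\mathbf{N}}A)\wr\Gamma_0$ and $(\bigoplus_{\mathbf{N}}A)\wr\Gamma_1$ are measure equivalent. The group $\bigoplus_{\mathbf{N}}A$ is again countably infinite and amenable, so by the theorem of Dye and Ornstein--Weiss \cite{Dye1959,OW87} its free ergodic p.m.p.\ action is orbit equivalent to that of $A$; hence by Proposition \ref{OElampsWreath} the wreath product actions of $A\wr\Gamma_i$ and $(\bigoplus_{\mathbf{N}}A)\wr\Gamma_i$ are orbit equivalent for each $i$. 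Chaining these two orbit equivalences with the measure equivalence above gives that $A\wr\Gamma_0$ is measure equivalent to $A\wr\Gamma_1$.

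The step I expect to require the most care is the bookkeeping surrounding Lemma \ref{lem:wreathProductRestriction}: one must be certain that a \emph{single} groupoid $\K$ genuinely satisfies $\K\cong\bigoplus_{\mathbf{N}}\K$ (this is precisely why the wreath kernel is taken to be $\bigoplus_{\mathbf{N}}B$ rather than $B$), that the restriction of the wreath product groupoid to $p_i^{-1}(D_i)$ corresponds on the group side to the stable orbit equivalence picture via Remark \ref{rem:action_wreath}, and that every action in play is free and ergodic so that Furman's correspondence applies in both directions and the coupling index emerges as exactly $\alpha$.
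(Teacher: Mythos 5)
Your proposal is correct and follows essentially the same route as the paper: the paper likewise fixes a free ergodic action of $B$, takes $\mathcal{S}$ to be the orbit equivalence relation of the coordinatewise action of $\bigoplus_{\mathbf{N}}B$ on $(Z^{\mathbf{N}},\eta^{\mathbf{N}})$ (so that $\mathcal{S}\cong\bigoplus_{\mathbf{N}}\mathcal{S}$), realizes the coupling as a stable orbit equivalence $(\mathcal{R}_0)_D\cong\mathcal{R}_1$, and concludes via Lemma \ref{lem:wreathProductRestriction} that $(\mathcal{S}\wr\mathcal{R}_0)_{p^{-1}(D)}\cong\mathcal{S}\wr(\mathcal{R}_0)_D\cong\mathcal{S}\wr\mathcal{R}_1$. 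Your verification of $\mathcal{K}\cong\bigoplus_{\mathbf{N}}\mathcal{K}$ and your derivation of the amenable case from Dye--Ornstein--Weiss and Proposition \ref{OElampsWreath} match the intended argument.
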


\begin{proof}
Fix a free ergodic p.m.p.\ action of $B\acts (Z,\eta)$ and consider the free ergodic action $\bigoplus_{\mathbf{N}} B\acts (Z^{\mathbf{N}},\eta^{\mathbf{N}})$, where $\bigoplus _{\mathbf{N}}B$ acts coordinatewise. 
The orbit equivalence relation $\s$ associated to this action satisfies $\s\cong \bigoplus _{\mathbf{N}}\s$.

After interchanging $\Gamma _0$ and $\Gamma _1$ if necessary, we may assume that $0<\alpha \leq 1$. 
Let $\Gamma_0\acts(X_0,\mu_0)$ and $\Gamma_1\acts(X_1,\mu_1)$ be a pair of free ergodic p.m.p.\ actions that realize a stable orbit equivalence of index $\alpha$ and let $\R_0$ and $\R_1$ be the associated orbit equivalence relations, respectively, so that there exists a Borel set $D\subseteq X_0$ of measure $\alpha$ such that $(\R_0)_D\cong \R_1$. 
We now consider the wreath product equivalence relations $\s\wr \R_0$ and $\s\wr\R_1$, which are themselves orbit equivalence relations of free ergodic p.m.p.\ actions of the wreath product groups $(\bigoplus _{\mathbf{N}}B)\wr \Gamma_0$ and $(\bigoplus _{\mathbf{N}}B)\wr \Gamma_1$, respectively. 
Let $p:(\s\wr\R_0)^0\to X_0$ be the natural projection map. 
By Lemma \ref{lem:wreathProductRestriction}, $(\s\wr\R_0)_{p^{-1}(D)}\cong \s\wr(\R_0)_D\cong \s\wr\R_1$, which completes the proof. \end{proof}

\begin{remark}
In general, one cannot expect orbit equivalence in the conclusion of the previous theorem. By \cite{Sako09}, if $B\wr (G \times H)$ and $C\wr (\Gamma\times \Lambda)$ are orbit equivalent for nonamenable exact groups $G$ and $\Gamma$, and infinite exact groups $\Lambda$ and $H$, then $G\times H$ and $\Gamma\times \Lambda$ must have been orbit equivalent in the first place. 
\end{remark}

\begin{remark}
If $\Gamma_0$ and $\Gamma_1$ are orbit equivalent, then it was previously known that $B\wr \Gamma_0$ and $B\wr\Gamma_1$ are orbit equivalent for any countable group $B$. 
This is implied by \cite[Proposition 7.1]{DelabieKoivistoLeMaitreTessera2022}.
\end{remark}

\begin{remark}
    Given an integer $d\geq 3$ and permutation groups $F\lneq F'\leq \mathrm{Sym}(d)$ with $F$ semi-regular, Le Boudec defines a certain group $G(F,F')$ of tree automorphisms, and shows that it is a lattice in a locally compact group that also admits the wreath product group $C_n\wr W_d$ as a lattice \cite{LeBoudecLocPrescribed, LeBoudecSimpleWreath21}; 
    here, $n$ is the index of $F$ in $F'$ and $W_d$ is the free product of $d$ copies of $\mathbf{C}_2$. In particular, $G(F,F')$ and $W_d$ are measure equivalent.
    Therefore, by applying Theorem \ref{thm:METopGroups} and Theorem \ref{thm:MELampGroups}, all such groups $G(F,F')$ are measure equivalent to one another and to $\mathbf{Z}\wr\mathbf{F}_2$. 
\end{remark}

\section{Rigidity}\label{section:rigidity}
In this section, our main goal is to show that there exist groups $\Gamma$ for which the wreath product actions $\mathbf{C}_2\wr\Gamma\acts 2^\Gamma$ and $\mathbf{C}_3\wr\Gamma\acts 3^\Gamma$ are not orbit equivalent.

Let $\Gamma$ be a countable group and let $\Gamma\acts (X,\mu)$ be a p.m.p.\ action of $\Gamma$. 
Given a Polish group $L$, a \textbf{measurable $L$-valued cocycle} of this action is a measurable map $c: \Gamma\times X\to L$ satisfying $c(\gamma _1\gamma _0, x)=c(\gamma _1,\gamma _0.x)c(\gamma _0, x)$ for all $\gamma _0,\gamma _1\in \Gamma$ and almost every $x\in X$. 
Two measurable $L$-valued cocycles $c_0$ and $c_1$ of this action are called \textbf{cohomologous} if there is a measurable map $f :X\rightarrow L$ such that $f (\gamma .x)c_0(\gamma , x)f (x)^{-1} = c_1(\gamma , x)$ for a.e.\ $x\in X$, for all $\gamma\in\Gamma$. 
The action $\Gamma\acts (X,\mu)$ is called {\bf $L$-cocycle superrigid} if for every $L$-valued cocycle $c: \Gamma \times X\rightarrow L$, there exists a group homomorphism $\rho: \Gamma\rightarrow L$ and measurable map $f: X\rightarrow L$ such that $c(\gamma,x)=f(\gamma .x)\rho(\gamma)f(x)^{-1}$. 
The action is called {\bf $\mathcal{G}_{\mathrm{ctble}}$-cocycle superrigid} if it is $L$-cocycle superrigid for every countable discrete group $L$.

As a consequence of Popa's cocycle superrigidity theorem \cite{Popa2007,Po08}, the Bernoulli shift $\Gamma\acts ([0,1]^\Gamma,\mathrm{Leb}^\Gamma)$ is $L$-cocycle superrigid for every Polish group $L$ that is either countable or compact, and every countable group $\Gamma$ that admits an infinite normal subgroup $N\triangleleft \Gamma$ such that either $(\Gamma,N)$ has relative property (T), or $N\cong H\times K$ where $H$ is nonamenable and $K$ is infinite.

\subsection{Preliminary lemmas}

Given groups $B$ and $\Lambda$, we equip $B^{\Lambda}$ with its product group structure, and for $\lambda \in \Lambda$ and $u\in B^{\Lambda}$ we write $\lambda  . u$ for the (left) shift of $u$ by $\lambda$, i.e., $(\lambda  . u)_{\delta}\coloneqq u_{\lambda ^{-1}\delta}$ for $\delta \in \Lambda$. 
We let $B\wr ^u \Lambda$ denote the \textbf{unrestricted (regular) wreath product} of $B$ with $\Lambda$, i.e., the associated semidirect product $\Lambda\ltimes B^{\Lambda}$, which we view as an internal semidirect product. Thus, $B\wr ^u \Lambda$ is the group generated by $B^{\Lambda}$ and $\Lambda$, subject to the relations $\lambda u\lambda ^{-1} = \lambda  . u$ for all $\lambda \in \Lambda$ and $u\in B^{\Lambda}$. 

\begin{lemma}[Houghton {\cite[Theorem 4.6]{houghton1972ends}}] \label{prop:One_ended_Implies_Conj}
Let $B$ and $\Lambda$ be groups.  If $\Gamma$ is a subgroup of $B\wr\Lambda$ that has trivial intersection with $\bigoplus _{\Lambda}B$, then $\Gamma$ is conjugate in $B\wr ^u \Lambda$ to a subgroup of $\Lambda$ via an element of $B^\Lambda$. 

Moreover, if $\Gamma$ has at most one end, then $\Gamma$ is conjugate in $B\wr \Lambda$ to a subgroup of $\Lambda$ via an element of $\bigoplus_\Lambda B$.
\end{lemma}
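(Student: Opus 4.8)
The plan is to reduce both assertions to the triviality of a single cocycle, trivializing it first coarsely inside the unrestricted product $B^\Lambda$ and then finitely inside $\bigoplus_\Lambda B$, with one-endedness responsible only for the second, finer step. Throughout I write $\pi:B\wr\Lambda\to\Lambda$ for the quotient by $\bigoplus_\Lambda B$. Since $\Gamma\cap\bigoplus_\Lambda B=\{e\}$, the map $\pi$ is injective on $\Gamma$; set $\bar\gamma:=\pi(\gamma)$ and $\bar\Gamma:=\pi(\Gamma)$. Each $\gamma\in\Gamma$ factors uniquely as $\gamma=u_\gamma\bar\gamma$ with $u_\gamma\in\bigoplus_\Lambda B$, and $\gamma\mapsto u_\gamma$ is a cocycle, $u_{\gamma\delta}=u_\gamma(\bar\gamma.u_\delta)$, for the shift action of $\Gamma$ (through $\pi$) on $B^\Lambda$. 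A direct computation shows that conjugation by $u\in B^\Lambda$ sends every $\gamma$ into $\Lambda$ exactly when $\bar\gamma.u=u\,u_\gamma$ for all $\gamma$, i.e.\ when $u$ trivializes this cocycle.

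For the first assertion I would trivialize the cocycle using that $\bar\Gamma$ acts \emph{freely} on $\Lambda$ by left translation, the orbits being the right cosets $\bar\Gamma\backslash\Lambda$. Fixing a transversal $\{\delta_j\}$, I set $u_{\delta_j}:=e$ and propagate along each free orbit by $u_{\bar\gamma^{-1}\delta_j}:=(u_\gamma)_{\delta_j}$. Injectivity of $\pi|_\Gamma$ makes this well defined, and the cocycle identity verifies $\bar\gamma.u=u\,u_\gamma$; this produces $u\in B^\Lambda$ with $u\Gamma u^{-1}\subseteq\Lambda$, giving conjugation into $\Lambda$ via an element of $B^\Lambda$.

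For the \emph{Moreover} I must upgrade this $u$ to lie in $\bigoplus_\Lambda B$. The available freedom is to replace $u$ by $wu$ for any $\bar\Gamma$-invariant $w\in B^\Lambda$, since $\bar\gamma.(wu)=(wu)u_\gamma$ still holds; such $w$ is precisely a function that is constant on each orbit. The equation $\bar\gamma.u=u\,u_\gamma$ with $u_\gamma$ finitely supported says that on each orbit $O_j\cong\bar\Gamma$ the configuration $u|_{O_j}$ is almost $\bar\Gamma$-invariant, and, reading off the \emph{global} finiteness of $u_\gamma$, that for each fixed $\bar\gamma$ the configuration $u|_{O_j}$ is genuinely $\bar\gamma$-invariant for all but finitely many $j$. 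One-endedness now enters through the dichotomy that every subset $S\subseteq\bar\Gamma$ with $|\bar\gamma S\triangle S|<\infty$ for all $\bar\gamma$ is finite or cofinite. First, each almost-invariant $u|_{O_j}$ is almost-constant: its level sets are almost-invariant, hence finite or cofinite, so either one value occurs cofinitely, or $u|_{O_j}$ is finite-to-one; I would exclude the latter by noting that an infinite group with at most one end is not locally finite (an infinite locally finite group has an infinite, co-infinite almost-invariant set, e.g.\ a union of alternating ``annuli'' $H_{2n+1}\setminus H_{2n}$ for an exhausting chain of finite subgroups), so $\bar\Gamma$ contains an infinite finitely generated subgroup $H$, and almost-$H$-invariance forces $u|_{O_j}$ to be constant on an infinite set. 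Correcting by the orbitwise constant equal to this dominant value makes $wu$ finitely supported on each orbit, with finite support $E_j$. Second, for global finiteness I use that whenever $E_j\neq\emptyset$ the stabilizer $\{\bar\gamma:\bar\gamma.(wu)|_{O_j}=(wu)|_{O_j}\}$ embeds in $\mathrm{Sym}(E_j)$ (the orbit action being free) and is thus finite; since each generator of $H$ fixes $(wu)|_{O_j}$ for cofinitely many $j$, all of $H$ does so for cofinitely many $j$, forcing $E_j=\emptyset$ there. Hence $wu\in\bigoplus_\Lambda B$ and $wu\,\Gamma\,(wu)^{-1}\subseteq\Lambda$ inside $B\wr\Lambda$.

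The step I expect to be the main obstacle is precisely this passage from orbitwise control to \emph{global} finiteness of the support. Almost-invariance is only a ``per-generator, cofinitely-many-orbits'' statement, so it does not by itself bound how many orbits carry nontrivial data; the whole weight of the one-end hypothesis is spent both in ruling out the finite-to-one pathology on individual orbits and, via non-local-finiteness, in supplying an infinite finitely generated $H\leq\bar\Gamma$ whose infinite image cannot sit inside the finite stabilizers, thereby collapsing all but finitely many $E_j$. Torsion groups, where no single infinite-order element is available, are exactly what makes this delicate, and they are handled by invoking a finitely generated infinite subgroup rather than a cyclic one.
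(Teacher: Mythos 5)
The paper offers no proof of this lemma at all --- it is imported verbatim from Houghton \cite[Theorem 4.6]{houghton1972ends} --- so there is no internal argument to measure yours against; what you have written is a correct self-contained reconstruction. The reduction of both conjugacy statements to trivializing the cocycle $\gamma\mapsto u_\gamma$ is right ($u\gamma u^{-1}=u\,u_\gamma(\bar\gamma.u)^{-1}\bar\gamma$, so landing in $\Lambda$ is exactly $\bar\gamma.u=u\,u_\gamma$), the orbit-by-orbit definition $u_{\bar\gamma^{-1}\delta_j}:=(u_\gamma)_{\delta_j}$ is well posed by freeness of the left translation action and injectivity of $\pi|_\Gamma$, and the cocycle identity does verify the required relation on all of $\Lambda$. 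The two-stage upgrade in the \emph{Moreover} is also sound: level sets of $u|_{O_j}$ are almost invariant, hence finite or cofinite by one-endedness; the finite-to-one alternative is killed because a one-ended infinite group is not locally finite, so an infinite finitely generated $H\leq\bar\Gamma$ exists and forces $u$ to be constant on an infinite connected piece of the $H$-Schreier graph off the finitely many bad edges; and the stabilizer of a nontrivial finitely supported configuration on a free orbit acts freely on its support, hence is finite, which collapses all but finitely many $E_j$ once every generator of $H$ stabilizes $v|_{O_j}$ cofinitely often. In effect you are reproving by hand the vanishing of the nonabelian $H^1$ with coinduced (respectively induced) coefficients that underlies Houghton's theorem, and your use of a finitely generated infinite subgroup rather than an infinite-order element is exactly the right device for torsion groups.

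The one genuine (though minor) lacuna is the boundary case: ``at most one end'' includes finite $\Gamma$ (zero ends), and there your argument has no infinite finitely generated $H$ to invoke. That case should be dispatched separately and is easy: for finite $\Gamma$ the set $F=\bigcup_{\gamma\in\Gamma}\mathrm{supp}(u_\gamma)$ is finite, so $u$ is genuinely $\bar\Gamma$-invariant, hence constant, on every orbit disjoint from $F$; correcting by the orbitwise constants leaves a function supported on the finitely many orbits meeting $F$, each of which is finite, so the corrected element already lies in $\bigoplus_\Lambda B$. (In this paper the lemma is only ever applied to infinite groups, so the omission is cosmetic, but the statement as written does cover finite $\Gamma$.)
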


See also \cite[Theorem 2.4]{Hassanabadi1978} for related results. 

\begin{lemma}[Furman {\cite[Theorem 1.8]{Furman2007}}, Popa \cite{Popa2007}] \label{lemma:rigidity_prep}
Let $\Gamma$ and $L$ be countable groups, and assume that $\Gamma$ has no nontrivial finite normal subgroups.
Let $\Gamma\acts (X,\mu)$ be a free ergodic p.m.p.\ action of $\Gamma$ that is $L$-cocycle superrigid, and let $L\acts (Z, \eta )$ be a free ergodic p.m.p.\ action. 

Suppose that there exist positive measure sets $X_0\subseteq X$ and $Z_0\subseteq Z$ and a measure space isomorphism $\phi _0: (X_0,\mu _0)\rightarrow (Z_0,\eta _0)$ such that $\phi _0(\Gamma. x)\cap X_0\subseteq L.\phi _0(x)\cap Z_0$ for a.e.\ $x\in X_0$, where $\mu _0$ and $\eta _0$ are the normalized restrictions of $\mu$ and $\eta$ respectively.

Then, after discarding a null set, there exist measurable functions $f:X\to L$ and $\phi :X\rightarrow Z_0$ with $\phi |_{X_0}=\phi _0$, an injective group homomorphism $\rho :\Gamma\to L$ and a positive measure $\rho (\Gamma )$-invariant set $Z_1\subseteq Z$ such that the map $\psi :(X,\mu )\rightarrow (Z_1,\eta _1)$ defined by $\psi (x)\coloneqq f(x).\phi (x)$ is a measure space isomorphism (where  $\eta _1$ is the normalized restriction of $\eta$) satisfying $\psi (\gamma. x)=\rho (\gamma ).\psi  (x)$ for all $\gamma \in \Gamma$ and $x\in X$.
\end{lemma}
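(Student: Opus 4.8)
The plan is to upgrade the ``local'' orbit containment into an honest $L$-valued cocycle on all of $X$, to untwist that cocycle using the cocycle superrigidity hypothesis, and finally to extract injectivity of both $\rho$ and $\psi$ from the freeness of the two actions together with the absence of finite normal subgroups.

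\emph{Inducing a cocycle.} Since $\Gamma\acts (X,\mu)$ is ergodic and $\mu(X_0)>0$, the set $X_0$ is a complete section, so by the Lusin--Novikov theorem there is a measurable $\beta\colon X\to\Gamma$ with $\beta(x).x\in X_0$ for a.e.\ $x$ and $\beta|_{X_0}\equiv e$. The hypothesis $\phi_0(\Gamma.x\cap X_0)\subseteq L.\phi_0(x)\cap Z_0$ together with freeness of $L\acts Z$ produces a Borel cocycle $w$ on the restricted relation $\mathcal{R}_X|_{X_0}$ with values in $L$, uniquely determined by $\phi_0(x')=w(x',x).\phi_0(x)$; since $\phi_0$ is injective and $L$ acts freely, $w$ is injective along each restricted class. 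I then set $\phi(x)\coloneqq\phi_0(\beta(x).x)$, so that $\phi\colon X\to Z_0$ satisfies $\phi|_{X_0}=\phi_0$, and define $c(\gamma,x)\coloneqq w\big(\beta(\gamma.x)\gamma.x,\ \beta(x).x\big)$. A direct check using the cocycle identity for $w$ shows that $c\colon\Gamma\times X\to L$ is a cocycle and that $\phi(\gamma.x)=c(\gamma,x).\phi(x)$ for all $\gamma$ and a.e.\ $x$.

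\emph{Untwisting and the measure.} Applying $L$-cocycle superrigidity of $\Gamma\acts(X,\mu)$ to $c$ yields a homomorphism $\rho\colon\Gamma\to L$ and a measurable $g\colon X\to L$ with $c(\gamma,x)=g(\gamma.x)\rho(\gamma)g(x)^{-1}$. Putting $f\coloneqq g^{-1}$ and $\psi(x)\coloneqq f(x).\phi(x)$ gives the required equivariance $\psi(\gamma.x)=\rho(\gamma).\psi(x)$. To identify the pushforward $\theta\coloneqq\psi_*\mu$, I partition $X_0$ according to the countably many values of $f$ and use that every element of $L$ preserves $\eta$ while $\phi_0$ sends $\mu_0$ to $\eta_0$; this shows $\psi_*(\mu|_{X_0})\ll\eta$, and spreading over the countably many $\Gamma$-translates of $X_0$ gives $\theta\ll\eta$. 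The measure $\theta$ is $\rho(\Gamma)$-invariant with $\rho(\Gamma)$-invariant support $Z_1$, and as a factor of the ergodic action $\Gamma\acts(X,\mu)$ the action $\rho(\Gamma)\acts(Z_1,\theta)$ is ergodic; hence the $\rho(\Gamma)$-invariant density $d\theta/d\eta$ is a.e.\ constant on $Z_1$ and $\theta=\eta_1$, the normalized restriction of $\eta$ to $Z_1$. In particular $\psi$ is measure preserving onto $(Z_1,\eta_1)$.

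\emph{Injectivity.} This is the crux. Because $\rho(\Gamma)\le L$ and $L\acts Z$ is free, the action $\rho(\Gamma)\acts Z_1$ is free; consequently along a single $\Gamma$-orbit one has $\psi(\gamma.x)=\psi(x)$ exactly when $\rho(\gamma)=e$, so the fibres of $\psi$ meet each orbit in precisely one $\ker\rho$-orbit. The task is then twofold: to rule out ``transverse'' identifications (fibres crossing distinct $\Gamma$-orbits), which is where the bijectivity of $\phi_0$ must be leveraged together with the injectivity of $w$ along classes, and to show that $\ker\rho$ is finite. For the latter I would argue that if $\ker\rho$ were infinite then, being a free p.m.p.\ action of an infinite group, $\ker\rho\acts(X,\mu)$ would be non-smooth, contradicting the fact that the Borel map $\psi$ (whose fibres are, by the previous point, exactly the $\ker\rho$-orbits) reduces the $\ker\rho$-orbit relation to equality on the standard space $Z_1$. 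Once $\ker\rho$ is finite, the hypothesis that $\Gamma$ has no nontrivial finite normal subgroup forces $\ker\rho=\{e\}$, so $\rho$ is injective; combined with freeness and the absence of transverse collapsing, $\psi$ is a.e.\ injective, hence a measure space isomorphism onto $(Z_1,\eta_1)$. I expect this last paragraph---pinning down the fibres of $\psi$ and simultaneously excluding transverse collapsing and infinite kernel---to be the main obstacle, as it is precisely where freeness of $L\acts Z$, the bijectivity of $\phi_0$, and the finite-normal-subgroup hypothesis must all be combined.
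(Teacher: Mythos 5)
The paper does not actually prove this lemma: it is quoted from the literature (Furman's Theorem 1.8 and Popa's work), so there is no internal argument to compare against. Your overall strategy --- induce an $L$-valued cocycle on all of $X$ from the local orbit morphism via a complete section, untwist it using cocycle superrigidity, and then analyze the resulting equivariant map $\psi$ --- is exactly the standard route (and the one Furman follows). Your first two steps are essentially correct: the cocycle $c(\gamma,x)=w(\beta(\gamma.x)\gamma.x,\beta(x).x)$ is well defined and satisfies $\phi(\gamma.x)=c(\gamma,x).\phi(x)$, the untwisting gives the equivariance of $\psi$, and the identification $\psi_*\mu=\eta_1$ via absolute continuity, $\rho(\Gamma)$-invariance and ergodicity is sound.

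The genuine gap is in the third step, and you name it yourself: you never prove that $\psi$ is essentially injective, you only describe it as ``the main obstacle.'' Two separate issues are entangled there. First, your argument that $\ker\rho$ is finite invokes the claim that the fibres of $\psi$ are \emph{exactly} the $\ker\rho$-orbits, but your ``previous point'' only shows this within a single $\Gamma$-orbit; if fibres cross orbits, $\psi$ is merely a homomorphism to equality, not a reduction, and smoothness of the $\ker\rho$-relation does not follow. This part is repairable without resolving the transversality question: $\phi$ is countable-to-one (each fibre of $\phi$ lies in a single $\Gamma$-orbit) and for fixed $z$ any $x\in\psi^{-1}(z)$ has $\phi(x)\in L.z\cap Z_0$, so $\psi$ is countable-to-one; hence $E_\psi=\{(x,y):\psi(x)=\psi(y)\}$ is a smooth \emph{countable} Borel equivalence relation containing the orbit relation of the free p.m.p.\ action of $\ker\rho$, and a subrelation of a smooth countable Borel equivalence relation is smooth, forcing $\ker\rho$ to be finite and therefore trivial. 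Second, and more seriously, the exclusion of ``transverse'' identifications --- distinct $\Gamma$-orbits being collapsed by $\psi$ --- is left entirely unproved. The hypothesis only gives an \emph{inclusion} of orbits, so two distinct $\Gamma$-orbits in $X_0$ may a priori be sent by $\phi_0$ into the same $L$-orbit, and nothing you have written prevents $f$ from then identifying them under $\psi$. This is precisely where the assumption that $\phi_0$ is a measure space isomorphism \emph{onto} $Z_0$ (not merely an injection) must be used, via a counting or disintegration argument over the countable fibres of $\psi$ combined with $\psi_*\mu=\eta_1$; it is the substantive content of the corresponding step in Furman's proof, and without it the proposal does not establish that $\psi$ is a measure space isomorphism.
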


\begin{lemma}\label{lem:cocSuperrigidImpliesStronglyErgodic}
Let $\Gamma$ be a countable group, and let $\Gamma\acts (X,\mu )$ be an ergodic p.m.p.\ action that is $\mathbf{Z}$-cocycle superrigid. Then the action is strongly ergodic.
\end{lemma}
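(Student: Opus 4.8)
The plan is to argue by contraposition: assuming the action is \emph{not} strongly ergodic, I will manufacture an honest $\mathbf{Z}$-valued cocycle and use $\mathbf{Z}$-cocycle superrigidity to untwist it, reaching a contradiction with the nontriviality of the asymptotically invariant data. First, recall that failure of strong ergodicity supplies a nontrivial asymptotically invariant sequence of Borel sets $(A_n)$ in $X$; after a harmless normalization we may assume $\mu(A_n)=\tfrac12$ for all $n$ and $\mu(\gamma A_n\triangle A_n)\to 0$ for every $\gamma\in\Gamma$. Fixing an exhaustion $F_1\subseteq F_2\subseteq\cdots$ of $\Gamma$ by finite sets and passing to a subsequence, I can arrange the stronger summability $\sum_n\max_{\gamma\in F_n}\mu(\gamma A_n\triangle A_n)<\infty$; in particular $\sum_n\mu(\gamma^{-1}A_n\triangle A_n)<\infty$ for each fixed $\gamma$, and by a further subsequence and ergodicity I may take $\mathbf{1}_{A_n}\to\tfrac12$ weakly, so that $(A_n)$ is still nontrivial.

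The key construction is the map $c:\Gamma\times X\to\mathbf{Z}$ given by $c(\gamma,x):=\sum_n\bigl(\mathbf{1}_{A_n}(\gamma x)-\mathbf{1}_{A_n}(x)\bigr)$. By the Borel--Cantelli lemma, the summability above guarantees that for each $\gamma$ and almost every $x$ only finitely many terms are nonzero, so $c$ is a well-defined integer-valued function, and a direct check shows it obeys the cocycle identity (it is the formal coboundary of the a.e.\ infinite partial-sum function $\sum_n\mathbf{1}_{A_n}$). Two features of $c$ will drive the argument: each slice $c(\gamma,\cdot)$ lies in $L^1(X,\mu)$ and has mean zero, since $\int_X\bigl(\mathbf{1}_{A_n}(\gamma x)-\mathbf{1}_{A_n}(x)\bigr)\,d\mu(x)=\mu(A_n)-\mu(A_n)=0$ and the partial sums converge in $L^1$; and $c(\gamma,\cdot)$ vanishes off a set of measure at most $\sum_n\mu(\gamma^{-1}A_n\triangle A_n)$, which is small for $\gamma$ in any prescribed finite set.

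Now apply $\mathbf{Z}$-cocycle superrigidity: there are a measurable $g:X\to\mathbf{Z}$ and a homomorphism $\rho:\Gamma\to\mathbf{Z}$ with $c(\gamma,x)=g(\gamma x)-g(x)+\rho(\gamma)$ for a.e.\ $x$. I would first eliminate the case $\rho\neq 0$ by reading $\rho(\gamma)$ as the mean drift of $c$: integrating the cohomology relation over $x$ (the coboundary term integrating to $0$ and $c(\gamma,\cdot)$ having mean $0$) yields $\rho(\gamma)=\int_X c(\gamma,x)\,d\mu(x)=0$ for every $\gamma$. Hence $c(\gamma,x)=g(\gamma x)-g(x)$ is the coboundary of a finite-valued function $g$. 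This is where the contradiction must live: a nontrivial asymptotically invariant sequence $(A_n)$ should produce a cocycle $c$ that is \emph{not} a coboundary, equivalently one whose associated skew product $\Gamma\acts X\times_c\mathbf{Z}$ is ergodic (or has nontrivial essential range), whereas any genuine coboundary makes $X\times_c\mathbf{Z}$ isomorphic to $X\times\mathbf{Z}$ with trivial $\mathbf{Z}$-action and hence highly non-ergodic.

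The main obstacle is precisely this last dichotomy, together with one technical point in the drift computation: the transfer function $g$ furnished by superrigidity is only measurable, so the step ``$\rho(\gamma)=\int c(\gamma,\cdot)\,d\mu$'' needs justification (e.g.\ by truncating $g$ and controlling the truncation using that $g\circ\gamma-g=c(\gamma,\cdot)-\rho(\gamma)\in L^1$, or by recasting the whole computation as an invariant of the skew product, whose isomorphism type depends only on the cohomology class of $c$). I expect the cleanest route is to phrase everything through $X\times_c\mathbf{Z}$: show that the cocycle built from a nontrivial asymptotically invariant sequence has mean-zero drift yet is \emph{conservative with nontrivial essential range}, so that it can be cohomologous neither to a nonzero homomorphism (excluded by the drift) nor to the trivial one (excluded because $c$ is not a coboundary). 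Establishing that $c$ is cohomologous to no homomorphism is the crux, and it is exactly here that the nontriviality of $(A_n)$, via the weak convergence $\mathbf{1}_{A_n}\to\tfrac12$, must be converted into a genuine statement about the essential range of $c$.
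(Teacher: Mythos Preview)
Your argument has a genuine gap at exactly the place you flag as the ``crux'': you never prove that the cocycle $c(\gamma,x)=\sum_n\bigl(\mathbf{1}_{A_n}(\gamma x)-\mathbf{1}_{A_n}(x)\bigr)$ is not cohomologous to a homomorphism. Everything up to that point is fine (the summability, the mean-zero property, even the $\rho=0$ step can be justified by a standard Birkhoff argument: for measurable $g$ with $g\circ\gamma-g\in L^1$, the ergodic averages of $g\circ\gamma-g$ converge a.e.\ while $(g\circ\gamma^n-g)/n\to 0$ in measure, forcing the integral to vanish). But once you reduce to $c$ being a genuine coboundary $c(\gamma,x)=g(\gamma x)-g(x)$, you offer only a heuristic (``nontrivial $(A_n)$ should force nontrivial essential range'') and no proof. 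It is not at all clear that weak convergence $\mathbf{1}_{A_n}\to\tfrac12$ alone rules out such a $g$: the partial sums $S_N=\sum_{n\le N}\mathbf{1}_{A_n}$ do diverge a.e., but the differences $S_N-g$ could still be asymptotically invariant integer-valued functions drifting to $+\infty$, and nothing you have written excludes that. Turning this into an honest contradiction would require a separate argument controlling the essential range of $c$, which you have not supplied.

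The paper's proof avoids this difficulty entirely by a different mechanism. Instead of hand-building a cocycle from an asymptotically invariant sequence, it invokes the Jones--Schmidt theorem to produce a measure-preserving factor map $\Psi:X\to Y$ onto a genuine free ergodic $\mathbf{Z}$-action, with $\Psi(\Gamma.x)=\mathbf{Z}.\Psi(x)$; the associated orbit cocycle $v:\Gamma\times X\to\mathbf{Z}$ is then untwisted by superrigidity to a homomorphism $\rho:\Gamma\to\mathbf{Z}$, which is necessarily nontrivial (since $\Psi$ is onto a free action). Passing to the ergodic decomposition of $\ker\rho$, one obtains an atomless factor on which $\rho(\Gamma)\cong\mathbf{Z}$ acts ergodically, and now one can \emph{import} a cocycle $w_0$ on this factor that is \emph{known a priori} not to be cohomologous to a homomorphism (e.g.\ an orbit-equivalence cocycle to a non-conjugate $\mathbf{Z}$-action, using Dye's theorem and Lemma~\ref{lemma:rigidity_prep}). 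Lifting $w_0$ to $\Gamma\times X$ and applying superrigidity a second time yields the contradiction. The key difference is that the paper manufactures its ``bad'' cocycle from a source where non-triviality is already established, rather than trying to extract it from the asymptotically invariant sequence directly.
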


\begin{proof}
    Assume toward a contradiction that the action of $\Gamma$ is not strongly ergodic.  
    By Jones-Schmidt \cite[Theorem 2.1 and Remark 2.5]{JS}, there exists a free ergodic p.m.p.\ action $\mathbf{Z}\acts(Y,\nu)$ and a measure preserving map $\Psi: (X,\mu )\to (Y,\nu )$ such that $\Psi(\Gamma. x)=\mathbf{Z}.\Psi(x)$ for almost every $x\in X$. 
    Define the measurable cocycle $v: \Gamma\times X\to \mathbf{Z}$ implicitly by $\Psi(\gamma. x)=v(\gamma,x).\Psi(x)$. 
    Applying the superrigidity assumption yields a group homomorphism $\rho: \Gamma\to \mathbf{Z}$ and measurable map $g: X\to \mathbf{Z}$ such that $g(\gamma .x)v(\gamma, x)g(x)^{-1}=\rho(\gamma)$ for all $\gamma \in \Gamma$ and a.e.\ $x\in X$.

    Define $\theta: X\to Y$ by $\theta(x)= g(x).\Psi(x)$. 
    Then $\theta$ is $\Gamma$-equivariant with respect to the action of $\Gamma$ on $Y$ implemented through $\rho$. 
    Since  $\theta _*\mu$ is absolutely continuous with respect to $\nu$, and $\theta _*\mu$ and $\nu$ are both invariant ergodic probability measures for this action, it follows that $\theta _*\mu = \nu$. 
    Ergodicity of the action of $\Gamma$ on $X$ implies that $\rho (\Gamma )$ is nontrivial, and hence infinite cyclic.
    
    Let $N=\ker (\rho )$, and let $\theta _0 :(X,\mu )\rightarrow (Y_0,\nu _0)$ be the ergodic decomposition map for the action of $N$ on $(X,\mu )$. 
    Then there is an ergodic p.m.p.\ action of $\rho (\Gamma )$ on $(Y_0,\nu _0)$ satisfying $\theta _0 (\gamma .x)=\rho (\gamma ).\theta _0 (x)$ for all $\gamma \in \Gamma$ and $x\in X$.
    Note that, since $\theta$ is $N$-invariant, $\theta$ factors through $\theta _0$, and in particular $\nu _0$ is atomless.

    By Dye's Theorem and Lemma \ref{lemma:rigidity_prep}, we may find a measurable $\mathbf{Z}$-valued cocycle $w_0: \rho(\Gamma) \times Y_0\rightarrow \mathbf{Z}$ that is not cohomologous to a homomorphism, e.g.\ consider the orbit equivalence cocycle to a non-conjugate free p.m.p.\ action of $\mathbf{Z}$. 
    Let $w:\Gamma\times X\rightarrow \mathbf{Z}$ be its lift, given by $w(\gamma ,x)=w_0(\rho (\gamma ),\theta_0 (x))$. 
    Applying the superrigidity assumption again yields a group homomorphism $\pi :\Gamma \rightarrow \mathbf{Z}$ and a measurable map $h:X\rightarrow \mathbf{Z}$ such that $h(\gamma x)w(\gamma ,x )h(x)^{-1} =\pi (\gamma  )$, i.e.,
    \[
h(\gamma x)=\pi (\gamma  )h(x)w_0(\rho (\gamma ),\theta_0 (x))^{-1}.
    \]
    Then $h_*\mu$ is a $\pi (N)$-invariant probability measure on $\mathbf{Z}$, so $\pi (N)$ must be finite and hence trivial. 
    Thus, the map $\pi _0:\rho (\Gamma )\rightarrow \pi (\Gamma )$ given by $\pi _0 (\rho (\gamma ))=\pi (\gamma )$ is a well-defined group homomorphism.  Since the map $h$ is $N$-invariant, it descends through $\theta_0$ to a map $h_0:Y_0\rightarrow \mathbf{Z}$ satisfying
    \[
h_0(\rho (\gamma )y)w_0(\rho (\gamma ),y)h_0(y)^{-1}=\pi _0(\rho (\gamma ))
    \]
for all $\gamma \in \Gamma$ and a.e.\ $y\in Y_0$. Thus, $w_0$ is cohomologous to the homomorphism $\pi _0$, a contradiction.
\end{proof}

\subsection{Cocycle superrigidity and ends of groups}

\begin{lemma}\label{lm:actTreeCocycle}
    Let $\Gamma$ be a countable group acting on a tree $T$ without inversion and with no global fixed points, and let $E$ denote the set of all directed edges of $T$. 
    Let $L$ be a nontrivial Polish group admitting a compatible bi-invariant metric $d$, and let $(X,\mu )$ be a nontrivial standard probability space. Then the generalized Bernoulli action $\Gamma\curvearrowright (X^{E},\mu ^{E})$ admits a measurable $L$-valued cocycle that is not cohomologous to a homomorphism.
\end{lemma}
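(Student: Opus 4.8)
The plan is to build the cocycle explicitly from the tree and then to locate, along a hyperbolic axis, the independent randomness that prevents it from being a coboundary twisted by a homomorphism. First I would fix a base vertex $o$ and a nonconstant measurable $g\colon X\to L$ (possible since $(X,\mu)$ and $L$ are both nontrivial). For a directed edge $e$ with reversal $\bar e$, set $g_e(f):=g(f_e)\,g(f_{\bar e})^{-1}$. This has two properties: $g_{\bar e}(f)=g_e(f)^{-1}$, and, using $(\gamma f)_{\gamma e}=f_e$ together with the $\Gamma$-equivariance of edge reversal, $g_{\gamma e}(\gamma f)=g_e(f)$. For vertices $a,b$ put $\beta_f(a,b):=\prod g_e(f)$, the path-ordered product over the directed edges of the geodesic from $a$ to $b$. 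The first property gives the chain rule $\beta_f(a,b)\beta_f(b,c)=\beta_f(a,c)$ (adjacent backtracks $g_e g_{\bar e}$ cancel, which is valid even when $L$ is nonabelian), and the second gives equivariance $\beta_{\gamma f}(\gamma a,\gamma b)=\beta_f(a,b)$. Then
\[
c(\gamma,f):=\beta_f(\gamma^{-1}o,\,o)
\]
is an $L$-valued cocycle for $\Gamma\curvearrowright(X^{E},\mu^{E})$, as a one-line computation from the chain rule and equivariance shows.

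Next I would reduce to a single well-behaved element. Since the action has no global fixed point, the orbit $\Gamma o$ is unbounded (a bounded orbit would have a fixed circumcenter), and by the structure theory of group actions on trees I may pass to a hyperbolic element $\gamma$ with an axis $A$ and translation length $\ell$, placing $o$ on $A$; the remaining elementary case, where $\Gamma$ fixes an end, reduces to an essentially $\mathbf{Z}$-action argument of the same shape. Along the axis one computes $c(\gamma^{n},f)=\prod_{j=1}^{n\ell}\xi_j(f)$, where $\xi_j(f)=g_{a_j}(f)$ and $a_1,a_2,\dots$ are the consecutive axis edges. Because distinct edges carry independent coordinates, the $\xi_j$ are i.i.d.\ with a law $\mu_F$ on $L$ that is symmetric and, by the nonconstancy of $g$, non-degenerate. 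Thus on the subgroup $\langle\gamma\rangle\cong\mathbf{Z}$ the cocycle is an independent-increment (``Bernoulli'') cocycle over the shift, and it suffices to show this restriction is not cohomologous to a homomorphism, since the restriction of a coboundary is again a coboundary.

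For the non-cohomology step, suppose $c(\gamma,f)=\psi(\gamma f)\rho(\gamma)\psi(f)^{-1}$. Conjugating the bundle $X^{E}\times L$ by $(f,l)\mapsto(f,\psi(f)^{-1}l)$ transforms the skew product $\gamma\cdot(f,l)=(\gamma f,\,c(\gamma,f)l)$ into $\gamma\cdot(f,l)=(\gamma f,\,\rho(\gamma)l)$, whose fibers move by left translations. This is exactly where the hypothesis on $L$ enters: by bi-invariance these translations are $d$-isometries, so a homomorphism-coboundary \emph{always} presents as an isometric (distal) extension of the base action. By contrast, the independent-increment cocycle of the previous paragraph should present the $\langle\gamma\rangle$-extension as \emph{relatively weakly mixing} over the shift. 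A nontrivial isometric extension is never weakly mixing, and the non-degeneracy of $\mu_F$ forbids triviality, yielding the contradiction (the case $\rho(\gamma)=e$, where $c$ would itself be a coboundary, is the same statement for the trivial extension).

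I expect the genuine obstacle to be making this last dichotomy precise for an arbitrary Polish $L$. When $L$ has finite diameter (for instance $L$ compact, or $L$ finite as in the application to $\mathbf{C}_n$-valued cocycles) the naive ``random product escapes while a coboundary stays bounded'' estimate is useless: the identity $d\big(c(\gamma^{n},f),\rho(\gamma)^{n}\big)\le d\big(\psi(\gamma^{n}f),e\big)+d\big(\psi(f),e\big)$ merely bounds a bounded quantity. The substance is therefore ergodic-theoretic rather than metric: one must prove that the i.i.d.-increment extension of the Bernoulli shift is relatively weakly mixing, using mixing of all orders of the shift together with the exact independence of the increments $\xi_j$ coming from distinct axis edges. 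The bi-invariance of $d$ is precisely the ingredient that pins every homomorphism-coboundary to the isometric side of the weak-mixing/isometric dichotomy, so that the argument closes uniformly over all admissible $L$.
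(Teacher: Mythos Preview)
Your cocycle construction is correct and essentially the paper's: both define $c(\gamma,\cdot)$ as an ordered product of edge-labels along the geodesic between a base vertex and its $\gamma$-translate (the paper first reduces to a two-point base $X=\{l_0,l_1\}\subseteq L$ and works on one orientation class $E^+$, but this is cosmetic).

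The genuine gap---which you yourself flag as ``the genuine obstacle''---is the non-cohomology step. Your proposed dichotomy (the homomorphism side is an isometric extension, the i.i.d.\ side is relatively weakly mixing) has no meaning at the stated generality: the skew product on $X^E\times L$ is a p.m.p.\ system only when $L$ carries a finite invariant measure, and for countable discrete targets such as $L=\mathbf{Z}$ there is none, so Furstenberg--Zimmer structure theory simply does not apply. Even when $L$ is compact, the claim that a non-degenerate i.i.d.\ cocycle yields a relatively weakly mixing extension is a theorem you would still have to prove. There is a secondary looseness as well: the restriction of $c$ to $\langle\gamma\rangle$ is a cocycle over $\langle\gamma\rangle\curvearrowright X^E$, not over the axis shift; while $c(\gamma^n,\cdot)$ factors through the axis coordinates, the putative transfer function $\psi$ need not, so the reduction to ``an i.i.d.\ cocycle over a $\mathbf{Z}$-shift'' is not as clean as written.

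The paper's argument is entirely different and elementary; it needs neither a hyperbolic element nor any extension theory. Assuming $c(\gamma,x)=f(\gamma x)\rho(\gamma)f(x)^{-1}$, approximate $f$ in the integrated metric $\tilde d$ by a cylinder map $\varphi$ depending on a finite edge set $F$. Since the action has no global fixed point, the orbit of the base vertex is unbounded, so one can choose $\gamma$ with geodesic $p_\gamma$ of length at least $3|F|$; then some edge $e$ of $p_\gamma$ lies outside $F\cup\gamma^{-1}F$. Let $\sigma$ swap the two values at the $e$-coordinate. Then $\varphi\sigma=\varphi$ and $\varphi\gamma\sigma=\varphi\gamma$, so both $f$ and $f\gamma$ are nearly $\sigma$-invariant; bi-invariance of $d$ then forces $d(c(\gamma,\sigma x_0),c(\gamma,x_0))<d(l_0,l_1)$ for some $x_0$. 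But flipping one edge on $p_\gamma$ changes exactly one factor of the product defining $c$, and bi-invariance gives $d(c(\gamma,\sigma x_0),c(\gamma,x_0))=d(l_0,l_1)$, a contradiction. This finite-approximation-plus-edge-flip argument is the whole proof and works uniformly for every Polish $L$ with a compatible bi-invariant metric.
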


\begin{proof}
Let $e\mapsto e^{-1}$ denote the fixed point free involution on $E$ sending an edge to its inverse. We may write $E$ as the disjoint union $E=E^+\sqcup E^-$ where both $E^+$ and $E^-$ are $\Gamma$-invariant, and the inversion map gives an equivariant bijection between $E^+$ and $E^-$. 

By \cite[Lemma 3.2]{Furman2007}(see also \cite[Lemma 2.11]{Popa2007}), we may assume without loss of generality that $X$ has cardinality 2, and since $L$ is nontrivial we may additionally assume that $X=\{ l_0, l_1 \}$ is a two element subset of $L$. Similarly, it is also enough to prove the conclusion of the Lemma for the generalized Bernoulli action on $(X^{E^+},\mu ^{E^+})$ in place of the action on $(X^{E},\mu ^{E})$.

We may assume that the bi-invariant metric $d$ on $L$ is bounded by $1$. Given two measurable  maps $\varphi _0 , \varphi _1: X^{E^+}\rightarrow L$, define  
    \[
    \tilde{d}(\varphi _0, \varphi _1) \coloneqq \int _X d(\varphi _0(x),\varphi _1(x)) \, d\mu (x) ,
    \] 
so that $\tilde{d}$ is a pseudometric on the set of all measurable maps from $X^{E^+}$ to $L$.
    
Fix a vertex $v$ of the tree $T$. 
For each $\gamma\in \Gamma$, let $p_{\gamma} = e^{\gamma}_1\cdots e^{\gamma}_{n_{\gamma}}$ denote the geodesic path from $v$ to $\gamma .v$ in $T$ (so each $e^{\gamma}_i$ belongs to $E$, the origin of $e^{\gamma}_1$ is $v$, the terminus of $e^{\gamma}_{n_{\gamma}}$ is $\gamma .v$, and the terminus of $e^{\gamma }_i$ equals the origin of $e^{\gamma}_{i+1}$ for all $1\leq i<n_{\gamma}$).
Define the map $c: \Gamma \times X^{E^+ }\to L$ by $c(\gamma, x) \coloneqq x(e^\gamma_{n_\gamma})\cdots x(e^\gamma_{1})$ where $x(e)\coloneqq x(e^{-1})^{-1}$ for $e\in E^{-}$, and the product refers to the group multiplication in $L$. 
Then it is straightforward to check that $c$ is a measurable $L$-valued cocycle. 
    
Assume toward a contradiction that $c$ is cohomologous to a homomorphism. Then, after discarding a null set, there exists a measurable map $f: X^{E^+}\to L$ and a homomorphism $\rho: \Gamma\to L$ such that $c(\gamma,x)=f(\gamma. x)\rho(\gamma)f(x)^{-1}$ for all $\gamma \in \Gamma$ and $x\in X$.

Let $r \geq 1$ be the maximum of the two ratios $\frac{\mu(l_0)}{\mu(l_1)}$ and $\frac{\mu(l_1)}{\mu(l_0)}$, and fix $\epsilon$ with $0<\epsilon<\frac{d(l_0,l_1)}{4r}$.
    We may then find a finite set $F\subseteq E^+$ of edges, together with a measurable map $\varphi: X^{E^+}\to L$ that depends only on the coordinates from $F$ and satisfies $\tilde d(f,\varphi)<\epsilon$.
    Since the action of $\Gamma$ on $T$ has no fixed points, the orbit of $v$ is unbounded, and hence there exists some $\gamma \in \Gamma$ such that the path $p_\gamma$ has length at least $3|F|$. 
    There is therefore an edge $e\in E^+$ not belonging to the set $F\cup \gamma^{-1} .F$, such that either $e$ or $e^{-1}$ belongs to $p_\gamma$.

    Let $\sigma: X^{E^+}\to X^{E^+}$ be the involution that changes the value at the $e$-coordinate and leaves all other coordinates unchanged. 
    Then $\varphi\gamma \sigma=\varphi\gamma$ and $\varphi\sigma=\varphi$ where we write $\gamma$ for the map $x\mapsto \gamma. x$.
    The Radon-Nikodym derivative $\frac{d\sigma _*\mu }{d\mu}$ is bounded above by $r$, hence $\tilde d (\varphi \sigma, f \sigma)<r\epsilon$. 
    Therefore, by the triangle inequality, $\tilde d(f\gamma ,f\gamma \sigma)<2r\epsilon$ and $\tilde d(f,f\sigma)<2r\epsilon$. Thus
    \[
    \int _X d(f (\gamma .x) ,f (\gamma .\sigma x)) + d(f (x) ,f (\sigma x)) \, d\mu (x) < d(l_0,l_1 ),
    \]
    so there exists some $x_0\in X$ for which $d(f(\gamma .x_0),f (\gamma .\sigma x_0)) + d(f (x_0) ,f (\sigma x_0))<d(l_0,l_1 )$. By the triangle inequality and bi-invariance of $d$ we have
    \begin{align*}
    d(c(\gamma , \sigma x_0),c(\gamma , x_0))&= d(f(\gamma .\sigma x_0)\rho(\gamma)f(\sigma x_0)^{-1},f(\gamma . x_0)\rho(\gamma)f(x_0)^{-1}) \\
    &\leq  d(f(\gamma .x_0 ),f(\gamma .\sigma x_0)) + d(f (x_0) ,f( \sigma x_0))\\
    &<d(l_0,l_1 ).
    \end{align*}
    Since $\sigma x_0$ differs from $x_0$ on exactly one edge along the path $p_\gamma$, the definition of $c$ together with bi-invariance of $d$ implies that $d(c(\gamma, \sigma x_0),c(\gamma, x_0))=d(l_0,l_1)$, a contradiction. 
\end{proof}

\begin{cor}\label{cor:LcocSuperrigidImpliesOneEnd}
    Let $\Gamma$ be a countable group and let $(X,\mu)$ be a nontrivial standard probability space. Assume the Bernoulli shift $\Gamma\acts (X^\Gamma, \mu^\Gamma)$ is $L$-cocycle superrigid for some nontrivial Polish group $L$ that admits a compatible bi-invariant metric. Then $\Gamma$ has at most one end, i.e., $H^1(\Gamma, \mathbf{Z}_2\Gamma)$ is trivial. 
\end{cor}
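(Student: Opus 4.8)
I will prove the contrapositive: if $\Gamma$ has more than one end, then the Bernoulli shift is not $L$-cocycle superrigid. The input from the theory of ends is that a group with $H^{1}(\Gamma,\mathbf{Z}_2\Gamma)\neq 0$ splits nontrivially over a finite subgroup; this is Stallings's theorem for finitely generated $\Gamma$ together with its extension to arbitrary groups due to Dunwoody. Writing such a splitting as $\Gamma=A\ast_{C}B$ or $\Gamma=A\ast_{C}$ with $C$ finite, I pass to the associated Bass–Serre tree $T$: the action of $\Gamma$ on $T$ is without inversion and has no global fixed point, and the set $E^{+}$ of positively oriented edges forms a single $\Gamma$-orbit, equivariantly identified with $\Gamma/C$. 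Thus each positive edge corresponds to a coset of the finite group $C$, and distinct edges correspond to disjoint cosets that together partition $\Gamma$.

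The goal is to manufacture the bad cocycle supplied by Lemma \ref{lm:actTreeCocycle} directly on $(X^{\Gamma},\mu^{\Gamma})$, so that superrigidity of this very action is contradicted and no transfer between different actions is needed. Fix two distinct points $l_{0},l_{1}\in L$ and a measurable $g:X\to\{0,1\}$ with $\mu(g^{-1}(0)),\mu(g^{-1}(1))>0$. Using the identification $E^{+}\cong\Gamma/C$, I define an edge labeling $x\mapsto\bar x\in\{l_{0},l_{1}\}^{E^{+}}$ by declaring the label of the edge $\delta C$ to be $l_{0}$ or $l_{1}$ according to the parity $\sum_{\delta'\in\delta C}g(x_{\delta'})\bmod 2$. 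Since the blocks $\delta C$ are finite (as $C$ is finite) and pairwise disjoint (they are the cosets of $C$), this map is $\Gamma$-equivariant and the resulting edge values are independent; it is precisely a factor map onto a generalized Bernoulli action to which Lemma \ref{lm:actTreeCocycle} applies. Pulling back the cocycle produced by that lemma along this labeling defines a measurable $L$-valued cocycle $c$ of $\Gamma\acts X^{\Gamma}$, explicitly $c(\gamma,x)=\prod_{e\in p_{\gamma}}\bar x(e)$ along the geodesic $p_\gamma$ from a base vertex to its $\gamma$-translate.

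To finish I re-run the proof of Lemma \ref{lm:actTreeCocycle} in this new coordinate system; this is necessary because pulling a non-coboundary back along a factor map need not keep it one, so I must reproduce the argument rather than merely cite it. The only geometric feature used in that proof is a finitary involution flipping the value at a single edge far out along $p_{\gamma}$; the analogous move here is the involution $\sigma$ of $X^{\Gamma}$ that alters the single coordinate $x_{\delta_{*}}$ by a fixed measure-class-preserving involution of $(X,\mu)$ swapping the level sets of $g$. Because $\delta_{*}$ lies in exactly one coset $\delta_{*}C$, this flips the label of exactly the one edge $\delta_{*}C$ and leaves all others unchanged, and because $C$ is finite the Radon–Nikodym derivative of $\sigma$ is bounded by the same constant $r$ attached to the base. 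Approximating a hypothetical trivializing $f:X^{\Gamma}\to L$ by a function depending on finitely many coordinates, choosing $\gamma$ with $p_{\gamma}$ long enough to contain an edge whose defining coset avoids this finite control set, and flipping a coordinate in that coset, reproduces verbatim the contradiction between the inequality $d(c(\gamma,\sigma x_{0}),c(\gamma,x_{0}))<d(l_{0},l_{1})$ and the exact equality $d(c(\gamma,\sigma x_{0}),c(\gamma,x_{0}))=d(l_{0},l_{1})$ forced by bi-invariance of $d$. Hence $c$ is not cohomologous to a homomorphism, contradicting $L$-cocycle superrigidity of $\Gamma\acts X^{\Gamma}$.

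The point requiring care — and the reason for insisting on a splitting over a single finite subgroup rather than an arbitrary tree action — is that the coordinate blocks carrying the edge labels must be globally disjoint inside the one space $X^{\Gamma}$ whose superrigidity is assumed. A single edge orbit makes these blocks exactly the cosets of $C$, which partition $\Gamma$; with several edge orbits one would be forced either to enlarge the base (replacing $X^{\Gamma}$ by $(X^{K})^{\Gamma}$) or to route the argument through a factor map and descend cocycle superrigidity along a relatively weakly mixing extension, and for a nonabelian Polish target $L$ this descent is a genuinely more delicate ergodicity argument on the relatively independent self-joining. Finiteness of $C$ is equally essential, since it is what keeps the flips finitary and the Radon–Nikodym derivatives bounded, exactly as in Lemma \ref{lm:actTreeCocycle}. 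The main external input is thus the end-splitting theorem in its form valid for arbitrary, possibly infinitely generated, countable groups.
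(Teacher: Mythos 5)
Your overall strategy -- replace the paper's route through the Gaussian functor by a direct realization of the edge labels as parity functions over the cosets of a finite edge group, and then re-run the argument of Lemma \ref{lm:actTreeCocycle} inside $X^\Gamma$ itself -- is attractive, and you are right that one cannot simply pull back a non-trivializable cocycle along a factor map, so re-running the argument is the honest move. However, your very first step contains a genuine gap. The statement ``more than one end $\Rightarrow$ $\Gamma$ splits nontrivially over a finite subgroup'' is \emph{false} for arbitrary countable groups: a countably infinite locally finite group (e.g.\ $\bigoplus_{\mathbf{N}}\mathbf{Z}/2$) has infinitely many ends, yet it admits no nontrivial decomposition as an amalgam or HNN extension over a finite subgroup, since any such decomposition produces an element of infinite order. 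This is precisely why the paper invokes the tree-action form of the structure theorem (\cite[Theorem IV.6.10]{Dicks1989}): every multi-ended group acts on a tree with finite edge stabilizers and no global fixed point, but in the locally finite case the tree is a ``telescope'' $\bigsqcup_n \Gamma/\Gamma_n$ with infinitely many edge orbits, and your single coset space $\Gamma/C$ -- the feature your whole coordinate bookkeeping rests on -- does not exist. Restricting the cocycle to a single edge orbit does not rescue the argument there, because geodesics in the telescope cross each edge orbit a bounded number of times, so you cannot place the flipped edge far from the finite control set. The gap is patchable (countably infinite locally finite groups are amenable, and the Bernoulli shift of an infinite amenable group is never $L$-cocycle superrigid for nontrivial $L$), but as written your contrapositive does not cover all multi-ended groups.

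A second, more technical issue: your perturbation $\sigma$ requires a measure-class-preserving involution $T$ of $(X,\mu)$ with $g\circ T=1-g$ almost everywhere and bounded Radon--Nikodym derivative, i.e.\ an essentially fixed-point-free involution of the base. This need not exist for an arbitrary nontrivial standard probability space (consider three atoms of equal mass, or one atom together with an atomless part). The paper sidesteps this by first reducing to an atomless (or two-point) base via \cite[Lemma 3.2]{Furman2007} -- exactly the transfer between base spaces that you announce you are avoiding. Since Lemma \ref{lm:actTreeCocycle} itself already invokes that reduction, you lose nothing by performing it once at the outset, and doing so repairs this point. With both corrections in place -- the tree-action statement in place of the splitting statement, plus a separate (easy) treatment of the locally finite case, and the base-space reduction -- your coset-parity construction does give a legitimate alternative to the paper's representation-theoretic step, at the cost of essentially rewriting the proof of Lemma \ref{lm:actTreeCocycle} rather than quoting it.
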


\begin{proof}
    By \cite[Lemma 3.2]{Furman2007}, it is enough to consider the case where $(X,\mu )$ is atomless. 
    We prove the contrapositive, so assume that $\Gamma$ has more than one-end. 
    Then by \cite[Theorem IV.6.10]{Dicks1989} $\Gamma$ acts without inversion on a tree $T$ with finite edge stabilizers and no fixed points. Let $E$ denote the set of directed edges of $T$.
    The associated orthogonal representation of $\Gamma$ on $\ell ^2E$ is then isomorphic to a subrepresentation of a multiple of the left-regular representation of $\Gamma$. Thus, applying the Gaussian action functor to this subrepresentation and its orthogonal complement, we see that the Bernoulli shift of $\Gamma$ over an atomless base space $(X,\mu )$ is isomorphic to the direct product of the generalized Bernoulli action $\Gamma\curvearrowright (X^{E},\mu ^{E})$ with a mixing action of $\Gamma$.
    By Lemma \ref{lm:actTreeCocycle}, the action of $\Gamma$ on $\Gamma\curvearrowright (X^{E},\mu ^{E})$ is not $L$-cocycle superrigid, so another application of \cite[Lemma 3.2]{Furman2007} shows that the Bernoulli shift $\Gamma\curvearrowright (X^\Gamma , \mu ^\Gamma )$ is also not $L$-cocycle supperrigid.     
\end{proof}

\begin{remark}
    In the case when $L$ is the circle, Corollary \ref{cor:LcocSuperrigidImpliesOneEnd} follows from Peterson-Sinclair \cite[Theorem 1.1]{PetersonSinclair12} where they obtain the stronger conclusion (see \cite{BekkaValette97} and \cite[Corollary 2.4]{PetersonThom2011}) that the first $\ell^2$-Betti number of $\Gamma$ vanishes. 
\end{remark}

\subsection{Rigidity for wreath product actions}
We now consider a stable orbit embedding of a $\mathcal{G}_{\mathrm{ctble}}$-cocycle superrigid action of a group $\Gamma$ into a wreath product action of a wreath product group $B\wr\Lambda$. 

In this context, assuming $\Gamma$ has no nontrivial finite normal subgroups, Lemma \ref{lemma:rigidity_prep} shows that $\Gamma$ must be isomorphic to a subgroup of $B\wr\Lambda$, and the stable orbit embedding can be rearranged into an isomorphism of the action of $\Gamma$ with an ergodic component of the action of this subgroup of $B\wr\Lambda$. 
In the following theorem we show that if the action of $\Gamma$ is moreover {\bf totally ergodic} -- meaning that every infinite subgroup of $\Gamma$ acts ergodically -- then even more is true: 
$\Gamma$ is in fact isomorphic to a subgroup of $\Lambda$, and the stable orbit embedding can be rearranged into an isomorphism of the action of $\Gamma$ with the action of this subgroup of $\Lambda$. 

\begin{thm}\label{Rigid}
Let $\Gamma$ be a countably infinite group with no nontrivial finite normal subgroups, and let $B$ and $\Lambda$ be countable groups, with $B$ nontrivial and $\Lambda$ infinite.
Let $\Gamma\acts (X,\mu)$ be a totally ergodic p.m.p.\ action of $\Gamma$ that is $\mathcal{G}_\mathrm{ctble}$-cocycle superrigid. 
Let $B\acts (Y,\nu)$ be a free p.m.p.\ action of $B$, and let $B\wr \Lambda \acts (Y^\Lambda,\nu^\Lambda )$ be the associated wreath product action. 

Suppose that there exist positive measure sets $X_0\subseteq X$ and $Z_0\subseteq Y^\Lambda$ and a measure space isomorphism $\phi _0: (X_0,\mu _0)\rightarrow (Z_0,\eta _0)$ such that 
\[
\phi _0(\Gamma .x)\cap X_0\subseteq (B\wr \Lambda).\phi _0(x)\cap Z_0
\]
for a.e.\ $x\in X_0$, where $\mu _0$ and $\eta _0$ denote the normalized restrictions of $\mu$ and $\nu ^{\Lambda}$ respectively. 

Then, after discarding a null set, there exist measurable functions $g: X\rightarrow B\wr \Lambda$ and $\phi: X\rightarrow Z_0$ with $\phi|_{X_0}=\phi _0$, and an injective group homomorphism $\sigma : \Gamma\rightarrow\Lambda$ such that the map $\theta :(X,\mu)\rightarrow(Y^\Lambda,\nu^\Lambda)$, given by $\theta (x)\coloneqq g(x).\phi (x)$, is a measure space isomorphism satisfying $\theta (\gamma. x)= \sigma (\gamma).\theta (x)$ for all $\gamma \in \Gamma$ and $x\in X$.

In particular, the action of $\Gamma$ on $(X,\mu )$ is isomorphic to the Bernoulli shift of $\Gamma$ with base space $(Y^{\Lambda /\sigma (\Gamma )}, \nu ^{\Lambda /\sigma  (\Gamma )})$.
\end{thm}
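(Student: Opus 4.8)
The plan is to feed the stable orbit embedding into the cocycle superrigidity machinery, read off an embedding $\rho:\Gamma\to B\wr\Lambda$, and then use total ergodicity to force $\rho(\Gamma)$ into the wreath complement $\Lambda$.

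First I would apply Lemma \ref{lemma:rigidity_prep} with $L=B\wr\Lambda$, which is a countable group, the action $\Gamma\acts(X,\mu)$ being $\mathcal{G}_{\mathrm{ctble}}$-cocycle superrigid and the wreath product action $B\wr\Lambda\acts(Y^\Lambda,\nu^\Lambda)$ being free and ergodic (ergodicity holds even when $B\acts(Y,\nu)$ is not ergodic, via the shift factor on $(Y/B)^\Lambda$). This produces an injective homomorphism $\rho:\Gamma\to B\wr\Lambda$, measurable maps $f:X\to B\wr\Lambda$ and $\phi:X\to Z_0$ with $\phi|_{X_0}=\phi_0$, and a $\rho(\Gamma)$-invariant positive measure set $Z_1\subseteq Y^\Lambda$, such that $\psi(x):=f(x).\phi(x)$ defines a measure space isomorphism $(X,\mu)\to(Z_1,\eta_1)$ with $\psi(\gamma.x)=\rho(\gamma).\psi(x)$.

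The heart of the argument — and the step I expect to be the main obstacle — is to show that $\pi\circ\rho:\Gamma\to\Lambda$ is injective, where $\pi:B\wr\Lambda\to\Lambda$ is the quotient by the wreath kernel; equivalently that $N:=\ker(\pi\circ\rho)=\rho^{-1}(\bigoplus_\Lambda B)$ is trivial. The mechanism is the following consequence of total ergodicity: if $H\leq\Gamma$ is infinite with $\rho(H)\subseteq\bigoplus_\Lambda B$ whose support omits infinitely many coordinates, then $\rho(H)$ fixes each omitted coordinate, so the projection of $Z_1$ onto the product of $Y$ over those coordinates is a nonconstant $\rho(H)$-invariant map (nonconstant since $\nu$ is not a point mass, hence each fiber is $\nu^\Lambda$-null), contradicting ergodicity of $H\acts X$. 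Since $\Gamma$ has no nontrivial finite normal subgroup, $N$ is trivial or infinite; assuming it infinite, normality forces $\mathrm{Supp}(\rho(N))$ to be invariant under left translation by $\Delta:=\pi\rho(\Gamma)$, and for $n\in N\setminus\{e\}$ the normal closure $\langle\langle n\rangle\rangle$ has $\rho$-support inside the finite union of right cosets $\Delta\cdot\mathrm{supp}(\rho(n))$. When $[\Lambda:\Delta]=\infty$ this support already omits infinitely many coordinates and the mechanism gives a contradiction; the delicate remaining case is $[\Lambda:\Delta]<\infty$, where one reduces — via the coordinate projections onto a single $\Delta$-coset and the shift-equivariance they carry — to the case of full support inside $\bigoplus_\Delta B$, and excludes it by combining total ergodicity with the strong ergodicity supplied by Lemma \ref{lem:cocSuperrigidImpliesStronglyErgodic}. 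I expect the clean manufacture of an infinite subgroup of $N$ with co-infinite support to be the technical crux.

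Granting injectivity, $\rho(\Gamma)\cap\bigoplus_\Lambda B=\{e\}$, so Lemma \ref{prop:One_ended_Implies_Conj} gives $u\in B^\Lambda$ with $u\rho(\Gamma)u^{-1}=\sigma(\Gamma)\leq\Lambda$ for the injective homomorphism $\sigma:=\mathrm{Ad}_u\circ\rho$. Even for $u$ of infinite support, coordinatewise application of $u$ is a measure preserving transformation $T_u$ of $(Y^\Lambda,\nu^\Lambda)$, and $\theta_u:=T_u\circ\psi$ satisfies $\theta_u(\gamma.x)=\sigma(\gamma).\theta_u(x)$ with $\sigma(\gamma)$ acting by the shift. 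For surjectivity I would observe that $(\theta_u)_*\mu$ is absolutely continuous with respect to $\nu^\Lambda$ and is $\sigma(\Gamma)$-invariant and ergodic, while $\nu^\Lambda$ is also $\sigma(\Gamma)$-invariant and ergodic for the shift; two ergodic invariant probability measures, one absolutely continuous with respect to the other, coincide, so $(\theta_u)_*\mu=\nu^\Lambda$ and $\theta_u$ is an isomorphism onto $(Y^\Lambda,\nu^\Lambda)$. In particular $\Gamma\acts(X,\mu)$ is isomorphic to the shift of $\sigma(\Gamma)$ on $Y^\Lambda$, that is, to the Bernoulli shift of $\Gamma$ with base $(Y^{\Lambda/\sigma(\Gamma)},\nu^{\Lambda/\sigma(\Gamma)})$; this already yields the final assertion. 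To obtain the stated form with $g$ valued in the restricted product, I would upgrade the conjugating element: having identified $\Gamma\acts X$ with a Bernoulli shift of $\Gamma$ over a nontrivial base, $\mathcal{G}_{\mathrm{ctble}}$-cocycle superrigidity makes this Bernoulli shift $L$-cocycle superrigid for some nontrivial finite $L$, so Corollary \ref{cor:LcocSuperrigidImpliesOneEnd} shows $\Gamma$, hence $\rho(\Gamma)$, has at most one end; the second part of Lemma \ref{prop:One_ended_Implies_Conj} then lets the conjugator be chosen as $u'\in\bigoplus_\Lambda B$. Setting $\sigma:=\mathrm{Ad}_{u'}\circ\rho$, $g(x):=u'f(x)\in B\wr\Lambda$, and $\theta(x):=u'.\psi(x)=g(x).\phi(x)$, the same computation and absolute-continuity argument give that $\theta$ is a measure space isomorphism onto $(Y^\Lambda,\nu^\Lambda)$ with $\theta(\gamma.x)=\sigma(\gamma).\theta(x)$ and $\phi|_{X_0}=\phi_0$, completing the proof.
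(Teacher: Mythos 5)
Your overall architecture matches the paper's: apply Lemma \ref{lemma:rigidity_prep} with $L=B\wr\Lambda$, prove that $\rho_2\coloneqq\pi\circ\rho$ is injective (where $\pi:B\wr\Lambda\to\Lambda$ kills the wreath kernel), use Lemma \ref{prop:One_ended_Implies_Conj} together with Corollary \ref{cor:LcocSuperrigidImpliesOneEnd} to conjugate $\rho(\Gamma)$ into $\Lambda$ by an element of $\bigoplus_\Lambda B$, and conclude with the absolute-continuity argument showing $Z_1$ is conull. That endgame is correct and is exactly the paper's. Your handling of the case $[\Lambda:\Delta]=\infty$ (with $\Delta=\rho_2(\Gamma)$), via the normal closure of a nontrivial $n\in N=\ker\rho_2$ and projection onto the omitted coordinates, is also sound, and in the sub-case where $\Delta$ is finite it is a pleasant substitute for the paper's strong-ergodicity argument. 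The genuine gap is the case $[\Lambda:\Delta]<\infty$, which you flag as the crux but do not resolve. Your proposed appeal to Lemma \ref{lem:cocSuperrigidImpliesStronglyErgodic} cannot close it: in that case $[\Gamma:N]=|\Delta|=\infty$, and strong ergodicity of $\Gamma\acts(X,\mu)$ is only known to pass to \emph{finite-index} subgroups (via total ergodicity and \cite[Lemma 3.1]{abertelek2012dynamical}), so you have no strong ergodicity of $N\acts(X,\mu)$ to play against the failure of strong ergodicity for subgroups of $\bigoplus_\Lambda B$; and plain ergodicity of $N$ is not enough, since infinite subgroups of $\bigoplus_\Lambda B$ can perfectly well act ergodically on $(Y^\Lambda,\nu^\Lambda)$.

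The missing ingredient --- the ``clean manufacture of an infinite subgroup of $N$ with co-infinite support'' you ask for --- is exactly what the paper supplies. Since $[\Lambda:\Delta]<\infty$ forces $\Delta$ to be infinite, for a fixed nontrivial $n\in N$ with $b=\rho(n)$ one can choose $\delta_m\in\Gamma$ so that the translates $\rho_2(\delta_m)\,\mathrm{supp}(b)$ are pairwise disjoint; the subgroup of $N$ generated by $\{\delta_m n\delta_m^{-1}\}_{m\in A}$ for an infinite co-infinite set $A$ of indices is infinite, and its $\rho$-image is supported in $\bigcup_{m\in A}\rho_2(\delta_m)\,\mathrm{supp}(b)$, which omits the infinitely many coordinates $\bigcup_{m\notin A}\rho_2(\delta_m)\,\mathrm{supp}(b)$; your own projection mechanism then yields the contradiction with total ergodicity. (The paper organizes this slightly differently: it first shows $\Delta$ is infinite using strong ergodicity and Jones--Schmidt, then takes \emph{all} conjugates with index $\geq k_0$ and projects onto a finite set $S_{k_0}$ chosen so large that atoms of $\nu^{S_{k_0}}$ have measure below $\nu^\Lambda(Z_1)$; the effect is the same.) With this patch your argument is complete and essentially coincides with the paper's proof.
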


\begin{proof}
By \cite{TD15}, total ergodicity together with $\Gamma$ having no nontrivial finite normal subgroups implies that the action $\Gamma\acts(X,\mu)$ is essentially free. 

After discarding a null set, we may find $f$, $\phi$, $\rho$, $(Z_1, \eta _1)$, and $\psi :(X,\mu )\rightarrow (Z_1,\eta _1 )$ as in the conclusion of Lemma \ref{lemma:rigidity_prep} applied to $L=B\wr\Lambda$ and $(Z,\eta )=(Y^\Lambda , \nu ^{\Lambda})$. 
In particular, $\eta _1$ is the normalized restriction of $\nu ^\Lambda$ to the positive measure subset $Z_1$ of $Y^{\Lambda}$, and $\psi$ is a measure preserving injection satisfying
\begin{equation}\label{eqn:psi_equivariance}
    \psi(\gamma .x) =\rho(\gamma).\psi(x)
\end{equation}
for all $\gamma \in \Gamma$ and $x\in X$.

The homomorphism $\rho$ has the form $\rho (\gamma ) = \rho_1 (\gamma ) \rho_2 (\gamma )$, where $\rho_2$ is a homomorphism from $\Gamma$ into $\Lambda$ and $\rho_1$ is a map from $\Gamma$ into $\bigoplus_{\Lambda} B$. 
We will show that $\rho _2$ is injective, but first we need to show that $\rho _2 (\Gamma )$ is infinite.

By Lemma \ref{lem:cocSuperrigidImpliesStronglyErgodic}, the action of $\Gamma$ on $(X,\mu )$ is strongly ergodic, hence by total ergodicity and \cite[Lemma 3.1]{abertelek2012dynamical}, the action of each finite index subgroup of $\Gamma$ is also strongly ergodic on $(X,\mu )$. 
The group $\Lambda$ being infinite implies that the action of $\bigoplus _{\Lambda}B$ on $(Y^{\Lambda},\nu ^{\Lambda})$ is not strongly ergodic, and moreover there does not exist a subgroup $H$ of $\bigoplus _{\Lambda}B$ and a positive measure $H$-invariant subset of $Y^{\Lambda}$ on which the action of $H$ is strongly ergodic \cite{JS}.
In particular, the action of $\rho (\ker (\rho _2))$ on $(Z_1, \eta _1 )$ is not strongly ergodic, and hence by \eqref{eqn:psi_equivariance} the action of $\ker (\rho _2)$ on $(X,\mu )$ is also not strongly ergodic. 
Thus, $\ker (\rho _2)$ must have infinite index in $\Gamma$, and we see that $\rho _2(\Gamma )$ is infinite. 

Assume now toward a contradiction that $\rho_2$ is not injective. 
Fix a nonidentity element $\gamma \in \ker \rho _2$ and let $b=\rho (\gamma )$. Since $\rho$ is injective, $b$ is a nonidentity element of $\bigoplus _{\Lambda}B$, hence its support $\mathrm{supp}(b) \coloneqq \{ \lambda \in \Lambda : b_{\lambda} \neq e \}$ is a nonempty finite subset of $\Lambda$. 
Since $\rho_2(\Gamma)$ is infinite, there exists a sequence $(\lambda _n)_{n\geq 0}$ of elements of $\rho _2 (\Gamma )$ such that the sets $\lambda _n \mathrm{supp}(b)$, $n\geq 0$, are pairwise disjoint. 
For each $n\geq 0$ let $\delta _n \in \Gamma$ be such that $\rho _2(\delta _n)=\lambda _n$. 
Observe that $\delta _n\gamma \delta _n ^{-1}\in \ker \rho_2$, and $\mathrm{supp}(\rho (\delta _n\gamma \delta _n^{-1})) = \lambda _n\mathrm{supp}(b)$.
For each $k\geq 0$ let $C_k$ be the subgroup of $\ker \rho_2$ generated by $\{ \delta _n\gamma\delta _n ^{-1} \} _{n\geq k}$, let $S_k\coloneqq\bigcup _{0\leq i< k}\lambda _i\mathrm{supp}(b)$, and let $k_0$ be so large that each atom (if any exist) of the measure $\nu ^{S_{k_0}}$ has measure strictly less than $\nu ^{\Lambda}(Z_1)$. 
Then $\rho (C_{k_0})$ is an infinite subgroup of $\bigoplus _{\Lambda}B$ whose elements all have support disjoint from $S_{k_0}$.  
The restriction map from $(Y^\Lambda ,\nu ^{\Lambda})$ to $(Y^{S_{k_0}},\nu ^{S_{k_0}})$ is a measure-preserving $\rho (C_{k_0})$-invariant map, which witnesses that each ergodic component of the action of $\rho (C_{k_0})$ on $(Y^{\Lambda}, \nu ^{\Lambda})$ has $\nu ^{\Lambda}$-measure strictly less than $\nu ^{\Lambda}(Z_1)$; in particular $\rho (C_{k_0})$ does not act ergodically on $(Z_1, \eta _1 )$.
However, since the action $\Gamma \acts (X,\mu)$ is totally ergodic, the infinite subgroup $C_{k_0}$ acts ergodically on $(X,\mu)$ and hence, by \eqref{eqn:psi_equivariance}, $\rho(C_{k_0})$ acts ergodically on $(Z_1, \eta _1 )$, a contradiction.

Since $\rho_2$ is injective, we can apply Lemma \ref{prop:One_ended_Implies_Conj} to obtain some $\xi\in B^\Lambda$ satisfying $\xi\rho(\Gamma)\xi^{-1}\leq\Lambda$, so that in fact $\xi \rho (\gamma  )\xi ^{-1}=\rho _2(\gamma )$ for all $\gamma \in \Gamma$. 
Note that applying $\xi$ coordinate-wise gives a measure space automorphism of $Y^\Lambda$ via $(\xi y)_{\lambda}\coloneqq \xi _{\lambda}.y_{\lambda}$ for $y\in Y^{\Lambda}$, $\lambda \in \Lambda$. 
This is furthermore an isomorphism from the action of $\Gamma$ on $(Y^{\Lambda}, \nu ^{\Lambda})$ implemented through $\rho$, to the action of $\Gamma$ on $(Y^{\Lambda}, \nu ^{\Lambda})$ implemented through $\rho _2$. 
Since $\rho _2$ is injective, the latter action is a Bernoulli action of $\Gamma$, hence the former is Bernoulli as well. In particular, these actions are ergodic, so $Z_1$ is $\nu ^{\Lambda}$-conull in $Y^{\Lambda}$, and $\eta_1 = \nu ^{\Lambda}$.      

The map $\psi$ therefore gives an isomorphism between the action of $\Gamma$ on $(X,\mu)$ and a Bernoulli action of $\Gamma$. 
Corollary \ref{cor:LcocSuperrigidImpliesOneEnd} now implies that the group $\Gamma$ is at most one ended, hence the same is true of $\rho (\Gamma )$, since $\rho$ is injective. 
We apply Lemma \ref{prop:One_ended_Implies_Conj} to get that $\rho(\Gamma)$ is conjugate via some element $d\in \bigoplus _{\Lambda}B$ to $\rho_2(\Gamma)\leq\Lambda$ (in fact, one may show that $d$ differs from $\xi$ by a constant map).

The maps $g (x)\coloneqq d f(x)$,  $\sigma (\gamma )\coloneqq d^{-1}\rho(\gamma) d$, and $\theta (x)\coloneqq d.\psi (x)$ then satisfy the conclusion of the theorem. 
\end{proof}

Let $B$ and $\Gamma$ be countable groups along with an action of $\Gamma$ on a set $V$.
Define the \textbf{(restricted) permutational wreath product group} $B\wr_V \Gamma\coloneqq \Gamma\ltimes\bigoplus_V B$ where $\Gamma$ acts on $\bigoplus _V B$ via the shift action. 
Let $B\acts (X,\mu)$ be a p.m.p.\ action of $B$. 
We define the associated wreath product action $B\wr_V\Gamma\acts (X^V,\mu^V)$ by $(b.x)_v = b_v. x_{v}$ and $(\gamma. x)_v=x_{\gamma ^{-1}.v}$ for $b\in \bigoplus _V B$, $\gamma \in \Gamma$, $x\in X^V$, $v\in V$.
We write $\mathrm{supp}(b)$ for the support of $b\in \bigoplus _V  B$, i.e., $\mathrm{supp}(b)\coloneqq \{ v\in V : b_v\neq e \}$.

\begin{lemma}\label{lem:EquivOEWreathProdActions}
Let $\Gamma$ be a countable group acting on countable sets $V$ and $W$ without finite orbits, and assume that each $\gamma \in \Gamma - \{ e \}$ fixes at most finitely many points in $W$.  
Let $B\acts (X,\mu )$ and $C\acts (Y,\nu )$ be p.m.p.\ actions of countable groups $B$ and $C$, and let
$B\wr_V \Gamma\acts (X^V, \mu^V)$ and $C\wr_W \Gamma\acts (Y^W, \nu^W)$ be the associated wreath product actions. 
Suppose that $\theta: (X^V,\mu^V)\to (Y^W,\nu^W)$ is a measure preserving $\Gamma$-equivariant map satisfying
\begin{equation}\label{eqn:orbits_into_orbits}
\theta ((B\wr_V \Gamma ).x)\subseteq (C\wr_W\Gamma ).\theta (x)
\end{equation}
for a.e.\ $x\in X^V$. Then 
\[
\textstyle{\theta ((\bigoplus _V B).x)\subseteq (\bigoplus _W C).\theta (x)}
\]
for a.e.\ $x\in X^V$.

\end{lemma}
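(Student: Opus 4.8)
The plan is to fix a single element $b\in\bigoplus_V B$ and show that $\theta(b.x)\in(\bigoplus_W C).\theta(x)$ for a.e.\ $x$; the full statement then follows by intersecting the resulting conull sets over the countable group $\bigoplus_V B$. By \eqref{eqn:orbits_into_orbits} applied to $b\in B\wr_V\Gamma$ we may write $\theta(b.x)=g.\theta(x)$ for some $g\in C\wr_W\Gamma$, and decomposing $g=c\gamma$ with $c\in\bigoplus_W C$ and $\gamma\in\Gamma$ (unique, since $\bigoplus_W C$ is normal with quotient $\Gamma$), the content of the lemma is precisely that the $\Gamma$-component $\gamma$ may be taken trivial.

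The technical heart is the following independence fact: for every $\delta\in\Gamma-\{e\}$ the set $\{y\in Y^W: \delta.y\in(\bigoplus_W C).y\}$ is $\nu^W$-null (we may assume $\nu$ is not a point mass and $W$ is infinite, the contrary cases being degenerate). Indeed, $\delta.y\in(\bigoplus_W C).y$ forces $y_{\delta^{-1}w}=y_w$ for all but finitely many $w\in W$. Since $\delta$ fixes only finitely many points of $W$ and $W$ is infinite, one can greedily choose an infinite sequence $w_1,w_2,\dots$ with the two-element sets $\{w_i,\delta^{-1}w_i\}$ pairwise disjoint; the events $\{y_{w_i}\neq y_{\delta^{-1}w_i}\}$ are then independent under $\nu^W$ and each has probability $1-\sum_a\nu(\{a\})^2>0$, so their probabilities sum to $\infty$, and the second Borel--Cantelli lemma gives $y_{w_i}\neq y_{\delta^{-1}w_i}$ infinitely often a.s., contradicting cofinite agreement. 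This is the step that genuinely uses the finite-fixed-point hypothesis on $W$. Because $\theta_*\mu^V=\nu^W$ and $\theta(\delta.x)=\delta.\theta(x)$, it follows that the $\Gamma$-component $\gamma$ above is a.e.\ uniquely determined: if $\gamma.\theta(x)$ and $\gamma'.\theta(x)$ lie in the same $\bigoplus_W C$-orbit, then applying the shift $(\gamma')^{-1}$ puts $(\gamma')^{-1}\gamma.\theta(x)$ in the $\bigoplus_W C$-orbit of $\theta(x)$, forcing $\gamma=\gamma'$. A Lusin--Novikov selection then yields a cocycle $\omega:(B\wr_V\Gamma)\times X^V\to\Gamma$ with $\theta(h.x)\in(\bigoplus_W C)\,\omega(h,x).\theta(x)$; equivariance gives $\omega(\gamma,\cdot)=\gamma$ for $\gamma\in\Gamma$, and the cocycle identity with normality of $\bigoplus_V B$ gives $\omega(\gamma.b,\gamma.x)=\gamma\,\omega(b,x)\,\gamma^{-1}$.

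It remains to prove $\omega(b,\cdot)\equiv e$ on $\bigoplus_V B$, and here lies the main difficulty. Composing with the quotient $p_Y:Y^W\to Q_Y:=Y^W/(\bigoplus_W C)$ produces a $\Gamma$-equivariant measure-preserving map $\Phi:=p_Y\circ\theta:X^V\to Q_Y$ onto a $\Gamma$-space whose action is \emph{essentially free} (exactly by the independence fact above), and which satisfies $\Phi(b.x)=\omega(b,x).\Phi(x)$. Suppose toward a contradiction that $\omega(b,\cdot)=\gamma_0\neq e$ on a positive-measure set $E$ (constant after partitioning into the countably many level sets of $\omega(b,\cdot)$). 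Since $\omega(\gamma_0^{-1}b,x)=\gamma_0^{-1}\gamma_0=e$ on $E$, equivariance yields $\Phi\bigl((\gamma_0^{-1}b).x\bigr)=\Phi(x)$ there; equivalently, $\theta\bigl((\gamma_0^{-1}b).x\bigr)$ and $\theta(x)$ differ in only finitely many $W$-coordinates on $E$, even though the source points $(\gamma_0^{-1}b).x$ and $x$ differ in infinitely many $V$-coordinates (the shift $\gamma_0^{-1}$ being nontrivial and $V$ having no finite orbits).

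The main obstacle is to convert this last incompatibility into a contradiction without assuming $\theta$ is local or finitary. My plan is to exploit the finite support $F=\mathrm{supp}(b)$ together with the absence of finite orbits on $V$: choosing $\gamma_1,\gamma_2,\dots\in\Gamma$ with the translates $\gamma_jF$ pairwise disjoint gives pairwise commuting kernel elements $b_j:=\gamma_j.b$ with $\omega(b_j,\cdot)=\gamma_j\gamma_0\gamma_j^{-1}$ on $\gamma_j.E$, while the estimate $\int\sum_{j\le n}\mathbf 1_{\gamma_j.E}\,d\mu^V=n\,\mu^V(E)$ forces unbounded overlaps of these positive-measure sets; combined with freeness of $\Gamma\curvearrowright Q_Y$ and the mixing of the generalized Bernoulli source shift, I expect this to violate measure preservation of $\Phi$. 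I anticipate that making this decorrelation argument rigorous --- ruling out a nontrivial $\Gamma$-part $\gamma_0$ purely from measure preservation --- is the crux; by contrast the reduction and the Borel--Cantelli independence lemma are comparatively routine.
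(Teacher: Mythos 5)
Your reduction and your auxiliary lemma are fine: the Borel--Cantelli argument showing that $\{y\in Y^W:\delta.y\in(\bigoplus_W C).y\}$ is $\nu^W$-null for $\delta\neq e$ is correct (and correctly uses the finite-fixed-point hypothesis on $W$), and it does yield a well-defined $\Gamma$-valued cocycle $\omega$ with $\omega(\gamma,\cdot)=\gamma$ and the conjugation identity on the kernel. But the proof stops exactly where the real work begins. The entire content of the lemma is that $\omega(b,\cdot)$ cannot take a constant nontrivial value $\gamma_0$ on a positive-measure set $E$, and for that step you offer only a plan that you yourself flag as unverified. The sketch does not close: the observation that $(\gamma_0^{-1}b).x$ and $x$ differ in infinitely many $V$-coordinates while their $\theta$-images differ in finitely many $W$-coordinates is not by itself contradictory for a general measurable measure-preserving $\theta$; the quotient $Q_Y=Y^W/(\bigoplus_W C)$ is the quotient by a non-smooth equivalence relation, so ``measure preservation of $\Phi$'' into $Q_Y$ is not a usable notion without substantial extra work; and the commutation relations among the conjugates $\gamma_j\gamma_0\gamma_j^{-1}$ on the overlaps $\gamma_i.E\cap\gamma_j.E$ do not visibly produce a contradiction (you cannot evaluate $\omega(b_i,b_j.x)$ without knowing $b_j.x\in\gamma_i.E$).

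The paper's proof of this step rests on an idea absent from your proposal: a quantitative decorrelation argument against the pushforward measure. One fixes $Y_0\subseteq Y$ with $0<\nu(Y_0)<1$, approximates the single-coordinate observable $x\mapsto 1_{Y_0}(\theta(x)_{w_0})$ by a function $\varphi$ depending on finitely many $V$-coordinates $P$, and chooses a finite $Q\subseteq W$ containing $\mathrm{supp}(f_b(x))$ off a small set. For $\gamma$ with $\gamma.P\cap\mathrm{supp}(b)=\emptyset$ and $\gamma.w_0\notin Q$, one deduces that $1_{Y_0}(\theta(x)_{\delta^{-1}\gamma.w_0})=1_{Y_0}(\theta(x)_{\gamma.w_0})$ on a set of measure at least $\mu^V(E)-\epsilon$. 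Neumann's Lemma (using the finite-fixed-point hypothesis to avoid the fixed points of $\delta$) supplies infinitely many such $\gamma_n$ with $\gamma_n.w_0$ distinct, and then the weak convergence of the indicators of $D_{\gamma_n}=\{y:1_{Y_0}(y_{\delta^{-1}\gamma_n.w_0})=1_{Y_0}(y_{\gamma_n.w_0})\}$ to the constant $r=\nu(Y_0)^2+(1-\nu(Y_0))^2<1$, pulled back through the measure-preserving map $\theta$, forces $\mu^V(\theta^{-1}(D_{\gamma_n})\cap E)\to r\,\mu^V(E)$, contradicting the lower bound above. Without this (or some equally concrete) mechanism for turning the mismatch into a measure-theoretic contradiction, the proposal has a genuine gap at its central step.
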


\begin{proof} 
We may of course assume that $\nu$ is not a point mass. 

By \eqref{eqn:orbits_into_orbits}, for each $b\in \bigoplus _{V}B$ there are measurable functions  $f_b: X^V\to \bigoplus_W C$ and $g_b: X^V\to \Gamma$ such that $\theta(b.x)=f_b(x)g_b(x).\theta(x)$ for a.e.\ $x\in X^V$. 

Suppose toward a contradiction that there exists some $b\in \bigoplus _{V}B$ and a positive measure subset $E$ of $X^{V}$ on which $g_b$ takes a constant non-trivial value $\delta\in \Gamma -\{ e \}$.
Fix a measurable subset $Y_0$ of $Y$ with $0<\nu(Y_0)<1$ and let $r\coloneqq \nu (Y_0)^2 + (1-\nu (Y_0))^2$. 
Fix $\epsilon >0$ so small that
\[
r\mu ^{V}(E) + \epsilon < \mu ^{V}  (E) -\epsilon .
\]
Fix $w_0\in W$.
For a subset $S$ of $V$, let $\pi _S : X^{V}\to X^S$ denote the restriction map $\pi _S(x)=x|_S$. 
We may find a finite subset $P$ of $V$ and a measurable function $\varphi :X^{V}\to \{ 0 , 1\}$ that factors through $\pi _{P}$ such that the set 
\[
E_P \coloneqq \{ x\in X^{V} : 1_{Y_0}(\theta (x)_{w_0})=\varphi (x)\}
\]
has measure at least  $1-\epsilon /3$. 
For each $\gamma \in \Gamma$ the map $x\mapsto \varphi (\gamma ^{-1}.x)$ then factors through $\pi _{\gamma .P}$, and by $\Gamma$-equivariance of $\theta$, for each $x\in \gamma .E_P$ we have $1_{Y_0}(\theta(x)_{\gamma .w_0})=\varphi (\gamma ^{-1}. x)$. 

We may also find a large enough finite subset $Q$ of $W$ such that the set 
\[
E_Q \coloneqq \{ x\in X^{V} : \mathrm{supp}(f_b(x))\subseteq Q\}
\]
has measure at least $1-\epsilon /3$.

For each $\gamma \in \Gamma$ with $\gamma .P$ disjoint from $\mathrm{supp}(b)$ we have $\varphi (\gamma ^{-1}b.x) =\varphi (\gamma ^{-1}.x)$ for all $x\in X^{V}$.
In addition, if $\gamma. w_0\not\in Q$ then each $x\in E_Q\cap E$ satisfies
\[
\theta (b.x)_{\gamma .w_0}=(f_b(x)\delta.\theta (x))_{\gamma. w_0} = (\delta.\theta (x))_{\gamma. w_0} = \theta (x)_{\delta^{-1}\gamma .w_0}.
\]
Therefore, if $\gamma .w_0\not\in Q$ and if $\gamma. P$ is disjoint from $\mathrm{supp}(b)$, then each $x$ belonging to the intersection $\gamma. E_P \cap b^{-1}\gamma. E_P\cap E_Q\cap E$ satisfies
\[
1_{Y_0}(\theta (x)_{\delta^{-1}\gamma. w_0}) = 1_{Y_0}(\theta (b.x)_{\gamma. w_0})=\varphi (\gamma ^{-1}b.x)=\varphi (\gamma ^{-1}.x)=1_{Y_0}(\theta (x)_{\gamma .w_0}).
\]
Since the set $F$ of fixed points of $\delta$ in $W$ is finite, and the sets $Q$, $P$, and $\mathrm{supp}(b)$ are all finite, repeated applications of Neumann's Lemma allow us to obtain an infinite sequence $(\gamma_n)_{n\in \mathbf{N}}$ in $\Gamma$ with $\gamma _n. P$ disjoint from $\mathrm{supp}(b)$ for all $n$, and such that $\gamma _n. w_0$, $n\in \mathbf{N}$, are distinct elements of $W\setminus (F\cup Q)$. 

Thus, for every $n\in \mathbf{N}$ the set $\gamma_n. E_P \cap b^{-1}\gamma_n .E_P\cap E_Q\cap E$ is contained in $\theta ^{-1}(D_{\gamma_n})\cap E$, where
\[
D_{\gamma_n} \coloneqq \{ y\in Y^{W} : 1_{Y_0}(y_{\delta^{-1}\gamma_n .w_0})=1_{Y_0}(y_{\gamma_n .w_0})\},
\]
and hence  
\begin{equation}\label{eqn:DgammaE}
\mu ^{V}(\theta^{-1}(D_{\gamma_n}) \cap E)\geq\mu ^{V}(E)-\epsilon.
\end{equation}
On the other hand, the sequence $(1_{D_{\gamma _n}})_{n\in\mathbf{N}}$ converges weakly in $L^1(Y^W ,\nu ^W)$ to the constant function $r=\nu(Y_0)^2+(1-\nu(Y_0))^2$, hence the sequence $(1_{\theta ^{-1}(D_{\gamma _n})})_{n\in\mathbf{N}}$ also converges weakly in $L^1(X^V,\mu ^V)$ to $r$ (since the map $h\mapsto h\circ \theta$ isometrically embeds $L^1(Y^W ,\nu ^W)$ into $L^1(X^V,\mu ^V)$ as a norm-closed subspace).
Therefore 
\[
\lim _{n\to\infty}\mu ^{V} (\theta^{-1}(D_{\gamma_n}) \cap  E) = r\mu ^{V}(E),
\]
so for all but finitely many $n\in \mathbf{N}$ we have
\[
\mu ^{V}(\theta^{-1}(D_{\gamma_n})\cap E)< r\mu ^{V}(E) + \epsilon ,
\]
contradicting \eqref{eqn:DgammaE} and our choice of $\epsilon$.
\end{proof}

Let $(X,\mu)$ be a standard probability space. 
Recall that the \textbf{Shannon entropy} of $(X,\mu )$ is defined by $h(X,\mu)\coloneqq-\sum_X \mu(x)\ln \mu(x)$ when $\mu$ is purely atomic, and $h(X,\mu)\coloneqq+\infty$ otherwise.

\begin{cor}\label{cor:Rigid}
Let $\Gamma$ be a countably infinite group having no nontrivial finite normal subgroups, and whose Bernoulli shift action is $\mathcal{G}_{\mathrm{ctble}}$-cocycle superrigid. 
Let $\Lambda$ be an arbitrary infinite countable group.

Let $B$ and $C$ be two nontrivial countable groups acting in a free p.m.p.\ manner on $(X,\mu)$ and $(Y,\nu)$, respectively. 
Assume that the associated wreath product actions 
$B\wr \Gamma\acts (X^\Gamma, \mu^\Gamma)$ and $C\wr\Lambda \acts (Y^\Lambda, \nu^\Lambda )$ are stably orbit equivalent. 

Then $\Gamma$ and $\Lambda$ are isomorphic, and $\bigoplus_\Gamma B$ and $\bigoplus_\Lambda C$ are orbit equivalent. 
Moreover, if $\Gamma$ is sofic and the actions of $B$ on $(X,\mu )$ and of $C$ on $(Y,\nu )$ are both ergodic, then $B$ and $C$ have the same cardinality.
\end{cor}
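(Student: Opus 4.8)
The plan is to locate a Bernoulli shift of $\Gamma$ inside the first wreath product action and feed it into Theorem \ref{Rigid}. The wreath complement $\Gamma\le B\wr\Gamma$ acts on $(X^\Gamma,\mu^\Gamma)$ exactly as the Bernoulli shift with base $(X,\mu)$; this action is totally ergodic (its restriction to any infinite subgroup is again a Bernoulli shift, hence ergodic) and, by hypothesis, $\mathcal{G}_{\mathrm{ctble}}$-cocycle superrigid. Given a stable orbit equivalence $\phi_0$ of the two wreath product actions, the inclusion $\Gamma.x\subseteq (B\wr\Gamma).x$ shows that $\phi_0$ restricts to a stable orbit embedding of this Bernoulli shift into $C\wr\Lambda\acts(Y^\Lambda,\nu^\Lambda)$, i.e.\ $\phi_0(\Gamma.x)\cap X_0\subseteq (C\wr\Lambda).\phi_0(x)\cap Z_0$. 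Applying Theorem \ref{Rigid} with $L=C\wr\Lambda$ produces an injective homomorphism $\sigma:\Gamma\to\Lambda$ and a measure space isomorphism $\theta:(X^\Gamma,\mu^\Gamma)\to(Y^\Lambda,\nu^\Lambda)$ with $\theta(\gamma.x)=\sigma(\gamma).\theta(x)$, so $\theta$ conjugates the $\Gamma$-shift to the $\sigma(\Gamma)$-shift. Since $\theta$ is obtained from $\phi_0$ by modifying within $C\wr\Lambda$-orbits and is an isomorphism onto $Y^\Lambda$, it is an honest orbit equivalence of the wreath product actions: writing $\mathcal{R}_{B\wr\Gamma}$, $\mathcal{R}_{C\wr\Lambda}$, $\mathcal{R}_\Gamma$, $\mathcal{R}_{\sigma(\Gamma)}$, $\mathcal{R}_{\bigoplus_\Gamma B}$, $\mathcal{R}_{\bigoplus_\Lambda C}$ for the relevant orbit relations, we have $\theta(\mathcal{R}_{B\wr\Gamma})=\mathcal{R}_{C\wr\Lambda}$ and $\theta(\mathcal{R}_\Gamma)=\mathcal{R}_{\sigma(\Gamma)}$.

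The crux is to upgrade $\sigma$ to an isomorphism. I would re-run the proof of Lemma \ref{lem:EquivOEWreathProdActions} in the present (mildly asymmetric) situation, letting $\Gamma$ act on $V=\Gamma$ by left translation and on $W=\Lambda$ through $\sigma$: both actions have no finite orbits, and every nonidentity element of $\Gamma$ (acting on $W$ via $\sigma$), as well as every nonidentity element of the target complement $\Lambda$, acts on $W=\Lambda$ by a fixed-point-free translation. As $\theta$ is $\Gamma$-equivariant and sends $(B\wr\Gamma)$-orbits into $(C\wr\Lambda)$-orbits, for each $b\in\bigoplus_\Gamma B$ we may write $\theta(b.x)=f_b(x)g_b(x).\theta(x)$ with $f_b(x)\in\bigoplus_\Lambda C$ and $g_b(x)\in\Lambda$, and the weak-mixing/Neumann's-lemma argument of Lemma \ref{lem:EquivOEWreathProdActions} forces $g_b(x)=e$ almost everywhere, i.e.\ $\theta(\mathcal{R}_{\bigoplus_\Gamma B})\subseteq\mathcal{R}_{\bigoplus_\Lambda C}$. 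Combining this with $\theta(\mathcal{R}_\Gamma)=\mathcal{R}_{\sigma(\Gamma)}$ and $\mathcal{R}_{B\wr\Gamma}=\mathcal{R}_\Gamma\vee\mathcal{R}_{\bigoplus_\Gamma B}$ gives
\[
\mathcal{R}_{C\wr\Lambda}=\theta(\mathcal{R}_{B\wr\Gamma})\subseteq \mathcal{R}_{\sigma(\Gamma)}\vee\mathcal{R}_{\bigoplus_\Lambda C}=\mathcal{R}_{C\wr_\Lambda\sigma(\Gamma)}\subseteq\mathcal{R}_{C\wr\Lambda}.
\]
Hence all these relations coincide, and since the $C\wr\Lambda$-action is free this forces every $\lambda\in\Lambda$ into $\sigma(\Gamma)$; that is, $\sigma(\Gamma)=\Lambda$ and $\sigma:\Gamma\to\Lambda$ is an isomorphism.

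With $\Gamma\cong\Lambda$ established, both wreath products have complement $\Gamma$, so Lemma \ref{lem:EquivOEWreathProdActions} now applies directly to $\theta^{-1}$ (with $\Lambda$ acting on $\Lambda$ regularly and on $\Gamma$ through $\sigma^{-1}$), yielding $\theta^{-1}(\mathcal{R}_{\bigoplus_\Lambda C})\subseteq\mathcal{R}_{\bigoplus_\Gamma B}$. Together with the inclusion from the previous paragraph, $\theta$ restricts to an isomorphism $\mathcal{R}_{\bigoplus_\Gamma B}\cong\mathcal{R}_{\bigoplus_\Lambda C}$; passing to an ergodic component (on which the coordinatewise actions remain free and are now ergodic, and which $\theta$ maps to an ergodic component of equal mass) exhibits orbit equivalent free ergodic actions of $\bigoplus_\Gamma B$ and $\bigoplus_\Lambda C$. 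This proves the first two assertions.

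For the final statement, once $\sigma$ is onto the quotient $\Lambda/\sigma(\Gamma)$ is a single point, so the last clause of Theorem \ref{Rigid} identifies the $\Gamma$-action on $(X^\Gamma,\mu^\Gamma)$ with the Bernoulli shift of $\Gamma$ over $(Y,\nu)$; thus the Bernoulli shifts of $\Gamma$ with bases $(X,\mu)$ and $(Y,\nu)$ are measurably conjugate. When $\Gamma$ is sofic, Bowen's theorem identifies the sofic entropy of a Bernoulli shift with the Shannon entropy of its base, a conjugacy invariant, so $h(X,\mu)=h(Y,\nu)$. Finally, a free ergodic p.m.p.\ action of a nontrivial countable group $B$ forces $h(X,\mu)=\log|B|$ (with $\log\aleph_0:=+\infty$): if $B$ is infinite the space is atomless and the entropy is $+\infty$, while if $B$ is finite the space must consist of $|B|$ atoms of equal mass, because a free ergodic action of a nontrivial finite group cannot live on an atomless space (a measurable subset of a fundamental domain of intermediate measure would generate a nontrivial invariant set). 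Hence $\log|B|=\log|C|$, i.e.\ $|B|=|C|$. I expect the main obstacle to be exactly the second paragraph: one must verify that the straightened map $\theta$ is a genuine orbit equivalence of the \emph{full} wreath product actions (not only of the shift subactions), and then adapt the proof of Lemma \ref{lem:EquivOEWreathProdActions} to the asymmetric setting where the source and target complements $\Gamma$ and $\Lambda$ differ, which is precisely what pins down $\sigma(\Gamma)=\Lambda$.
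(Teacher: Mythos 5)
Your proof is correct and follows essentially the same route as the paper's: apply Theorem \ref{Rigid} to the embedded Bernoulli shift, upgrade $\theta$ to a genuine orbit equivalence of the full wreath product actions via ergodicity, use (the argument of) Lemma \ref{lem:EquivOEWreathProdActions} to match the wreath kernels and deduce surjectivity of $\sigma$ from freeness, and finish with sofic entropy. The only presentational differences are that you explicitly re-run the lemma's argument with $g_b$ taking values in $\Lambda$ rather than $\sigma(\Gamma)$ and phrase surjectivity through the lattice of orbit relations, whereas the paper cites the lemma directly and argues surjectivity pointwise; these are the same computations.
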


\begin{proof} By assumption, there exist positive measure sets $X_0\subseteq X^\Gamma$ and $Y_0\subseteq Y^\Gamma$ and a stable orbit equivalence $\phi_0: X_0\rightarrow Y_0$. 
By Theorem \ref{Rigid}, there exist measurable functions $g: X^\Gamma\rightarrow C\wr \Lambda$ and $\phi: X^\Gamma\rightarrow Y_0$ with $\phi|_{X_0}=\phi _0$, and an injective group homomorphism $\sigma : \Gamma\rightarrow\Lambda$ such that the map $\theta :(X^\Gamma,\mu^\Gamma)\rightarrow(Y^\Lambda,\nu^\Lambda)$, given by $\theta (x)\coloneqq g(x).\phi (x)$, is a measure space isomorphism conjugating the action of $\Gamma$ on $X^\Gamma$ to the action of $\Gamma$ on $Y^\Lambda$ implemented through $\sigma$.

We first show that $\theta$ is an orbit equivalence between the $B\wr\Gamma$ and $C\wr\Lambda$ actions. 
First note that by ergodicity of the $B\wr\Gamma$ action, after discarding a null set we have that every $(B\wr\Gamma )$-orbit is of the form $(B\wr\Gamma). x$ for some $x\in X_0$. 
In addition, for $x\in X_0$, we have $\theta((B\wr\Gamma ).x)\subseteq (C\wr\Lambda). \phi_0(x)$ and, since $\phi_0$ is a stable orbit equivalence, it follows that $\theta$ takes distinct $B\wr\Gamma$ orbits to distinct $C\wr\Lambda$ orbits. 
After discarding further null sets we may assume that $\theta$ is bijective, which implies $\theta((B\wr\Gamma ).x)= (C\wr\Lambda) .\phi_0(x)=(C\wr\Lambda).\theta(x)$ for almost every $x\in X_0$. 
This shows that $\theta$ is an orbit equivalence between the two wreath product actions.
It now follows from Lemma \ref{lem:EquivOEWreathProdActions} that $\theta ((\bigoplus _{\Gamma}B).x)\subseteq (\bigoplus _{\Lambda}C).\theta (x)$ for a.e.\ $x\in X^\Gamma$.

We now show that the homomorphism $\sigma$ is surjective. 
Given $\lambda\in \Lambda$, if $b\in \bigoplus_\Gamma B$ and $\gamma\in \Gamma$ are such that $\lambda .\theta(x)=\theta(b\gamma .x)$ for a positive measure set of $x\in X$, then there is some $c\in \bigoplus _\Lambda C$ such that 
\[
\lambda.\theta(x)=c.\theta(\gamma .x)=c\sigma(\gamma).\theta(x)
\]
for a possibly smaller positive measure set of $x\in X$.
Freeness of the $C\wr\Lambda$ action therefore implies that $\lambda=c\sigma(\gamma)$, hence $\lambda=\sigma(\gamma)$ and $c$ and $b$ are both trivial. 

Applying Lemma \ref{lem:EquivOEWreathProdActions} to both $\theta$ and $\theta^{-1}$ shows that $\bigoplus_\Gamma B$ and $\bigoplus_\Gamma C$ are orbit equivalent via $\theta$. 

Consequently the Bernoulli shifts $\Gamma\acts(X^\Gamma, \mu^\Gamma)$ and $\Gamma\acts (Y^\Gamma,\nu^\Gamma)$ are isomorphic. 
Utilizing soficity of $\Gamma$ and ergodicity of the $B-$ and $C-$actions, by Bowen \cite{Bowen2010} and Kerr-Li \cite{KerrLi2011}, the Shannon entropies $\log|B|=h(X,\mu)=h(Y,\nu)=\log |C|$ must be equal, concluding the proof.  
\end{proof}

\subsection{Automorphisms of wreath product equivalence relations}

Let $\Gamma$ be a countably infinite group.
Let $B\curvearrowright (X,\mu )$ be a free action of a nontrivial countable group $B$, and let $\mathcal{B}$ be the orbit equivalence relation of this action.

We naturally identify the wreath product groupoid $\mathcal{B}\wr\Gamma$ with the orbit equivalence relation of the associated wreath product action $B\wr\Gamma \curvearrowright (X^\Gamma ,\mu ^\Gamma )$.
Under this identification, the direct sum subgroupoid $\bigoplus _\Gamma \mathcal{B}$ is identified with the equivalence relation on $X^\Gamma$ described in the last paragraph of $\S$\ref{subsection:direct_sum_groupoid}.
Thus, automorphisms of $\mathcal{B}\wr\Gamma$ and $\bigoplus _{\Gamma}\mathcal{B}$ are determined by their restriction to the set of objects, which is identified with $X^\Gamma$.
In this way, we view both $\mathrm{Aut}(\mathcal{B}\wr\Gamma )$ and $\mathrm{Aut}(\bigoplus_\Gamma\mathcal{B})$ as subgroups of $\mathrm{Aut}(X^\Gamma, \mu ^\Gamma )$, and the full group $[\mathcal{B}\wr\Gamma ]$ as a subgroup of $\mathrm{Aut}(\mathcal{B}\wr\Gamma )$.

We describe an injective group homomorphism $\Aut(\Gamma)\hookrightarrow \Aut(\mathcal{B}\wr\Gamma)$. 
Given an automorphism $\varphi\in \Aut(\Gamma)$, define $\varphi ^*: X^\Gamma\to X^\Gamma$ by $\varphi ^*(x)_\gamma = x_{\varphi^{-1}(\gamma)}$ for all $x\in X^{\Gamma}, \gamma \in \Gamma$.

Let $\mathrm{Aut}_\Gamma (\bigoplus_\Gamma \mathcal{B})$ denote the subgroup of $\mathrm{Aut}(\bigoplus_\Gamma \mathcal{B})$ consisting of all automorphism of $\bigoplus_\Gamma\mathcal{B}$ that are $\Gamma$-equivariant. 
Every such automorphism is also an automorphism of $\mathcal{B}\wr\Gamma$, and thus $\mathrm{Aut}_\Gamma(\bigoplus_\Gamma\mathcal{B})$ is a subgroup of $\mathrm{Aut}(B\wr\Gamma )$. 
By Lemma \ref{lem:EquivOEWreathProdActions}, every automorphism of $\mathcal{B}\wr\Gamma$ that is $\Gamma$-equivariant is itself an automorphism of $\bigoplus _\Gamma \mathcal{B}$, and hence belongs to $\Aut_\Gamma(\bigoplus _\Gamma\mathcal{B})$.

\begin{cor}\label{cor:autWreathEqRels}
Assume that $\Gamma$ is a countably infinite group with no nontrivial finite normal subgroups, and that the Bernoulli shift action of $\Gamma$ on $(X^\Gamma ,\mu^\Gamma )$ is $\mathcal{G}_{\mathrm{ctble}}$-cocycle superrigid.
Then 
\[
\mathrm{Aut}(\mathcal{B}\wr\Gamma) = \mathrm{Aut}(\Gamma )^*\cdot \big(\mathrm{Aut}_\Gamma (\textstyle{\bigoplus_\Gamma \mathcal{B})}\cdot [\mathcal{B}\wr\Gamma]\big).
\]
and $\mathrm{Aut}_\Gamma (\textstyle{\bigoplus_\Gamma \mathcal{B})}\cdot [\mathcal{B}\wr\Gamma]$ is a normal subgroup of $\Aut(\mathcal{B}\wr\Gamma)^*$.

In addition, $\mathrm{Aut}_\Gamma (\bigoplus_\Gamma \mathcal{B})\cap [\mathcal{B}\wr\Gamma]=Z(\Gamma)$ where $Z(\Gamma)$ is the center of $\Gamma$, naturally viewed as a subgroup of $[\mathcal{B}\wr\Gamma]$. 
\end{cor}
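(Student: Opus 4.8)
The plan is to analyze an arbitrary $T\in\Aut(\mathcal{B}\wr\Gamma)$ by feeding it into the rigidity machinery of Theorem \ref{Rigid}, and then to peel off, one factor at a time, a coordinate permutation $\sigma^*$, a $\Gamma$-equivariant piece, and a full-group piece. First I would record three facts placing the relevant subgroups inside $\Aut(\mathcal{B}\wr\Gamma)$: each $\sigma^*$ (for $\sigma\in\Aut(\Gamma)$) preserves both $\mathcal{B}\wr\Gamma$ and $\bigoplus_\Gamma\mathcal{B}$ and intertwines the Bernoulli shift through $\sigma$, by a direct check on the defining relations, so $\Aut(\Gamma)^*\subseteq\Aut(\mathcal{B}\wr\Gamma)$; the full group $[\mathcal{B}\wr\Gamma]$ is, as always, normal in $\Aut(\mathcal{B}\wr\Gamma)$; and, as noted just before the statement, $\Aut_\Gamma(\bigoplus_\Gamma\mathcal{B})\subseteq\Aut(\mathcal{B}\wr\Gamma)$, with the crucial converse (Lemma \ref{lem:EquivOEWreathProdActions}) that any $\Gamma$-equivariant element of $\Aut(\mathcal{B}\wr\Gamma)$ already lies in $\Aut_\Gamma(\bigoplus_\Gamma\mathcal{B})$.

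The conceptual core is the decomposition of $T$. I would apply Theorem \ref{Rigid} with $\Lambda=\Gamma$, $C=B$, the totally ergodic $\mathcal{G}_{\mathrm{ctble}}$-cocycle superrigid action being the $\Gamma$-Bernoulli shift on $(X^\Gamma,\mu^\Gamma)$, and stable orbit equivalence data $X_0=Z_0=X^\Gamma$, $\phi_0=T$ (valid since $T(\Gamma.x)\subseteq T((B\wr\Gamma).x)=(B\wr\Gamma).T(x)$). This produces an injective homomorphism $\sigma:\Gamma\to\Gamma$ and a measurable $g:X^\Gamma\to B\wr\Gamma$ such that $\theta(x):=g(x).T(x)$ is a measure isomorphism with $\theta(\gamma.x)=\sigma(\gamma).\theta(x)$. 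Setting $U:=\theta\circ T^{-1}$, the map $U$ moves points within their $B\wr\Gamma$-orbits, so $U\in[\mathcal{B}\wr\Gamma]$ and $\theta=UT$ is again an orbit equivalence of the wreath action with itself; the surjectivity argument of Corollary \ref{cor:Rigid} (via freeness and Lemma \ref{lem:EquivOEWreathProdActions}) then upgrades $\sigma$ to an element of $\Aut(\Gamma)$. Since $\sigma^*$ intertwines the shift through the same $\sigma$, the map $V:=\theta\circ(\sigma^*)^{-1}$ commutes with every shift, i.e.\ is $\Gamma$-equivariant; as $V=UT(\sigma^*)^{-1}\in\Aut(\mathcal{B}\wr\Gamma)$, Lemma \ref{lem:EquivOEWreathProdActions} places $V\in\Aut_\Gamma(\bigoplus_\Gamma\mathcal{B})$. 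Thus every $T$ factors as $T=U^{-1}V\sigma^*$ with $U^{-1}\in[\mathcal{B}\wr\Gamma]$, $V\in\Aut_\Gamma(\bigoplus_\Gamma\mathcal{B})$, and $\sigma^*\in\Aut(\Gamma)^*$.

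The first two displayed assertions then follow group-theoretically from this factorization. Because $\sigma^*$ preserves both $\mathcal{B}\wr\Gamma$ and $\bigoplus_\Gamma\mathcal{B}$ and intertwines the shift, conjugation by $\sigma^*$ carries $[\mathcal{B}\wr\Gamma]$ and $\Aut_\Gamma(\bigoplus_\Gamma\mathcal{B})$ into themselves; together with normality of $[\mathcal{B}\wr\Gamma]$, this shows that $N:=\Aut_\Gamma(\bigoplus_\Gamma\mathcal{B})\cdot[\mathcal{B}\wr\Gamma]$ is a subgroup normalized by each factor $U^{-1}$, $V$, $\sigma^*$, hence by all of $\Aut(\mathcal{B}\wr\Gamma)$, giving the normality statement. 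The product formula is immediate from here, since $T=U^{-1}V\sigma^*=\sigma^*\big((\sigma^*)^{-1}U^{-1}V\sigma^*\big)\in\Aut(\Gamma)^*\cdot N$, and the reverse inclusion holds because all three subgroups lie in $\Aut(\mathcal{B}\wr\Gamma)$.

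Finally, for the intersection $\Aut_\Gamma(\bigoplus_\Gamma\mathcal{B})\cap[\mathcal{B}\wr\Gamma]=Z(\Gamma)$, the inclusion $\supseteq$ is clear: the shift by $\gamma\in\Gamma$ lies in $[\mathcal{B}\wr\Gamma]$, always preserves $\bigoplus_\Gamma\mathcal{B}$, and commutes with every shift precisely when $\gamma\in Z(\Gamma)$. For $\subseteq$, I would take $S$ in the intersection and write $S(x)=b(x)\gamma(x).x$ with $b(x)\in\bigoplus_\Gamma B$ and $\gamma(x)\in\Gamma$ (freeness); comparing $S(\delta.x)=\delta.S(x)$ componentwise yields $\gamma(\delta.x)=\delta\gamma(x)\delta^{-1}$ and $b(\delta.x)=\delta.b(x)$. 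An ergodicity/counting argument then forces $\gamma$ to be constant and central: the level sets of $\gamma$ are permuted by the shift through conjugation, so finiteness of the total measure forces the conjugacy class of the essential value $\gamma_0$ to be finite, whence its centralizer has finite index, is infinite, and acts ergodically (total ergodicity of the Bernoulli shift), pinning $\gamma\equiv\gamma_0\in Z(\Gamma)$. Replacing $S$ by the central shift $\gamma_0^{-1}S$ reduces matters to a $\Gamma$-equivariant element $x\mapsto b'(x).x$ of $[\bigoplus_\Gamma\mathcal{B}]$; equivariance gives $b'(x)_\delta=f(\delta^{-1}.x)$ for a single measurable $f:X^\Gamma\to B$, and finite support of $b'(x)$ together with recurrence for the infinite group $\Gamma$ forces $f\equiv e$, so $S$ is exactly the central shift by $\gamma_0$. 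I expect this last step to be the main obstacle: precisely controlling the $B$- and $\Gamma$-components of a $\Gamma$-equivariant full-group element and ruling out nontrivial finitely supported equivariant gauge maps $b'$, whereas the conceptual heart of the corollary is the single application of Theorem \ref{Rigid} that extracts $\sigma$ and strips $T$ down to a $\Gamma$-equivariant map.
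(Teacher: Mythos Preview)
Your proposal is correct and follows essentially the same route as the paper: apply Theorem~\ref{Rigid} to an arbitrary automorphism, extract the full-group piece $U=\theta\circ T^{-1}$ (the paper's $\tilde g$), use the surjectivity argument from Corollary~\ref{cor:Rigid} to make $\sigma\in\Aut(\Gamma)$, and strip off $\sigma^*$ to leave a $\Gamma$-equivariant automorphism, which Lemma~\ref{lem:EquivOEWreathProdActions} places in $\Aut_\Gamma(\bigoplus_\Gamma\mathcal{B})$. The only cosmetic differences are that the paper composes with $(\sigma^{-1})^*$ on the left rather than on the right, and in the intersection computation the paper first kills the $\bigoplus_\Gamma B$-component (via the equivariant map $x\mapsto\mathrm{supp}(b_x)$ into finite subsets of $\Gamma$) and then pins down the $\Gamma$-component using weak mixing of the diagonal action, whereas you first fix the $\Gamma$-component via a finite-conjugacy-class and total-ergodicity argument and then dispose of $b'$; both orderings work.
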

\begin{proof}
    Let $\phi\in \mathrm{Aut}(\mathcal{B}\wr\Gamma)$ be an arbitrary automorphism. Apply Theorem \ref{Rigid} to $\phi$ to get a measurable map $g:X^\Gamma\to B\wr\Gamma$ and an injective homorphism $\sigma:\Gamma\to \Gamma$ such that the measure space isomorphism $\theta\in \Aut(X^\Gamma,\mu^\Gamma)$ given by $x\mapsto g(x).\phi(x)$ satisfies $\theta(\gamma.x)=\sigma(\gamma).\theta(x)$ for every $\gamma\in \Gamma$. As in the proof of Corollary \ref{cor:Rigid}, the map $\sigma$ is surjective, and hence $\sigma\in \Aut(\Gamma)$. Define $\tilde g: X^\Gamma\to X^\Gamma$ by $x\mapsto g(\phi^{-1}x).x$ and note that $\tilde g$ is a measure space isomorphism since $\tilde g\circ\phi=\theta$. For almost every $x\in X^\Gamma$, and every $\gamma,\delta\in \Gamma$ we have
    \begin{align*}
        ((\sigma^{-1})^*(\theta(\gamma.x)))_\delta=(\sigma(\gamma).\theta(x))_{\sigma(\delta)}=\theta(x)_{\sigma(\gamma^{-1}\delta)}=(\gamma.(\sigma^{-1})^*(\theta(x)))_\delta,
    \end{align*}
    which shows that $(\sigma^{-1})^*\circ\theta$ is $\Gamma$-equivariant. 
    We can now rewrite $\phi=\tilde g^{-1}\circ\sigma^*\circ((\sigma^{-1})^*\circ \theta)$ to see that $\phi\in[\mathcal{B}\wr\Gamma]\cdot \Aut(\Gamma)^*\cdot \Aut_\Gamma(\bigoplus_\Gamma \mathcal{B})$.

    For $\rho\in \mathrm{Aut}(\Gamma)$ and $\gamma,\delta\in \Gamma$, we have
    \[
    \gamma.\rho^*(x)_\delta=\rho^*(x)_{\gamma^{-1}\delta}=x_{\rho^{-1}(\gamma^{-1})\rho^{-1}(\delta)}=\rho^*(\rho^{-1}(\gamma).x)_\delta
    \]
    which implies $\gamma.\rho^*(x)=\rho(\rho^{-1}(\gamma).x)$. Letting $\theta\in \mathrm{Aut}_\Gamma(\bigoplus_\Gamma \mathcal{B})$, we can compute
    \[
    \gamma.((\rho^*)^{-1}\theta\rho^*)(x)= ((\rho^*)^{-1}\theta)(\rho(\gamma).\rho^*(x))=((\rho^*)^{-1}\theta\rho^*)(\gamma.x)
    \]
    showing $(\rho^*)^{-1}\theta\rho^*$ is $\Gamma$-equivariant and confirming that $\mathrm{Aut}_\Gamma(\bigoplus_\Gamma \mathcal{B})$ is normalized by $\mathrm{Aut}(\Gamma)^*$.

    The group $\mathrm{Aut}(\mathcal{B}\wr\Gamma)$ is exactly the normalizer of $[\mathcal{B}\wr\Gamma]$ in $\mathrm{Aut}(X^\Gamma,\mu^\Gamma)$ (see \cite[\S6]{Kec10}). 
    This implies that $\Aut_\Gamma(\bigoplus_\Gamma \mathcal{B})\cdot[\mathcal{B}\wr\Gamma]$ is in fact a normal subgroup of $\Aut(\mathcal{B}\wr\Gamma)$.

    Consider an automorphism $\theta\in [\mathcal{B}\wr\Gamma] \cap \mathrm{Aut}_\Gamma (\bigoplus_\Gamma \mathcal{B})$ and for $x\in X^\Gamma$ let $b_x\in \bigoplus_\Gamma B$ and $\delta_x\in \Gamma$ be such that $\theta(x)=\delta_xb_x.x$.
    We will show $\theta$ is exactly the map given by $x\mapsto \delta.x$ for some $\delta\in Z(\Gamma)$. 
    By $\Gamma$-equivariance and freeness of the action of $B\wr\Gamma$, we have that $\gamma\delta_x b_x=\delta_{\gamma. x} b_{\gamma .x}\gamma$, and hence $\delta_{\gamma .x}=\gamma\delta_x\gamma^{-1}$ and $b_{\gamma .x}=\gamma b_x\gamma ^{-1}$ for every $\gamma\in \Gamma$ and $x\in X^\Gamma$.
    Thus $x\mapsto \mathrm{supp}(b_x)$ is a $\Gamma$-equivariant 
    map to the left translation action of $\Gamma$ on the set of all finite subsets of $\Gamma$; since $\Gamma$ is infinite the pushforward of $\mu$ must be the point mass at the empty set, i.e., $b_x$ is trivial for almost every $x\in X^\Gamma$.
    In addition, the set $\{ (x,y) \in X^\Gamma \times X^\Gamma : \delta _x =\delta _y \}$ is a positive measure $\Gamma$-invariant subset of $X^\Gamma \times X^\Gamma$, hence must be conull since the Bernoulli shift action of $\Gamma$ is weakly mixing.
    Thus, $\delta _x =\delta$ is constant on a conull subset of $X^\Gamma$, and it follows that $\delta\in Z(\Gamma)$ and $\Aut_\Gamma(\bigoplus_\Gamma \mathcal{B})\cap[\mathcal{B}\wr\Gamma]\subseteq Z(\Gamma)$. 
    These two subgroups are equal since for each $\gamma\in Z(\Gamma )$ the map $x\mapsto \gamma. x$ is a $\Gamma$-equivariant element of $[\mathcal{B}\wr\Gamma]$.  
\end{proof}

\printbibliography
\end{document}